\documentclass[10pt,reqno]{amsart}
\usepackage{amssymb,mathrsfs, mathtools}
\usepackage{wrapfig} 
\usepackage{tikz}
\usepgflibrary{fpu}
\usetikzlibrary{arrows,calc,decorations.pathreplacing}
\definecolor{light-gray1}{gray}{0.90}
\definecolor{light-gray2}{gray}{0.80}

\mathtoolsset{showonlyrefs}




\newcommand{\A}{\mathcal{A}}
\newcommand{\B}{\mathcal{B}}

\newcommand{\E}{\mathcal{E}}

\newcommand{\J}{\mathcal{J}}
\newcommand{\LL}{\mathcal{L}}

\newcommand{\HH}{\mathcal{H}}

\newcommand{\cS}{\mathcal{S}}
\newcommand{\Sp}{\mathbb{S}}

\newcommand{\N}{\mathbb{N}}
\newcommand{\R}{\mathbb{R}}
\newcommand{\Z}{\mathbb{Z}}

\newcommand{\al}{\alpha}
\newcommand{\be}{\beta}
\newcommand{\ga}{\gamma}
\newcommand{\de}{\delta}
\newcommand{\e}{\varepsilon}
\newcommand{\fy}{\varphi}
\newcommand{\om}{\omega}
\newcommand{\la}{\lambda}

\newcommand{\ta}{\tau}

\newcommand{\x}{\xi}

\newcommand{\De}{\Delta}
\newcommand{\Om}{\Omega}

\newcommand{\p}{\partial}

\newcommand{\supp}{\operatorname{supp}}

\newcommand{\I}{\infty}

\newcommand{\ti}{\widetilde}
\newcommand{\ba}{\overline}

\newcommand{\ds}{\displaystyle}
\newcommand{\ang}[1]{\left\langle{#1}\right\rangle}
\newcommand{\abs}[1]{\left\lvert{#1}\right\rvert}

\newcommand{\EQ}[1]{\begin{equation}\begin{split} #1 \end{split}\end{equation}}
\setlength{\marginparwidth}{2cm}

\newcommand{\Del}[1]{}

\def\ti{\tilde}

\numberwithin{equation}{section}

\newtheorem{thm}{Theorem}[section]
\newtheorem{cor}[thm]{Corollary}
\newtheorem{lem}[thm]{Lemma}
\newtheorem{prop}[thm]{Proposition}
\newtheorem{claim}[thm]{Claim}

\theoremstyle{remark}
\newtheorem{rem}{Remark}

\newcommand{\mand}{{\ \ \text{and} \ \  }}

\newcommand{\mas}{{\ \ \text{as} \ \ }}

\newcommand{\ud}{\mathrm{d}} 
\newcommand{\eps}{\epsilon}

\begin{document}

\author{R.~C\^{o}te}
\author{C.~E.~Kenig}
\author{A.~Lawrie}
\author{W.~Schlag}

\title[Large energy solutions of the equivariant wave map problem: I]{Characterization of large energy solutions of the equivariant wave map problem: I}
\begin{abstract} We consider $1$-equivariant wave maps from $\R^{1+2} \to \Sp^2$. For wave maps of topological degree zero we prove global existence and scattering for energies below twice the energy of harmonic map, $Q$, given by stereographic projection. We deduce this result via the concentration compactness/rigidity method developed by the second author and Merle. In particular, we establish a classification of equivariant wave maps with trajectories that are pre-compact in the energy space up to the scaling symmetry of the equation. Indeed, a wave map of this type can only be either $0$ or $Q$ up to a rescaling. This gives a proof in the equivariant case of a refined version of the {\em threshold conjecture} adapted to the degree zero theory
 where the true threshold is $2\E(Q)$, not $\E(Q)$. 
The aforementioned global existence and scattering statement can also be deduced by considering the work of Sterbenz and Tataru in the equivariant setting. 

For wave maps of topological degree one, we establish a classification of solutions blowing up in finite time with energies less than three times the energy of $Q$. Under this restriction on the energy, we show that a blow-up solution of degree one is essentially the sum of a rescaled $Q$ plus a remainder term of topological degree zero of energy less than twice the energy of $Q$. This result reveals the universal character of the known blow-up constructions for degree one, $1$-equivariant wave maps of Krieger, the fourth author, and Tataru as well as Rapha\"{e}l and Rodnianski. 
\end{abstract}

\thanks{Support of the National Science Foundation DMS-0968472 for the second author, and  DMS-0617854, DMS-1160817 for the fourth author is gratefully acknowledged. 
This first author wishes to thank the University of Chicago for its hospitality during the academic year 2011-12, and acknowledges support from the European Research Council through the project BLOWDISOL. The authors thank Jacek Jendrej for pointing out a gap in the proof of Theorem 1.1 and for his fix of this gap, which is included in Appendix B}

\subjclass{35L05, 35L71}

\keywords{equivariant wave maps, concentration compactness, profile decomposition, finite time blowup}

\maketitle

 \section{Introduction}
 Wave maps are defined formally as critical points of the Lagrangian 
 \begin{align*}
 \LL(U, \p U) = \frac{1}{2} \int_{\R^{1+d}} \eta^{\al \be} \ang{ \p_{\al} U\, , \, \p_{\be}U  }_g \, dt \, dx.
 \end{align*}
Here $U: (\R^{1+d}, \eta) \to (M, g)$ where $\eta= \textrm{diag}(-1, 1,\dots, 1)$ is the Minkowski metric on $\R^{1+d}$ and $M$ is a Riemannian manifold with metric $g$. Critical points of $\LL$ satisfy the Euler-Lagrange equation
\begin{align*}
 \eta^{\al \be} D_{\al} \p_{\be} U = 0,
 \end{align*} 
 where $D$ is the pull-back covariant derivative on $U^*TM$.  In local coordinates on $(M, g)$, the Cauchy problem for wave maps is  given by
  \begin{align} \label{cp i}
  &\Box U^k =  - \eta^{\al \be} \Gamma^k_{i j}(U) \p_{\al} U^i \p_{\be} U^j\\
  &(U, \p_t U)\vert_{t=0}=(U_0, U_1), \notag
  \end{align}
  where $\Gamma_{i j}^k$ are the Christoffel symbols on $TM$. Equivalently, we can consider the extrinsic formulation for wave maps.  If $M \hookrightarrow \R^N$ is embedded, critical points are characterized by 
\begin{align*}
\Box U \perp T_U M.
\end{align*}
Here, the Cauchy problem becomes 
  \begin{align*}
  &\Box U = \eta^{\al \be} S(U)( \p_{\al} U, \p_{\be} U)\\
   &(U, \p_t U)\vert_{t=0}=(U_0, U_1),
\end{align*}
where $S$ is the second fundamental form of the embedding.  One should note  that harmonic maps from $\R^d \to M$ are wave maps that do not depend on time.  

Wave maps exhibit a conserved energy,  
\begin{align}\label{en e}
\E(U, \p_t U)(t) = \int_{\R^d}  ( \abs{\p_t U}^2_g +  \abs{\nabla U}_g^2 ) \, dx = \textrm{const.},
 \end{align}
and are invariant under the scaling 
\begin{align*} 
(U(t, x), \p_t U(t, x)) \mapsto  (U(\la t, \la x), \la \p_tU(\la t, \la x)).
\end{align*}
The scaling invariance implies that the Cauchy problem is $\dot{H}^s \times \dot{H}^{s-1}(\R^d)$ critical for $s= \frac{d}{2}$, energy critical when $d=2$, and energy supercritical for $d>2$.  For a recent review of  some of the main developments in the area we refer the reader to Krieger's survey \cite{Kr}. 	

\subsection{Equivariant wave maps}
In the presence of symmetries, such as when the target manifold~$M$ is a surface of revolution, one often singles out a special class of such maps called equivariant wave maps. As an example, for the sphere $M=\Sp^{d}$ one requires that $U\circ \rho=\rho^\ell \circ U$ where the equivariance class, $\ell$, is a positive integer and 
$\rho\in SO(d)$ acts on $\R^d$ and on $\Sp^d$ by rotation, in the latter case about a fixed axis.

Here  we consider energy critical equivariant wave maps. We restrict out attention to the corotational case $\ell=1$, and study maps $U: (\R^{1+2}, \eta) \to (\Sp^2, g)$, where  $g$ is the round metric on $\Sp^2$. In spherical coordinates, $$(\psi, \om) \mapsto (\sin \psi \cos \om , \sin \psi \sin \om, \cos \psi),$$ on $\Sp^2$, the metric $g$ is given by the matrix $g = \textrm{diag}( 1, \sin^2(\psi))$. In the $1$-equivariant setting, we thus require our wave map, $U$, to have the form 
\begin{align*}
U(t, r, \om) = (\psi(t, r), \om) \mapsto (\sin \psi(t, r) \cos \om ,\, \sin \psi(t, r) \sin \om, \,\cos \psi(t, r)),
 \end{align*}
 where $(r, \om)$ are polar coordinates on $\R^2$. In this case, the Cauchy problem \eqref{cp i} reduces to 
\begin{align} \label{cp} 
&\psi_{tt} - \psi_{rr} - \frac{1}{r} \psi_r + \frac{\sin(2\psi)}{2r^2} = 0\\
&(\psi, \psi_t)\vert_{t=0} = (\psi_0, \psi_1). \notag
\end{align} 

We note that equivariant wave maps to surfaces of revolution such as the sphere have been extensively studied, and we refer the reader to the works of Shatah \cite{Sh}, Christodoulou, Tahvildar-Zadeh \cite{CTZ}, Shatah, Tahvildar-Zadeh \cite{STZ1,STZ}, Struwe  \cite{St}, and the book by Shatah, Struwe \cite{SS} for a summary of these developments. 

In this equivariant setting, the conservation of energy becomes
\begin{align}
\E(U, \p_tU)(t)=\E(\psi, \psi_t)(t) = \int_0^{\infty} \left(\psi_t^2 + \psi_r^2 + \frac{\sin^2(\psi)}{r^2}\right) \, r\, dr = \textrm{const.} 
\end{align}
Any $\psi(r,t)$  of finite energy  and continuous dependence on $t\in I:= (t_{0},t_{1})$ must satisfy $\psi(t, 0) = m\pi$ and  $\psi(t,\infty)=n\pi$ for all $t\in I$, where $m, n$ are fixed integers.  This requirement splits the energy space into disjoint classes according to this topological condition. The wave map evolution preserves these classes.

 In light of this discussion, the natural spaces in which to consider Cauchy data for \eqref{cp} are the energy classes
 \begin{align}\label{Hnm}
 \HH_{m, n} := \{ (\psi_0, \psi_1) \,\vert\,  \E(\psi_0, \psi_1) < \infty \quad \textrm{and} \quad \psi_0(0) =m\pi, \, \psi_0(\infty) = n\pi\}. 
 \end{align}
We will mainly consider the spaces $\HH_{0, n}$ and we denote these by $\HH_n := \HH_{0, n}$. In this case we refer to $n$ as the degree of the map. We also define $\HH= \bigcup_{n\in \Z} \HH_{n}$ to be the full energy space. 

In the analysis of $1$-equivariant wave maps to the sphere, an important role is played by the harmonic map, $Q$, given by stereographic projection. In spherical coordinates, $Q$ is given by $Q(r) = 2\arctan(r)$ and is a solution to 
\begin{align}\label{Q}
Q_{rr} + \frac{1}{r}Q_r = \frac{\sin(2Q)}{2r^2}.
\end{align} 
One can show via an explicit calculation that $(Q, 0)$ is an element of $\HH_1$, i.e., $Q$ has finite energy and sends the origin in $\R^2$ to the north pole and spacial infinity to the south pole. In fact, the energy $\E(Q):=\E(Q,0)=4$ is minimal in $\HH_1$ and simple phase space analysis shows that, up to a rescaling,  $(Q, 0)$ is the unique, nontrivial, $1$-equivariant harmonic map to the sphere in $\HH_1$. Note the slight abuse of notation above in that we will denote the energy of the element $(Q, 0) \in \HH_1$ by $\E(Q)$ rather than $\E(Q, 0)$.  

It has long been understood that in the energy-critical setting, the geometry of the target should play a decisive role in determining the asymptotic behavior of  wave maps. For equivariant wave maps, global well-posedness for all smooth data was established by Struwe in \cite{St} in the case where the target manifold does not admit a non-constant finite energy harmonic sphere.  This extended the results of Shatah, Tahvildar-Zadeh \cite{STZ1}, and Grillakis \cite{G}, where global well-posedness was proved for targets satisfying a geodesic convexity condition. Recently, global well-posedness, including scattering,  has been established in the full (non-equivariant), energy critical wave maps problem in a remarkable series of works \cite{KS}, \cite{ST1}, \cite{ST2},  \cite{T3}, for targets that do not admit finite energy harmonic spheres, completing the program developed in \cite{Tat}, \cite{T2}. 

However, finite-time blow-up can occur in the case of compact targets that admit non-constant harmonic spheres. Because we are working in the equivariant, energy critical setting, blow-up can only occur at the origin and in an energy concentration scenario which amounts to a breakdown in regularity. Moreover, in \cite{St}, Struwe showed that if a solution is~$C^{\infty}$ before a regularity breakdown occurs, then such a scenario can only happen by the bubbling off of a non-constant harmonic map. 

In particular, Struwe showed that if a solution, $\psi(t,r)$, with smooth initial data $\vec\psi(0)= (\psi(0), \dot{\psi}(0))$, breaks down at $t=1$, then the energy concentrates at the origin and there  is a sequence of times $t_j \nearrow 1$ and scales $\la_j>0$ with  $\la_j \ll 1-t_j$ so that the rescaled sequence of wave maps $$\vec \psi_j(t, r) := \left(\psi(t_j+ \la_jt,\la_jr), \la_j \dot \psi\left(t_j+ \la_jt, \la_j r\right)\right)$$ converges {\em locally} to $ \pm Q(r/ \la_0)$ in the space-time norm  ${H}^1_{\textrm{loc}}((-1, 1) \times \R^2; \Sp^2)$ for some $\la_0>0$. Further evidence of finite time blow up for equivariant wave maps to the sphere was provided by the first author in \cite{Co}. Recently, explicit blow-up solutions have been constructed in \cite{RS} for equivariance classes $\ell \ge 4$ and in the $1$-equivariant case in \cite{KST}, \cite{KST2} and \cite{RR}. In \cite{KST},  Krieger, the fourth author, and Tataru constructed explicit blow-up solutions  with prescribed blow-up rates $\la(t)= (1-t)^{1 + \nu}$ for $\nu >\frac{1}{2}$ although it is believed that all rates with $\nu >0$ are possible as well. In \cite{KST2}, a similar result is given for the radial, energy critical Yang Mills equation. In \cite{RR}, Rodnianski and Rapha\"{e}l give a description of  stable blow-up dynamics for  equivariant wave maps and the radial, energy critical Yang Mills equation in an open set about $Q$ in a stronger topology than the energy.

Our goal in this paper is twofold. On one hand, we study the asymptotic behavior of solutions to \eqref{cp} with data in the ``zero" topological class, i.e., $\vec \psi(0) \in \HH_0$, below a sharp energy threshold, namely $2\E(Q)$. Additionally, we seek to classify the behavior of wave maps of topological degree one, i.e., those with data $\vec \psi \in \HH_1$,  that blow up in finite time with energies below the threshold $3\E(Q)$. In particular, we show that blow-up profiles exhibited in the works \cite{KST}, \cite{RS} and \cite{RR} are universal in this energy regime in a precise sense described below in Section~\ref{sect bu thm}. 

\subsection{Global existence and scattering for wave maps in $\HH_0$ with energy below $2\E(Q)$}
We begin with a description of our results in the degree zero case. In \cite{St}, Struwe's work implies that  solutions $\vec \psi(t)$ to \eqref{cp} with data $\vec \psi(0) \in \HH_0$ are global in time if $\E(\vec \psi(0)) <2 \E(Q)$. This follows directly from the fact that wave maps in $\HH_0$ with energy below $2\E(Q)$ stay bounded away from the south pole and hence cannot converge, even locally, to a degree one rescaled harmonic map, thus ruling out blow-up. Recently, the first two authors together with Merle, in \cite{CKM}, extended this result to include scattering to zero in the regime, $\vec{\psi}(0) \in \HH_0$ and $\E(\vec \psi) \le \E(Q)+ \de$ for small $\de>0$. It was conjectured in \cite{CKM} that scattering should also hold for all energies up to $2\E(Q)$. This conjecture is a refined version of what is usually called  {\em  threshold conjecture}, adapted to the case of topologically trivial equivariant data. It is implied by the recent work of Sterbenz and Tataru in \cite{ST1}, \cite{ST2} when one considers their results in the equivariant setting with topologically trivial data. Here we give an alternate proof of this  {\em refined threshold conjecture} in  the equivariant setting based on the concentration compactness/rigidity method of the second author and Merle, \cite{KM06}, \cite{KM08}. 
 In particular, we prove the following:

\begin{thm} [Global Existence and Scattering in $\HH_0$ below $2\E(Q)$]\label{main}  For any smooth data $\vec\psi(0) \in \HH_0$ with $\E(\vec\psi(0)) < 2\E(Q)$, there exists a unique global evolution $\vec \psi \in C^0(\R; \HH_0)$. Moreover, $\vec\psi(t)$ scatters to zero in the sense that the energy of $\vec \psi(t)$ on any arbitrary, but fixed compact region vanishes as $t \to \infty$. In other words, one has 
\begin{align} \label{scat}
 \vec{\psi}(t) = \vec{\fy}(t) + o_{\HH}(1) \quad \textrm{as} \quad t \to \infty
 \end{align}
 where $\vec \fy \in \HH$ solves the linearized version of \eqref{cp}, i.e., 
 \begin{align}\label{lin wave}
 \fy_{tt} - \fy_{rr} - \frac{1}{r} \fy_r + \frac{1}{r^2} \fy = 0
 \end{align}
Furthermore, this result is sharp in $\HH_0$ in sense that $2\E(Q)$ is a true threshold. Indeed for all $\de>0$ there exists data $\vec\psi(0) \in \HH_0$ with $\E(\vec\psi) \le 2\E(Q) + \de$, such that $\vec \psi$ blows up in finite time.  
\end{thm}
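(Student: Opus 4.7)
The theorem comprises three assertions: global existence in $\HH_0$ below $2\E(Q)$, scattering, and sharpness. The first two proceed by the Kenig--Merle concentration-compactness / rigidity scheme. Global existence is essentially immediate from Struwe's bubbling theorem \cite{St}: if a smooth solution $\vec\psi(t)\in\HH_0$ broke down at $t=1$, then the rescalings $(\psi(t_j+\la_j t,\la_j r),\la_j\dot\psi(t_j+\la_j t,\la_j r))$ would converge locally in $H^1$ to $(\pm Q(\cdot/\la_0),0)$. The limiting bubble has energy exactly $\E(Q)$, and since $\psi(\infty)=0$ the remainder must be of degree $\mp 1$ and thus carry at least $\E(Q)$ of energy (the infimum in that topological class, attained in the limit by rescaled $Q$'s). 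Hence $\E(\vec\psi)\ge 2\E(Q)$, contradicting the hypothesis.

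For scattering, small-data theory in $\HH_0$ furnishes the base case. Let $E_*$ denote the supremum of $E$ such that every $\vec\psi(0)\in\HH_0$ with $\E(\vec\psi(0))<E$ scatters, and suppose for contradiction that $E_* < 2\E(Q)$. Applying a Bahouri--G\'erard type profile decomposition to a sequence of near-threshold non-scattering solutions and invoking perturbation theory for \eqref{cp}, one extracts a \emph{critical element}: a non-trivial solution $\vec u_*\in C(I_*;\HH_0)$ with $\E(\vec u_*)=E_*$ whose trajectory $\{\vec u_*(t)\}$ is pre-compact in $\HH_0$ modulo the scaling action $\psi(r)\mapsto \psi(\la r)$. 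The rigidity theorem of the present paper then classifies such critical elements as either $0$ or a rescaled $Q$; since $(Q,0)\in \HH_1\setminus\HH_0$, we must have $\vec u_*\equiv 0$, contradicting $E_*>0$. Hence $E_*\ge 2\E(Q)$, and the two equivalent formulations of scattering in \eqref{scat} follow from the resulting global scattering-norm bound together with standard linear theory for \eqref{lin wave}.

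For sharpness, we perform a surgery on a known degree-one blow-up. Fix $\de>0$ and take the Krieger--Schlag--Tataru solution $\vec u\in\HH_1$ from \cite{KST}, smooth on $[0,1)$ and blowing up at $(t,r)=(1,0)$, with $\E(\vec u)\le \E(Q)+\de/4$. Outside a ball $B_R$ with $R>1$, $u(t,r)$ stays uniformly close to $\pi$ and lies strictly outside the backward light cone from $(1,0)$. At $t=0$ we cut $\vec u(0)$ at $r=R$ and splice it to a smoothly cut-off $\pi - Q(\cdot/L)$ with $L$ so large that the replacement contributes energy at most $\E(Q)+\de/4$. The resulting initial data lies in $\HH_0$ with total energy $\le 2\E(Q)+\de$; by finite speed of propagation the modified solution agrees with $\vec u$ inside the backward light cone from $(1,0)$, and therefore still blows up at $(1,0)$.

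The principal difficulty lies in the rigidity step underpinning the scattering argument: one must classify solutions of \eqref{cp} with energy below $2\E(Q)$ and pre-compact-modulo-scaling trajectory as either trivial or a rescaled $Q$. This will require sharp virial and exterior-energy identities adapted to the $1$-equivariant structure of \eqref{cp}, control of the scaling parameter $\la(t)$, and the elimination of non-stationary self-similar candidates. Once the correct function-space framework for corotational maps is in place, the remaining ingredients (small-data theory, profile decomposition, and the perturbation lemma) follow essentially standard Kenig--Merle templates.
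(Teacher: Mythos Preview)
Your plan is correct and matches the paper's approach almost exactly: Kenig--Merle concentration-compactness for scattering, a virial-based rigidity argument for the critical element, and a finite-speed-of-propagation surgery on a KST blow-up for sharpness. Two small cautions are worth recording. First, the rigidity step the paper actually uses (Theorem~\ref{rigidity2}) is the $\HH_0$-specific version and relies only on the virial identity and the uniqueness of the trivial harmonic map in $\HH_0$; the exterior-energy estimates you mention are not needed here (they appear only later in the blow-up classification). Second, be careful not to invoke the full classification Theorem~\ref{rigidity} at this stage, since the paper deduces that result \emph{from} Theorem~\ref{main}; you need the self-contained $\HH_0$ rigidity to avoid circularity.
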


\begin{rem}  We note that a threshold result as in Theorem~\ref{main} only makes sense in $\HH_0$. Indeed, all initial data in $\HH_1$  have enough energy to blow-up by bubbling off a harmonic map in the sense of Struwe's result in \cite{St}, since $Q$ minimizes the energy in $\HH_1$. The same goes for all higher degrees. In Section~\ref{sharp} we construct a degree zero wave map which blows up in finite time using the explicit {\em degree one} blow up solutions of Krieger, the fourth author and Tataru. This example will also help to illustrate why the twice the energy of the degree one map $Q$ gives the sharp threshold for degree zero maps.  
\end{rem}

\begin{rem}Characterizing the possible dynamics at the threshold, $\vec \psi \in \HH_0$, $\E(\vec \psi)=2\E(Q)$ and above $\E(\vec \psi)>2\E(Q)$, remain open questions.
\end{rem}
\begin{rem}
We briefly remark that Theorem~\ref{main} holds with the same assumptions and conclusions for data $\vec \psi \in \HH_{n, n}$ where $\HH_{n, n}$ is defined as in \eqref{Hnm}. Indeed, the spaces $\HH_{0}$ and  $\HH_{n, n}$ are isomorphic via the map  $(\psi_0, \psi_1) \mapsto (\psi_0+n\pi, \psi_1)$. Also, we can replace the words ``smooth finite energy data" in Theorem~\ref{main} with  just ``finite energy data" using the well-posedness theory for \eqref{cp}, see for example \cite{CKM}. 
\end{rem}
 As mentioned above,  Theorem~\ref{main} is established by the concentration compactness/rigidity method of the second author and Merle in \cite{KM06} and \cite{KM08}. The novel aspect of our implementation of this method lies in the development of a robust rigidity theory for wave maps $\vec U(t)$ with trajectories that are pre-compact in the energy space up to certain time-dependent modulations. We note that the following theorem is independent of both the topological class and the energy of the wave map. 
 
 \begin{thm}[Rigidity]\label{rigidity} Let $\vec{U}(t,r, \om) = ((\psi(t,r), \om), (\dot{\psi}(t, r), 0)) \in \HH$ be a solution to \eqref{cp}  and let $I_{\max}(\psi)= (T_-(\psi), T_+(\psi))$ be the maximal interval of existence. Suppose that there exists $A_0>0$ and a continuous function $\la : I_{\max} \to [A_0, \infty)$ such that the set 
 \begin{align} \label{ K}
\ti{K}:= \left\{\,\left (\, U\left(t, \frac{r}{\la(t)}, \om \right),\,  \frac{1}{\la(t)}\p_tU\left( t,\frac{r}{ \la(t)}, \om \right) \right)\, \Big{\vert}\, t \in I_{max} \right\}
 \end{align}
 is pre-compact in $\dot{H}^1 \times L^2(\R^2 ; \Sp^2)$. Then, $I_{\max}= \R$ and either $ U \equiv  0$ or $U: \R^2 \to \Sp^2$ is an equivariant harmonic map,  i.e., $U(t, r, \om) = (\pm Q(r/\ti\la), \om)$ for some  $\ti \la>0$.
 \end{thm}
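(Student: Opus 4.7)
The statement is the rigidity step of the Kenig-Merle concentration-compactness/rigidity scheme adapted to equivariant wave maps, and I would follow the classical three-step template: promote the trajectory to being globally defined, force time-independence by a virial identity, then invoke the harmonic map classification.

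\emph{Global existence.} Suppose $T_+(\psi) < \infty$. Pre-compactness of $\ti K$ combined with Struwe's bubbling theorem forces $\la(t) \to \infty$ as $t \nearrow T_+$, and finite speed of propagation localizes all the energy inside the backward light cone from $(T_+, 0)$. A flux estimate on truncated backward cones in the spirit of Shatah-Tahvildar-Zadeh \cite{STZ1,STZ} then shows the self-similar region carries zero energy, contradicting the hypothesis that $\ti K$ has positive energy. Time-reversal rules out $T_- > -\infty$, so $I_{\max} = \R$.

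\emph{Virial step.} Pick a smooth cutoff $\chi_R$ equal to $1$ on $r \le R$ and supported in $r \le 2R$, and form the localized virial
\begin{equation*}
V_R(t) = \int_0^\infty \chi_R(r)\, r\, \psi_r(t,r)\, \psi_t(t,r)\, r\, dr.
\end{equation*}
A Pohozaev-type computation using the equation \eqref{cp} produces a schematic identity of the form
\begin{equation*}
V_R(t_2) - V_R(t_1) = -\int_{t_1}^{t_2}\!\!\int_0^\infty \chi_R(r)\, \psi_t^2\, r\, dr\, dt + \mathrm{Err}_R(t_1, t_2),
\end{equation*}
where $\mathrm{Err}_R$ comes from derivatives of $\chi_R$ hitting the energy density. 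Pre-compactness of $\ti K$ together with $\la(t) \ge A_0$ gives uniform-in-$t$ spatial decay of the rescaled trajectories; choosing $R$ large compared to $\sup_{t \in [t_1, t_2]} \la(t)^{-1}$ makes $\mathrm{Err}_R(t_1, t_2) = o_R(1)(t_2 - t_1) + O(1)$. Meanwhile Cauchy-Schwarz and energy conservation bound $|V_R(t)|$ uniformly in $R$ and $t$. Dividing by $t_2 - t_1$, sending $R \to \infty$ and then $t_2 - t_1 \to \infty$ forces the time average of $\|\psi_t(t)\|_{L^2(r\, dr)}^2$ to vanish, which by compactness of $\ti K$ upgrades to $\psi_t \equiv 0$ on $I_{\max} \times (0, \infty)$.

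\emph{Classification and main obstacle.} Since $\psi_t \equiv 0$, the stationary profile $\psi = \psi(r)$ solves the ODE \eqref{Q} with finite energy, and the standard phase-plane analysis cited just after \eqref{Q} classifies the finite-energy equivariant solutions as either constants (giving $U \equiv$ constant, i.e., $U \equiv 0$ in the claimed sense) or rescalings $\pm Q(r/\ti \la)$, yielding the stated dichotomy. The main technical obstacle is uniform control of $\mathrm{Err}_R$: because the rescaling parameter $\la(t)$ is time-dependent and may tend to infinity, the cutoff scale $R$ must be slaved to $\la(t)^{-1}$ along each time window. The hypothesis $\la(t) \ge A_0$ is essential here -- it prevents the solution from spreading to infinite spatial scale, which would destroy the tail decay the virial argument relies on -- and the handling of the complementary scenario $\la(t) \to \infty$ (concentration) within the same framework is where the careful interplay between the virial weight and the compactness really has to be exploited.
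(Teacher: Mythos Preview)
Your virial computation is essentially the right opening move (modulo two minor slips: $|V_R(t)|$ is not uniform in $R$ but only $O(R)$, and the correct order of limits is to fix $R$ large, divide by $t_2-t_1$, send $t_2-t_1\to\infty$, and only then let $R\to\infty$). With those fixes you do get that the Ces\`aro mean $\frac{1}{T}\int_0^T\|\psi_t(t)\|_{L^2}^2\,dt\to 0$.

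The genuine gap is the sentence ``which by compactness of $\ti K$ upgrades to $\psi_t\equiv 0$.'' This does not follow: vanishing of the time average says nothing about $\psi_t$ on any fixed bounded time window, and pre-compactness of the rescaled trajectory alone does not force the kinetic energy to vanish pointwise in $t$. (Think of a hypothetical periodic-in-time compact trajectory whose kinetic energy oscillates; your argument would have to rule this out, and you have not.) The paper does \emph{not} attempt to prove $\psi_t\equiv 0$ for the original solution. Instead, from the vanishing Ces\`aro mean it extracts a sequence of times $t_n\to\infty$ along which the kinetic energy, averaged over a window of length $1/\la(t_n)$, tends to zero; rescales by $\la(t_n)$; uses pre-compactness of $\ti K$ to pass to a \emph{strong} limit $\vec U_\infty$ in $\dot H^1\times L^2$; shows $\partial_t U_\infty\equiv 0$ on a time interval, so $U_\infty$ is a weak (hence, by H\'elein, smooth) equivariant harmonic map; and concludes $\E(\vec U)=\E(\vec U_\infty)\in\{0,\E(Q)\}$ by strong convergence. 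The case $\E(\vec U)=0$ gives $U\equiv 0$; the case $\E(\vec U)=\E(Q)$ splits by topological class: in $\HH_{\pm 1}$ energy minimality forces $U=\pm Q(\cdot/\tilde\la)$, while $U\in\HH_0$ is eliminated because Theorem~\ref{main} says such a solution scatters, contradicting pre-compactness. The same sequence/limit mechanism, run inside the backward cone, handles the finite-time blow-up scenario in Step~1. So the ``upgrade'' you are reaching for is really a sequence argument plus a nontrivial appeal to the degree-zero scattering theorem, neither of which appears in your outline.
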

\begin{rem} 
To establish  Theorem~\ref{main} we only need a version of Theorem~\ref{rigidity} that deals with data in $\HH_0$ below $2\E(Q)$. This rigidity result in $\HH_0$ is given in Theorem~\ref{rigidity2} below, and states that any solution $\vec \psi\in \HH_0$ with a pre-compact rescaled trajectory must be identically zero.  
The full result in Theorem~\ref{rigidity} is established for its own interest. In fact, we use the conclusions of Theorem~\ref{main} in order to deduce the full classification of pre-compact solutions given in Theorem~\ref{rigidity}.  Alternatively, we can prove Theorem~\ref{rigidity} using the scattering result of \cite[Theorem $1$]{CKM}, and deduce Theorem~\ref{rigidity2} as a corollary. We have chosen the former approach here to illustrate the independence of our stronger rigidity results from the variational arguments given in \cite[Lemma~$7$]{CKM}. 
\end{rem}

\subsection{Classification of blow-up solutions in $\HH_1$ with energies below $3\E(Q)$}\label{sect bu thm}

We now turn to the issue of describing blow-up for wave maps in $\HH_1$, i.e., those maps $\vec \psi(t)$ with $\psi(t, 0)= 0$ and $\psi(t, \infty) = \pi$. From here on out, any wave map that is assumed to blow-up will be also be assumed  to do so at time $t=1$. As mentioned above, the recent works \cite{KST} and \cite{RR} construct explicit blow-up solutions $ \psi(t) \in \HH_1$. In \cite{KST}, the blow up solutions constructed there exhibit a decomposition of the form 
\begin{align}\label{kst dec}
\psi(t, r) = Q(r/ \la(t)) + \epsilon(t, r)
\end{align}
where the concentration rate satisfies $\la(t) = (1-t)^{1+ \nu}$ for $\nu>\frac{1}{2}$, and $\epsilon(t)\in \HH_0$ is small and regular.  Here we consider  the converse problem. Namely, if blow-up does occur for a solution $\vec \psi(t) \in \HH_1$, in which energy regime, and in what sense does such a decomposition always hold?  

The works of Struwe, in \cite{St} for the equivariant case, and Sterbenz, Tataru in \cite{ST2} for the full wave map problem, give a partial answer to this question. As mentioned above, they show that if blow-up occurs, then along a sequence of times, a sequence of rescaled versions of the original wave map converge {\em locally} to $Q$ in the space-time norm ${H}^1_{\textrm{loc}}( (-1, 1)\times \R^2; \Sp^2)$. However working locally removes any knowledge of the topology of the wave map, which is determined by the behavior of the map at spacial infinity. In this paper we seek to strengthen the results in \cite{St} and \cite{ST2} in the equivariant setting by working globally in space in  the {\em energy topology}. Here we are forced to account for the topological restrictions of a degree one wave map, and in fact we use these restrictions, along with our degree zero theory, to our advantage. 

In particular, we make the following observation. If a wave map $\psi(t) \in \HH_1$ blows up at $t=1$ then the local convergence results of Struwe in \cite{St} allow us to extract the blow up profile $\pm Q_{\la_n}:= \pm Q(\cdot/ \la_n)$ at least along a sequence of times $t_n \to 1$. If $\vec \psi$ has energy below $3\E(Q)$ the profile must be $+Q(\cdot/ \la_n)$, and since $Q \in \HH_1$ as well we thus have  $\psi(t_n)- Q_{\la_n} \in \HH_0$. Since this  object should converge locally to zero, the energy of the difference should be roughly the difference of the energies, at least for large $n$.  Hence, if $\psi(t)$ has energy below $3\E(Q)$ the difference $\psi(t_n) - Q_{\la_n}$ is degree zero and has energy below $2\E(Q)$. By Theorem~\ref{main}, we then suspect that the blow-up profile already extracted is indeed universal in this regime and that a decomposition of the form \eqref{kst dec} should indeed hold, excluding the possibility of any different dynamics, such as more bubbles forming. We prove the following result:


\begin{thm}[Classification of blow-up solutions in $\HH_1$ with energies below $3\E(Q)$]\label{bu cont} Let $\vec \psi(t) \in \HH_1$  be a smooth solution to  \eqref{cp} blowing up at time $t=1$ with  $$\E(\vec \psi) = \E(Q) + \eta < 3\E(Q). $$ Then, there exists a continuous function, $\la:[0,1) \to (0, \infty)$ with $\la(t) = o(1-t)$, a map $\vec \fy=(\fy_0, \fy_1) \in \HH_0$ with $\E(\vec \fy)= \eta$, and a decomposition  
\begin{align}\label{dec1}
\vec \psi(t) =   \vec \fy + \left(Q\left(\cdot/\la(t)\right), 0\right) + \vec\epsilon(t)
\end{align}
such that $\vec \epsilon(t) \in \HH_0$ and $\vec \epsilon(t)\to 0$ in $\HH_0$ as $t \to 1$. 
\end{thm}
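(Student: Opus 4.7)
The plan is to combine Struwe's bubbling theorem with a concentration-compactness profile decomposition for the sequence $\vec\psi(t_n)$ approaching blow-up, and then use the degree-zero scattering theory of Theorem~\ref{main} to control the non-bubble piece and promote subsequential to continuous convergence.

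\textbf{Step 1: Subsequential extraction of a single bubble.} By Struwe's theorem there is a sequence $t_n \nearrow 1$ and scales $\lambda_n \ll 1-t_n$ such that the rescaled maps $(\psi(t_n,\lambda_n\cdot),\lambda_n\dot\psi(t_n,\lambda_n\cdot))$ converge locally to $(\pm Q(\cdot/\lambda_0),0)$. Under the energy constraint $\E(\vec\psi)<3\E(Q)$ the minus sign is ruled out: an extracted bubble of degree $-1$ would force the remainder $\vec\psi(t_n)-(-Q_{\lambda_n},0)$ to have topological degree $2$, hence energy at least $2\E(Q)$, contradicting $\eta<2\E(Q)$. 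Thus the bubble is $+Q_{\lambda_n}$, and the weak limit of $\vec\psi(t_n)$ in $\HH$ defines a candidate $\vec\varphi\in\HH_0$ with $\E(\vec\varphi)\le\eta$.

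\textbf{Step 2: Profile decomposition and strong convergence along $\{t_n\}$.} Apply a Bahouri--G\'erard style profile decomposition to the sequence $\vec a_n:=\vec\psi(t_n)-(Q_{\lambda_n},0)\in\HH_0$, whose energy is bounded by $\eta<2\E(Q)$. Any additional concentrating profile of degree $\pm 1$ would contribute at least $\E(Q)$, which together with the already extracted bubble exceeds the energy budget; degree-zero profiles evolve via Theorem~\ref{main} as global scattering solutions, so their asymptotic orthogonality combined with conservation of energy forces all concentrating profiles to vanish. A Pythagorean energy identity then yields $\E(\vec\varphi)=\eta$ and strong convergence $\vec a_n\to\vec\varphi$ in $\HH_0$.

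\textbf{Step 3: Modulation, uniqueness, and continuous extension.} For $t$ close to $1$ define
$$\lambda(t):=\operatorname{argmin}_{\mu>0}\bigl\|\vec\psi(t)-\vec\varphi-(Q_\mu,0)\bigr\|_{\HH},$$
which, once $\vec\psi(t)$ lies in a tubular neighborhood of the one-parameter family $\vec\varphi+\{(Q_\mu,0):\mu>0\}$, is well-defined and continuous by an implicit function / modulation argument. The goal is to show $\vec\epsilon(t):=\vec\psi(t)-\vec\varphi-(Q_{\lambda(t)},0)\to 0$ in $\HH_0$ as $t\to 1$ along the full continuum. Arguing by contradiction, if $\vec\epsilon(s_n)\not\to 0$ along some sequence $s_n\nearrow 1$, a reapplication of Struwe's theorem combined with the decomposition of Step~2 would yield either an additional nontrivial bubble or a nonzero residual degree-zero profile with nonvanishing energy; both alternatives violate the energy identity $\E(\vec\varphi)+\E(Q)=\E(\vec\psi)$, or conflict with the scattering of degree-zero residuals provided by Theorem~\ref{main}. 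Uniqueness of $\vec\varphi$ follows along the same lines.

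\textbf{Step 4: Rate $\lambda(t)=o(1-t)$.} If $\lambda(t_k)\gec 1-t_k$ along some sequence, rescaling by $1-t_k$ rather than by $\lambda(t_k)$ would produce a nontrivial self-similar limit supported inside the backward light cone, whose pre-compact trajectory is excluded by the rigidity Theorem~\ref{rigidity} combined with finite speed of propagation. The main obstacle is Step~3: upgrading the subsequential decomposition to a continuous decomposition with error vanishing in the full $\HH_0$ topology, rather than only locally or weakly. This requires carefully coupling the modulation equations for $\lambda(t)$ with the degree-zero scattering result, and rigorously excluding the formation of additional bubbles or of non-dispersive degree-zero bulk using the strict inequality $\eta<2\E(Q)$.
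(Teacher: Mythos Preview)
Your outline has the right overall shape but contains two genuine gaps that the paper's argument is specifically designed to overcome.

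\textbf{Step 2 does not work as stated.} The claim that ``degree-zero profiles evolve via Theorem~\ref{main} as global scattering solutions, so their asymptotic orthogonality combined with conservation of energy forces all concentrating profiles to vanish'' is incorrect. A nonzero degree-zero profile with energy $\le\eta<2\E(Q)$ is perfectly compatible with the energy budget and scatters; scattering does not make it disappear from the decomposition. The paper's mechanism for killing such profiles is entirely different: one first uses the time-averaged kinetic energy decay of Shatah--Tahvildar-Zadeh (Corollary~\ref{t dec extract}) to arrange $\|\dot b_n\|_{L^2}\to 0$, which forces every profile to have vanishing initial velocity and $t_n^j\equiv 0$; then a virial-type computation (Lemma~\ref{2 lim}) shows each profile has $\dot\fy^j\equiv 0$ outside the light cone, so $\fy^j$ is a finite-energy harmonic map on $\R^2\setminus\{0\}$, which by Sacks--Uhlenbeck extends across the origin and hence vanishes in $\HH_0$. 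None of this follows from energy accounting or scattering alone.

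\textbf{The key analytic ingredient is missing.} Even after establishing that $\vec b_n$ has no profiles, you only know $\vec b_n\to 0$ in the dispersive Strichartz norm, \emph{not} in the energy space. Closing this gap is precisely what requires the exterior linear energy estimate of Proposition~\ref{lin ext estimate} (from~\cite{CKS}): for data of the form $(f,0)$ in $\R^4$, a fixed fraction of the free energy stays outside the forward light cone for all $t\ge 0$. This is converted into Corollary~\ref{ext en est} for the nonlinear error $\vec b_n$ and is used in a contradiction argument (propagating the decomposition backward in time via finite speed of propagation and comparing with the original $\vec\psi$ at a fixed spatial point away from the singularity) to force $\|b_{n,0}\|_H\to 0$. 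Your proposal never invokes this estimate; without it the argument stalls at Strichartz convergence, which is strictly weaker than the conclusion of the theorem. Relatedly, your construction of $\vec\fy$ as a ``weak limit of $\vec\psi(t_n)$'' is too vague: $\vec\psi(t_n)\in\HH_1$ does not have a well-defined weak limit in $\HH_0$. The paper instead builds $\vec\fy(t)$ by a surgery-plus-propagation construction so that $\vec\fy(t,r)+\pi=\vec\psi(t,r)$ for all $r>1-t$ (Lemma~\ref{psi fy}), which is what makes the remainder $\vec a(t)=\vec\psi(t)-\vec\fy(t)$ compactly supported in the cone and hence amenable to the exterior-energy contradiction.

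Finally, once the subsequential result (Proposition~\ref{bu}) is in hand, the upgrade to continuous $t$ is much simpler than your Step~3 suggests: one shows $\E(\vec a(t))\to\E(Q)$ directly, and then the coercivity Lemma~\ref{decomposition} immediately furnishes a continuous $\la(t)$ with $\|a(t)-Q(\cdot/\la(t))\|_H\to 0$; no contradiction argument or reapplication of Struwe is needed.
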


\begin{rem}
In the companion work \cite{CKLS2} we address the question of {\em global solutions} $\psi(t) \in \HH_1$ in the regime $\E(\vec \psi)< 3\E(Q)$.  We can show that in this case we have a decomposition and convergence as in \eqref{dec1} with $\la(t) \ll t$ as $t \to \infty$.  This will give us a complete classification of the possible dynamics in $\HH_1$ for energies below $3\E(Q)$. Of course, our results do not give information about the precise rates $\la(t)$.  We also would like to mention the recent results of Bejenaru, Krieger, and Tataru \cite{BKT}, regarding wave maps in $\HH_1$, where they prove asymptotic orbital stability for a co-dimension two class of initial data which is ``close" to $Q_{\la}$ with respect to a stronger topology than the energy. 
\end{rem}

\begin{rem}
Theorem~\ref{bu cont} is reminiscent of the recent results proved by Duyckaerts, the second author, and Merle in \cite{DKM1}, \cite{DKM2}, for the energy critical focusing semi-linear wave equation in $\R^{1+3}$. In fact, the techniques developed in these works provided important ideas  for the proof of Theorem~\ref{bu cont}. The situation for wave maps is somewhat different, however, as the geometric nature of the problem provides some key distinctions. The most notable of these distinctions is that the underlying linear theory for wave maps of degree zero is not nearly as strong as that of a semi-linear wave in $\R^{1+3}$, which causes serious problems. Indeed, as demonstrated in \cite{CKS}, the strong lower bound on the exterior energy in \cite[Lemma $4.2$]{DKM1} {\em fails} for  general initial data in even dimensions. This difficulty is overcome by the fact that there is no self-similar blow-up  for energy critical equivariant wave maps, see e.g.,~\cite{SS}, which can be shown directly due to the non-negativity of the energy density. 

In addition, our degree zero result and the rigid topological restrictions of the problem allow us to extend the conclusions of Theorem~\ref{bu cont} all the way up to $3\E(Q)$ instead of just slightly above the energy of the harmonic map $\E(Q) + \de$, for $\de>0$ small, as is the case in \cite{DKM1}, \cite{DKM2}. This large enegy result is similar in nature to the results for the $3d$ semi-linear radial wave equation in \cite{DKM3}, when, in the notation from \cite{DKM3}, $J_0=1$. 
\end{rem}

\begin{rem}
The results in \cite{DKM1}, \cite{DKM2} have recently been extended by Duyckaerts, the second author, and Merle in \cite{DKM3} and \cite{DKM4}. In \cite{DKM4}, a classification of solutions to the radial, energy critical, focusing semi-linear wave equation in $\R^{1+3}$ of {\em all} energies is given in the sense that only three scenarios are shown to be possible;  $(1)$  type I blow-up; $(2)$ type II blow-up with the solution decomposing into a sum of blow-up profiles arising from rescaled solitons plus a radiation term; or $(3)$ the solution is global and decomposes into a sum of rescaled solitons plus a radiation term as $t \to \infty$. 
\end{rem}

\subsection{Remarks on the proofs of the main results}

In addition to the methods originating in \cite{KM06}, \cite{KM08} and \cite{DKM1}, \cite{DKM2}, the work in this paper rests explicitly on several developments in the field over the past two decades. Here we provide a quick guide to the work on which our results lie: 
\subsubsection{Results used in the proof of Theorem~\ref{main}}
\begin{itemize} 

\item Theory of equivariant wave maps developed in the nineties in the works of Shatah, Tahvildar-Zadeh, \cite{STZ1}, \cite{STZ}, including the use of virial identities to prove energy decay estimates. 
\item The concentration compactness decomposition of Bahouri-G\'{e}rard, \cite{BG}.
\item Lemma $2$ in \cite{CKM} which relates energy constraints to $L^{\infty}$ estimates for equivariant wave maps. In particular, if a degree zero map has energy less than $2\E(Q)$, then the evolution, $\psi(t, r)$, is bounded uniformly below $\pi$. In addition, although only a weaker small data result such as \cite[Theorem $8.1$]{SS} is needed, we use the global existence and scattering result for degree one wave maps with energy below $\E(Q)+ \de$ for small $\de>0$, which was established in \cite[Theorem~$1$]{CKS}. 
\item H\'{e}lein's theorem on the regularity of harmonic maps which says that a weakly harmonic map is, in fact, harmonic, \cite{Hel}. 
\end{itemize}
\subsubsection{Results used in the proof of Theorem~\ref{bu cont}}
\begin{itemize}

\item The virial identity and the corresponding energy decay estimates in \cite{STZ1}. 
\item Struwe's characterization of blow-up, \cite[Theorem $2.2$]{St}, which gives $H^1_{\textrm{loc}}$ convergence along a sequence of times to $Q$ if blow-up occurs. This allows us, a priori, to identify and extract the blow-up profile $Q_{\la_n}$ along a sequence of times, $t_n$, which is absolutely crucial in our argument since we can then work with degree zero maps once $Q_{\la_n}$ has been subtracted from the degree one maps $\psi(t_n)$.   
\item The concentration compactness decomposition of Bahouri-G\'{e}rard, \cite{BG}.
\item The new results on the free radial $4d$ wave equation established by the first, second, and fourth authors in \cite{CKS}. 

 \item The decomposition of degree one maps which have energy slightly above $Q$ and the stability of this decomposition under the wave map evolution for a period of time inversely proportional to the proximity of the data  to $Q$ in the energy space established by the first author in \cite{Co}.   

\end{itemize}

As we outline in the appendix, the proofs of Theorem~\ref{main}, Theorem~\ref{rigidity},  and Theorem~\ref{bu cont} extend easily to energy critical $1$-equivariant wave maps with more general targets.  In addition, the proofs of Theorem~\ref{rigidity} and Theorem~\ref{main} apply equally well to the equivariance classe $\ell =2$ and the $4d$ equivariant Yang-Mills system after suitable modifications. One should also be able to deduce these results for the equivariance classes $\ell \ge 3$ once a suitable small data theory is established for these equations, which are similar in nature to the even dimensional energy critical semi-linear wave equations in high dimensions treated in \cite{BCLPZ} -- the difficulty here resides in the low fractional power in the nonlinearity.  

However, the method we used to prove Theorem~\ref{bu cont} only works, as developed here, for odd equivariance classes, $\ell=1, 3, 5, \dots$, and does not work  when one considers  even equivariance classes, $\ell = 2, 4, 6, \dots$, or the $4d$ equivariant Yang-Mills system  in this context.  This failure of our technique arises in the linear theory in \cite{CKS} for even dimensions, which provides favorable estimates for our proof scheme only when $\ell$ is odd. Since the $4d$ equivariant Yang-Mills system corresponds roughly to a $2$-equivarant wave map, this falls outside the scope of our current method as well. To be more specific, one can identify the linearized $\ell$-equivariant wave map equation with the $2 \ell + 2$-dimensional free radial wave equation. In the final stages of the proof of Theorem~\ref{bu cont}, and in particular Corollary~\ref{ext en est}, we require the exterior energy estimate 
\begin{align*}
 \| f\|_{\dot{H}^1}\lesssim \|S(t)(f, 0) \|_{\dot{H}^1 \times L^2( r \ge t)} \quad \textrm{for all} \quad t \ge 0
\end{align*}
where $S(t)$ is the the free radial wave evolution operator. In \cite{CKS}, this estimate is shown to be true in even dimensions $4, 8, 12, \dots$, and false in dimensions $2, 6, 10, \dots$. Without this estimate, our proof would  show compactness of the error term in our decomposition in a certain suitable Strichartz space but not in the energy space. Therefore, the full conclusion of Theorem~\ref{bu cont} remains open for the $4d$ equivariant Yang-Mills system and the $\ell$-equivariant wave map equation when $\ell$ is even. 

\subsection{Structure of the paper}
The outline of the paper is as follows. In Section~\ref{prelim} we establish the necessary preliminaries needed for the rest of the work. We include a brief review of the results of Shatah, Tahvildhar-Zadeh, \cite{STZ1} and Struwe \cite{St}. We also recall the concentration compactness decomposition of Bahouri, G\'{e}rard \cite{BG} and adapt their theory to case of equivariant wave maps to the sphere. In particular, we deduce a Pythagorean expansion of the nonlinear wave map energy of such a decomposition at a fixed time. This type of result is crucial in the concentration compactness/rigidity method of  \cite{KM06}, \cite{KM08}. We also establish an appropriate nonlinear profile decomposition. 

In Section~\ref{km} we give a brief outline of the concentration compactness/rigidity method that is used to prove Theorem~\ref{main}. In Section~\ref{rigid} we prove Theorem~\ref{rigidity}, which allows us to complete the proof of Theorem~\ref{main}. 

Finally, in Section~\ref{sect bu} we establish Theorem~\ref{bu cont}, which relies crucially on the linear theory developed in ~\cite{CKS}. 
 
\subsection{Notation and Conventions} We will interchangeably use the notation $\psi_t(t, r)$ and $\dot\psi(t, r)$ to refer to the derivative with respect to the time variable $t$ of the function $\psi(t, r)$.    

The notation $X \lesssim Y$ means that there exists a constant $C>0$ such that $X \le CY$. Similarly,  $X \simeq Y$ means that there exist constants $0<c<C$ so that $cY \le X \le CY$. 
 
\section{Preliminaries}\label{prelim}
We define the energy space  $$\HH= \{\vec U \in \dot{H}^1 \times L^2(\R^2; \Sp^2) \, \vert\, U \circ \rho = \rho \circ U, \,\,\, \forall \rho \in SO(2)\}.$$ $\HH$ is endowed with the norm 
\begin{align}\label{en norm}
\E(\vec U(t)) = \|\vec U(t) \|_{\dot{H}^1 \times L^2(\R^2; \Sp^2)}^2 = \int_{\R^2}  ( \abs{\p_t U}^2_g +  \abs{\nabla U}_g^2 ) \, dx.
\end{align}
As noted in the introduction, by our equivariance condition we can write $U(t, r, \om) = (\psi(t,r), \om)$ and the energy of a wave map becomes 
\begin{align}\label{en psi}
\E(U, \p_tU)(t)=\E(\psi, \psi_t)(t) = \int_0^{\infty} \left(\psi_t^2 + \psi_r^2 + \frac{\sin^2(\psi)}{r^2}\right) \, r\, dr = \textrm{const.} 
\end{align}
We also define the localized energy as follows: Let $r_1, r_2 \in [0, \infty)$. Then we set 
\begin{align*}
\E_{r_1}^{r_2}(\vec \psi(t)):= \int_{r_1}^{r_2}  \left(\psi_t^2 + \psi_r^2 + \frac{\sin^2(\psi)}{r^2}\right) \, r\, dr.
\end{align*}
Following Shatah and Struwe, \cite{SS}, we set
\begin{align} 
G(\psi):= \int_0^{\psi} \abs{\sin \rho} \, d\rho.
\end{align}
Observe that for any $(\psi, 0) \in \HH_n$ and for any $r_1, r_2 \in[0, \infty)$ we have 
\begin{align}\label{b R}
\abs{G(\psi(r_2))-G(\psi(r_1))} &=\abs{\int_{\psi(r_1)}^{\psi(r_2)} \abs{\sin \rho} \, d\rho} 
\\
&= \abs{\int_{r_1}^{r_2} \abs{\sin(\psi(r))} \psi_r(r) \, dr} \notag
 \le\frac{1}{2} \E_{r_1}^{r_2}(\psi, 0)
\end{align}

\subsection{Properties of degree zero wave maps}
As in \cite{CKM}, let $\al \in [0, 2\E(Q)]$ and define the   set $V(\al)\subset \HH_0$: 
 $$V(\al):=\{(\psi_0, \psi_1) \in  \HH_{0}\, \vert \, \E(\psi_0, \psi_1) < \al \}$$  
We claim that for every $\al \in [0, 2\E(Q)]$, $V(\al)$ is naturally endowed with the norm 
\begin{align}\label{HxL^2}  
\|(\psi_0, \psi_1)\|_{H \times L^2}^2 = \int_0^{\infty} \left(\psi_1^2 + (\psi_0)_r^2 + \frac{ \psi_0^2}{r^2} \right) \, r\, dr
\end{align} 
To see this, we recall the following lemma proved in \cite{CKM}.
\begin{lem}\cite[Lemma $2$]{CKM}\label{ckm lem2}
There exists an increasing function $K:[0, 2\E(Q)) \to [0, \pi)$ such that  
\begin{align}\label{pw} 
\abs{\psi(r)} \le K(\E(\vec{\psi})) < \pi \quad \forall \vec{\psi} \in \HH_0 \quad \textrm{with} \quad \E(\psi) <2\E(Q)
\end{align} 
Moreover, for each $\al \in [0, 2\E(Q)]$ we have 
\begin{align}\label{H=E}
\E(\psi_0, \psi_1) \simeq \|(\psi_0, \psi_1) \|_{H\times L^2}
\end{align} 
for every $(\psi_0, \psi_1) \in V(\al)$, with the constant above depending only on $\al$.  
\end{lem}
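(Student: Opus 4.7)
The key tool is the function $G(\psi) = \int_0^{\psi} |\sin \rho|\,d\rho$ together with the pointwise bound \eqref{b R}, namely $|G(\psi(r_2)) - G(\psi(r_1))| \le \frac{1}{2} \E_{r_1}^{r_2}(\vec\psi)$, which comes from AM-GM applied to the cross term $|\sin\psi|\,|\psi_r|$ against the energy density. Since $G$ is strictly increasing on $\R$ with $G(\pi) = 2 = \E(Q)/2$, controlling $G(\psi)$ pointwise controls $|\psi|$ as long as $|\psi|$ stays below $\pi$. The threshold $2\E(Q)$ is hard-wired into this identity.

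To prove \eqref{pw}, I would use the degree zero boundary conditions $\psi_0(0) = \psi_0(\infty) = 0$ and apply \eqref{b R} on both $[0,r]$ and $[r,\infty)$ separately. Since $G(0) = 0$, this yields the two inequalities $|G(\psi_0(r))| \le \frac{1}{2}\E_0^r(\vec\psi)$ and $|G(\psi_0(r))| \le \frac{1}{2}\E_r^\infty(\vec\psi)$. Taking their minimum and bounding by the average gives
\begin{equation*}
|G(\psi_0(r))| \le \tfrac{1}{2}\min\bigl(\E_0^r(\vec\psi),\E_r^\infty(\vec\psi)\bigr) \le \tfrac{1}{4}\E(\vec\psi).
\end{equation*}
When $\E(\vec\psi) < 2\E(Q) = 8$, the right side is strictly less than $2 = G(\pi)$, forcing $|\psi_0(r)| < \pi$ by monotonicity of $G$. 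Setting $K(\al) := G^{-1}(\al/4)$ on $[0, 2\E(Q))$ produces the required strictly increasing function with $\lim_{\al \to 2\E(Q)} K(\al) = G^{-1}(2) = \pi$.

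For the norm equivalence \eqref{H=E}, the upper bound $\E(\psi_0, \psi_1) \le \|(\psi_0, \psi_1)\|_{H \times L^2}^2$ is immediate from the elementary pointwise inequality $\sin^2\psi \le \psi^2$, valid for all real $\psi$. The reverse inequality is where the energy restriction enters: on $V(\al)$ with $\al < 2\E(Q)$, the previous step yields the uniform bound $|\psi_0(r)| \le K(\al) < \pi$. The function $\psi \mapsto |\sin\psi|/|\psi|$ extends continuously to $1$ at the origin and is strictly positive on the compact interval $[-K(\al), K(\al)]$ because this interval does not meet $\pm\pi$; hence it attains a strictly positive minimum $c(\al) > 0$ depending only on $\al$. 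This gives the pointwise bound $|\sin\psi_0(r)| \ge c(\al) |\psi_0(r)|$, and integrating against $r\,dr/r^2$ yields the missing inequality $\int_0^\infty \psi_0^2/r^2 \cdot r\,dr \le c(\al)^{-2}\int_0^\infty \sin^2(\psi_0)/r^2 \cdot r\,dr \le c(\al)^{-2}\E(\vec\psi)$; combined with the $\psi_1$ and $(\psi_0)_r$ terms this completes the equivalence.

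There is no real obstacle here: the argument is essentially a bookkeeping of the identity $G(\pi) = \E(Q)/2$, and the threshold $2\E(Q)$ appears naturally because the degree zero condition forces $\psi_0$ to traverse $G$ twice. The only subtle point is that $c(\al) \to 0$ as $\al \to 2\E(Q)$, reflecting the fact that near this threshold $|\psi_0|$ can approach $\pi$, where $\sin$ vanishes, so the equivalence constants necessarily degenerate — this is why the constant in \eqref{H=E} is allowed to depend on $\al$.
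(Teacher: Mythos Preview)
Your proof is correct and is essentially the standard argument (the paper itself does not prove this lemma but quotes it from \cite[Lemma~2]{CKM}). The key step --- splitting at $r$, applying \eqref{b R} on each half, and bounding $\min(\E_0^r,\E_r^\infty)\le\tfrac12\E(\vec\psi)$ to get $|G(\psi_0(r))|\le\tfrac14\E(\vec\psi)<G(\pi)$ --- is exactly the mechanism behind the cited result, and your treatment of the norm equivalence via the positive lower bound for $|\sin\psi|/|\psi|$ on $[-K(\alpha),K(\alpha)]$ is the natural complement.
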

 
When considering Cauchy data for \eqref{cp} in the class $\HH_{0}$ the formulation in \eqref{cp} can be modified in order to take into account the strong repulsive potential term that is  hidden in the nonlinearity: 
\begin{align*} 
\frac{\sin(2\psi)}{2r^2}  = \frac{ \psi}{r^2} + \frac{ \sin(2\psi) - 2\psi}{2r^2}  =  \frac{ \psi}{r^2} + \frac{O(\psi^3)}{r^2} 
\end{align*}
Indeed, the presence of the strong repulsive potential $\frac{1}{r^2}$ indicates that the linearized operator of \eqref{cp} has more dispersion than the $2$-dimensional wave equation. In fact, it has the same dispersion as the $4$-dimensional wave equation as the following standard reduction shows. 

Setting $\psi = r u$ we are led to this equation for $u$:
\begin{align}\label{4d}
&u_{tt} - u_{rr} -\frac{3}{r} u_r + \frac{\sin(2r u) - 2ru}{2r^3} = 0\\
&\vec u(0)= (u_0, u_1). \notag
\end{align} 
The nonlinearity above has the form $N(u, r) = u^3 Z(ru)$ where $Z$ is a smooth, bounded, even function and the linear part is the radial d'Alembertian in $\R^{1+4}$. The linearized version of \eqref{4d} is just the free radial wave equation in $\R^{1+4}$, namely 
\begin{align} \label{4d free}
&v_{tt} - v_{rr} -\frac{3}{r} v_r =0.
\end{align}
Observe that for $\vec \psi(0) \in \HH_0$ we have that  
\begin{align} \label{2-4}
 \E(\vec\psi(0)) \le \|\vec \psi\|_{H \times L^2}^2:=\int_0^{\infty}\left(\psi_t^2+  \psi_r^2 + \frac{\psi^2}{r^2}\right) \, r \, dr = \int_0^{\infty}(u_t^2+ u_r^2) \, r^3 \, dr.
\end{align} 
If, in addition, we assume that $\E(\vec\psi(0))<2\E(Q)$ then, by Lemma~\ref{ckm lem2} we also have the opposite inequality
\begin{align} \label{4d equiv}
\|\vec u(0)\|_{\dot{H}^1 \times L^2}^2 = \|\vec{\psi}(0) \|_{H\times L^2}^2  \lesssim  \E(\vec{\psi}(0)).
\end{align}
Therefore, when considering initial data  $(\psi_0, \psi_1) \in V(\al)$ for $\al \le 2\E(Q)$ the Cauchy problem \eqref{cp} is equivalent to the Cauchy problem for \eqref{4d} for radial initial data $(r \psi_0, r\psi_1)=:\vec u(0) \in \dot{H}^1 \times L^2 (\R^4)$.  
 
The following exterior energy estimates for the $4d$ free radial wave equation established by the first, second, and fourth authors  in \cite{CKS} will play a key role in our analysis: 
\begin{prop}\cite[Corollary $5$]{CKS}\label{lin ext estimate} Let $S(t)$ denote the free evolution operator for the $4d$ radial wave equation, \eqref{4d free}. There exists $\al_0>0$ such that for all $t  \ge 0$ we have
\begin{align} \label{4d lin ext est}
\|S(t)(f, 0)\|_{\dot{H}^1 \times L^2( r \ge t)} \ge  \al_0 \|f\|_{\dot{H}^1}
\end{align}
for all radial data $(f, 0) \in \dot{H}^1 \times L^2$. 
\end{prop}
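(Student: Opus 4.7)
The plan is to pass to the radial Hankel representation and exploit the decomposition of $S(t)(f,0)$ into outgoing and incoming half-waves, reducing the claim to a Plancherel-type identity on the half-line. Concretely, for radial $f \in \dot{H}^1(\R^4)$, I would write the Hankel profile
\[
f(r) = c \int_0^\infty F(\rho)\, \frac{J_1(r\rho)}{r\rho}\, \rho^3 \, d\rho, \qquad \|f\|_{\dot{H}^1(\R^4)}^2 \simeq \int_0^\infty |F(\rho)|^2 \, \rho^5 \, d\rho,
\]
and express the free evolution as $S(t)(f,0)(r) = c \int_0^\infty F(\rho) \cos(t\rho)\, (r\rho)^{-1} J_1(r\rho)\, \rho^3 \, d\rho$. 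Splitting $\cos(t\rho) = \frac{1}{2}(e^{it\rho}+e^{-it\rho})$ and $J_1 = \frac{1}{2}(H_1^{(1)}+H_1^{(2)})$ yields four half-wave pieces. Using the asymptotics $H_1^{(1,2)}(z) \sim \sqrt{2/(\pi z)}\, e^{\pm i(z-3\pi/4)}$ for large $z$, the two \emph{outgoing} components (carrying phases $e^{\pm i(r-t)\rho}$) behave on $\{r \gg 1\}$ like $r^{-3/2}$ times an inverse Fourier transform of a weighted amplitude evaluated at $s:=r-t \ge 0$, while the two \emph{incoming} components oscillate with phase $e^{\pm i(r+t)\rho}$.

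The core step is to establish the model lower bound
\[
\|u_{\mathrm{out}}(t,\cdot)\|_{\dot{H}^1 \times L^2(r \ge t)}^2 \;\ge\; c_0\, \|f\|_{\dot{H}^1(\R^4)}^2,
\]
which, after the change of variable $s = r-t$ and absorption of the factors $r^{3/2}$, follows from the Plancherel identity on the half-line $\{s \ge 0\}$ applied to the reconstructed amplitude $F$. I would then absorb the two incoming components and the outgoing/incoming cross-terms by an orthogonality argument in the dual Fourier variable, since the four families of phases $e^{\pm i(r \pm t)\rho}$ decouple after projection and only the outgoing contribution survives in the limit $t \to \infty$, in a quantitative sense.

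The main obstacle is the low-frequency/small-argument regime $r\rho \lesssim 1$, where the large-argument Hankel asymptotics break down and must be replaced by the power series expansion of $J_1$ near the origin. I would handle this by a Littlewood--Paley decomposition of $F$ into dyadic frequency bands $\rho \sim 2^j$, combined with the scaling invariance of the wave equation to reduce each band to the case $t \sim 1$, and then estimate the contribution of each band directly using the explicit behavior of $J_1$. It is precisely at this point that the parity of the ambient dimension enters and that the hypothesis $g \equiv 0$ is essential: with both data nontrivial, the outgoing and incoming pieces can be arranged to cancel to leading order in even dimensions and no such lower bound holds, whereas with only the position datum the two pieces carry equal amplitudes and cannot fully annihilate, which yields the uniform lower bound $\alpha_0 > 0$ valid for all $t \ge 0$.
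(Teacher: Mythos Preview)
The paper does not contain a proof of this proposition: it is quoted verbatim from the companion paper \cite{CKS} and used here as a black box. So there is no proof in this paper to compare against; your proposal is an independent attempt at the result.

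Your outline captures the correct architecture of the \cite{CKS} argument---Hankel transform, splitting into outgoing and incoming half-waves via $H_1^{(1)}$ and $H_1^{(2)}$, and the eventual reduction to a one-dimensional Plancherel computation in the variable $s=r-t$. You also correctly identify that the hypothesis $g\equiv 0$ (vanishing velocity) is what forces equal amplitudes on the two half-waves and prevents cancellation, and that this is where the parity of $d$ enters.

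The genuine gap in your sketch is the step where you ``absorb the two incoming components and the cross-terms by an orthogonality argument.'' At any finite $t$, the incoming half-wave is \emph{not} small on $\{r\ge t\}$, and the phases $e^{\pm i(r\pm t)\rho}$ do not decouple in any obvious sense on that region; saying that decoupling holds ``in the limit $t\to\infty$, in a quantitative sense'' is exactly the content of the proposition and cannot be invoked without proof. In \cite{CKS} this step is carried out by an explicit algebraic computation using Bessel-function identities specific to the order $\nu=1$ (equivalently $d=4$), which produces an exact formula for the exterior energy and shows it is bounded below by a fixed fraction of $\|f\|_{\dot H^1}^2$ for every $t\ge 0$. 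The same computation in $d=2$ or $d=6$ yields a different Bessel order for which the corresponding identity fails, and one can build counterexamples. Your Littlewood--Paley reduction for the small-argument regime is plausible as a technical device, but it does not by itself supply the missing lower bound; you would still need to exhibit, band by band, the positive-definiteness that in \cite{CKS} comes from the explicit Bessel calculation.
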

The point here is that this same result applies to the linearized version of the wave map equation: 
\begin{align} \label{2d lin}
\fy_{tt} -\fy_{rr} -\frac{1}{r} \fy_r + \frac{1}{r^2} \fy = 0
\end{align} 
with initial data $\vec \fy(0) = ( \fy_0, 0)$. Indeed we have the following: 
\begin{cor}\label{linear ext en} Let $W(t)$ denote the linear evolution operator associated to \eqref{2d lin}. Then there exists $\be_0>0$ such that for all $t \ge 0$ we have 
\begin{align} \label{2d lin ext est}
\|W(t)(\fy_0, 0)\|_{H \times L^2( r \ge t)} \ge  \be_0 \|\fy_0\|_{H}
\end{align} 
for all radial initial data $(\fy_0, 0) \in H \times L^2$. 
\end{cor}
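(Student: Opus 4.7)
The plan is to reduce Corollary~\ref{linear ext en} to Proposition~\ref{lin ext estimate} via exactly the substitution $\psi = ru$ used earlier in the paper to relate the $2d$ linearized wave map equation to the free radial wave equation in $\R^{1+4}$. Concretely, if $\fy$ solves~\eqref{2d lin} with data $(\fy_0, 0)$, I would set $v(t, r) := \fy(t, r)/r$; a direct calculation shows $v$ solves the $4d$ free radial wave equation~\eqref{4d free} with data $(v_0, 0) = (\fy_0/r,\,0)$, and the computation underlying~\eqref{2-4}--\eqref{4d equiv} gives the exact equality $\|\fy_0\|_H = \|v_0\|_{\dot H^1(\R^4)}$.

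The heart of the argument is to compare the \emph{exterior-cone} norms of $\fy$ and $v$. Writing $\fy = rv$ and expanding yields
\begin{align*}
(\fy_r^2 + r^{-2}\fy^2)\,r = r^3 v_r^2 + \p_r(r^2 v^2), \qquad \fy_t^2\, r = r^3 v_t^2,
\end{align*}
so integration over $r \in (t, \I)$, together with the vanishing $r^2 v(t, r)^2 \to 0$ as $r \to \I$ (which follows by density from smooth compactly supported data), produces the key identity
\begin{align*}
\|W(t)(\fy_0, 0)\|_{H\times L^2(r \ge t)}^2 = \|S(t)(v_0, 0)\|_{\dot H^1 \times L^2(r \ge t)}^2 - t^2 v(t, t)^2.
\end{align*}
Thus the $2d$ exterior energy agrees with the $4d$ exterior energy up to the boundary correction $t^2 v(t, t)^2 = \fy(t, t)^2$ on the light cone.

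The main technical point, and the one I expect to be the chief obstacle, is to show that this boundary correction absorbs only a fixed fraction of the $4d$ exterior energy. Using $v(t, r) \to 0$ as $r \to \I$ and writing $v(t, t) = -\int_t^\I v_r(t, r)\,dr$, Cauchy--Schwarz with the weight $r^3$ gives
\begin{align*}
|v(t, t)|^2 \le \Bigl(\int_t^\I r^{-3}\,dr\Bigr)\Bigl(\int_t^\I r^3 v_r^2\, dr\Bigr) = \frac{1}{2t^2}\int_t^\I r^3 v_r^2\, dr,
\end{align*}
and hence $t^2 v(t, t)^2 \le \tfrac12 \|S(t)(v_0, 0)\|_{\dot H^1 \times L^2(r \ge t)}^2$. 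Substituting into the identity above and applying Proposition~\ref{lin ext estimate} then yields
\begin{align*}
\|W(t)(\fy_0, 0)\|_{H\times L^2(r \ge t)}^2 \ge \tfrac12 \|S(t)(v_0, 0)\|_{\dot H^1 \times L^2(r \ge t)}^2 \ge \tfrac{\al_0^2}{2}\|v_0\|_{\dot H^1}^2 = \tfrac{\al_0^2}{2}\|\fy_0\|_H^2,
\end{align*}
so the corollary holds with $\be_0 = \al_0/\sqrt 2$. The remaining verifications, namely the integration by parts and the vanishing of boundary contributions at $r = 0$ and $r = \I$, are routine and are justified by density from smooth compactly supported data.
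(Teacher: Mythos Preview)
Your proof is correct and follows essentially the same route as the paper: both reduce to Proposition~\ref{lin ext estimate} via the substitution $\fy = rv$ and both arrive at the same constant $\be_0 = \al_0/\sqrt{2}$. The only difference is in how the exterior norms are compared: the paper uses a rough two-sided comparability $\|\fy(t)\|_{H(r\ge A)}^2 \simeq \|v(t)\|_{\dot H^1(r\ge A)}^2$ (within a factor~$2$), whereas you derive the exact identity with boundary term and then control that term by Cauchy--Schwarz---a slightly sharper and more explicit variant of the same argument.
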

\begin{proof} Let $\vec \fy(t) = W(t)( \fy_0, 0)$ be the linear evolution of the smooth radial data $(\fy_0, 0) \in H \times L^2$.  Define $\vec v(t)$ by $\fy(t, r) = r v(t, r)$. Then $\vec v(t) \in \dot{H}^1 \times L^2 (\R^4) $ and is a solution to \eqref{4d free} with initial data $(v_0, 0) = (\frac{\fy_0}{r}, 0)$. Next observe that for all $A \ge 0$ we have
\begin{align*} 
\|v(t)\|_{\dot{H}^1(r \ge A)}^2 = \int_A^{\infty} v_r^2(t, r) \, r^3\, dr &= \int_A^{\infty} \left( \frac{\fy_r(t, r)}{r} - \frac{\fy(t, r)}{r^2} \right)^2 \, r^3 \, dr\\
&\le 2 \| \fy(t)\|_{H(r\ge A)}^2
\end{align*}
Similarly we can show that $\|\fy(t) \|_{H (r \ge A)}^2 \le  2\|v(t)\|_{ \dot{H}^1( r \ge A)}^2$. Therefore using \eqref{4d lin ext est} on $v(t)$ we obtain 
\begin{align*} 
\|\vec \fy(t) \|_{H \times L^2(r \ge t)}^2  \ge \frac{1}{2} \|v(t)\|_{\dot{H}^1(r \ge t)}^2 \ge \frac{\al_0^2}{2} \|v_0\|_{\dot{H}^1}^2 =  \frac{\al_0^2}{2} \|\fy_0\|_{ H}^2
\end{align*}
which proves \eqref{2d lin ext est} with $\be_0 = \frac{\al_0}{\sqrt{2}}$. 
\end{proof}

 \subsection{Properties of degree one wave maps}
 
Now, suppose $\vec \psi=(\psi_0, \psi_1) \in \HH_1$. This means that  $\psi(0)=0$ and $\psi(\infty) = \pi$. The $H \times L^2$ norm of $\vec \psi$ is no longer finite, but we do have the following comparison: 
\begin{lem}
Let $\vec \psi =(\psi_0, 0) \in \HH_1$ be smooth and let $r_0 \in [0, \infty)$. Then there exists $\al>0$ such that 
\begin{itemize} 
\item[($a$)] If $\E_0^{r_0}( \vec \psi) <\al$, then 
\begin{align}\label{HH1 0}
\|\psi\|_{H(r \le r_0)}^2 \lesssim  \E_0^{r_0}(\vec \psi).
\end{align}
\item[($b$)] If $\E_{r_0}^{\infty}( \vec \psi) <\al$, then 
\begin{align}\label{HH1 inf}
\|\psi(\cdot)- \pi\|_{H(r \ge r_0)}^2 \lesssim  \E_{r_0}^{\infty}(\vec \psi).
\end{align}
\end{itemize}
\end{lem}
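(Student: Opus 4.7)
The strategy is identical for parts $(a)$ and $(b)$: use the estimate \eqref{b R} relating differences of $G(\psi)$ to the localized energy in order to pin $\psi$ pointwise inside an interval where $\sin^2(\psi)$ is equivalent to $\psi^2$ (resp. $(\psi-\pi)^2$), and then absorb the angular term of the $H$-norm into the energy.

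For part $(a)$, note that $G : \R \to \R$ is a $C^1$ homeomorphism with $G(0)=0$, $G(\pm\pi)=\pm 2$, since $G'(\psi)=|\sin\psi|$ vanishes only at multiples of $\pi$. Using \eqref{b R} with $r_1=0$ and the boundary condition $\psi(0)=0$, I get
\begin{equation*}
|G(\psi(r))| = |G(\psi(r))-G(\psi(0))| \le \tfrac{1}{2}\E_0^{r}(\vec\psi) \le \tfrac{\alpha}{2}, \qquad 0\le r \le r_0.
\end{equation*}
I then fix $\alpha>0$ so small that the preimage $G^{-1}([-\alpha/2,\alpha/2])$ is contained in an interval $[-c,c]$ with, say, $c=\pi/2$. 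On such an interval, $\sin^2(\psi)\ge C\psi^2$ for a universal constant $C>0$, so
\begin{equation*}
\int_0^{r_0} \frac{\psi^2}{r^2}\,r\,dr \le C^{-1}\int_0^{r_0}\frac{\sin^2(\psi)}{r^2}\,r\,dr \le C^{-1}\E_0^{r_0}(\vec\psi),
\end{equation*}
and combining with the trivial control $\int_0^{r_0} \psi_r^2\,r\,dr\le \E_0^{r_0}(\vec\psi)$ gives \eqref{HH1 0}.

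Part $(b)$ is handled by the same argument translated by $\pi$. Since $\psi(\infty)=\pi$ and $G(\pi)=2$, \eqref{b R} applied with $r_2=\infty$ yields
\begin{equation*}
|G(\psi(r))-2| \le \tfrac{1}{2}\E_r^{\infty}(\vec\psi) \le \tfrac{\alpha}{2}, \qquad r \ge r_0,
\end{equation*}
and choosing $\alpha$ small enough (uniformly, using the continuity of $G^{-1}$ at $2$) forces $\psi(r)\in[\pi-c,\pi+c]$ for a fixed $c<\pi$. Writing $\tilde\psi(r):=\psi(r)-\pi$, one has $\tilde\psi_r=\psi_r$ and $\sin^2(\psi)=\sin^2(\tilde\psi)\ge C\tilde\psi^2$ on $[-c,c]$, so the same estimate
\begin{equation*}
\int_{r_0}^{\infty}\frac{(\psi-\pi)^2}{r^2}\,r\,dr \lesssim \int_{r_0}^{\infty}\frac{\sin^2(\psi)}{r^2}\,r\,dr \le \E_{r_0}^{\infty}(\vec\psi)
\end{equation*}
yields \eqref{HH1 inf} once combined with the kinetic term bound.

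No serious obstacle is anticipated; the only subtle point is that one must select $\alpha$ small enough in a quantitative way so that the preimage under $G$ of $[-\alpha/2,\alpha/2]$ (resp.\ $[2-\alpha/2,2+\alpha/2]$) lies strictly inside $(-\pi,\pi)$ (resp.\ $(0,2\pi)$), which is precisely where $\sin^2$ controls the square of the distance from the appropriate multiple of $\pi$. This threshold $\alpha$ can be taken universal, independent of $r_0$.
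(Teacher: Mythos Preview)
Your proof is correct and follows essentially the same route as the paper's: use \eqref{b R} together with the boundary values $\psi(0)=0$, $\psi(\infty)=\pi$ to trap $\psi(r)$ near $0$ (resp.\ $\pi$), then exploit $\sin^2(\psi)\gtrsim \psi^2$ (resp.\ $(\psi-\pi)^2$) on that range to dominate the Hardy term by the energy. The paper writes out only part~(b) and declares part~(a) analogous, whereas you spell out both; otherwise the arguments are the same.
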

\begin{proof}
We prove only the second estimate as the proof of the first is similar. Since $G(\pi)=2$, by \eqref{b R} we have for all $r \in[r_0, \infty)$ that
\begin{align*}
\abs{G(\psi(r)) -2} \le\frac{1}{2} \E_r^{\infty}(\psi, 0) < \frac{\al}{2}.
\end{align*}
Since $G$ is continuous and increasing this means that $\psi(r) \in [\pi- \e(\al), \pi+ \e(\al)]$ where $\e(\rho) \to 0$ as $\rho \to 0$. Hence for $\al$ small enough we have the estimate $\sin^2( \psi(r)) \ge \frac{1}{2} \abs{ \psi(r) - \pi}^2$ for all $r \in [r_0, \infty]$ and the estimate \eqref{HH1 inf} follows by integrating this.
\end{proof}

Let $Q(r):= 2 \arctan(r)$. Note that $(Q, 0)\in \HH_1$ is the unique (up to scaling) time-independent,  solution to \eqref{cp} in $\HH_1$. Indeed, $Q$ has minimal energy in $\HH_1$ and $\E(Q, 0)= 4$. 
One way to see this is to note that $Q$ satisfies $r Q_r(r) = \sin(Q)$ and hence for any $0\le a\le  b < \infty$ we have 
\begin{align} 
G(Q(b))-G(Q(a)) = \int_{a}^{b} \abs{\sin(Q(r))} Q_r(r) \, dr = \frac{1}{2} \E_a^b(Q, 0)
\end{align}
Letting $a \to 0$ and $b \to \infty$ we obtain $\E(Q, 0) = 2G(\pi) = 4$. To see that  $\E(Q, 0)$ is indeed minimal in $\HH_1$, observe that we can factor the energy as follows:
\begin{align*} 
\E(\psi, \psi_t)&=  \int_0^{\infty} \psi_t^2\, r\, dr + \int_0^{\infty} \left(\psi_{r}  - \frac{\sin(\psi)}{r}\right)^2 \, r\, dr + 2\int_0^{\infty} \sin(\psi) \psi_r \, dr\\
&= \int_0^{\infty} \psi_t^2\, r\, dr + \int_0^{\infty} \left(\psi_{r}  - \frac{\sin(\psi)}{r}\right)^2 \, r\, dr + 2\int_{\psi(0)}^{\psi(\infty)} \sin(\rho)  \, d\rho
\end{align*} 
Hence, in $\HH_1$ we have 
\begin{align} \label{var char}
\E(\psi, \psi_t) \ge  \int_0^{\infty} \psi_t^2\, r\, dr + 4 =   \int_0^{\infty} \psi_t^2\, r\, dr + \E(Q)
\end{align}

We shall also require a decomposition from~\cite{Co} which amounts to the coercivity of the energy near to ground state~$Q$,  up to the scaling symmetry. 

\begin{lem}\cite[Proposition $2.3$]{Co}\label{decomposition} There exists a function $\de: (0, \infty) \to (0, \infty)$ such that $\de(\al) \to 0$ as  $\al \to 0$ and such that the following holds: Let $\vec \psi = ( \psi, 0) \in \HH_1$. Define 
\begin{align*}
\al :=\E(\vec \psi)- \E(Q)>0
\end{align*}
Then there exists $\la \in(0, \infty)$ such that 
\begin{align*}
\|\psi - Q( \cdot/ \la)\|_{H} \le \de(\al)
\end{align*}
Note that one can choose  $\la>0$ so that $\E_0^{\la}(\vec \psi) = \E_0^1(Q)= \E(Q)/2$. 
\end{lem}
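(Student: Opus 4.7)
My plan is to combine the exact variational factorization of the static energy with a compactness-uniqueness argument that exploits the fact that, up to scaling, $Q$ is the only nontrivial $1$-equivariant harmonic map in $\HH_1$. Applied to $\vec\psi=(\psi,0)\in\HH_1$, the factorization of $\E$ shown just above the statement collapses to
\[
\E(\vec\psi)=\int_0^\infty\!\left(\psi_r-\frac{\sin\psi}{r}\right)^{\!2}\! r\,dr + 2\!\int_0^\pi\!\sin\rho\,d\rho=\int_0^\infty\!\left(\psi_r-\frac{\sin\psi}{r}\right)^{\!2}\! r\,dr + \E(Q),
\]
so the excess energy $\al$ controls the $L^2(r\,dr)$-distance from the harmonic-map ODE:
\[
I(\psi):=\int_0^\infty\!\left(\psi_r-\frac{\sin\psi}{r}\right)^{\!2}\! r\,dr=\al.
\]
Since $r\mapsto\E_0^r(\vec\psi)$ is continuous, nondecreasing, vanishes at $0$, and equals $\E(Q)+\al>\E(Q)/2$ at $\infty$, there is a $\la>0$ with $\E_0^\la(\vec\psi)=\E(Q)/2$; by the scaling invariance of all quantities involved I rescale to $\la=1$ and aim to prove $\|\psi-Q\|_H\le\de(\al)$.

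The main step is a contradiction-compactness argument. If the claim failed there would exist $\e_0>0$ and a sequence $\psi_n\in\HH_1$ with $\al_n\to 0$, $\E_0^1(\vec\psi_n)=\E(Q)/2$, and $\|\psi_n-Q\|_H\ge\e_0$. The uniform total-energy bound together with the pointwise control \eqref{b R} of $G(\psi_n)$ makes $\{\psi_n\}$ equicontinuous on compact subsets of $(0,\infty)$ and bounded in $H^1_{\mathrm{loc}}(r\,dr)$, so after extraction $\psi_n\to\psi_\infty$ weakly in $H^1_{\mathrm{loc}}$ and uniformly on compacta. The bound $I(\psi_n)\to 0$ then forces $\psi_{\infty,r}=\sin\psi_\infty/r$ in the distributional sense. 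Applying the factorization on $[0,1]$ gives $2G(\psi_n(1))=\E_0^1(\vec\psi_n)-\int_0^1(\psi_{n,r}-\sin\psi_n/s)^2\, s\,ds\to\E(Q)/2=2$, hence $\psi_n(1)\to\pi/2=Q(1)$. This pins the limit: $\psi_\infty$ is the unique nonconstant solution of the ODE with $\psi_\infty(1)=\pi/2$, namely $\psi_\infty=Q$.

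The delicate final step is to upgrade this local/weak convergence to convergence in $H$, which contradicts $\|\psi_n-Q\|_H\ge\e_0$. I would split $(0,\infty)$ into three regions. On $[\rho,1/\rho]$, uniform convergence together with $I(\psi_n)\to 0$ upgrades to strong $H^1(r\,dr)$-convergence of $\psi_n$ to $Q$. Near $r=0$, the factorization identity combined with $2G(\psi_n(r))\le\E_0^r(\vec\psi_n)$ shows $\psi_n(r)\to 0$ uniformly in $n$ as $r\to 0$, and then the small-energy comparison \eqref{HH1 0} bounds $\|\psi_n\|_{H(r\le\rho)}$ by $\E_0^\rho(\vec\psi_n)$, which is small for small $\rho$ uniformly in $n$. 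Near $r=\infty$, the $\HH_1$ topology forces $\psi_n\to\pi$ and the tail estimate \eqref{HH1 inf} controls $\|\psi_n-\pi\|_{H(r\ge 1/\rho)}$ by $\E_{1/\rho}^\infty(\vec\psi_n)$, which is small because the global energies converge to $\E(Q)$ and the bulk is captured in $[\rho,1/\rho]$. The main obstacle is precisely this endpoint analysis: ruling out loss of energy at $0$ and $\infty$ requires the rigidity of the exact normalization $\E_0^1(\vec\psi_n)=\E_0^1(Q)$, which prevents any $Q(\cdot/\mu)$-profile from escaping to small or large scales and is what ultimately closes the argument.
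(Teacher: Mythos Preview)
The paper does not supply its own proof of this lemma; it is simply quoted from \cite[Proposition~2.3]{Co}, so there is nothing in the present paper to compare your argument against directly.

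That said, your outline is the standard and correct approach, and it is essentially the argument in \cite{Co}: the Bogomol'nyi factorization reduces the excess energy $\al$ to the $L^2(r\,dr)$ defect $\int_0^\infty(\psi_r-\sin\psi/r)^2\,r\,dr$, the normalization $\E_0^\la(\vec\psi)=\E(Q)/2$ fixes the scale, and compactness--uniqueness (using that $Q$ is the only nonconstant solution of $rQ_r=\sin Q$) gives the result. Your tail analysis via \eqref{HH1 0}, \eqref{HH1 inf}, and the convergence of total energies is the right way to rule out concentration at $0$ or $\infty$.

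One small point to tighten: when you deduce $\psi_n(1)\to\pi/2$, the localized factorization actually gives $2(1-\cos\psi_n(1))\to\E(Q)/2=2$, hence only $\cos\psi_n(1)\to 0$; this equals $2G(\psi_n(1))$ only once you know $\psi_n(1)\in[0,\pi]$. To pin down the branch $\psi_n(1)\to\pi/2$ (rather than $-\pi/2$ or $3\pi/2$), use \eqref{b R} from both endpoints: $|G(\psi_n(1))|\le\frac12\E_0^1(\vec\psi_n)=1$ and $|G(\psi_n(1))-2|\le\frac12\E_1^\infty(\vec\psi_n)=1+\al_n/2$, which for small $\al_n$ forces $G(\psi_n(1))$ near~$1$ and hence $\psi_n(1)$ near~$\pi/2$. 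With that clarification the argument is complete.
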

We will also need the following consequence of Lemma~\ref{decomposition} that is also proved in~\cite{Co}. 
 \begin{cor}\cite[Corollary $2.4$]{Co} \label{Cote} Let  $\rho_n, \sigma_n \to \infty$ be two sequences such that  $\rho_n \ll \sigma_n$. Let $\vec \psi_n(t) \in \HH_1$ be a sequence of wave maps defined on time intervals $[0, \rho_n]$ and suppose that 
 \begin{align*}
   \|\vec \psi_n(0)- (Q, 0)\|_{H \times L^2}  \le   \frac{1}{\sigma_n}.
   \end{align*}
   Then 
    \begin{align*} 
  \sup_{t \in[0, \rho_n]}\|\vec \psi_n( t)- (Q, 0)\|_{H \times L^2}  = o_n(1) \quad \textrm{as} \quad n  \to \infty
   \end{align*}
 \end{cor}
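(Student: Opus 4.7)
The plan is to combine energy conservation with the decomposition from Lemma~\ref{decomposition} and a continuity/bootstrap argument on the modulation scale. Set $\eta_n := \|\vec\psi_n(0)-(Q,0)\|_{H\times L^2} \le 1/\sigma_n$. Because $(Q,0)$ is a non-degenerate (modulo scaling) critical point of $\E$ on $\HH_1$, conservation of energy gives $\alpha_n := \E(\vec\psi_n(t)) - \E(Q) = O(\eta_n^2) = O(\sigma_n^{-2})$ for every $t \in [0,\rho_n]$, and the factorization leading to \eqref{var char} yields $\|(\psi_n)_t(t)\|_{L^2}^2 \le \alpha_n$ throughout.

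At each $t \in [0,\rho_n]$, Lemma~\ref{decomposition} applied to $(\psi_n(t),0)$ produces a continuous function $\lambda_n : [0,\rho_n] \to (0,\infty)$, characterized uniquely by $\E_0^{\lambda_n(t)}(\vec\psi_n(t)) = \E(Q)/2$, satisfying
\[
\|\psi_n(t)-Q(\cdot/\lambda_n(t))\|_H \le \delta(\alpha_n) \to 0.
\]
The initial hypothesis forces $\lambda_n(0) \to 1$. Once I show $\lambda_n(t) \to 1$ uniformly on $[0,\rho_n]$, the triangle inequality gives $\|\vec\psi_n(t)-(Q,0)\|_{H\times L^2} \lesssim |\lambda_n(t)-1|+\delta(\alpha_n)+\sqrt{\alpha_n}$, which tends to $0$ uniformly and proves the claim.

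The main step is the control of $\lambda_n$. I would run a bootstrap: fix small $\kappa > 0$ and let $T_n^* \in (0,\rho_n]$ denote the first time at which $|\lambda_n(t)-1|=\kappa$, then show this cannot occur for $n$ large. Differentiating the normalization $\E_0^{\lambda_n(t)}(\vec\psi_n(t)) = \E(Q)/2$ in $t$ and using the local energy identity for wave maps
\[
\partial_t\bigl[(\psi_n)_t^2 + (\psi_n)_r^2 + \sin^2\psi_n/r^2\bigr]\,r = 2\,\partial_r\bigl[(\psi_n)_t(\psi_n)_r\,r\bigr]
\]
produces the modulation equation $\dot\lambda_n(t)\,e(t,\lambda_n(t))\lambda_n(t) = -2(\psi_n)_t(\psi_n)_r\lambda_n\big|_{r=\lambda_n(t)}$, with $e$ the energy density (which is bounded below near the bulk of $Q$, so the coefficient is harmless). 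After replacing the sharp cutoff $\E_0^{\lambda}$ by a smoothed localization adapted to scale $\lambda_n(t)$ (converting the pointwise trace at $r=\lambda_n(t)$ into an honest spacetime integral), Cauchy--Schwarz in $t$ combined with $\|(\psi_n)_t\|_{L^2}^2 \le \alpha_n$ and the uniform bound on $\|(\psi_n)_r\|_{L^2}$ yields $\int_0^{T_n^*}|\dot\lambda_n(t)|\,dt \lesssim \sqrt{\alpha_n}\,T_n^* \lesssim \rho_n/\sigma_n \to 0$, contradicting $|\lambda_n(T_n^*)-1|=\kappa$.

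The hard part is precisely this bound on $\dot\lambda_n$: the naive pointwise energy flux $(\psi_n)_t(\psi_n)_r$ at the moving radius $r=\lambda_n(t)$ is not directly controllable by $\sqrt{\alpha_n}$. Mollification of the sharp cutoff (or averaging $\lambda$ over a suitable annulus at the initial scale) replaces the trace by a spacetime integral amenable to Cauchy--Schwarz. This is where the hypothesis $\rho_n \ll \sigma_n$ enters decisively: the drift of $\lambda_n$ proceeds at rate $\sqrt{\alpha_n} \sim 1/\sigma_n$, producing a total linear-in-time displacement of order $\rho_n/\sigma_n$, which is small precisely on the given time scale and no longer.
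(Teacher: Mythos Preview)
The paper does not supply its own proof of this corollary; it simply cites \cite[Corollary~2.4]{Co} and the remark following it. Your reconstruction is the standard modulation-theoretic argument and is essentially what that reference does: use that $Q$ is a critical point of $\E$ to get $\alpha_n=\E(\vec\psi_n)-\E(Q)=O(\sigma_n^{-2})$, use the Bogomol'nyi factorization \eqref{var char} to get $\|\dot\psi_n(t)\|_{L^2}^2\le\alpha_n$, invoke Lemma~\ref{decomposition} to produce the modulation parameter $\lambda_n(t)$, and then control its drift by $|\dot\lambda_n|\lesssim\|\dot\psi_n\|_{L^2}\lesssim\sigma_n^{-1}$, so that the total variation over $[0,\rho_n]$ is $O(\rho_n/\sigma_n)=o(1)$.

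Two minor comments on the write-up. First, after mollifying the sharp cutoff the bound on $\dot\lambda_n$ comes from Cauchy--Schwarz in the \emph{spatial} variable (over the annulus where the smooth cutoff varies), not in $t$; one gets $|\dot\lambda_n(t)|\lesssim\|\dot\psi_n(t)\|_{L^2}$ pointwise in $t$, provided the coefficient $\int e(t,r)\,r^2\chi'(r/\lambda_n)\,dr/\lambda_n^2$ is bounded away from zero, which holds inside the bootstrap since the energy density of $Q$ is nondegenerate at $r\sim 1$. Second, the step ``$\lambda_n(0)\to 1$'' and the Lipschitz bound $\|Q(\cdot/\lambda)-Q\|_H\lesssim|\lambda-1|$ both use that $rQ_r=\sin Q\in H$, which is worth stating explicitly. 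With those details filled in, the argument is complete and matches the cited source.
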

 \begin{rem} 
 We refer the reader to the proof of \cite[Corollary $2.4$]{Co} and the remark immediately following it for a detailed proof of Corollary~\ref{Cote}. We have phrased the above result in terms of sequences of wave maps because this is the form in which it will be applied in Section~\ref{sect bu}.  Also, we note that in~\cite{Co} the notation $\|\cdot\|_H^2$ is used to denote the nonlinear energy, $\E(\cdot)$, of a map, whereas here $\|\cdot\|_H$ is defined as in \eqref{HxL^2}.  Both Lemma~\ref{decomposition} and Corollary~\ref{Cote} hold with either definition. 
 \end{rem}
 \subsection{Properties of blow-up solutions}
 Now let $\vec \psi(t) \in \HH$ be a wave map with maximal interval of existence $I_{\max}( \vec \psi) =(T_-(\vec \psi), T_+( \vec \psi))  \neq~\R$.
 By translating in time, we can assume that $T_+(\vec \psi)=1$.  We recall a few facts that we will need in our argument. From the work  of Shatah and Tahvildar-Zadeh \cite{STZ1}, Jia and and second author~\cite{JK-AJM}, and from Appendix B to this paper, we have the following results:
\begin{lem}\cite[Lemma $2.2$]{STZ1}\label{ext en decay}
Let $\vec \psi(t) \in \HH$ on the interval $[0, 1)$. For any $\la \in(0, 1]$ we have 
\begin{align}
\E_{\la(1-t)}^{1-t} (\vec \psi(t)) = \int_{\la(1-t)}^{1-t} \left(\psi_t^2(t, r) + \psi_r^2(t, r) + \frac{\sin^2(\psi(t, r))}{r^2} \right) \, r\, dr \to 0 \quad \textrm{as} \quad t \to 1
\end{align}
\end{lem}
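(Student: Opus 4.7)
This is the classical Shatah--Tahvildar-Zadeh non-concentration estimate at the outer edge of the backward light cone centered at the blow-up vertex $(s, r) = (1, 0)$. Let $E_{\mathrm{cone}}(s) := \E_0^{1-s}(\vec\psi(s))$. Integrating the conservation law $\partial^\mu T_{\mu 0} = 0$ for the equivariant stress--energy tensor on the truncated backward cone $K_{t_0}^{t_1} := \{(s,r) : t_0 \le s \le t_1,\, 0 \le r \le 1-s\}$ gives the flux identity
\begin{align*}
E_{\mathrm{cone}}(t_0) - E_{\mathrm{cone}}(t_1) \,=\, \int_{t_0}^{t_1}\! r\Big[\tfrac12(\psi_s-\psi_r)^2 + \tfrac{\sin^2\psi}{2 r^2}\Big]\Big|_{r=1-s}\, ds \,\ge\, 0.
\end{align*}
So $E_{\mathrm{cone}}$ is nonincreasing and has a limit $E_\ast \ge 0$, and the lateral flux vanishes as $t_0, t_1 \to 1^-$.

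The basic flux identity alone is not enough, as it still allows $E_\ast$ to be distributed throughout the shrinking cone. To localize the residual energy at the tip, contract $T_{\mu\nu}$ with the dilation-type vector field $X = (1-s)\partial_s + r\partial_r$, which vanishes at the vertex and is tangent to the lateral null boundary. Integrating $\partial^\mu(X^\nu T_{\mu\nu}) = \tfrac12\pi^{\mu\nu}T_{\mu\nu}$ over $K_{t_0}^{t_1}$, and using that the angular potential $\sin^2\psi/r^2$ contributes nonnegatively, yields a Morawetz-type space-time bound of the form
\begin{align*}
\int_{t_0}^{t_1}\!\!\int_0^{1-s}\!\!\Big[\tfrac{\sin^2\psi}{r^2} + (\psi_s+\psi_r)^2\Big]\, r\, dr\, ds \,\lesssim\, (1-t_0)\,\E(\vec\psi) + \mathrm{Flux}(t_0, 1),
\end{align*}
whose right-hand side tends to $0$ as $t_0 \to 1^-$.

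Fix $\lambda \in (0,1]$. Using the vanishing of the space-time bound above together with the basic flux identity applied on the trapezoidal region $R_t := \{(s,r) : t \le s \le 1-\lambda(1-t),\, 1-\lambda(1-t)-s \le r \le 1-s\}$, which controls the annular energy $\E_{\lambda(1-t)}^{1-t}(\vec\psi(t))$ by an inner null flux and the difference $E_{\mathrm{cone}}(t) - E_{\mathrm{cone}}(1-\lambda(1-t))$ modulo a short-time-averaging correction, one concludes the pointwise vanishing
\begin{align*}
\E_{\lambda(1-t)}^{1-t}(\vec\psi(t)) \,\longrightarrow\, 0 \quad \text{as } t \to 1^-.
\end{align*}

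The main obstacle is verifying the nonnegativity of the bulk term in the Morawetz identity: since $X$ is \emph{not} a conformal Killing field of $\R^{1+2}$, one must carefully identify the good-sign part of $\pi^{\mu\nu}T_{\mu\nu}$ by multiplying the equivariant wave-map equation by $(1-s)\psi_s + r\psi_r$, integrating by parts over $K_{t_0}^{t_1}$, and handling the boundary at $r=0$ via the a priori smoothness of $\psi$ up to the axis on any compact sub-interval of $[0,1)$. This computation, carried out in \cite{STZ1} and in the Shatah--Struwe book \cite{SS}, is the crux of the argument.
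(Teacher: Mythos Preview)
The paper does not prove this lemma; it is quoted verbatim from \cite{STZ1} and used as a black box. Your sketch is therefore not being compared against anything in the present paper, but rather against the original Shatah--Tahvildar-Zadeh argument, which it follows in outline: flux identity on the mantle of the backward cone to get monotonicity of $E_{\mathrm{cone}}$ and vanishing of the null flux, then a dilation-type multiplier identity with $X=(s-1)\partial_s+r\partial_r$ to upgrade this to decay of the annular energy.

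Two points of precision. First, your trapezoidal region $R_t$ has bottom face $[(1-\lambda)(1-t),\,1-t]$, not $[\lambda(1-t),\,1-t]$; this is harmless after relabeling $\lambda\leftrightarrow 1-\lambda$. Second, and more substantively, the flux identity on $R_t$ alone gives
\[
\E_{(1-\lambda)(1-t)}^{1-t}(\vec\psi(t)) \;=\; E_{\mathrm{cone}}\bigl(1-\lambda(1-t)\bigr) \;+\; \mathrm{Flux}_{\mathrm{outer}} \;-\; \mathrm{Flux}_{\mathrm{inner}},
\]
and while the outer flux vanishes and $E_{\mathrm{cone}}(1-\lambda(1-t))\to E_\ast$, the inner null flux need not vanish a priori, nor does $E_\ast$ cancel against anything. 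The actual mechanism in \cite{STZ1,SS} is not a flux balance on $R_t$ but rather the multiplier identity itself: integrating $(s-1)\psi_s+r\psi_r$ times the equation over the full truncated cone produces a boundary term at $s=t$ that directly dominates the annular energy, with the bulk and mantle contributions controlled by $(1-t)\E(\vec\psi)+\mathrm{Flux}(t,1)$. Your Morawetz estimate is the right ingredient, but the way you combine it with the trapezoid is not quite how the argument closes. Since you ultimately defer the computation to \cite{STZ1,SS} anyway, the sketch is acceptable as a pointer to the literature, which is all the paper itself provides.
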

\begin{proof} 
Lemma~\ref{ext en decay} for smooth wave maps  was proved in~\cite[Lemma 2.2]{STZ1}, and in~\cite[Lemma 2.1]{JK-AJM} for energy class solutions to the $6d$ focusing quadratic wave equation. One may apply a nearly identical argument as the one in~\cite{JK-AJM} to prove Lemma~\ref{ext en decay} here. Alternatively, see Appendix~\ref{a:STZ} for an argument extending this result to $\vec\psi(t) \in \HH$.
\end{proof}

\begin{lem}\cite[Corollary $2.2$]{STZ1}\label{t av decay} Let $\vec{\psi}(t) \in \HH$ be a solution to \eqref{cp} such that $I_{\max}(\vec \psi)$ is a finite interval. Without loss of generality we can assume $T_+(\vec \psi) =1$. Then we have 
\begin{align} \label{dot psi decay} 
\frac{1}{1-t} \int_{t}^1 \int_0^{1-s} \dot \psi^2(s, r) \, r\, dr\, ds \to 0 \quad \textrm{as} \quad t \to 1
\end{align}
\end{lem}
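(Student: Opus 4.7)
The plan is to use the virial multiplier $r^2 \psi_r$ to derive a pointwise-in-$t$ identity for $\int_0^{1-t} \psi_t^2 \, r \, dr$, and then to upgrade a naive $O(1-t)$ bound on the resulting boundary and flux terms to the required $o(1-t)$ bound using Lemma~\ref{ext en decay}. For $t \in (0,1)$ set $V(t) := \int_0^{1-t} r^2 \psi_t(t,r) \psi_r(t,r) \, dr$. Testing \eqref{cp} against $r^2 \psi_r$, integrating over $[0, 1-t]$, and using $\psi_r \sin(2\psi)/2 = \partial_r(\sin^2 \psi / 2)$ together with integration by parts in the $\psi_{rr}$ term yields
\begin{equation*}
\int_0^{1-t} \psi_t^2(t,r) \, r \, dr = -V'(t) + \tfrac{(1-t)^2}{2}(\psi_t - \psi_r)^2(t, 1-t) - \tfrac{1}{2}\sin^2 \psi(t, 1-t).
\end{equation*}
(Both sides vanish identically when $\psi$ is a static rescaled harmonic map $Q_\lambda$, which is a useful sanity check.) Integrating in $t$ over $[t, T]$ with $t < T < 1$, and then letting $T \to 1$ (using $|V(T)| \le (1-T) \E(\vec\psi)/2 \to 0$), gives
\begin{equation*}
\int_t^1 \!\!\int_0^{1-s} \! \psi_t^2 \, r \, dr \, ds = V(t) + \int_t^1 \left[\tfrac{(1-s)^2}{2}(\psi_t - \psi_r)^2(s, 1-s) - \tfrac{1}{2} \sin^2 \psi(s, 1-s)\right] ds.
\end{equation*}

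For the lateral flux terms, the standard energy identity $E_{\mathrm{int}}'(s) = -F(s)$, with $E_{\mathrm{int}}(s) := \E_0^{1-s}(\vec\psi(s))$ and $F(s) := (1-s)(\psi_t - \psi_r)^2(s, 1-s) + \sin^2 \psi(s, 1-s)/(1-s) \ge 0$, implies that $E_{\mathrm{int}}(s) \downarrow E_\ast \ge 0$ and $\int_t^1 F(s) \, ds = E_{\mathrm{int}}(t) - E_\ast = o(1)$ as $t \to 1$. Bounding the bracket crudely by $(1-s) F(s) / 2$ and pulling out $(1-s) \le (1-t)$ produces
\begin{equation*}
\left|\int_t^1 \left[\cdots\right] ds\right| \le \tfrac{1-t}{2}\bigl(E_{\mathrm{int}}(t) - E_\ast\bigr) = o(1-t).
\end{equation*}

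The remaining, and only genuine, difficulty is to show $V(t) = o(1-t)$, since Cauchy--Schwarz gives only the naive bound $|V(t)| \le (1-t) \E(\vec\psi)/2 = O(1-t)$. The improvement comes from Lemma~\ref{ext en decay}. For any $\lambda \in (0, 1]$ and $\varphi \in \{\psi_t, \psi_r\}$,
\begin{equation*}
\int_0^{1-t} \!\! r^2 \varphi^2 \, dr \le \lambda(1-t) \!\! \int_0^{\lambda(1-t)} \!\! \varphi^2 \, r \, dr + (1-t) \!\! \int_{\lambda(1-t)}^{1-t} \!\! \varphi^2 \, r \, dr \le \lambda(1-t) \E(\vec\psi) + (1-t) \E_{\lambda(1-t)}^{1-t}(\vec\psi(t)),
\end{equation*}
using $r \le \lambda(1-t)$ on the inner piece and $r \le 1-t$ on the outer. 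Dividing by $(1-t)$, sending $t \to 1$ (Lemma~\ref{ext en decay} kills the annular term for each fixed $\lambda$), and then $\lambda \to 0^+$ force $\int_0^{1-t} r^2 \psi_t^2 \, dr$ and $\int_0^{1-t} r^2 \psi_r^2 \, dr$ to both be $o(1-t)$. A final Cauchy--Schwarz then yields $|V(t)|^2 \le \bigl(\int r^2 \psi_t^2\bigr)\bigl(\int r^2 \psi_r^2\bigr) = o((1-t)^2)$, hence $|V(t)| = o(1-t)$.

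Combining the three displays gives $\int_t^1 \int_0^{1-s} \psi_t^2 \, r \, dr \, ds = o(1-t)$, which is the desired statement after dividing by $(1-t)$. The main obstacle is the last paragraph: the virial identity alone only delivers an $O(1-t)$ bound on $V(t)$, and breaking that barrier really does require the non-trivial annular decay of the local energy provided by Lemma~\ref{ext en decay}; everything else is either an algebraic manipulation or a direct application of the energy monotonicity.
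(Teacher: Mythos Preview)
Your argument is correct. The paper does not supply its own proof of this lemma; it simply cites \cite[Corollary~2.2]{STZ1}. Your approach---the virial multiplier $r^2\psi_r$ yielding the pointwise identity, the flux bound via the energy monotonicity $E_{\mathrm{int}}'(s)=-F(s)$, and the crucial upgrade $V(t)=o(1-t)$ obtained by splitting at $r=\la(1-t)$ and invoking Lemma~\ref{ext en decay}---is exactly the Shatah--Tahvildar-Zadeh argument, so there is nothing further to compare.
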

\begin{proof} 
As above Lemma~\ref{ext en decay} for smooth wave maps  was proved in~\cite[Lemma 2.3]{STZ1}. See Appendix~\ref{a:STZ} for an argument extending this result to $\vec\psi(t) \in \HH$. 
\end{proof}

As in \cite{DKM1}, we can use Lemma~\ref{t av decay} to establish the following result. The proof is identical to the argument given in  \cite[Corollary $5.3$]{DKM1} so we do not reproduce it here.

\begin{cor}\cite[Corollary $5.3$]{DKM1}\label{t dec extract} Let $\psi(t) \in \HH$ be a solution to \eqref{cp} such that $T_+(\vec \psi)=1$. Then, there exists a sequence of times $ \{t_n\} \nearrow 1$ such that for every $n$ and for every $\sigma \in (0, 1-t_n)$, we have
\begin{align} 
\frac{1}{\sigma} \int_{t_n}^{t_n+ \sigma} \int_0^{1-t}  \dot \psi^2(t, r) \, r\, dr \, dt &\le \frac{1}{n} \label{sig}\\
 \int_0^{1-t_n}  \dot \psi^2(t_n, r) \, r\, dr & \le \frac{1}{n}\label{t dec} 
 \end{align}
 \end{cor}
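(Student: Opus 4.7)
Abbreviate $f(t) := \int_0^{1-t}\dot\psi^2(t,r)\,r\,dr$; this is a nonnegative measurable function of $t \in [0,1)$ bounded by $\E(\vec\psi)$, and Lemma~\ref{t av decay} reads
\[
h(T) := \f{1}{1-T}\int_T^1 f(s)\,ds \longrightarrow 0 \quad\text{as } T \to 1.
\]
The corollary asks for $t_n \to 1$ with $f(t_n) \le 1/n$ (which is \eqref{t dec}) and, simultaneously, $\sup_{0<\sigma<1-t_n}\f{1}{\sigma}\int_{t_n}^{t_n+\sigma}f(s)\,ds \le 1/n$ (which is \eqref{sig}). After extraction one may of course assume $t_n$ is increasing.

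My plan is a one-sided Hardy--Littlewood maximal-function argument, exactly as in \cite[Corollary~5.3]{DKM1}. Set
\[
\M f(t) := \sup_{0<\sigma<1-t}\f{1}{\sigma}\int_t^{t+\sigma} f(s)\,ds, \qquad t \in [0,1),
\]
so that \eqref{sig} is precisely $\M f(t_n) \le 1/n$. For $t \in [T,1)$ the averages defining $\M f(t)$ only see values of $f$ on $[t,t+\sigma] \subset [T,1)$, hence $\M f(t) \le M^{+}(f\mathbf{1}_{[T,1)})(t)$ where $M^{+}$ is the unrestricted one-sided maximal operator on $\R$. Its weak-$(1,1)$ bound gives
\[
\bigl|\{t \in [T,1) : \M f(t) > \lambda\}\bigr| \le \f{C}{\lambda}\int_T^1 f(s)\,ds = \f{C(1-T)}{\lambda}\,h(T).
\]
For each $n$, Lemma~\ref{t av decay} lets me pick $T_n \in [1-1/n,1)$ with $h(T_n) \le 1/(4Cn)$; taking $\lambda = 1/n$ then yields
\[
\bigl|\{t \in [T_n,1) : \M f(t) > 1/n\}\bigr| \le \tfrac14(1-T_n),
\]
so $G_n := \{t \in [T_n,1) : \M f(t) \le 1/n\}$ has measure at least $\tfrac34(1-T_n)$. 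The Lebesgue differentiation theorem gives $\M f(t) \ge f(t)$ at every Lebesgue point of $f$; throwing away a null set still leaves a positive-measure subset of $G_n$ on which \emph{both} $f(t) \le 1/n$ and $\M f(t) \le 1/n$ hold. Any $t_n$ picked from this subset satisfies \eqref{sig}--\eqref{t dec}, and since $t_n \ge 1-1/n$ one has $t_n \to 1$.

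The delicate step---and the reason one cannot just read off the conclusion from Lemma~\ref{t av decay}---is the uniformity in $\sigma$ in \eqref{sig}: the naive bound $\f{1}{\sigma}\int_{t_n}^{t_n+\sigma}f \le \f{1-t_n}{\sigma}h(t_n)$ degenerates as $\sigma \to 0$, so pointwise smallness of $h$ near $1$ is not by itself enough. The maximal operator absorbs all scales $\sigma \in (0,1-t)$ simultaneously, and the weak-type estimate lets me excise the bad set of $t$'s where the forward average is large at some scale, leaving abundant room to pick $t_n$ arbitrarily close to~$1$.
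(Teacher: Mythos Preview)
Your argument is correct and is essentially the approach the paper defers to (the weak-$(1,1)$ bound for the one-sided maximal function is the content of the DKM1 Corollary~5.3 argument). The only cosmetic difference is that the paper obtains \eqref{t dec} from \eqref{sig} by sending $\sigma\to 0$ and invoking the continuity of $t\mapsto\dot\psi(t,\cdot)$ in $L^2$, whereas you use Lebesgue differentiation; since $f$ is in fact continuous here, both routes are immediate.
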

Note that \eqref{t dec} follows from \eqref{sig} by letting $\sigma \to 0$ in \eqref{sig} and recalling the continuity of the map $t \mapsto \dot \psi(t, \cdot)$ from $[0,1) \to L^2$.

We now recall a result of Struwe, \cite{St}, which will be essential in our argument for degree~$1$. 

\begin{thm} \cite[Theorem $2.1$]{St} \label{str} Let $\psi(t) \in \HH$ be a smooth solution to \eqref{cp} such that $T_+(\vec \psi)=1$.  Let $\{t_n\}\nearrow 1$ be defined as in Corollary~\ref{t dec extract}. Then there  exists  a sequence  $\{\la_n\}$ with $\la_n=o(1-t_n)$ so that the following results hold: Let 
\begin{align} \label{psi n def}
\vec \psi_n(t, r) := (\psi(t_n+ \la_n t, \la_n r), \la_n \dot \psi(t_n + \la_n t, \la_n r))
\end{align}
be the wave map evolutions associated to the data $\vec \psi_n(r):= \vec \psi(t_n, \la_n r)$. And denote by $U_n(t, r, \om) := (\psi_n(t, r) , \om)$  the full wave maps. Then, 
\begin{align} \label{loc}
U_n(t, r, \om) \to U_{\infty}(r, \om) \quad \textrm{in} \quad{H}^1_{loc}((-1, 1) \times \R^2 ; \Sp^2)
\end{align} 
where $U_{\infty}$ is a smooth, non-constant, $1$-equivariant,  time independent solution to \eqref{cp i}, and hence $U_{\infty}(r, \om)=(\pm Q(r/\la_0), \om)$ for some $\la_0>0$. We further note that after passing to a subsequence, $U_n(t, r, \om) \to U_{\infty}(r, \om)$ locally uniformly in $(-1, 1) \times (\R^2-\{0\})$.
 
 Moreover, with the times $t_n$ and scales $\la_n$ as above, we have 
 \begin{align}\label{sig with la}
 \frac{1}{\la_n} \int_{t_n}^{t_n+ \la_n} \int_0^{1-t}  \dot \psi^2(t, r) \, r\, dr \, dt = o_n(1).
 \end{align}
\end{thm}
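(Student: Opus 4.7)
The plan follows Struwe's classical bubbling strategy~\cite{St}, coupled with the refined time-averaged estimates supplied by Corollary~\ref{t dec extract}. Take the sequence $\{t_n\}\nearrow 1$ given by that corollary, and choose $\la_n \in (0, 1-t_n)$ to be the largest scale for which the localized energy $\E_0^{\la_n}(\vec\psi(t_n))$ equals a small but fixed fraction of $\E(Q)$. Using Lemma~\ref{ext en decay} (exterior energy decay in backward cones of aperture one), one verifies $\la_n/(1-t_n)\to 0$: otherwise a definite amount of energy would have to persist on annuli of the form $\{\mu(1-t_n)\le r\le 1-t_n\}$ for some $\mu>0$, contradicting that lemma.

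Next, define $\vec\psi_n$ by \eqref{psi n def}. Each $\vec\psi_n$ is a smooth wave map of total energy $\E(\vec\psi)$, defined on a rescaled time interval containing $[-t_n/\la_n, (1-t_n)/\la_n)$, which expands to all of $\R$ since $\la_n = o(1-t_n)$. The key a priori estimate is immediate from a rescaled version of Corollary~\ref{t dec extract}: applying \eqref{sig} with $\sigma = \la_n$ and changing variables yields
\begin{align*}
\int_0^1\!\!\int_0^{(1-t_n)/\la_n - s}\!\!(\p_t \psi_n)^2(s,r)\,r\,dr\,ds = \frac{1}{\la_n}\int_{t_n}^{t_n+\la_n}\!\!\int_0^{1-\tau}\!\!\dot\psi^2(\tau,r)\,r\,dr\,d\tau \to 0,
\end{align*}
which establishes \eqref{sig with la} directly, and also forces $\p_t\psi_\infty \equiv 0$ for any weak subsequential limit $\vec\psi_\infty$ of $\vec\psi_n$.

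Third, extract a subsequential weak-$\HH$ limit $\vec\psi_\infty$ on any precompact subset of $(-1,1)\times\R^2$, and upgrade to strong $H^1_{\textrm{loc}}$ convergence. The strong convergence comes from combining the local energy identity for equivariant wave maps, time-independence of $\psi_\infty$, and the well-known fact that the equivariant wave map energy density is pointwise non-negative and admits no self-similar blow-up profile (cf.~\cite{SS}); together these exclude the persistence of any defect measure. The non-triviality of $\psi_\infty$ is then enforced by the choice of $\la_n$: a definite amount of energy sits in $\{r\le 1\}$ at $t=0$ in the rescaled picture, and strong convergence ensures this energy is actually carried by $\psi_\infty$ rather than being lost.

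Finally, $\psi_\infty$ is a finite-energy, $1$-equivariant, time-independent weak solution to \eqref{cp}, i.e., a weakly harmonic map $\R^2\to\Sp^2$. H\'elein's regularity theorem then provides smoothness, after which the ODE \eqref{Q} together with the classical phase-plane analysis of \cite{SS} forces $\psi_\infty(r) = \pm Q(r/\la_0)$ for some $\la_0 > 0$. Local uniform convergence on $(-1,1)\times(\R^2\setminus\{0\})$ follows from interior smoothness of the limit, the wave map equation, and a standard bootstrap once strong $H^1_{\textrm{loc}}$ convergence is in hand. The main obstacle is precisely the strong $H^1_{\textrm{loc}}$ convergence together with the exclusion of extra concentration at $r=0$ in the limit; these are handled, as in Struwe's original argument, by the monotonicity of the energy flux in truncated backward cones, now sharpened by the averaged control \eqref{sig} from Corollary~\ref{t dec extract}.
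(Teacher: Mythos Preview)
Your proposal is correct and follows essentially the same route as the paper. The paper itself does not give a self-contained proof of this statement: it cites Struwe~\cite{St} and only explains, in the remark following the theorem, the single modification made here---namely that the times $t_n$ are first selected via Corollary~\ref{t dec extract} (rather than via Struwe's Vitali-covering argument), after which one sets $\la_n:=\la(t_n)$ with $\la(\cdot)$ Struwe's concentration scale, applies \eqref{sig} with $\sigma=\la_n$ to obtain \eqref{sig with la}, and then runs Struwe's original argument verbatim. Your sketch reproduces exactly this: you pick the concentration scale, derive \eqref{sig with la} from \eqref{sig}, and then outline Struwe's compactness/regularity/classification steps, so there is nothing substantively different to compare.
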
 

\begin{rem} We note that we have altered the selection procedure by which  the sequence of times $t_n$ is chosen in the proof of Theorem~\ref{str}. In \cite{St}, after defining a scaling factor $\la(t)$, Struwe uses Lemma~\ref{t av decay} to select a sequence of times $t_n$ via an argument involving Vitali's covering theorem, and he sets $\la_n:= \la(t_n)$. Here we do something different. Given Lemma~\ref{t av decay} we use the argument in \cite[Corollary $5.3$]{DKM1} to find a sequence $t_n \to 1$ so that \eqref{sig} and \eqref{t dec} hold. Now we choose the scales $\la(t)$ as in Struwe and for each $n$ we set $\sigma = \la_n:= \la(t_n)$ and we establish  \eqref{sig with la}, which is exactly \cite[Lemma $3.3$]{St}. The rest of the proof of Theorem~\ref{str} now proceeds exactly as in \cite{St}. 
\end{rem}

We will also need the following consequences of Theorem~\ref{str}:

\begin{lem} \label{Hloc conv} Let $\psi(t) \in \HH$ be a solution to \eqref{cp} such that $T_+(\vec \psi)=1$. Let  $\{t_n\}\nearrow 1$ and  $\{\la_n\}$ be chosen as in Theorem~\ref{str}. Define $\psi_n(t, r)$, $\pm Q(r/ \la_0)$ as in \eqref{psi n def}. Then 
\begin{align}\label{Hloc pm}
\psi_n \mp Q( \cdot /\la_0)  \to 0\quad{as} \quad n \to \infty \quad \textrm{in} \quad L^2_t((-1, 1); H_{\textrm{loc}})
\end{align}
where $H$ is defined as in \eqref{HxL^2}. 
\end{lem}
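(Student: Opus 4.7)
Plan. Write $\fy_n(t,r) := \psi_n(t,r) \mp Q(r/\la_0)$. Because $\psi_n(t,0)=0=Q(0)$ one has $\fy_n(t,0)=0$, a topological constraint I will exploit heavily below. The target $H$-norm decomposes as
\[
\|\fy_n(t)\|_{H(r\le R)}^2 \;=\; \int_0^R \fy_{n,r}^2\, r\, dr \;+\; \int_0^R \fy_n^2/r\, dr,
\]
so the task reduces to showing that the time integral of each summand on $[-1+\de,1-\de]$ vanishes as $n\to\I$ for every $R>0$ and every $\de\in(0,1)$.

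Theorem~\ref{str} supplies both $H^1_{\mathrm{loc}}((-1,1)\times\R^2;\Sp^2)$ convergence $U_n\to U_\infty=(\pm Q(\cdot/\la_0),\omega)$ and locally uniform convergence on $(-1,1)\times(\R^2\setminus\{0\})$. The first input, after expanding $|\na U|^2 = \psi_r^2 + \sin^2\psi/r^2$ in the polar orthonormal frame and integrating out $\omega$, extracts the two space-time $L^2$ decay statements
\[
\int_{-1+\de}^{1-\de}\!\!\int_{r_0}^R \fy_{n,r}^2\, r\, dr\, dt \to 0, \qquad \int_{-1+\de}^{1-\de}\!\!\int_0^R \frac{(\sin\psi_n - \sin(\pm Q(r/\la_0)))^2}{r}\, dr\, dt \to 0,
\]
the first obtained on any annulus $[r_0,R]$ where locally uniform convergence keeps $\sin\psi_n,\cos\psi_n$ bounded below away from their zeros (so that the derivative of $\cos\psi$ can be inverted), and the second directly from the $\hat\omega$-component of $|\na(U_n-U_\infty)|^2$. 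The remaining contribution $\int\!\!\int_0^{r_0}\fy_{n,r}^2 r\, dr\, dt$ to the gradient summand is bounded by $\int\!\!\int\E_0^{r_0}(\vec\psi_n(t))\, dt$ plus $\int\!\!\int\E_0^{r_0/\la_0}(Q,0)\, dt$, both of which I show below to be $o_{r_0}(1)$.

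For the potential summand, split the radial integral at a small $r_0>0$. On $[r_0,R]$ the locally uniform convergence gives $\|\fy_n\|_{L^\infty_{t,r}([-1+\de,1-\de]\times[r_0,R])}\to 0$ and hence $\int_{r_0}^R \fy_n^2/r\, dr \le \|\fy_n\|_{L^\infty}^2 \log(R/r_0) \to 0$ uniformly in $t$. On $[0,r_0]$, choose $r_0$ small enough that $|Q(r/\la_0)|\le\pi/2$ there; as soon as the near-origin energy $\E_0^{r_0}(\vec\psi_n(t))$ drops below $2$, the Shatah--Struwe inequality \eqref{b R} applied with $r_1=0$ forces $|G(\psi_n(t,r))| < 1 = G(\pi/2)$ and therefore $|\psi_n(t,r)|\le\pi/2$ on $[0,r_0]$. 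In that regime the bi-Lipschitz bound $|\sin a - \sin b| \simeq |a-b|$ together with the second display above yields $\int_0^{r_0}\fy_n^2/r\, dr \lesssim \int_0^{r_0}(\sin\psi_n - \sin(\pm Q(r/\la_0)))^2/r\, dr \to 0$.

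The main obstacle is the pointwise-in-$t$ smallness of $\E_0^{r_0}(\vec\psi_n(t))$ needed to apply \eqref{b R}. The integrated bound $\int_{-1+\de}^{1-\de}\E_0^{r_0}(\vec\psi_n(t))\, dt \to 2(1-\de)\,\E_0^{r_0/\la_0}(Q,0)$ follows from the $L^1$-convergence in space-time of the full wave-map energy densities (gradient plus time derivative) on compact sets, and its right-hand side is $o_{r_0}(1)$. A Chebyshev argument combined with a further subsequence extraction and the continuity of $t\mapsto\E_0^{r_0}(\vec\psi_n(t))$ promotes this integrated smallness to the required pointwise bound for $n$ large, closing the argument.
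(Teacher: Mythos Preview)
Your extraction of $\int\!\!\int_0^R \frac{(\sin\psi_n-\sin Q_{\la_0})^2}{r}\,dr\,dt\to 0$ from the $\hat\omega$-component of $|\na(U_n-U_\infty)|^2$ is correct and elegant, and once one knows that $|\psi_n(t,r)|,\,|Q_{\la_0}(r)|\le\pi/2$ on $[0,r_0]$ uniformly in $t$ and $n$, your bi-Lipschitz argument does close. The gap is in how you obtain that pointwise-in-$t$ smallness.

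The Chebyshev step gives only that the bad set $A_n:=\{t:\E_0^{r_0}(\vec\psi_n(t))\ge 2\}$ has small measure, and neither continuity of $t\mapsto\E_0^{r_0}(\vec\psi_n(t))$ nor a subsequence extraction upgrades this to $A_n=\emptyset$; there is nothing preventing a short time interval on which the local energy is of order $\E(\vec\psi)$. On $A_n$ you then need a uniform bound on $\int_0^{r_0}\fy_n(t,r)^2/r\,dr$ to exploit $|A_n|\to 0$, but no such bound is available: the two-dimensional radial Hardy inequality $\int_0^{r_0}f^2/r\,dr\lesssim\int_0^{r_0}f_r^2\,r\,dr$ (for $f(0)=0$) is false, so the gradient energy alone cannot control this term. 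Concretely, if at some time $\psi_n(t,\cdot)$ were to climb from $0$ to near $\pi$ on a very short interval $[0,r_*]$ with $r_*\ll r_0$, then $\int_{r_*}^{r_0}\fy_n^2/r\,dr\gtrsim\log(r_0/r_*)$ while the energy stays bounded.

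The missing ingredient is built into Struwe's \emph{choice} of the scales $\la_n$: in \cite[Proof of Theorem~2.1,\,(ii)]{St} the $\la_n$ are selected precisely so that $\E_0^{1}(\vec\psi_n(t))<\e_1$ uniformly in $|t|\le 1$ and in $n$, for a fixed small $\e_1$. This is what the paper invokes, and it immediately yields $|\psi_n(t,r)|<\pi/8$ on $[0,1]$ via \eqref{b R}. The paper then finishes with a different device than yours---it multiplies the equation for $\psi_n-Q_{\la_0}$ by $(\psi_n-Q_{\la_0})\chi$, integrates by parts, and uses the sign condition $(\psi_n-Q)(\sin 2\psi_n-\sin 2Q)\ge c(\psi_n-Q)^2$---but that sign condition requires exactly the same uniform smallness input. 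If you feed Struwe's uniform bound into your argument in place of the Chebyshev step, your route through $(\sin\psi_n-\sin Q_{\la_0})^2/r$ becomes a valid and arguably more direct alternative to the paper's multiplier computation.
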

\begin{proof} We prove the case where the convergence in Theorem~\ref{str} is to  $+Q(r/ \la_0)$. Let  $Q_{\la_0}(r)= Q(r/ \la_0)$. By Theorem~\ref{str}, we know that 
\begin{multline} \label{H1 loc}
\int_{\R^{1+2}} \left( \abs{ \p_t\psi_n(t, r)}^2+  \abs{ \p_r(\psi_n(t, r)- Q_{\la_0}(r))}^2 \right)\chi(t, r) \, r \, dr \, dt \\+ \int_{\R^{1+2}}  \abs{\psi_n(t, r) - Q_{\la_0}(r)}^2 \chi(t, r) \, r \, dr \, dt  \longrightarrow 0 \quad \textrm{as} \quad n \to \infty
\end{multline}
for all $ \chi \in C_0^{\infty}((-1, 1) \times \R^2)$, radial in space. Hence to prove \eqref{Hloc pm}, it suffices to show that 
\begin{align}\label{hardy piece}
\int_{\R^{1+2}}  \frac{\abs{\psi_n(t, r)- Q_{\la_0}(r)}^2}{r^2} \chi(t, r) \, r \, dr \, dt \to 0 \quad \textrm{as}  \quad n \to \infty
\end{align}
for all $\chi$ as above. Next, note that if for fixed $\de>0$, $\chi(t, r)$ satisfies $\textrm{supp}( \chi(t, \cdot)) \subset [\de, \infty)$, we have 
\begin{multline*}
\int_{\R^{1+2}}  \frac{\abs{\psi_n(t, r)- Q_{\la_0}(r)}^2}{r^2} \chi(t, r) \, r \, dr \, dt \\ \le \de^{-2}\int_{\R^{1+2}}  \abs{\psi_n(t, r)- Q_{\la_0}(r)}^2 \chi(t, r) \, r \, dr \, dt  \to 0 \quad \textrm{as}  \quad n \to \infty, 
\end{multline*}
with the convergence in the last line following from \eqref{H1 loc}. Hence, from here out we only need to consider $\chi$ with $\textrm{supp} \chi(t, \cdot) \subset [0, 1]$.  Referring to Struwe's argument in \cite[Proof of Theorem $2.1$, (ii)]{St}, we note that by construction, $\la_n$ and $\la_0$ are such that
\begin{align*}
&\E_0^1( \vec \psi_n(t)) < \e_1, \quad
\E_0^1(Q_{\la_0}) < \e_1
\end{align*}
uniformly in $\abs{t} \le1$ and uniformly in $n$, where $\e_1>0$ is a fixed constant that we can choose to be as small as we want. Recalling that for each $t$,  $\psi(t, 0)= Q(0) = 0$ and using  \eqref{b R}, this implies  that 
\begin{align*}
\abs{G( \psi_n(t, r)) } \le \frac{1}{2} \e_1, \quad
\abs{G(Q_{\la_0}(r))} \le \frac{1}{2} \e_1
\end{align*}
for all $r \in [0, 1]$. 
In particular, we can choose $\e_1$ small enough so that 
\begin{align*}
\abs{\psi_n(t, r)}< \frac{\pi}{8} , \quad \abs{Q_{\la_0}(r)} < \frac{\pi}{8}
\end{align*}
for all $r \in[0, 1]$. Using the above line we then can conclude that there exists $c>0$ such that
\begin{align}\label{sin bound}
(\psi_n(t, r)- Q(r/\la_0))(\sin(2 \psi_n(t, r))- \sin(2 Q_{\la_0}(r)))\ge c (\psi_n(t, r)- Q(r/\la_0))^2
\end{align}
for all $r \in[0, 1]$, and $\abs{t} \le 1$.  Consider the equation 
\begin{align*}
(-\p_{tt} + \p_{rr} + \frac{1}{r}\p_r)(\psi_n(t, r)- Q_{\la_0}(r)) = \frac{\sin(2 \psi_n(t, r)) - \sin(2Q_{\la_0}(r))}{r^2}.
\end{align*}
Now, let $\chi \in C_0^{\infty}((-1, 1) \times \R^2)$ satisfy $\textrm{supp}( \chi(t, \cdot)) \subset [0,1]$.  Multiply the above equation by $(\psi_n(t, r)- Q_{\la_0}(r)) \chi(t, r)$, and integrate over $\R^{1+2}$. Then, integrating by parts and using the strong local convergence in \eqref{H1 loc} we can deduce that 
\begin{align*}
\int_{\R^{1+2}}  \frac{( \sin(2\psi_n(t, r))-\sin(2 Q_{\la_0}(r))) (\psi_n(t, r)- Q(r/\la_0))}
{r^2} \chi(t, r) \, r \, dr \, dt \to 0 
\end{align*}
as $n \to \infty$. The lemma then follows by combining the above line with \eqref{sin bound}. 
\end{proof}

\begin{lem}\label{pi seq lem pm}Let $\psi(t) \in \HH$ be a wave map that blows up at time $t=1$. Then, there exists a sequence of times $ \bar t_n \to 1$ and a sequence of points $r_n \in [0, 1- \bar t_n)$ such that 
\begin{align} \label{pi seq pm}
 \psi(\bar t_n, r_n) \to \pm \pi \quad \textrm{as} \quad n \to \infty
\end{align} 
\end{lem}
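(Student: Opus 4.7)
The plan is to extract the sequence directly from Struwe's Theorem~\ref{str}. Recall that Theorem~\ref{str} gives sequences $t_n \nearrow 1$ and scales $\la_n = o(1-t_n)$ such that the rescaled wave maps $\vec\psi_n(t,r) = (\psi(t_n + \la_n t, \la_n r), \la_n \dot\psi(t_n+\la_n t, \la_n r))$ satisfy, up to passing to a subsequence, $U_n(t,r,\om) \to (\pm Q(r/\la_0), \om)$ locally uniformly on $(-1,1) \times (\R^2 \setminus \{0\})$ for some $\la_0 > 0$. In particular, restricting to time $t=0$, we have that $\psi_n(0,r) = \psi(t_n, \la_n r)$ converges to $\pm Q(r/\la_0)$ uniformly on compact subsets of $(0,\infty)$.

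Since $Q(\rho) = 2\arctan(\rho) \to \pi$ as $\rho \to \infty$, the profile $\pm Q(r/\la_0)$ tends to $\pm\pi$ as $r \to \infty$. I would choose a sequence $R_k \to \infty$ with $|Q(R_k/\la_0) - \pi| < 1/(2k)$. The goal is then to diagonalize: for each $k$, pick an index $n_k$ (with $n_k$ strictly increasing in $k$) large enough that the following two conditions hold simultaneously:
\begin{itemize}
\item[(i)] $|\psi_{n_k}(0, R_k) - (\pm) Q(R_k/\la_0)| < 1/(2k)$, which is possible by the locally uniform convergence of $\psi_n(0,\cdot)$ to $\pm Q(\cdot/\la_0)$ on the fixed compact set $\{R_k\} \subset (0,\infty)$;
\item[(ii)] $\la_{n_k} R_k < 1 - t_{n_k}$, which is possible because $\la_n/(1-t_n) \to 0$, so $R_k < (1-t_n)/\la_n$ for all sufficiently large $n$.
\end{itemize}

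Setting $\bar t_k := t_{n_k}$ and $r_k := \la_{n_k} R_k$, condition (ii) gives $r_k \in [0, 1-\bar t_k)$, and by construction $\psi(\bar t_k, r_k) = \psi_{n_k}(0, R_k)$, which by (i) and the triangle inequality satisfies $|\psi(\bar t_k, r_k) \mp \pi| < 1/k \to 0$. The only mildly subtle point is that the sign $\pm$ is fixed by the subsequence chosen in Theorem~\ref{str}, so the conclusion is that $\psi(\bar t_k, r_k) \to +\pi$ or $\psi(\bar t_k, r_k) \to -\pi$, exactly matching the statement of the lemma. Since no hard analytic obstacle arises — Struwe's theorem delivers the nontrivial profile, and the rest is a diagonal extraction using $\la_n = o(1-t_n)$ — I do not anticipate a serious obstruction.
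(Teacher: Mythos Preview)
Your proof is correct and uses essentially the same idea as the paper: both exploit Struwe's convergence to $\pm Q(\cdot/\la_0)$, the fact that $Q(\rho)\to\pi$ as $\rho\to\infty$, and $\la_n=o(1-t_n)$ to place the point inside the light cone. The paper phrases this as a contradiction argument (if $\psi$ stayed bounded away from $\pm\pi$ inside the cone, the $L^2_t H_{\textrm{loc}}$ convergence of Lemma~\ref{Hloc conv} would fail on an annulus where $|Q_{\la_0}|>\pi-\de_0/2$), whereas you argue directly via a diagonal extraction from the locally uniform convergence stated in Theorem~\ref{str}; this is a stylistic rather than substantive difference.
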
 
\begin{proof} 
If not, then there exists a  $\de_0 >0 $ such that for every time $t \in [0, 1)$  we have $\abs{\psi(t, r) } \in \R -[ \pi- \de_0, \pi+ \de_0]$ for all $r \in [0, 1-t)$. Now let $t_n$, $\la_n$ and $\psi_n(t, r)$  and $\pm Q_{\la_0}$ be as in Theorem~\ref{str} and Lemma~\ref{Hloc conv}.
Choose $0<R_1<R_2 < \infty$ so that $\abs{Q_{\la_0}(r)} > \pi-\frac{\de_0}{2}$ for $r \in [R_1, R_2]$ and choose $N$ large enough so that $[\la_n R_1, \la_n R_2]  \subset [0, 1-t_n - \la_n t) $ for all $t\in [0, 1]$ and for all $n \ge N$.  This implies that 
\begin{align} \label{not close}
\abs{\psi_n(t, r) \mp Q_{\la_0}(r)} \ge \frac{\de_0}{2} \quad \forall n \ge N, \quad \forall r \in[R_1, R_2],
\end{align}
and for all $t \in [0 ,1]$. But this provides an immediate contradiction with the convergence in \eqref{Hloc pm}. 
\end{proof}

\begin{cor}  \label{pi seq lem}Let $\psi(t) \in \HH_1$ be a wave map that blows up at time $t=1$ such that $\E(\vec \psi)<3 \E(Q)$. Recall that $\vec \psi(t) \in \HH_1$ means that $\psi(t, 0)=0, \psi(t,\infty)= \pi$. Then we have
\begin{align}\label{Hloc}
\psi_n - Q( \cdot /\la_0)  \to 0\quad{as} \quad n \to \infty \quad \textrm{in} \quad L^2_t((-1, 1); H_{\textrm{loc}}),
\end{align}
with $\psi_n(t, r)$, $t_n$, and $\la_n$ defined as in Theorem~\ref{str}. In addition, there exists another sequence of times $\bar t_n \to 1$ and a sequence of points $r_n \in [0, 1-\bar t_n)$ such that 
\begin{align} \label{pi seq}
 \psi(\bar t_n, r_n) \to  \pi \quad \textrm{as} \quad n \to \infty
\end{align} 
\end{cor}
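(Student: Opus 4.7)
The plan is to derive the statement from Theorem~\ref{str} and Lemma~\ref{Hloc conv} together with the energy-threshold hypothesis $\E(\vec\psi) < 3\E(Q)$. Both Theorem~\ref{str} and Lemma~\ref{Hloc conv} give us the convergence $\psi_n \to \pm Q(\cdot/\la_0)$ (locally in $H^1$ and pointwise uniform away from the origin); we therefore first need to rule out the minus sign using the topological constraint $\vec\psi \in \HH_1$, and then \eqref{Hloc} follows, after which \eqref{pi seq} is a direct appeal to Lemma~\ref{pi seq lem pm}.

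\medskip

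\emph{Ruling out the minus sign.} Suppose, for contradiction, that $\psi_n \to -Q(\cdot/\la_0)$ along a subsequence. The idea is to trap energy corresponding to \emph{three} harmonic map bubbles. Fix small $\de>0$ and large $R>0$. By the locally uniform convergence $U_n \to U_\infty$ on $(-1,1)\times(\R^2\setminus\{0\})$ from Theorem~\ref{str}, we get
\[
\psi(t_n,\la_n \de) \to -Q(\de/\la_0), \qquad \psi(t_n,\la_n R) \to -Q(R/\la_0),
\]
as $n\to\infty$. Applying \eqref{b R} on $[\la_n\de,\la_n R]$ yields
\[
\E_{\la_n\de}^{\la_n R}(\vec\psi(t_n)) \;\ge\; 2\bigl|G(\psi(t_n,\la_n R)) - G(\psi(t_n,\la_n \de))\bigr|,
\]
which in the limit $n\to\infty$, then $\de\to 0$, $R\to\infty$, approaches $2|G(-\pi)-G(0)|=4=\E(Q)$. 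Similarly, since $\psi(t_n,r)\to \pi$ as $r\to\infty$ (because $\vec\psi(t_n)\in\HH_1$), \eqref{b R} on $[\la_n R,\infty)$ gives
\[
\E_{\la_n R}^{\infty}(\vec\psi(t_n)) \;\ge\; 2\bigl|G(\pi) - G(\psi(t_n,\la_n R))\bigr|,
\]
which tends to $2|G(\pi)-G(-\pi)|=8=2\E(Q)$ as $n\to\infty$ and $R\to\infty$. Adding the two lower bounds,
\[
\E(\vec\psi) \;\ge\; \E_0^{\la_n R}(\vec\psi(t_n)) + \E_{\la_n R}^{\infty}(\vec\psi(t_n)) \;\ge\; 3\E(Q) - o_{n,R,\de}(1),
\]
contradicting $\E(\vec\psi)<3\E(Q)$. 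Hence the convergence in Lemma~\ref{Hloc conv} must be to $+Q(\cdot/\la_0)$, which is precisely \eqref{Hloc}.

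\medskip

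\emph{The sequence for \eqref{pi seq}.} Now invoke Lemma~\ref{pi seq lem pm}: since we have just established that the locally uniform limit along $t_n$, $\la_n$ is $+Q(\cdot/\la_0)$, choosing $R_1<R_2$ so that $Q(r/\la_0)>\pi-\de_0/2$ on $[R_1,R_2]$ forces, in the contradiction argument of Lemma~\ref{pi seq lem pm}, the sign to be $+$. This produces a sequence $\bar t_n\to 1$ and $r_n\in[0,1-\bar t_n)$ with $\psi(\bar t_n,r_n)\to\pi$, completing the proof. The main subtlety is the careful handling of the bookkeeping in the $n\to\infty$, $R\to\infty$, $\de\to 0$ limits; but since all three bounds only appeal to the one-sided inequality \eqref{b R}, no delicate compactness is needed beyond the pointwise convergence already supplied by Theorem~\ref{str}.
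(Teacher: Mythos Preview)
Your proof is correct and follows the same strategy as the paper: rule out the $-Q$ limit via \eqref{b R} and the $3\E(Q)$ energy threshold, then read off \eqref{pi seq} from the proof method of Lemma~\ref{pi seq lem pm}. The only cosmetic difference is that the paper first invokes Lemma~\ref{pi seq lem pm} to locate a single point $r_n$ with $\psi(\bar t_n,r_n)\to-\pi$ and then applies \eqref{b R} there, whereas you use the locally uniform convergence from Theorem~\ref{str} directly at the two points $\la_n\de$ and $\la_n R$; both routes yield the same $3\E(Q)$ lower bound.
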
 
\begin{proof}
We use the energy bound $\E(\vec \psi) < 3 \E(Q)$  to eliminate the possibility that the convergence in Theorem~\ref{str} is to $-Q(r/\la_0)$ instead of to $+ Q(r/ \la_0)$.  Suppose that in fact we had in \eqref{Hloc pm} that $\psi_n +Q(\cdot/ \la_n) \to 0$ in $L^2_t((-1, 1); H_{\textrm{loc}})$. Lemma~\ref{pi seq lem pm} then gives  a sequence of times $\bar t_n \to 1$ and a sequence $r_n \in [0, 1- \bar t_n)$ such that 
\begin{align}\label{-pi}
\psi(\bar t_n, r_n) \to - \pi
\end{align}
as $n \to \infty$. Now recall that $\vec \psi(t) \in \HH_1$. Using the above along with \eqref{b R} we see that 
\begin{align*}
2\E(Q) = 8 \leftarrow 2\abs{G(\psi(\bar t_n, r_n))- 2 } \le \E_{r_n}^{\infty}(\psi(\bar t_n), 0))
\end{align*}
On the other hand, we can use \eqref{-pi} and \eqref{b R} again to see that  
\begin{align*}
\E(Q)=4\leftarrow 2\abs{G(\psi(\bar t_n, r_n))} \le \E_0^{r_n}(\psi(\bar t_n), 0)
\end{align*}
Putting this together we see that we must have $\E(\vec \psi) \ge 3 \E(Q)$ which contradicts our initial assumption on the energy. 
\end{proof}

\subsection{Profile Decomposition} 
Another essential ingredient of our argument is the profile decomposition of Bahouri and Gerard \cite{BG}. Here we restate the main results of \cite{BG} and then adapt these results to the case of $2d$ equivariant wave maps to the sphere of topological degree zero. In fact the results for the $4d$ wave equation stated here first appeared in \cite{Bu} as the decomposition in \cite{BG} was performed only in dimension $3$.  In particular, we recall the following result:

\begin{thm} \cite[Main Theorem]{BG} \label{BaG}\cite[Theorem $1.1$]{Bu}
Consider a sequence of data $\vec u_n \in \dot{H}^1 \times L^2( \R^4)$ such that $ \|u_n\|_{\dot{H}^1 \times L^2} \le C$. Then, up to extracting a subsequence,  there exists a sequence of free $4d$ radial waves $\vec V_L^j  \in \dot{H}^1 \times L^2$,  a sequence of times $\{t_n^j\}\subset \R$, and sequence of scales $\{\la_n^j\}\subset (0, \infty)$, such that for $\vec w_n^k$ defined by 
\begin{align} \label{4d lin prof} 
&u_{n, 0}(r) = \sum_{j=1}^k \frac{1}{\la_n^j}V_L^j( -t_n^j/ \la_n^j, r/ \la_n^j) + w_{n, 0}^k(r)\\
& u_{n, 1}(r)= \sum_{j=1}^k \frac{1}{(\la_n^j)^2}\dot V_L^j( -t_n^j/ \la_n^j, r/ \la_n^j) + w_{n, 1}^k(r)
\end{align}
we have, for any $j \le k$, that 
\begin{align} \label{w weak}
(\la_{n}^j w_n^k( \la_n^j t_n^j,  \la_n^j\cdot) , (\la_n^j)^2 w_n^k( \la_n^j t_n^j,  \la_n^j\cdot)) \rightharpoonup 0\quad \textrm{weakly in} \quad \dot{H}^1 \times L^2(\R^4). 
\end{align}
In addition, for any $j\neq k$ we have
\begin{align} \label{po scales}
\frac{\la_n^j}{\la_n^k} + \frac{\la_n^k}{\la_n^j} + \frac{\abs{t_n^j-t_n^k}}{\la_n^j} + \frac{\abs{t_n^j-t_n^k}}{\la_n^k} \to \infty \quad \textrm{as} \quad n \to \infty.
\end{align}
Moreover, the errors $\vec w_n^k$ vanish asymptotically in the sense that if we let $w_{n, L}^k(t) \in \dot{H}^1 \times  L^2$ denote the free evolution, (i.e., solution to \eqref{4d free}), of the data $\vec w_n^k \in \dot{H}^1 \times L^2$, we have 
\begin{align} \label{w in strich}
\limsup_{n \to \infty} \left\| w_{n, L}^k\right\|_{L^{\infty}_tL^4_x \cap L^3_tL^6_x( \R \times \R^4)}  \to 0 \quad \textrm{as} \quad k \to \infty.
\end{align}
Finally, we have the almost-orthogonality of the $\dot{H}^1 \times L^2$ norms of the decomposition: 
\begin{align} \label{free en ort}
\|\vec u_n\|_{\dot H^1 \times L^2}^2 = \sum_{1 \le j \le k} \| \vec V_L^j( - t_n^j/ \la_n^j) \|_{\dot{H}^1 \times L^2}^2  +  \|\vec w_n^k\|_{\dot H^1\times L^2}^2 + o_n(1)
\end{align} 
as  $n \to \infty$.
\end{thm}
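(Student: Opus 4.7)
The plan is to mimic the standard Bahouri--G\'{e}rard extraction by iterated concentration, using a Strichartz-type norm as the size functional that detects remaining profiles. The key point is that the $4d$ radial free wave evolution enjoys Strichartz estimates with $\vec u\mapsto u_L\in L^{\infty}_tL^4_x\cap L^3_tL^6_x$ bounded on $\dot H^1\times L^2$, and that the only obstruction to this norm being small is scale/translation concentration. Write $u_{n,L}(t,r):=S(t)\vec u_n$ for the free evolution.

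First I would prove the \emph{extraction lemma}: if $\vec f_n$ is bounded in $\dot H^1\times L^2(\R^4)$ and $\limsup_n\|f_{n,L}\|_{L^{\infty}_tL^4_x\cap L^3_tL^6_x}\ge\eta>0$, then there exist scales $\la_n>0$ and times $t_n\in\R$ such that, along a subsequence,
\EQ{
\bigl(\la_n f_n(\la_n t_n,\la_n\cdot),\,\la_n^2\dot f_n(\la_n t_n,\la_n\cdot)\bigr)\rightharpoonup (V_0,V_1)\ \text{weakly in }\dot H^1\times L^2,
}
with $\|(V_0,V_1)\|_{\dot H^1\times L^2}\gec\eta^{\theta}$ for some $\theta>0$ depending only on dimension and exponents. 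The idea is that the Strichartz bound, if nontrivial, forces $|f_{n,L}|$ to be bounded below on some parabolic box $\{|t-t_n|\le\la_n,\ |r|\le\la_n\}$; after rescaling this box to unit size one passes to a weak limit via the Rellich--Kondrachov theorem on an annulus, exploiting the Sobolev embedding $\dot H^1(\R^4)\hookrightarrow L^4(\R^4)$. This is the standard argument in \cite{BG}, adapted to the radial $4d$ setting (and already recorded in \cite{Bu}), and it is the main technical step.

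Given the extraction lemma, I would build the decomposition inductively. Apply the lemma to $\vec u_n$ to obtain $(\la_n^1,t_n^1,V_L^1)$, defining $V_L^1$ as the unique free wave with the extracted weak limit at time $-t_n^1/\la_n^1$, and set
\EQ{
\vec w_n^1(r):=\vec u_n(r)-\Bigl(\tfrac{1}{\la_n^1}V_L^1(-t_n^1/\la_n^1,r/\la_n^1),\,\tfrac{1}{(\la_n^1)^2}\dot V_L^1(-t_n^1/\la_n^1,r/\la_n^1)\Bigr).
}
Weak convergence together with the fact that the subtracted profile achieves the weak limit yields the \emph{Pythagorean identity}
$\|\vec u_n\|^2=\|V_L^1(-t_n^1/\la_n^1)\|^2+\|\vec w_n^1\|^2+o_n(1)$, because $\|a_n-a\|^2=\|a_n\|^2-\|a\|^2+o(1)$ for weak limits in Hilbert space. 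Repeating the procedure on $\vec w_n^1$ produces $(\la_n^2,t_n^2,V_L^2)$ and $\vec w_n^2$, and so on. At each stage the Pythagorean identity gives
\EQ{
\sum_{j=1}^k\|V_L^j(-t_n^j/\la_n^j)\|_{\dot H^1\times L^2}^2\le\|\vec u_n\|_{\dot H^1\times L^2}^2+o_n(1),
}
so by the lower bound in the extraction lemma the sequence $\eta_k:=\limsup_n\|w_{n,L}^k\|_{L^{\infty}_tL^4_x\cap L^3_tL^6_x}$ must satisfy $\sum_k\eta_k^{2\theta}<\infty$, hence $\eta_k\to0$ as $k\to\infty$, which is precisely \eqref{w in strich}.

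It remains to check the \emph{pseudo-orthogonality of parameters} \eqref{po scales} and the weak convergence property \eqref{w weak}. Both follow from the inductive construction together with a standard argument by contradiction: if for some $j\ne k$ we had $\la_n^j/\la_n^k\to\la_\infty\in(0,\infty)$ and $(t_n^j-t_n^k)/\la_n^j\to\tau_\infty\in\R$ along a subsequence, then rescaling $\vec w_n^{k-1}$ by $(\la_n^k,t_n^k)$ and taking the weak limit would recover a nonzero contribution of $V_L^j$, contradicting that $V_L^j$ was subtracted at an earlier step; this simultaneously yields \eqref{w weak}. The hardest step is really the quantitative extraction lemma in the first paragraph, since the remainder of the argument is essentially an abstract Hilbert-space iteration packaged with the orthogonality diagonalization. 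Once that lemma is in hand, a diagonal subsequence extraction covers all indices $j,k$ simultaneously and completes the proof.
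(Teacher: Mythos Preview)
The paper does not supply its own proof of this statement; it quotes the result directly from \cite{BG} and \cite{Bu} as background and uses it as a black box. Your sketch is the standard Bahouri--G\'erard extraction argument (iterated weak-limit extraction governed by a critical Strichartz/Besov functional, Hilbert-space Pythagorean splitting, and parameter orthogonality by contradiction), which is exactly the route taken in those references, so there is nothing to compare against here beyond noting that your outline matches the cited proofs.
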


The norms appearing in~\eqref{w in strich}  are dispersive and examples of Strichartz estimates, see Lindblad, Sogge~\cite{LinS} and Sogge's book~\cite{Sogge} for more background and details. 
For our purposes here, it will often be useful to rephrase the above decomposition in the framework of the $2d$ linear wave equation \eqref{lin wave}. Using the right-most equality in \eqref{2-4}  together with the identifications 
\begin{align*} 
& \psi_n(r) =  r u_n(r)\\
& \fy^j_L( -t_n^j/ \la_n^j, r/ \la_n^j) = \frac{r}{\la_n^j}V^j_L( -t_n^j/ \la_n^j, r/ \la_n^j)\\
& \ga_n^k(r)  = r w_n^k,
\end{align*}
we see that Theorem~\ref{BaG} directly implies the following decomposition for  sequences $\vec \psi_n \in \HH_0$ with uniformly bounded $H \times L^2$ norms. In particular, by~\eqref{4d equiv}, the following corollary holds for all sequences $\vec \psi_n \in \HH_0$ with $\E(\vec\psi_n) \le C <2\E(Q)$.  

\begin{cor}\label{bg wm} Consider a sequence of data $\vec \psi_n \in \HH_0$ that is uniformly bounded in $H \times L^2$. Then, up to extracting a subsequence,  there exists a sequence of linear waves $\vec \fy^j_L \in \HH_0$, (i.e., solutions to \eqref{lin wave}),  a sequence of times $\{t_n^j\}\subset \R$, and a sequence of scales $\{\la_n^j\}\subset (0, \infty)$, such that for $\vec \ga_n^k$ defined by 
\begin{align} 
&\psi_{n, 0}(r) = \sum_{j=1}^k \fy^j_L( -t_n^j/ \la_n^j, r/ \la_n^j) + \ga_{n, 0}^k(r)\\
& \psi_{n, 1}(r)= \sum_{j=1}^k \frac{1}{\la_n^j}\dot\fy^j_L( -t_n^j/ \la_n^j, r/ \la_n^j) + \ga_{n, 1}^k(r)
\end{align}
we have, for any $j \le k$, that 
\begin{align} 
(\ga_n^k( \la_n^j t_n^j,  \la_n^j\cdot) , \la_n^j \ga_n^k( \la_n^j t_n^j,  \la_n^j\cdot)) \rightharpoonup 0\quad \textrm{weakly in} \quad H \times L^2. 
\end{align}
In addition, for any $j\neq k$ we have
\begin{align} \label{po}
\frac{\la_n^j}{\la_n^k} + \frac{\la_n^k}{\la_n^j} + \frac{\abs{t_n^j-t_n^k}}{\la_n^j} + \frac{\abs{t_n^j-t_n^k}}{\la_n^k} \to \infty \quad \textrm{as} \quad n \to \infty.
\end{align}
Moreover, the errors $\vec \ga_n^k$ vanish asymptotically in the sense that if we let $\ga_{n, L}^k(t) \in \HH_0$ denote the linear evolution, (i.e., solution to \eqref{lin wave}) of the data $\vec \ga_n^k \in \HH_0$, we have 
\begin{align} 
\limsup_{n \to \infty} \left\|\frac{1}{r} \ga_{n, L}^k\right\|_{L^{\infty}_tL^4_x \cap L^3_tL^6_x( \R \times \R^4)}  \to 0 \quad \textrm{as} \quad k \to \infty.
\end{align}
Finally, we have the almost-orthogonality of the $H \times L^2$ norms of the decomposition: 
\begin{align} \label{ort H} 
\|\vec \psi_n\|_{H \times L^2}^2 = \sum_{1 \le j \le k} \| \vec \fy_L^j( - t_n^j/ \la_n^j) \|_{H \times L^2}^2  +  \|\vec \ga_n^k\|_{H \times L^2}^2 + o_n(1)
\end{align} 
as  $n \to \infty$.
\end{cor}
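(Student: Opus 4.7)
The plan is to reduce Corollary~\ref{bg wm} directly to Theorem~\ref{BaG} via the substitution $\psi(r) = r u(r)$, using the isometry between $H \times L^2$ for radial functions in $2d$ and $\dot H^1 \times L^2$ for radial functions in $4d$ recorded in the right-hand identity of \eqref{2-4}. Given a sequence $\vec\psi_n \in \HH_0$ that is bounded in $H \times L^2$, I would set
\begin{align*}
u_{n,0}(r) := \frac{\psi_{n,0}(r)}{r}, \qquad u_{n,1}(r) := \frac{\psi_{n,1}(r)}{r},
\end{align*}
so that $\vec u_n$ is a bounded sequence in $\dot H^1 \times L^2(\R^4)$. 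Applying Theorem~\ref{BaG} to $\vec u_n$ produces, after passing to a subsequence, $4d$ radial free wave profiles $\vec V_L^j$, times $t_n^j$, scales $\la_n^j$, and errors $\vec w_n^k$ satisfying \eqref{4d lin prof}--\eqref{free en ort}.

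Next I would define the $2d$ linear profiles by $\fy_L^j(\tau, r) := r V_L^j(\tau, r)$ and the errors by $\ga_n^k(r) := r w_n^k(r)$. A direct computation shows $\fy_L^j$ solves the $2d$ linearized wave map equation \eqref{2d lin}, since differentiating $\fy = rV$ and using the $4d$ radial d'Alembertian acting on $V$ produces exactly the operator in \eqref{2d lin}. Multiplying \eqref{4d lin prof} by $r$ and using the scaling identity $r \cdot \frac{1}{\la}V(\cdot, r/\la) = \fy(\cdot, r/\la)$ and $r \cdot \frac{1}{\la^2}\dot V(\cdot, r/\la) = \frac{1}{\la}\dot\fy(\cdot, r/\la)$ gives the stated decomposition for $\vec\psi_n$.

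The remaining properties transfer essentially by inspection. Weak convergence \eqref{w weak} in $\dot H^1 \times L^2(\R^4)$ is equivalent to the claimed weak convergence in $H \times L^2$ because the map $u \mapsto ru$ is an isometry between these spaces on the radial subspaces, and in particular an isomorphism for the weak topologies. The pseudo-orthogonality \eqref{po scales} is identical to \eqref{po} since the parameters $(t_n^j, \la_n^j)$ are unchanged. For the Strichartz vanishing, note that if $\vec\ga_n^k$ evolves linearly under \eqref{2d lin} and $\vec w_n^k$ evolves under \eqref{4d free} from the corresponding data, then $\ga_{n,L}^k(t,r) = r \, w_{n,L}^k(t,r)$, so the norm $\|\frac{1}{r}\ga_{n,L}^k\|_{L^\infty_t L^4_x \cap L^3_t L^6_x(\R\times\R^4)}$ equals $\|w_{n,L}^k\|_{L^\infty_t L^4_x \cap L^3_t L^6_x(\R\times\R^4)}$, which tends to zero as $k\to\infty$ by \eqref{w in strich}. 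Finally, the Pythagorean expansion \eqref{ort H} follows from \eqref{free en ort} together with the identity $\|\vec\psi_n\|_{H\times L^2}^2 = \|\vec u_n\|_{\dot H^1\times L^2(\R^4)}^2$ and the analogous identities for each profile and for the error.

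There is no serious obstacle here: the entire statement is a mechanical translation of Theorem~\ref{BaG} through the isometry $u \mapsto ru$. The only minor point requiring care is to verify that this substitution commutes with both the free $4d$ radial evolution and the $2d$ linearized wave map flow \eqref{2d lin}, which one checks by direct computation with the radial Laplacians in dimensions two and four.
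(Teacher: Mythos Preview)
Your proposal is correct and follows exactly the paper's approach: the paper states (in the paragraph immediately preceding Corollary~\ref{bg wm}) that the corollary follows directly from Theorem~\ref{BaG} via the identifications $\psi_n = r u_n$, $\fy_L^j(\cdot, r/\la_n^j) = \frac{r}{\la_n^j} V_L^j(\cdot, r/\la_n^j)$, $\ga_n^k = r w_n^k$ together with the isometry in~\eqref{2-4}. If anything, your sketch is more detailed than the paper's, which simply records these identifications and declares the corollary as a direct consequence.
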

In order to apply the concentration-compactness/rigidity method developed by  the second author and Merle in \cite{KM06}, \cite{KM08}, we need the following ``Pythagorean decomposition'' of the nonlinear energy \eqref{en psi}: 
\begin{lem} \label{en orth lem}
Consider a sequence $\vec \psi_n \in \HH_0$ and a decomposition as in Corollary~\ref{bg wm}. Then this Pythagorean decomposition holds for the energy of the sequence: 
\begin{align} \label{en orth} 
\E(\vec \psi_n) = \sum_{j=1}^k \E(\vec \fy_L^j(-t_n^j/ \la_n^j)) + \E(\vec \ga_n^k) + o_n(1) 
\end{align}
 as $ n \to \infty$. 
 \end{lem}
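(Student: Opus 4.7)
Using the identity $\E(\vec\psi) = \|\vec\psi\|_{H\times L^2}^2 - \int_0^\infty(\psi^2-\sin^2\psi)\,dr/r$ and the Pythagorean expansion \eqref{ort H} of the linear norm, the lemma reduces to establishing the analogous orthogonal decomposition for the scale-invariant \emph{nonlinear correction}
\[
\Phi(\psi):=\int_0^\infty (\psi^2-\sin^2\psi)\,\frac{dr}{r}.
\]
Writing $\Psi_n^j(r):=\fy_L^j(-t_n^j/\la_n^j,r/\la_n^j)$ and invoking the scale-invariance of $\Phi$, the target becomes
\[
\Phi(\psi_n) = \sum_{j=1}^k \Phi\bigl(\fy_L^j(-t_n^j/\la_n^j,\cdot)\bigr) + \Phi(\ga_n^k) + o_n(1).
\]

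The natural setting is the $4d$ variable $u:=\psi/r$; by \eqref{4d equiv} and the derivation of Corollary~\ref{bg wm} from Theorem~\ref{BaG}, the profile decomposition translates into the standard $\dot H^1(\R^4)$ decomposition $u_n=\sum_j U_n^j + w_n^k$ with $U_n^j(r)=(\la_n^j)^{-1}V_L^j(-t_n^j/\la_n^j,r/\la_n^j)$. Setting $F(x):=x^2-\sin^2 x$, one has $\Phi(\psi)=\int F(ru)\,dr/r$. The smooth function $F$ satisfies $|F(x)|\lesssim\min(x^4,x^2)$ and $|F''(x)|=4\sin^2 x\lesssim\min(x^2,1)$, so the mean value theorem yields the cross-term bound
\[
|F(a+b)-F(a)-F(b)|\lesssim |a|\,|b|\,\min\bigl((a+b)^2,1\bigr).
\]

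Iterating this estimate across $\psi_n=\sum_j\Psi_n^j+\ga_n^k$ reduces the proof to showing that every mixed integral
\[
\int_0^\infty |\Psi_n^j|\,|\Psi_n^\ell|\,\min(\psi_n^2,1)\,\frac{dr}{r}\qquad (j\ne\ell),
\]
together with its analogues involving $\ga_n^k$, tends to zero as $n\to\infty$. Rewritten in $4d$ notation this equals $\int|U_n^j U_n^\ell|\min(u_n^2,r^{-2})\,r^3\,dr$, which by H\"{o}lder's inequality on $(\R^4, r^3\,dr)$ is dominated by $\|U_n^j U_n^\ell\|_{L^2(\R^4)}\|u_n\|_{L^4(\R^4)}^2$. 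The whole question then reduces to proving $\|U_n^j U_n^\ell\|_{L^2(\R^4)}\to 0$, together with the corresponding cross norms involving $w_n^k$. This in turn follows from: (i) the pseudo-orthogonality \eqref{po} of the parameters $(\la_n^j,t_n^j)$, which, via the standard approximation of each $V_L^j(-t_n^j/\la_n^j,\cdot)$ by smooth compactly supported functions in $\dot H^1(\R^4)$, implies asymptotic disjointness of the rescaled supports; and (ii) the Strichartz-type smallness \eqref{w in strich} of $w_n^k$, which controls all cross-terms with the error via H\"{o}lder in $L^3_tL^6_x\cap L^\infty_tL^4_x$.

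The main obstacle is that $F$ is not a polynomial and $\psi_n$ is not pointwise bounded from the $H\times L^2$ hypothesis alone; the factor $\min((a+b)^2,1)$ is designed to address both regimes simultaneously. Its "small $\psi$" component reduces matters to the standard $L^4(\R^4)$-orthogonality of Bahouri-G\'{e}rard~\cite{BG}, while the "large $\psi$" set $\{|\psi_n|\ge 1\}$ is handled by the crude bound $|F|\lesssim \psi^2$ combined with Chebyshev's inequality (via the Sobolev embedding $\dot H^1(\R^4)\hookrightarrow L^4(\R^4)$, which yields $|\{|\psi_n|\ge 1\}|\lesssim \|u_n\|_{L^4(\R^4)}^4$). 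Combining the two regimes completes the proof.
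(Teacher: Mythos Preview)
Your reduction to the nonlinear correction $\Phi(\psi)=\int(\psi^2-\sin^2\psi)\,dr/r$ is sound, and the overall strategy---pass to the $4d$ picture, approximate each $V_L^j$ by $C_c^\infty$ functions, then exploit parameter orthogonality---is exactly the right one and is what the paper does. But two of the concrete steps fail as written.

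First, the pointwise inequality $|F(a+b)-F(a)-F(b)|\lesssim |a|\,|b|\,\min\bigl((a+b)^2,1\bigr)$ is false: set $a=-b\ne 0$, so the right-hand side vanishes while the left equals $2F(a)=2(a^2-\sin^2 a)>0$. The double mean-value argument (using $F(0)=F'(0)=0$ and $|F''|=4\sin^2\le 4\min(x^2,1)$) yields the bound with $(|a|+|b|)^2$ in place of $(a+b)^2$. This is easy to repair; in fact the even cruder bound $|F(a+b)-F(a)-F(b)|\lesssim |a|\,|b|$ (equivalently, your $\min\le 1$) already suffices and produces exactly the cross-integrals \eqref{i2}--\eqref{i4} that the paper treats, without the extra $\min(u_n^2,r^{-2})$ factor.

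Second, and more seriously, controlling the profile--error cross-terms by the Strichartz smallness \eqref{w in strich} does not work. That estimate says only $\limsup_{n}\|w_{n,L}^k\|_{L^\infty_tL^4_x}\to 0$ as $k\to\infty$; for \emph{fixed} $k$ the norm $\|w_n^k\|_{L^4_x}$ need not vanish as $n\to\infty$, whereas the lemma demands an $o_n(1)$ error for each fixed $k$. The correct mechanism depends on the behavior of $-t_n^j/\la_n^j$. When this sequence is bounded (so one may take $t_n^j\equiv 0$), the relevant input is the weak convergence \eqref{w weak}: after rescaling and approximating $V_L^j(0,\cdot)$ by a $C_c^\infty$ function, Rellich gives $\la_n^j w_n^k(\la_n^j\cdot)\to 0$ in $L^p_{\mathrm{loc}}(\R^4)$ for $p<4$, which kills the localized integral. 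When $|t_n^j/\la_n^j|\to\infty$, neither weak convergence nor Strichartz smallness is the right tool---one instead uses the dispersive pointwise decay $\|V_L^j(s,\cdot)\|_{L^\infty}\lesssim\langle s\rangle^{-3/2}$ together with Hardy's inequality on $w_n^k$. The paper carries out precisely this case analysis; your H\"older-plus-$L^4$ shortcut does not circumvent it.
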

 \begin{proof} 
 By \eqref{ort H}, it suffices to show for each $k$ that 
 \begin{align*} 
 \int_0^{\infty} \frac{\sin^2\left( \psi_n\right)}{r} \,  dr =  \sum_{j=1}^k  \int_0^{\infty} \frac{\sin^2\left( \fy^j_L(-t_n^j/ \la_n^j)\right)}{r} \,  dr +  \int_0^{\infty} \frac{\sin^2\left( \ga^k_n\right)}{r} \,  dr + o_n(1).
 \end{align*}
 We will need the following simple inequality: 
 \begin{align} \label{sin}
 \abs{\sin^2(x+y) -\sin^2(x) - \sin^2(y)} &= \abs{-2 \sin^2(x) \sin^2(y) + \frac{1}{2} \sin(2x) \sin(2y)}\\
 & \lesssim \abs{x} \abs{y}. \notag
 \end{align}

 Since at some point we will need to make use dispersive estimates for the $4d$ linear wave equation the argument is clearer if, at this point, we pass back to the $4d$ formulation. Recall that this means we set   
\begin{align*} 
& \psi_n(r) =  r u_n(r)\\
& \fy^j_L( -t_n^j/ \la_n^j, r/ \la_n^j) = \frac{r}{\la_n^j}V^j_L( -t_n^j/ \la_n^j, r/ \la_n^j)\\
& \ga_n^k(r)  = r w_n^k.
\end{align*}
Since we have fixed $k$, we can, by an approximation argument, assume that all of the profiles $V^j(0, \cdot)$ are smooth and supported in the same compact set, say $B(0, R)$. We seek to prove that 
\begin{align*} 
\abs{ \int_0^{\infty} \frac{\sin^2\left( r u_n\right)}{r} \, dr  - \sum_{j=1}^k  \int_0^{\infty} \frac{\sin^2\left( \frac{r}{\la_n^j}V^j_L(-t_n^j/ \la_n^j, r/ \la_n^j)\right)}{r} \,dr -  \int_0^{\infty} \frac{\sin^2\left( r w^k_n\right)}{r} \,  dr } \\
= o_n(1).
 \end{align*}
Using the inequality \eqref{sin} $k-1$ times, we can reduce our problem to showing the following two estimates:
\begin{align} 
& \int_0^{\infty}  \frac{\abs{V_L^j(-t_n^j/ \la_n^j, r/ \la_n^j)}}{(\la_n^j)}  \frac{\abs{V_L^i(-t_n^i/ \la_n^i, r/ \la_n^i)}}{(\la_n^i)}   \, r \, dr =o_n(1) \quad \textrm{for} \, \, i \neq j \label{i2}\\
& \int_0^{\infty}  \frac{\abs{V^j_L(-t_n^j/ \la_n^j, r/ \la_n^j)}}{(\la_n^j)}  \abs{w_n^k(r)}  \, r \, dr =o_n(1) \quad \textrm{for} \, \, j \le k. \label{i4}
\end{align}
From here the proof proceeds on a case by case basis where the cases are determined by which pseudo-orthogonality condition  is satisfied in \eqref{po}.  

\begin{flushleft} {\em Case $1$: $\la_n^i  \simeq \la_n^j.$}  
\end{flushleft}
In this case we may assume, without loss of generality, that $\la_n^j= \la_n^i = 1$ for all $n$. By \eqref{po} we then must have that $\abs{t_n^i-t_n^j} \to \infty$ as $n \to \infty$. This means that either $\abs{t_n^i} $ or $\abs{t_n^j}$, or both tend to $\infty$ as $n \to \infty$.  To prove \eqref{i2} we rely on the $\ang{t}^{-\frac{3}{2}}$ point-wise decay of free waves in $\R^4$. Indeed, we have 
\begin{align*} 
&\int_0^{\infty} \abs{V_L^j(-t_n^j, r)}  \abs{V_L^{i}(-t_n^i, r)} \, r \, dr \\
& \le \left( \int_{0}^{R+ \abs{t_n^j}} \abs{V_L^j(-t_n^j, r)}^2 \, r \, dr \right)^{\frac{1}{2}} \left( \int^{R+ \abs{t_n^i}}_0 \abs{V_L^i(-t_n^i, r)}^2 \, r\, dr\right)^{\frac{1}{2}}\\
&\lesssim \ang{t_n^j}^{-1/2} \ang{t_n^i}^{-1/2} = o_n(1).
\end{align*}
Next we prove \eqref{i4}. First suppose that $\abs{t_n^j} \to \infty$. Then we have 
\begin{align*} 
\int_0^{\infty}  \abs{V_L^j(-t_n^j, r)}\abs{w_n^k(r)}  \, r \, dr& \le \left( \int_{0}^{R+\abs{t_n^j}} \abs{V_L^j(-t_n^j, r)}^{2} \, r \, dr \right)^{\frac{1}{2}}\\
& \quad \times \left( \int^{\infty}_0 \abs{w_n^k(  r)}^2 \, r\, dr\right)^{\frac{1}{2}}\\
&\lesssim \|w_n^k\|_{\dot{H}^1}\ang{t_n^j}^{-\frac{1}{2}} =o_n(1)
\end{align*}
where the second inequality follows from the point-wise decay of free waves in $\R^4$ and Hardy's inequality. Finally consider the case where $\abs{t_n^j} \le C$. Then we can assume, after passing to a subsequence and translating the profile,  that $t_n^j=0$ for  every $n$. In this case, then we know that
$w_n^k \rightharpoonup 0$ weakly in $\dot{H}^1$ and hence $w_n^k\to 0$ strongly in, e.g., $L^3_{loc}(\R^4)$ as $n \to \infty$. And we have 
\begin{align*} 
\int_0^{\infty}  \abs{V_L^j(0, r)}\abs{w_n^k(r)}  \, r \, dr& \le \left( \int_{0}^{R} \abs{V_L^j(0, r)}^{\frac{3}{2}}  \, dr \right)^{\frac{2}{3}} \left( \int^{R}_0 \abs{w_n^k(  r)}^3 \, r^3\, dr\right)^{\frac{1}{3}} \\
& \le C(R) \|w_n^k\|_{L^3(B(0, R))} =o_n(1).
\end{align*}

\begin{flushleft} {\em Case $2$: $\mu_n^{i j}=\frac{\la_n^i}{\la_n^j} \to 0$  and $\frac{\abs{t_n^j}}{\la_n^j} + \frac{\abs{t_n^i}}{\la_n^i} \le C$ as $n \to \infty$.}
\end{flushleft}
We can assume, by translating the profiles, that $t_n^i = t_n^j = 0$ for all $n$. We begin by establishing \eqref{i2}. 

Changing variables we have 
\begin{align*} 
&\int_0^{\infty}  \frac{\abs{V_L^j(0, r/ \la_n^j)}}{(\la_n^j)}  \frac{\abs{V_L^i(0, r/ \la_n^i)}}{(\la_n^i)}   \, r \, dr  = \int_{0}^{R} \abs{V^j(0, r)}\mu^{ij}_n \abs{V^i(0, \mu^{ij}_n r)} \, r\, dr\\
& \le \left( \int_{0}^{R} \abs{V_L^j(0, r)}^2 \, r \, dr \right)^{\frac{1}{2}} \left( \int^R_0( \mu^{ij}_n)^2 \abs{V_L^i(0, \mu^{ij}_n r)}^2 \, r\, dr\right)^{\frac{1}{2}}\\
& \le C \left( \int_0^{R \mu_n^{ij}}  \abs{V_L^i(0, r)}^2 \, r\, dr\right)^{\frac{1}{2}} = o_n(1),
\end{align*}
where the last line follows from the fact that $R \mu_n^{ij} \to 0$ as $n \to \infty$. Next we prove \eqref{i4}. Again, we change variables to obtain
\begin{align*} 
& \int_0^{\infty}  \frac{\abs{V_L^j(-t_n^j/ \la_n^j, r/ \la_n^j)}}{(\la_n^j)}  \abs{w_n^k(r)}  \, r \, dr = \int_0^R \abs{V_L^j(0, r)} \la_n^j \abs{w^k_n( \la_n^j r)} \, r \, dr\\
 & \le \left( \int_{0}^{R} \abs{V_L^j(0, r)}^{\frac{3}{2}} \, r \, dr \right)^{\frac{2}{3}} \left( \int^R_0(\la_n^j)^3 \abs{w_n^k( \la^j_n r)}^3 \, r^3\, dr\right)^{\frac{1}{3}}= o_n(1),
 \end{align*}
 where the last line tends to $0$ as $n \to \infty$ since \eqref{w weak} implies that $\la_n^j w_n^k (\la_n^j \cdot) \to 0$ in $ L^3_{loc}(\R^4)$.  

\begin{flushleft} {\em Cases $3$: $\mu_n^{i j}=\frac{\la_n^i}{\la_n^j} \to 0$ , $\frac{\abs{t_n^j}}{\la_n^j} + \frac{\abs{t_n^i}}{\la_n^i} \to \infty$}\end{flushleft}
This remaining case can be handled by combining the techniques demonstrated in {\em Case $1$} and {\em Case $2$} using either the point-wise decay of free waves or \eqref{w weak} when applicable. We leave the details to the reader.  
\end{proof}

We will state the remaining results in this section in the $4d$ setting for simplicity.  The transition back to the $2d$ setting is straight-forward and is omitted. 

Next,  we exhibit the existence of a non-linear profile decomposition as in \cite{BG}.    We will employ the following notation: For a profile decomposition as in \eqref{4d lin prof} with profiles $\{V^j_L\}$ and parameters $\{t_n^j, \la_n^j\}$ we will denote by $\{V^j\}$ the non-linear profiles associated to $\{V^j_L(-t_n^j/ \la_n^j), \dot{V}^j_L(-t_n^j/ \la_n^j)\}$, i.e., the unique solution to \eqref{4d} such that for all $-t_n^j/ \la_n^j \in I_{\max}(V^j)$ we have
\begin{align*} 
\lim_{n \to \infty} \left\| \vec V^j(-t_n^j/\la_n^j) - \vec V_L^j(-t_n^j/ \la_n^j)\right\|_{\dot{H}^1 \times L^2} =0
\end{align*}
The existence of the non-linear profiles follows immediately from the local well-posedness theory for \eqref{4d} developed in \cite{CKM} in the case that $-t_n^j/ \la_n^j \to \tau_{\infty}^j \in \R$. If $-t_n^j/ \la_n^j  \to \pm \infty$ then the existence of the nonlinear profile follows from the existence of wave operators for \eqref{4d}.

We will make use of the following result on several occasions. 

\begin{prop}\label{nonlin profile} Let $\vec u_n \in \dot{H}^1 \times L^2$ be a uniformly bounded sequence with a profile decomposition as in Theorem~\ref{BaG}. Assume that the nonlinear profiles $V^j$ associated to the linear profiles $V^j_L$  all exist globally and scatter  in the sense that 
\begin{align*} 
\|V^j\|_{L^3_t(\R; L^6_x)} <\infty.
\end{align*}
Let $\vec u_n(t)$ denote the solution of \eqref{4d} with initial data $\vec u_n$.  Then, for $n$ large enough,  $\vec u_n(t, r)$ exists globally in time and scatters with 
\begin{align*} 
\limsup_{n \to \infty} \|u_n\|_{L^3_t(\R ; L^6_x)} < \infty.
\end{align*} 
Moreover, the following non-linear profile decomposition holds: 
\begin{align} 
u_n(t, r) = \sum_{j=1}^k \frac{1}{\la_n^j}V^j\left(\frac{t-t_n^j}{\la_n^j}, \frac{r}{\la_n^j}\right) + w_{n, L}^k(t, r)+ z_n^k(t, r)
\end{align} 
with  $w_{n, L}^k(t, r)$ as in \eqref{w in strich} and 
\begin{align}\label{nonlin error}
\lim_{k \to \infty} \limsup_{n\to \infty} \left( \|z_n^k\|_{L^3_tL^6_x} + \|\vec z_n^k\|_{L^{\infty}_t \dot{H}^1 \times L^2} \right) =0.
\end{align}
\end{prop}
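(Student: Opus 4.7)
\medskip

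\noindent\emph{Proof plan.} The plan is the standard Bahouri--G\'erard style nonlinear decomposition argument combined with a long-time perturbation lemma for \eqref{4d}, as in the Kenig--Merle framework. First I define the approximate solution
\begin{equation*}
U_n^k(t,r) := \sum_{j=1}^k \frac{1}{\la_n^j}V^j\left(\frac{t-t_n^j}{\la_n^j},\frac{r}{\la_n^j}\right) + w_{n,L}^k(t,r),
\end{equation*}
and aim to show that (i) $\vec U_n^k(0)-\vec u_n\to 0$ in $\dot H^1\times L^2$ as $n\to\infty$; (ii) $\|U_n^k\|_{L^3_tL^6_x(\R\times\R^4)}$ is bounded uniformly in $n$ and $k$; and (iii) the error $e_n^k := (\p_{tt}-\Delta)U_n^k - N(U_n^k,r)$, where $N(u,r)=u^3 Z(ru)$, tends to $0$ in a dual Strichartz norm (e.g.\ $L^1_tL^2_x$ or $L^{3/2}_tL^{6/5}_x$) after first letting $n\to\infty$ and then $k\to\infty$. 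Once these three facts are established, a standard perturbation lemma for \eqref{4d}, of the type proved in \cite{CKM} for this nonlinearity, upgrades $U_n^k$ to a genuine solution $u_n$, yields $\limsup_n\|u_n\|_{L^3_tL^6_x}<\infty$, and gives the decomposition with remainder $z_n^k := u_n - U_n^k$ satisfying \eqref{nonlin error}.

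The main content is (ii) and (iii). For (ii), the pseudo-orthogonality \eqref{po scales} implies that after passing to the $4d$ formulation, the rescaled profiles $\frac{1}{\la_n^j}V^j((t-t_n^j)/\la_n^j,r/\la_n^j)$ are asymptotically orthogonal in $L^3_tL^6_x$: a change of variables together with the global $L^3_tL^6_x$ bound on each $V^j$ and dominated convergence shows that the cross terms $\|V^j_{(n)}\cdot V^i_{(n)}\|_{L^{3/2}_tL^3_x}=o_n(1)$ for $i\neq j$, so the $L^3_tL^6_x$ norm of the sum is bounded by $\sum_{j=1}^k\|V^j\|_{L^3_tL^6_x}^3$ up to $o_n(1)$. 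Combining with \eqref{w in strich} for the linear error then controls $\|U_n^k\|_{L^3_tL^6_x}$ uniformly (once $k$ is large one absorbs $w_{n,L}^k$ using smallness). For (iii), since $V^j$ solves \eqref{4d}, $e_n^k$ equals $\sum_j N(V^j_{(n)},r) + N(w_{n,L}^k,r) - N(U_n^k,r)$; using $N(u,r)=u^3 Z(ru)$ with $Z$ smooth and bounded, this is a sum of products of at least two distinct profiles or of $w_{n,L}^k$ with other profiles. The same pseudo-orthogonality arguments used in the proof of Lemma~\ref{en orth lem} (passing to the $4d$ formulation, splitting into the cases distinguished by which relation in \eqref{po scales} holds, and using either the pointwise $\langle t\rangle^{-3/2}$ decay of $4d$ free waves for the smooth compactly supported profiles or weak convergence in $\dot H^1$ combined with local compactness) show each such cross-term vanishes as $n\to\infty$. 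The contribution involving $w_{n,L}^k$ is handled by \eqref{w in strich} after $k\to\infty$.

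The hard part, as always in this type of argument, is (iii): one has to control the nonlinear cross terms in a Strichartz-dual norm uniformly in $n$, then show the residual smallness as $k\to\infty$, while simultaneously having a perturbation lemma with a large-data threshold that does not depend on $k$. This forces one to proceed in the standard two-step way: first fix $k$ large enough that the forcing $\|N(w_{n,L}^k,r)\|_{L^{3/2}_tL^{6/5}_x}$ can be made $<\eta$ uniformly in $n$ (using \eqref{w in strich} and the bound $|N(u,r)|\lesssim |u|^3$), then send $n\to\infty$ to kill the cross terms, and finally invoke the perturbation lemma with error tolerance $\eta$ chosen small in terms of the a priori bound from (ii). Apart from this packaging, everything reduces either to the orthogonality machinery already deployed in Lemma~\ref{en orth lem} or to the small-data/perturbation theory for \eqref{4d} developed in \cite{CKM}.
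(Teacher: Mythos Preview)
Your proposal is correct and essentially identical to the paper's proof: both apply the perturbation lemma (Lemma~\ref{pert}) to an approximate solution built from the rescaled nonlinear profiles, using the pseudo-orthogonality \eqref{po scales} to show the equation error vanishes in $L^1_tL^2_x$ and to decouple the $L^3_tL^6_x$ norms of the profiles (the paper's $v_n^k$ omits $w_{n,L}^k$ and instead places it in the $w_0$ slot of Lemma~\ref{pert}, but this is cosmetic). The one point you assert without argument---that the bound in (ii) is uniform in $k$---is exactly what the paper flags as ``essential'': it is obtained by splitting the sum at an index $j_0$ chosen so that $\sum_{j>j_0}\|\vec V_L^j(-t_n^j/\la_n^j)\|_{\dot H^1\times L^2}^2<\de_0^2$, invoking the small-data theory to get $\|V^j\|_{L^3_tL^6_x}\lesssim \|\vec V_L^j(-t_n^j/\la_n^j)\|_{\dot H^1\times L^2}$ for $j>j_0$, and then summing in $\ell^2$ via \eqref{free en ort}.
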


The proof of Proposition~\ref{nonlin profile} is similar to the the proof of \cite[Proposition $2.8$]{DKM1} and we give a sketch of the argument below. In the current formulation, the argument is easier than the one given in \cite{DKM1} since here we make the simplifying assumption that all of the non-linear profiles exist globally and scatter. We also refer the reader to \cite[Proof of Proposition $3.1$]{LS} where the essential elements of the argument are carried out in an almost identical setting. 

The main ingredient in the proof of Proposition~\ref{nonlin profile} is the following non-linear perturbation lemma which we will also make use of later as well. For the proof of the perturbation lemma we refer the reader to \cite[Theorem $2.20$]{KM08}, and \cite[Lemma $3.3$]{LS}. In the latter reference a detailed proof in an almost identical setting is provided which can be applied verbatim here. 

\begin{lem}\cite[Theorem $2.20$]{KM08} \cite[Lemma $3.3$]{LS} \label{pert} There  are continuous functions $\e_0,  C_0: (0, \infty) \to (0, \infty)$ such that the following holds: Let $I\subset \R$ be an open interval, (possibly unbounded), $u, v \in C^0(I; \dot{H}^1(\R^4)) \cap C^1(I; L^2(\R^4)) $ radial functions satisfying for some $A>0$ 
\begin{align*} 
&\|\vec u\|_{L^{\I}(I; \dot{H}^1 \times L^2)}+ \|\vec v\|_{L^{\I}(I; \dot{H}^1 \times L^2)}+ \|v\|_{L^3_t(I; L^6_x)} \le A\\
&\|\textrm{eq}(u)\|_{L^1_t(I; L^2_x)}+\|\textrm{eq}(v)\|_{L^1_t(I; L^2_x)} + \|w_0\|_{L^3_t(I; L^6_x)} \le \e \le \e_0(A)
\end{align*} 
where $\textrm{eq}(u):= \Box u +u^3Z(ru)$ in the sense of distributions, and $\vec w_0(t):= S(t-t_0)(\vec u-\vec v)(t_0)$ with $t_0 \in I$ arbitrary, but fixed and $S$ denoting the free wave evolution operator in $\R^{1+4}$. Then,
\begin{align*} 
\|\vec u -\vec v - \vec w_0\|_{L^{\I}_t(I; \dot{H}^1 \times L^2)} + \|u-v\|_{L^3_tL^6_x} \le C_0(A) \e
\end{align*} 
In particular, $\|u\|_{L^3_t(I; L^6_x)} < \I$. 
\end{lem}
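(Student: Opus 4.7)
Set $e := u - v - w_0$, so that $\vec e(t_0) = 0$ and
\[
\Box e = -\bigl[N(u) - N(v)\bigr] + \bigl[\mathrm{eq}(u) - \mathrm{eq}(v)\bigr] \qquad \text{on } I \times \R^4,
\]
where $N(u) := u^3 Z(ru)$. Since $Z$ is smooth, bounded and even, both $|Z(x)|$ and $|xZ'(x)|$ are bounded uniformly in $x \in \R$; applied to $F(w) = w^3 Z(rw)$ this gives $|F'(w)| \le C w^2$ uniformly in $r > 0$, hence the pointwise Lipschitz bound
\[
|N(u) - N(v)| \le C(u^2 + v^2)|u - v|.
\]
Given a threshold $\de = \de(A) > 0$ (to be chosen small in terms of $A$), use $\|v\|_{L^3_tL^6_x(I)} \le A$ to split $I$ into at most $N \le (A/\de)^3 + 1$ consecutive subintervals $I_0, I_1, \dots, I_{N-1}$, with $t_0 \in I_0$ and $\|v\|_{L^3_tL^6_x(I_j)} \le \de$ on each. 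The plan is to run a standard Strichartz/continuity argument on $I_0$ and iterate across the remaining subintervals on either side of $t_0$.

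On $I_0$, the $\dot H^1$-admissible Strichartz pair $(3, 6)$ for the radial $4$d wave equation and $\vec e(t_0) = 0$ give
\[
M_0 \le C_1 \bigl(\|N(u) - N(v)\|_{L^1_tL^2_x(I_0)} + \|\mathrm{eq}(u) - \mathrm{eq}(v)\|_{L^1_tL^2_x(I_0)}\bigr),
\]
where $M_0 := \|e\|_{L^3_tL^6_x(I_0)} + \|\vec e\|_{L^\infty_t(\dot H^1 \times L^2)(I_0)}$. Using $u - v = w_0 + e$, the bound $\|u\|_{L^3_tL^6_x(I_0)} \le \de + \e + M_0$, the Lipschitz estimate above, and H\"older's inequality, one obtains
\[
M_0 \le C_2\bigl[(\de^2 + \e^2 + M_0^2)(\e + M_0) + \e\bigr].
\]
Choosing $\de$ so that $C_2 \de^2 \le 1/4$ and $\e \le \e_0(A)$ small enough, a continuity bootstrap in the right (and separately the left) endpoint of $I_0$, starting from $M_0 = 0$ at $t_0$, closes to give $M_0 \le K_1(A)\, \e$.

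To extend to $I_1$, I would rerun the same argument for $e$ on $I_1$ with reference linear profile $w_0 + \tilde w_0^{(1)}$, where $\tilde w_0^{(1)}(t) := S(t - t_1)\vec e(t_1)$ is the free evolution of the defect $\vec e(t_1)$ at the common endpoint $t_1$ of $I_0$ and $I_1$. Strichartz for the free $4$d wave gives $\|\tilde w_0^{(1)}\|_{L^3_tL^6_x(I_1)} \lesssim \|\vec e(t_1)\|_{\dot H^1\times L^2} \le K_1 \e$, and the same bootstrap closes on $I_1$ with updated constant $K_2 \le C'(K_1 + 1)$. Iterating over the $N = N(A)$ subintervals on either side of $t_0$ yields the conclusion with $C_0(A) = C^{N(A)}$, provided $\e_0(A)$ is taken small enough that the bootstrap smallness survives each step. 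The main subtlety is the uniform-in-$r$ Lipschitz estimate on $N$, which rests precisely on the boundedness of $|xZ'(x)|$ to control the otherwise dangerous term $w^3 r Z'(rw)$ in $\partial_w F$; everything else is bookkeeping of constants across the finite chain of subintervals.
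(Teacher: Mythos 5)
Your proposal is correct and follows essentially the same route as the paper, which gives no proof of Lemma~\ref{pert} itself but defers to \cite[Theorem $2.20$]{KM08} and \cite[Lemma $3.3$]{LS}; their argument is precisely this scheme of splitting $I$ into $N(A)$ subintervals on which $\|v\|_{L^3_tL^6_x}$ is small, the pointwise Lipschitz bound $|N(u)-N(v)|\lesssim (u^2+v^2)|u-v|$ coming from the boundedness of $Z$ and $xZ'(x)$, a Strichartz/continuity bootstrap on each subinterval with the $\dot H^1$-admissible pair $(3,6)$ in $\R^{1+4}$, and propagation of the defect $\vec e(t_j)$ by the free evolution with geometrically growing constants. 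The only point to state explicitly is that $\e_0(A)$ is fixed after $N(A)$ so the smallness survives all iterations, which you already note.
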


\begin{proof}[Proof of Proposition~\ref{nonlin profile}]
Set 
\begin{align*} 
v_n^k(t, r) = \sum_{j=1}^k \frac{1}{\la_n^j}V^j\left(\frac{t-t_n^j}{\la_n^j}, \frac{r}{\la_n^j}\right) 
\end{align*} 
We would like to apply Lemma~\ref{pert} to $u_n$ and $v_n^k$ for large $n$ and we need to check that the conditions of Lemma~\ref{pert} are satisfied for these choices. First note that $\textrm{eq}(u_n) = 0$. We claim that $\|\textrm{eq}(v_n^k)\|_{L^1_tL^2_x}$ is small for large $n$. To see this, observe that 
\begin{align*} 
\textrm{eq}(v_n^k) = \sum_{j=1}^k N\left( V^j_n(t, r)\right) - N\left( \sum_{j=1}^k V_n^j(t, r)\right)
\end{align*} 
where we have used the notation $V^j_n(t, r):= \frac{1}{\la_n^j}V^j\left(\frac{t-t_n^j}{\la_n^j}, \frac{r}{\la_n^j}\right) $ and $N(v) = v^3 Z(rv)$ as in \eqref{4d}. Using the simple inequality 
\begin{multline} \label{sin 2}
\abs{\frac{\sin(2ru) + \sin(2rv) - \sin(2r(u+v))}{2r^3}} \\= \abs{\frac{2 \sin(2ru) \sin^2(rv) + 2 \sin(2rv) \sin^2 (ru)}{2r^3}}  \lesssim u^2\abs{v} + v^2 \abs{u}
\end{multline} 
together with the pseudo-orthogonality of the times and scales in \eqref{po scales} and arguing as in the proof of Lemma~\ref{en orth lem} we obtain $\|\textrm{eq}(v_n^k)\|_{L^1_tL^2_x} \to 0$ as $n \to \infty$ for any fixed $k$. 
Next it is essential that 
\begin{align} \label{unif in k}
 \limsup_{n \to \infty} \left\|\sum_{j=1}^k V^j_n\right\|_{L^3_tL^6_x} \le A < \infty
 \end{align} 
 {\em uniformly in $k$}, which will follow from the small data theory together with \eqref{free en ort}. The point here is that the sum can be split into one over $1 \le j \le j_0$ and another over $j_0 \le j \le k$ . The splitting is performed in terms of the free energy, with $j_0$ being chosen so that 
 \begin{align*} 
  \limsup_{n \to \infty} \sum_{j_0 <j \le k} \|V^j_L(-t_n^j/ \la_n^j)\|_{\dot{H}^1 \times L^2}^2 < \de_0^2
  \end{align*}
  where $\de_0$ is chosen so that the small data theory applies. Using again \eqref{po scales} as well as the small data scattering theory one now obtains 
  \begin{align*} 
  \limsup_{n \to \infty}  \left\|\sum_{j_0 < j \le k}  V^j_n\right\|^3_{L^3_tL^6_x} &= \sum_{j_0 < j \le k}\|V^j\|_{L^3_tL^6_x}^3 \\
  & \le C \limsup_{n \to \infty} \left( \sum_{j_0 <j \le k} \|V^j_L(-t_n^j/ \la_n^j)\|_{\dot{H}^1 \times L^2}^2 \right)^{\frac{3}{2}}
  \end{align*}
 with an absolute constant $C$. This implies \eqref{unif in k}. Now the desired result follows directly from Lemma~\ref{pert}. 
\end{proof}

In Section \ref{sect bu} we will require a few additional results from \cite{CKS}. We restate these results here for completeness. First, we note that for a profile decomposition as in Theorem \ref{BaG}, the Pythagorean decompositions of the free energy remain valid even after a space localization. In particular we have the following: 
\begin{prop}\cite[Corollary $8$]{CKS} \label{c16}Consider a sequence of radial data $\vec u_n \in \dot{H}^1 \times L^2( \R^4)$ such that $ \|u_n\|_{\dot{H}^1 \times L^2} \le C$, and a profile decomposition of this sequence as in Theorem \ref{BaG}. Let $\{r_n\}\subset (0, \infty)$ be any sequence. Then we have 
\begin{align*}
 \| \vec u_n\|_{\dot{H}^1 \times L^2(r \ge r_n)}^2  = \sum_{1 \le j \le k} \| \vec V_L^j( - t_n^j/ \la_n^j) \|_{\dot{H}^1 \times L^2(r \ge r_n/ \la_n^j)}^2  +  \|\vec w_n^k\|_{\dot H^1\times L^2(r \ge r_n)}^2 + o_n(1)
\end{align*} 
as  $n \to \infty$.
\end{prop}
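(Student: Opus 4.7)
The plan is to substitute the profile decomposition $u_{n,0} = \sum_{j=1}^k V^j_{L,n,0} + w^k_{n,0}$ (with $V^j_{L,n,0}(r) := (\la_n^j)^{-1}V^j_L(-t_n^j/\la_n^j, r/\la_n^j)$) and its analogue for $u_{n,1}$ into $\|\vec u_n\|^2_{\dot H^1 \times L^2(r \ge r_n)}$, then show that all cross terms vanish as $n\to\infty$. The diagonal contribution coming from the $j$-th profile is handled by the change of variables $r = \la_n^j s$, which converts $\int_{r_n}^\infty |\p_r V^j_{L,n,0}|^2 r^3\,dr$ into $\int_{r_n/\la_n^j}^\infty |\p_s V^j_L(-t_n^j/\la_n^j, s)|^2 s^3\,ds$, exactly the rescaled profile norm in the statement (and similarly for the time component); the $\vec w_n^k$ diagonal term is already in the required form. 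Thus the content is to show each off-diagonal term is $o_n(1)$.

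For the profile-profile cross terms with $i \ne j$, I would pass to a subsequence so that $\la_n^j/\la_n^i$ and $(t_n^j - t_n^i)/\la_n^j$ admit limits in $[0,\infty]$ and $[-\infty,\infty]$, respectively; the pseudo-orthogonality \eqref{po scales} then forces at least one degenerate limit. After approximating $\vec V^j_L(0), \vec V^i_L(0)$ in $\dot H^1 \times L^2$ by smooth, compactly supported data (energy conservation of the free flow preserves the approximation at all times) and transferring to the $j$-th frame, one sees that the two profiles concentrate either at widely separated scales or, within a common scale, on disjoint light-cone annuli; the resulting $L^2$ inner product is $o_n(1)$. The cutoff $\chi_{s \ge r_n/\la_n^j}$ is bounded by $1$ and requires no separate attention. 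This is precisely the argument that proves the unlocalized identity \eqref{free en ort}.

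For the profile-error cross term, the same change of variable recasts it as
\begin{equation*}
\int_{r_n/\la_n^j}^\infty \p_s V^j_L(-t_n^j/\la_n^j, s)\, \p_s \tilde w^k_{n,0}(s)\, s^3\, ds \; + \; (\text{time-derivative analogue}),
\end{equation*}
where $\tilde w^k_{n,0}(s) := \la_n^j w^k_{n,0}(\la_n^j s)$. By \eqref{w weak}, $\vec{\tilde w}^k_n \rightharpoonup 0$ weakly in $\dot H^1 \times L^2$. Passing to a further subsequence with $r_n/\la_n^j \to \rho^j \in [0,\infty]$ and $-t_n^j/\la_n^j \to s^j \in [-\infty,\infty]$, two cases arise. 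If $s^j \in \R$, strong continuity of the free flow gives $\p_s V^j_L(-t_n^j/\la_n^j) \to \p_s V^j_L(s^j)$ in $L^2(s^3 ds)$, and together with $\chi_{s \ge r_n/\la_n^j}\to\chi_{s\ge\rho^j}$ a.e.\ and dominated convergence, the truncated gradient converges strongly in $L^2(s^3 ds)$, so pairing with the weakly null $\p_s \tilde w^k_{n,0}$ produces $o_n(1)$. If $s^j = \pm\infty$, I would approximate $\vec V^j_L(0)$ in $\dot H^1\times L^2$ by smooth compactly supported data $\vec \phi^\e$ up to error $\e$; the corresponding free evolution $\vec\Phi^\e$ on $\R^{1+4}$ satisfies $\|\vec V^j_L(t)-\vec\Phi^\e(t)\|_{\dot H^1\times L^2}\le \e$ for every $t$ by energy conservation, and its $\dot H^1$ density concentrates in a fixed-width annulus around $|s|\sim |t_n^j/\la_n^j|$ by finite speed of propagation. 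A case split on whether $|t_n^j/\la_n^j|$ is eventually much larger or much smaller than $r_n/\la_n^j$ shows that $\chi_{s\ge r_n/\la_n^j}\p_s \Phi^\e(-t_n^j/\la_n^j)$ converges strongly in $L^2(s^3 ds)$ (either to $\p_s\Phi^\e(-t_n^j/\la_n^j)$ itself or to $0$), so its pairing with the weakly null error is $o_n(1) + O(\e)$, and letting $\e \to 0$ closes the case.

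The main obstacle I expect is this last subcase, $-t_n^j/\la_n^j \to \pm\infty$, in which $V^j_L(-t_n^j/\la_n^j)$ has no strong $\dot H^1$ limit so the naive weak-strong pairing fails; it is rescued by the density approximation combined with the dispersive/finite-propagation structure of free waves on $\R^{1+4}$ and a comparison between the rates of $r_n$ and $|t_n^j|$.
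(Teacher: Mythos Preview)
The paper does not supply its own proof of this proposition; it is quoted from \cite[Corollary~8]{CKS}. Your overall strategy---expand the square and show each cross term is $o_n(1)$---is the standard one, and your treatment of the case $-t_n^j/\la_n^j \to s^j \in \R$ is fine: there one may take $t_n^j \equiv 0$, so \eqref{w weak} indeed gives $\vec{\tilde w}^k_n \rightharpoonup 0$, and the truncated profile converges strongly by dominated convergence.

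Your proposed resolution of the case $-t_n^j/\la_n^j \to \pm\infty$ has two gaps. First, \eqref{w weak} asserts weak nullity of the \emph{evolved} rescaled error at time $t_n^j/\la_n^j$, not of $\vec{\tilde w}^k_n$ at time $0$; the latter need not converge weakly to zero (it may carry profiles of index $>k$ that happen to sit at scale $\la_n^j$ and time $0$). The fix is to use unitarity of the free flow to rewrite the full-space pairing as $\langle \vec\Phi^\e(0),\, \vec{\tilde w}^k_{n,L}(t_n^j/\la_n^j)\rangle_{\dot H^1\times L^2}$, which pairs a fixed vector against the sequence that \eqref{w weak} actually controls. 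Second, and more seriously, your dichotomy ``$|t_n^j/\la_n^j|$ eventually much larger or much smaller than $r_n/\la_n^j$'' omits the case where $r_n/\la_n^j - |t_n^j/\la_n^j|$ stays bounded. Then the cutoff slices through the light-cone annulus supporting $\vec\Phi^\e(-t_n^j/\la_n^j)$, and the truncated profile neither vanishes nor agrees with the full one. Closing this case requires showing that $S(-\tau)\bigl[\chi_{\{s\ge |\tau|+d\}}\,\vec\Phi^\e(\tau)\bigr]$ converges strongly in $\dot H^1\times L^2$ as $|\tau|\to\infty$ for each fixed $d\in\R$---a true but nontrivial statement about the asymptotic (radiation-field) structure of free radial waves in $\R^4$, and precisely the ingredient the argument in \cite{CKS} supplies.
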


Next, we will need a fact about solutions to the free $4d$ radial wave equation that is also established in \cite{CKS}. The following result is the analog of \cite[Claim $2.11$]{DKM1} adapted to $\R^4$. In \cite{DKM1} it is proved in odd dimensions only.

\begin{lem}\cite[Lemma $11$]{CKS} \cite[Claim $2.11$]{DKM1} \label{local scat lem} Let $\vec w_n(0)= (w_{n,0}, w_{n,1})$ be a uniformly bounded sequence in $\dot{H}^1 \times L^2(\R^4)$ and let $\vec w_n(t)\in \dot{H}^1 \times L^2( \R^4)$ be the corresponding sequence of radial $4d$ free waves. Suppose that 
\begin{align*}
\|w_n \|_{L^3_tL^6_x} \to 0
\end{align*}
as $n \to \infty$. Let $\chi \in C^{\I}_0(\R^4)$ be radial so that $\chi \equiv 1$ on $\abs{x} \le 1$ and $\textrm{supp} \chi \subset \{ \abs{x} \le 2\}$. Let $\{\la_n\} \subset (0, \infty)$ and consider the truncated  data 
\begin{align*}
\vec v_n(0):=  \fy(r/ \la_n) \vec w_n(0),
\end{align*}
where either $\fy= \chi$ or $\fy= 1- \chi$. Let $\vec v_n(t)$ be the corresponding sequence of free waves. Then 
\begin{align*}
\|v_n \|_{L^3_tL^6_x} \to 0 \quad \textrm{as} \quad n \to \infty.
\end{align*}
\end{lem}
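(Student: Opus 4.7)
The plan is to argue by contradiction using the Bahouri-Gérard profile decomposition of Theorem~\ref{BaG}, transferring a hypothetical nontrivial profile of $\vec v_n(0)$ to a nontrivial profile of $\vec w_n(0)$, which is forbidden by $\|w_n\|_{L^3_tL^6_x}\to 0$. As a preliminary reduction, the identity $\vec w_n(0)=\chi(\cdot/\lambda_n)\vec w_n(0)+(1-\chi(\cdot/\lambda_n))\vec w_n(0)$ combined with the triangle inequality in $L^3_tL^6_x$ shows it suffices to treat $\varphi=\chi$.

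Assume, for contradiction, that $\|v_n\|_{L^3_tL^6_x}\ge c>0$ along a subsequence. The product rule and Hardy's inequality imply that multiplication by $\chi(\cdot/\lambda_n)$ is uniformly bounded on $\dot H^1\times L^2$, so $\vec v_n(0)$ is uniformly bounded and Theorem~\ref{BaG} yields linear profiles $\vec V^j_L$ with parameters $(\lambda_n^j,t_n^j)$ satisfying~\eqref{po scales}. The Pythagorean decoupling of $L^3_tL^6_x$ for asymptotically orthogonal free waves, combined with~\eqref{w in strich}, forces the existence of a nontrivial profile $V^{j_0}_L\not\equiv 0$. Conversely, the hypothesis $\|w_n\|_{L^3_tL^6_x}\to 0$ together with weak-to-weak continuity of the free-wave Strichartz map and weak lower semicontinuity of the $L^3_tL^6_x$ norm imply that every weak limit of any rescaled-translated subsequence of $\vec w_n(0)$ in $\dot H^1\times L^2$ must vanish (otherwise the corresponding free wave would contribute a strictly positive Strichartz norm, contradicting the hypothesis).

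Set $\mu_n:=\lambda_n^{j_0}/\lambda_n$ and $\tau_n^{j_0}:=t_n^{j_0}/\lambda_n^{j_0}$, and pass to a subsequence so that $\mu_n\to\mu^\star\in[0,\infty]$ and $\tau_n^{j_0}\to\tau^\star\in[-\infty,\infty]$. Let $\Pi_n$ denote the $\dot H^1\times L^2$-isometric rescaling at scale $\lambda_n^{j_0}$. The crucial pointwise identity is
\[
\Pi_n\vec v_n(0)=\chi(\mu_n\,\cdot)\,\Pi_n\vec w_n(0),
\]
which, combined with the scaling covariance $\Pi_n S(t_n^{j_0})=S(\tau_n^{j_0})\Pi_n$, rewrites the profile extraction of $V^{j_0}_L$ as $S(\tau_n^{j_0})[\chi(\mu_n\cdot)\Pi_n\vec w_n(0)]\rightharpoonup \vec V^{j_0}_L(0)\ne 0$ in $\dot H^1\times L^2$. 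A case analysis on $\mu^\star$ then reduces this to a weak-convergence statement for $\Pi_n\vec w_n(0)$ modulated by the rescaled cutoff. When $\mu^\star\in(0,\infty)$, $\chi(\mu_n\cdot)\to\chi(\mu^\star\cdot)$ smoothly on compacts. When $\mu^\star=0$, the multiplier tends to $1$ uniformly on compact subsets of $[0,\infty)$ and the derivative $\mu_n\chi'(\mu_n\cdot)$ is supported in the annulus $\{1/\mu_n\le r\le 2/\mu_n\}$ which escapes every bounded set. When $\mu^\star=\infty$, the support of $\chi(\mu_n\cdot)$ shrinks to the origin and, using that $\{0\}$ has zero $\dot H^1$-capacity in dimension four (so that $C_c^\infty(\R^4\setminus\{0\})$ is dense in $\dot H^1(\R^4)$), testing against compactly supported functions away from the origin yields vanishing. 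In every scenario, the nonzero weak limit $\vec V^{j_0}_L(0)$ must arise from a nontrivial rescaled-translated weak limit of $\vec w_n(0)$, contradicting the conclusion of the previous paragraph.

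The main obstacle is the simultaneous handling of the rescaled cutoff $\chi(\mu_n\cdot)$ and the time shift when $|\tau^\star|=\infty$: in that regime, weak convergence of $\Pi_n\vec w_n(0)$ does not automatically propagate under $S(\tau_n^{j_0})$, because a sequence of data converging weakly to zero may still ``concentrate'' after a large-time free evolution. To close the argument one works instead with the complementary free wave $\vec{\tilde v}_n$ with data $(1-\chi(\cdot/\lambda_n))\vec w_n(0)$, using the splitting $\Pi_n\vec w_n(t_n^{j_0})=\Pi_n\vec v_n(t_n^{j_0})+\Pi_n\vec{\tilde v}_n(t_n^{j_0})$. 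Since $\Pi_n\vec w_n(t_n^{j_0})=S(\tau_n^{j_0})\Pi_n\vec w_n(0)\rightharpoonup 0$ while $\Pi_n\vec v_n(t_n^{j_0})\rightharpoonup \vec V^{j_0}_L(0)\ne 0$, the complementary sequence $\Pi_n\vec{\tilde v}_n(t_n^{j_0})$ must carry the nonzero weak limit $-\vec V^{j_0}_L(0)$; the same $\mu^\star$ trichotomy applied to the multiplier $1-\chi(\mu_n\cdot)$, together with the already-established fact that $\Pi_n\vec w_n(0)$ admits no nontrivial rescaled-translated weak limits, then rules out this possibility and furnishes the desired contradiction.
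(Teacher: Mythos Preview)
The paper does not give its own proof of this lemma; it is quoted from \cite{CKS} and \cite{DKM1} and used as a black box. So there is no paper proof to compare against, and the question is whether your argument stands on its own.

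Your setup is sound: the reduction to $\varphi=\chi$, the uniform $\dot H^1\times L^2$ bound on $\vec v_n(0)$, the extraction of a nontrivial profile $V^{j_0}_L$ under the contradiction hypothesis, and the observation that $\vec w_n(0)$ admits no nontrivial rescaled--time-shifted weak limits are all correct. The $\mu^\star$ trichotomy in your third paragraph also works cleanly, \emph{but only when $|\tau^\star|<\infty$}: in that regime one may absorb the finite time shift into the profile and argue at the level of initial data, where the multiplier analysis you describe (convergence of $\chi(\mu_n\cdot)$ to $\chi(\mu^\star\cdot)$, to $1$, or support collapsing to $\{0\}$) legitimately forces the weak limit to come from a weak limit of $\Pi_n\vec w_n(0)$, hence to vanish.

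The gap is in your fourth paragraph. When $|\tau_n^{j_0}|\to\infty$, you correctly derive that $\Pi_n\vec{\tilde v}_n(t_n^{j_0})\rightharpoonup -\vec V^{j_0}_L(0)\ne 0$, but the claim that ``the same $\mu^\star$ trichotomy applied to $1-\chi(\mu_n\cdot)$\,\dots\ rules out this possibility'' does not follow. The trichotomy is an argument about the \emph{initial} data $(1-\chi(\mu_n\cdot))\Pi_n\vec w_n(0)$; it yields information about weak limits at time zero, not after the unbounded evolution $S(\tau_n^{j_0})$. Passing from $v_n$ to $\tilde v_n$ merely transfers the profile from one truncation to the complementary one --- you have shown that \emph{both} $v_n$ and $\tilde v_n$ carry a nontrivial profile at the parameters $(\lambda_n^{j_0},t_n^{j_0})$, but this is not a contradiction in itself, and the obstacle (a weakly null sequence need not remain weakly null after an $n$-dependent unitary) is identical for $\tilde v_n$. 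Nor does the trichotomy upgrade to strong convergence: for instance when $\mu^\star=\infty$, one only gets $\chi(\mu_n\cdot)\Pi_n\vec w_n(0)\rightharpoonup 0$, not $\to 0$ in $\dot H^1\times L^2$, so one cannot simply commute past $S(\tau_n^{j_0})$. As written, the argument is circular in the regime $|\tau^\star|=\infty$.

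The proofs in \cite{DKM1} and \cite{CKS} close this case by a different mechanism (dispersive/pointwise estimates for the free propagator rather than pure weak-limit bookkeeping), and some such additional input appears to be genuinely needed here.
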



\section{Outline of the Proof of Theorem~\ref{main}}\label{km}

The proof of Theorem~\ref{main} follows from the concentration-compactness/rigidity method developed by the second author and Merle in \cite{KM06},  \cite{KM08}. This method provides a framework for establishing global existence and scattering results for a large class of nonlinear dispersive equations. We begin with a brief outline of the argument adapted to our current situation. For data $\vec \psi(0) \in \HH_0$ denote by $\vec \psi(t)$ the nonlinear evolution to \eqref{cp} associated to $\vec{\psi}(0)$. Define the set 
\begin{align}\label{scat set}
\cS:=\{\vec \psi(0) \in \HH_0 \, \vert\, \vec\psi(t)\, \,   \textrm{exists globally and scatters to zero as}\,\,  t \to \pm \infty\}
\end{align} 
Our goal is then to prove that 
\begin{align*}
\{\vec \psi(0) \in \HH_0 \, \vert\, \E(\vec \psi) < 2 \E(Q)\} \subset \cS
\end{align*}
This will be accomplished by establishing the following three steps. First, we recall the following global existence and scattering result proved in \cite{CKM}, for data in $\HH_0$ with energy $\le \E(Q)$. 
\begin{thm}\cite[Theorem $1$ and Corollary $1$]{CKM} \label{CKM main}There exists a small $\de> 0$ with the following property. Let $\vec\psi(0)=(\psi_0, \psi_1) \in \HH_0$ be such that $\E(\vec\psi) < \E(Q)+ \de$. Then, there exists a unique global evolution $\vec\psi \in C^0(\R; \HH_0)$ to \eqref{cp} which scatters to zero in the sense of \eqref{scat}.
\end{thm}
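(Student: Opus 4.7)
The plan is to apply the concentration-compactness and rigidity scheme of Kenig-Merle (the same framework outlined in Section~\ref{km} for the main theorem), specialized to the regime near the ground-state energy. By Lemma~\ref{ckm lem2}, for data with $\E(\vec\psi) < 2\E(Q)$ the nonlinear energy is equivalent to $\|\vec\psi\|_{H\times L^2}^2$, so via the identification $\psi = r u$ the problem is equivalent to a radial $4$d wave equation with the cubic-type nonlinearity $N(u,r) = u^3 Z(ru)$. Standard Strichartz estimates therefore yield local well-posedness and unconditional small-data scattering: there is a threshold $\delta_0>0$ below which every datum in $\HH_0$ generates a global evolution that scatters.

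Suppose the theorem is false and fix $\delta$ so small that $\E(Q) + \delta < 2\E(Q)$. Define
\[
E_c := \inf\{\,\E(\vec\psi(0)) : \vec\psi(0)\in\HH_0,\ \vec\psi(t)\ \text{does not scatter}\,\},
\]
so that $E_c\in[\delta_0,\E(Q)+\delta)$. Choose a sequence $\vec\psi_n(0)\in\HH_0$ with $\E(\vec\psi_n)\searrow E_c$ whose evolutions $\vec\psi_n(t)$ fail to scatter. Apply the Bahouri-G\'erard profile decomposition (Corollary~\ref{bg wm}) and the Pythagorean expansion of the nonlinear energy (Lemma~\ref{en orth lem}). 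By minimality of $E_c$, every nonlinear profile of energy strictly less than $E_c$ is global and scatters. The nonlinear profile decomposition (Proposition~\ref{nonlin profile}) together with the perturbation lemma (Lemma~\ref{pert}) then forces all but one profile and the error term to vanish asymptotically, producing a ``critical element'' $\vec U_c(t)\in\HH_0$ of energy exactly $E_c$ whose trajectory is pre-compact in $\HH$ modulo the scaling symmetry of \eqref{cp}.

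To conclude, invoke the rigidity theorem (Theorem~\ref{rigidity}): any such pre-compactly orbited solution is either identically zero or a rescaled equivariant harmonic map $\pm Q(\cdot/\tilde\lambda)$. Since $U_c\in\HH_0$ whereas $\pm Q\in\HH_{\pm 1}$, the harmonic-map alternative is ruled out by the topological restriction, and we conclude $U_c\equiv 0$. This contradicts $\E(U_c)=E_c\ge \delta_0>0$, finishing the proof of both global existence and scattering for all $\vec\psi(0)\in\HH_0$ with $\E(\vec\psi)<\E(Q)+\delta$.

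The hardest step is the rigidity statement used at the end. In the regime $\E(\vec\psi) < 2\E(Q)$ one has the a priori bound $|\psi(t,r)|\le K(\E(\vec\psi))<\pi$ from Lemma~\ref{ckm lem2}, which is essential to propagate compactness while staying away from the south pole. The rigidity is established by virial-type monotonicity identities combined with the non-negativity of the equivariant energy density (which excludes self-similar blow-up, as in \cite{SS}) and the regularity theorem of H\'elein, which upgrades any weak stationary limit to a genuine harmonic map. One can avoid appealing to the full Theorem~\ref{rigidity} here by using instead the weaker variational argument of \cite[Lemma~7]{CKM} tailored to the neighborhood of $Q$, but the Kenig-Merle route sketched above is the cleaner one and yields the same conclusion.
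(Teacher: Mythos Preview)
The paper does not give its own proof of this statement: Theorem~\ref{CKM main} is quoted from \cite{CKM} and used as input. The paper does remark, immediately after stating it, that the proof of Theorem~\ref{main} via Theorem~\ref{rigidity2} furnishes an independent alternative proof; your proposal is essentially that alternative route, specialized to the smaller energy window $\E(\vec\psi)<\E(Q)+\de$.

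There is, however, a genuine circularity in the way you have written it. You invoke Theorem~\ref{rigidity} at the rigidity step, but in this paper the proof of Theorem~\ref{rigidity} \emph{uses} Theorem~\ref{main} (see the end of both Step~1 and Step~2 of its proof, where the case $U(t)\in\HH_0$ with $\E(\vec U)=\E(Q)$ is disposed of by appealing to Theorem~\ref{main}). Since Theorem~\ref{main} already contains Theorem~\ref{CKM main} as a special case, your chain of implications closes on itself. The fix is simple: replace the appeal to Theorem~\ref{rigidity} by Theorem~\ref{rigidity2}, whose proof is self-contained (virial identity, compactness, and the fact that the only harmonic map in $\HH_0$ is zero) and does not rely on any scattering statement. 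With that substitution your argument is correct and coincides with the paper's own logic for Theorem~\ref{main}.

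For comparison, the original proof in \cite{CKM} is genuinely different from this route: their rigidity step is not Theorem~\ref{rigidity2} but the variational Lemma~7 of \cite{CKM}, which exploits the specific structure of the energy near $Q$. You allude to this at the end; it is worth noting that the present paper deliberately avoids that lemma (see the remark following Theorem~\ref{rigidity}) precisely to show that the rigidity argument here is independent of those variational considerations.
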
 
This shows that $\cS $ is not empty. We remark that Theorem~\ref{CKM main} gives more than what is needed for the rest of the argument. A small data global existence and scattering result such as \cite[Theorem $2$]{CKM} would suffice to show that $\cS$ is not empty. In fact, the proof of Theorem~\ref{main}, and in particular Theorem $4.1$ provide an independent alternative to the proof of scattering below $\E(Q)+ \de$ given in \cite{CKM}.  

Next, we argue by contradiction. Assume that Thereom~\ref{main} fails and  suppose that $\E(Q)<\E^* < 2\E(Q)$ is the minimal energy level at which a failure to the conclusions of Theorem~\ref{main} occurs. We then combine the concentration compactness decomposition given in Corollary \ref{bg wm}, the nonlinear perturbation theory in Lemma~\ref{pert}, and the nonlinear profile decomposition in Proposition~\ref{nonlin profile} to extract a so-called critical element, i.e., a nonzero solution $\vec\psi_* \in C^0( I_{\max}(\vec \psi_*); \HH_0)$ to \eqref{cp} whose trajectory in $\HH_0$ is pre-compact up to certain time-dependent scaling factors arising due to the  scaling symmetry of the equation. Here $I_{\max}(\vec \psi)$ is the maximal interval of existence of $\vec\psi_*$. To be specific, we can deduce the following proposition:
\begin{prop}\cite[Proposition $2$ and Proposition $3$]{CKM} \label{conc comp}Suppose that Theorem~$1$ fails and let $\E^*$ be defined as above. Then, there exists a nonzero solution $\vec \psi_*(t) \in \HH_0$ to \eqref{cp}, (referred to as a the critical element), defined on its maximal interval of existence $I_{\max}(\vec \psi_*) \ni0$, with $$\E(\vec \psi_*)= \E^* <2\E(Q)$$ Moreover, there exists $A_0>0$, and a continuous function $\la: I_{\max} \to [A_0, \infty) $ such that the set 
\begin{align} \label{K} 
K:=\left\{ \left (\, \psi_*\left(t, \frac{r}{\la(t)} \right),\,  \frac{1}{\la(t)}\dot{\psi}_*\left( t,\frac{r}{ \la(t)}\right) \right)\,\Big{\vert} \, t\in I_{\max}\right\}
\end{align}
is pre-compact in $H \times L^2$. 
\end{prop}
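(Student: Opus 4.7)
\smallskip

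\noindent\textbf{Plan of Proof.} The argument follows the classical concentration--compactness scheme of Kenig--Merle, adapted to the equivariant wave map setting using the tools collected in Section~\ref{prelim}. First, by the definition of $\E^*$ as the minimal energy at which Theorem~\ref{main} fails and by Theorem~\ref{CKM main}, $\E^* > \E(Q)$. Select a sequence $\vec\psi_n(0) \in \HH_0$ with $\E(\vec\psi_n) \searrow \E^*$ such that the corresponding nonlinear evolutions $\vec\psi_n(t)$ do not belong to $\cS$. By Lemma~\ref{ckm lem2} the sequence is uniformly bounded in $H\times L^2$, so we can apply Corollary~\ref{bg wm} to extract linear profiles $\vec\fy^{\,j}_L$ with parameters $(t_n^j, \la_n^j)$ satisfying the pseudo-orthogonality~\eqref{po}, together with the error term $\vec\ga_n^k$.

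\smallskip

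The first key step is to show \emph{only one nontrivial profile carries the full energy $\E^*$}. By the nonlinear energy Pythagorean expansion of Lemma~\ref{en orth lem},
\begin{equation*}
\E^* = \lim_{n\to\infty}\E(\vec\psi_n) = \sum_{j=1}^k \E(\vec\fy^{\,j}_L(-t_n^j/\la_n^j)) + \E(\vec\ga_n^k) + o_n(1).
\end{equation*}
Suppose for contradiction that at least two profiles are nontrivial, or that one profile has energy strictly below $\E^*$ with the remainder contributing. Each associated nonlinear profile $V^j$ then has energy strictly less than $\E^*$, so by minimality of $\E^*$ each $V^j$ is global and scatters, i.e.\ $\|V^j\|_{L^3_tL^6_x}<\infty$ (passing to the $4d$ formulation via $\psi=ru$). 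Proposition~\ref{nonlin profile} then produces a nonlinear profile superposition approximating $\vec\psi_n$ in $L^{\I}_t(\dot H^1\times L^2)\cap L^3_tL^6_x$, and the perturbation lemma (Lemma~\ref{pert}) yields global existence and scattering for $\vec\psi_n$ itself for large $n$, contradicting $\vec\psi_n\notin\cS$. Hence exactly one profile $\vec\fy^{\,1}_L$ is present, $\E(\vec\fy^{\,1}_L)=\E^*$, and $\|\vec\ga_n^1\|_{H\times L^2}\to 0$.

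\smallskip

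Let $\vec V$ be the nonlinear profile associated to $\vec\fy^{\,1}_L$ and its parameters. Define the critical element by suitably rescaling and translating so that $0\in I_{\max}(\vec\psi_*)$; by construction $\E(\vec\psi_*)=\E^*$ and $\vec\psi_*\notin\cS$. The pre-compactness of the rescaled trajectory
\begin{equation*}
K=\left\{\Bigl(\psi_*(t,\cdot/\la(t)),\ \la(t)^{-1}\dot\psi_*(t,\cdot/\la(t))\Bigr):\ t\in I_{\max}\right\}
\end{equation*}
follows from a standard argument: any sequence $\tau_n\in I_{\max}$ yields a new sequence $\vec\psi_*(\tau_n,\cdot)$, which is a new sequence of non-scattering solutions of energy $\E^*$; running the same profile decomposition analysis on this sequence again forces a single concentrating profile, and the resulting scales define $\la(\tau_n)$ along which convergence in $H\times L^2$ holds after subtracting the asymptotically vanishing remainder. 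Continuity of $\la$ on $I_{\max}$ is obtained by the usual procedure of selecting $\la(t)$ via a compactness modulus (following, e.g., Kenig--Merle); rescaling once more we may assume $\la(t)\ge A_0>0$ on $I_{\max}$.

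\smallskip

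The main obstacle is the profile selection step, specifically verifying that the perturbation lemma applies once all nonlinear profiles are known to scatter — this requires the uniform-in-$k$ bound on $\|\sum_j V^j_n\|_{L^3_tL^6_x}$ built in the proof of Proposition~\ref{nonlin profile}, which in turn relies on the smallness of the tail of the profile decomposition in the free energy norm via~\eqref{free en ort} and the small data theory for~\eqref{4d}. The rest is routine once Lemma~\ref{en orth lem} and Proposition~\ref{nonlin profile} are in place.
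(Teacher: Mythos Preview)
Your outline is correct and follows exactly the concentration--compactness scheme that the paper invokes; note that the paper does not supply its own proof of this proposition but cites \cite[Propositions~2 and~3]{CKM} together with the remarks pointing to \cite{KM06,KM08,DKM1} for the continuity of $\la$ and the lower bound $\la(t)\ge A_0$. One small correction: the lower bound $\la(t)\ge A_0>0$ is not obtained by a mere rescaling---it requires the separate argument referenced in the paper's third remark (from \cite[Section~6, Step~3]{DKM1}), which rules out $\la(t_n)\to 0$ by showing that such a sequence would force scattering of the critical element.
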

\begin{rem} 
As noted above, the Cauchy problem \eqref{cp}, for data $\vec\psi(0) \in V(\al)$ with $\al\le 2\E(Q)$ is equivalent to  the  Cauchy problem for the $4d$ nonlinear radial wave equation, \eqref{4d}, via the identification $ru = \psi$. Hence, it suffices to carry out the small data global existence and scattering argument, as well as the concentration compactness decomposition and the extraction of a critical element on the the level of the $4d$ equation \eqref{4d} for $u$. We remark that in this setting, scattering in the sense of \eqref{scat} is equivalent to $\|u\|_{\mathcal{X}(\R^{1+4})}<\infty$ where $\mathcal{X}$ is a suitably chosen Strichartz norm. For example, $\mathcal{X} = L^3_tL^6_x$ will do. 
\end{rem} 
 \begin{rem} 
 In the proof of Theorem~\ref{main}, the requirement that $\E(\vec \psi(0))< 2\E(Q)$ arises in the concentration compactness procedure.  Indeed, in order to ensure that the critical element  $\vec\psi_*$ described in Proposition~\ref{conc comp} lies in $\HH_0$ one needs to require that any sequence of data $\{\vec{\psi}_n(0)\}$ with energies converging from below to the minimal energy level $\E_*$, also have uniformly bounded $H\times L^2$ norms. This is only guaranteed when $\E_*<2\E(Q)$ by Lemma~\ref{ckm lem2}.  In this case, one obtains a sequence of data $\vec u_n(0)$, via the identification $ru_n =\psi_n$, that is uniformly bounded in $\dot H^1 \times L^2(\R^4)$ and on which one is free to perform the concentration compactness decomposition as in \cite{BG} and extract a critical element $\vec u_*$ as in  \cite{KM08}, \cite{CKM}. We can then define $\vec \psi_* := r\vec u_*$.
 \end{rem} 
\begin{rem} 
For the proof that the function $\la(t)$ described in Proposition~\ref{conc comp} can be taken to be continuous, we refer the reader to \cite[Lemma $4.6$]{KM08} and \cite[Remark $5.4$]{KM06}. The fact that we can assume that $\la$ is bounded from below follows verbatim from  the arguments given in \cite[Section $6$, Step $3$]{DKM1}. See also, \cite[Proof of Theorem $7.1$]{KM08} and \cite[Proof of Theorem $5.1$]{KM06}. 
\end{rem}

The final step, referred to as the rigidity argument, consists of showing any solution $\vec \psi(t) \in \HH_0$ with the aforementioned compactness properties must be identically zero, which provides the contradiction. This part of the concentration compactness/rigidity method is what allows us to extend the result in \cite{CKM} to all energies below $2\E(Q)$ and we will thus carry out the proof in detail in the next section.

\subsection{Sharpness of Theorem~\ref{main} in $\HH_0$}\label{sharp} Before we begin the rigidity argument, we first show that Theorem~\ref{main} is indeed sharp in $\HH_0$ by demonstrating the following claim: for all $\de>0$ there exist data $\vec\psi(0) \in \HH_0$ with $\E(\psi)\le 2\E(Q) + \de$, such that the corresponding wave map evolution, $\psi(t)$, blows up in finite time. This follows easily from the blow-up constructions of \cite{KST} or \cite{RR}. 
 
Fix $\de_0 >0$.  By \cite{KST} or \cite{RR} we can choose data $\vec u(0) \in \HH_1$ such that 
 \begin{align*} 
 \E(\vec u(0)) \le \E(Q) + \de, \quad \de \ll \de_0
 \end{align*}
such that the corresponding wave map evolution $\vec u(t) \in \HH_1$ blows up at time $t~=~1$. In other words, the energy of $\vec u(t)$ concentrates in the backwards light cone, $K(1,0):=\{(t, r) \in[0,1]\times [0,1] \,\,  \vert\,\,  r\le 1-t\}$, emanating from the point $(1, 0) \in \R\times [0, \infty]$, i.e., 
\begin{align*} 
\lim_{t \nearrow 1}\E_0^{1-t}(\vec u(t)) \ge \E(Q) 
\end{align*}
where $\E_a^b(u, v) = \int_a^b (u_r^2 + v^2 + \frac{\sin^2(u)}{r^2})\, r \, dr$.  Now define $\vec \psi(0) \in  \HH_0$ as follows: 
\begin{align} 
\psi(0,r) = \begin{cases} u(0, r) \quad \, \textrm{if} \quad r \le 2\\ \pi -Q(\la r) \quad \textrm{if}\quad r\ge2\end{cases}
\end{align}
where $\la>0$ is chosen so that $\pi - Q(2\la ) = u(0, 2)$. We note that the existence of such a $\la$ follows form the fact that we can ensure that $u(0, r)>0 $ for $r>1$. To see this, observe that since $\vec u(t)$ blows up at time $t=1$ and thus must concentrate at least $\E(Q)$ inside the light cone we can deduce by the monotonicity of the energy that $\E_{0}^1( \vec u(0)) \ge \E(Q)$. Now choose  $\de<\E(Q)$. If we have $u(0, r) \le 0$ for any $r>1$ we would need at least $\E_r^{\infty}(u(0), 0) \ge \E(Q)$ to ensure that $u(0, \infty)=  \pi$. This follows from the minimality of $\E(Q)$ in $\HH_1$. However $\E_{r}^{\infty}(u(0), 0) \le \de < \E(Q)$. 

Now observe that
\begin{align} 
\E(\vec \psi(0)) = \E_0^2(\vec u(0)) + \E_2^{\infty}(\pi- Q) \le \E(\vec u) + \E(Q) \le 2\E(Q) + \de.
\end{align}
Let $\vec\psi(t)$ denote the wave map evolution of the data $\vec \psi(0)$. By the finite speed of propagation, we have that $\vec\psi(t, r) = \vec u(t, r)$ for all $(t, r) \in K(0,1)$ and hence 
\begin{align}
 \lim_{t \nearrow 1} \E_0^{1-t}(\vec \psi(t)) = \lim_{t \nearrow 1}\E_0^{1-t}(\vec u(t)) \ge \E(Q)
 \end{align}
 which means that $\vec \psi(t)$ blows up at $t=1$ as desired. Note that if one wishes to construct blow-up data in $ \HH_0$ that maintains the smoothness of $ u(0)$, one can simply smooth out $\vec \psi(0, r)$ in a small neighborhood of the point $r=2$ using an arbitrarily small amount of energy. 
 
We again remark that the questions of determining  the possible dynamics at the threshold, $\E(\vec \psi) = 2 \E(Q)$, and above it, $\E(\vec \psi)>2\E(Q)$, are not addressed here and remain open.


\section{Rigidity} \label{rigid}

 In this section we prove Theorem~\ref{rigidity} and complete the proof of Theorem~\ref{main}. We begin by establishing a rigidity theory in $\HH_0$ which will allow us to deduce Theorem~\ref{main}.  We then use the conclusions of Theorem~\ref{main} together with the proof of Theorem~\ref{rigidity2} to establish Theorem~\ref{rigidity}.

 \begin{thm}[Rigidity in $\HH_0$]\label{rigidity2} Let $\vec \psi(t) \in \HH_0$ be a solution to \eqref{cp} and let $I_{\max}(\psi)= (T_-(\psi), T_+(\psi))$ be the maximal interval of existence. Suppose that there exist $A_0>0$ and a continuous function $\la : I_{\max} \to [A_0, \infty)$ such that the set 
 \begin{align} \label{K2}
  K:= \left\{\,\left (\, \psi\left(t, \frac{r}{\la(t)} \right),\,  \frac{1}{\la(t)}\dot{\psi}\left( t,\frac{r}{ \la(t)}\right) \right)\, \Big{\vert}\, t \in I_{max} \right\}
 \end{align}
 is pre-compact in $H \times L^2$. Then, $I_{\max}= \R$ and $\psi\equiv 0$.
 \end{thm}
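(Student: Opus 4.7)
The plan is to follow the Kenig--Merle concentration-compactness/rigidity strategy, splitting into the finite-time blow-up scenario and the global-in-time scenario; in both cases, the goal is to extract a static, finite-energy $1$-equivariant map in $\HH_0$ from a rescaled subsequence, and then to show that this static profile must be identically zero.

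To rule out finite-time blow-up, assume $T_+(\vec\psi) < \infty$ and translate so $T_+ = 1$. First I would argue that $\la(t)(1-t) \to \infty$ as $t \to 1^-$: otherwise, pre-compactness of $K$ produces an $H \times L^2$-convergent rescaled sequence of data at some $t_n \to 1^-$ whose limit has, by the local Cauchy theory for \eqref{4d}, a uniform positive life-span, and pulling this back through the scaling contradicts blow-up at $t=1$. Combined with pre-compactness, this divergence implies that for every $\delta > 0$, $\E_{(1+\delta)(1-t)}^\infty(\vec\psi(t)) \to 0$ as $t \to 1^-$. The standard finite-speed-of-propagation monotonicity of exterior energy in cones expanding outside the backward light cone from $(1,0)$ then forces $\vec\psi(t) \equiv 0$ on $\{r > 1-t\}$ for every $t \in [0, 1)$. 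Next I would apply Lemma~\ref{t av decay} to produce a sequence $t_n \to 1^-$ with $\int_0^{1-t_n} \dot\psi^2(t_n, r)\, r\, dr \to 0$; by pre-compactness the rescaled profiles $(\psi(t_n, \cdot/\la(t_n)), \la(t_n)^{-1} \dot\psi(t_n, \cdot/\la(t_n)))$ strongly converge in $H \times L^2$ to a limit $\vec\psi_\infty = (\psi_\infty, 0)$, and passing to the limit in \eqref{cp} identifies $\psi_\infty$ as a finite-energy $1$-equivariant weakly harmonic map.

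In the global case $I_{\max} = \R$, I would invoke an analogous global time-averaged virial identity of Shatah--Tahvildar-Zadeh, which together with the lower bound $\la(t) \geq A_0$ and the pre-compactness of $K$ produces a sequence $t_n \to \infty$ and a strong $H \times L^2$-limit $\vec\psi_\infty = (\psi_\infty, 0)$ of the rescaled profiles with $\dot\psi_\infty \equiv 0$. In both scenarios, H\'elein's regularity theorem upgrades $\psi_\infty$ to a smooth $1$-equivariant harmonic map from $\R^2$ into $\Sp^2$, and the classification of such maps forces $\psi_\infty \equiv 0$ or $\psi_\infty = \pm Q(\cdot/\mu)$ for some $\mu > 0$. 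Since $\psi_\infty \in H$ but $\|Q_\mu\|_H = \infty$ (the integral $\int Q_\mu^2/r\, dr$ diverges at infinity because $Q(\infty) = \pi$), only the trivial alternative is compatible, so $\psi_\infty \equiv 0$. Pre-compactness of $K$ then forces $\|\vec\psi(t)\|_{H \times L^2} \to 0$ along the full family; since $\E(\vec\psi) \leq \|\vec\psi\|_{H \times L^2}^2$ (using $\sin^2\psi \leq \psi^2$) and energy is conserved, we conclude $\vec\psi \equiv 0$, contradicting both the blow-up hypothesis and the assumption of a nontrivial global solution.

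The main obstacle will be passing to the limit in the nonlinearity $\sin(2\psi)/(2r^2)$ when identifying $\psi_\infty$ as a harmonic map, which requires leveraging the strong convergence in $H \times L^2$ afforded by pre-compactness together with the $L^\infty$-control on $\psi_\infty$ coming from its finite $H$-norm. A secondary subtlety is that when $\E(\vec\psi)$ exceeds $2\E(Q)$, the pointwise bound $|\psi| < \pi$ of Lemma~\ref{ckm lem2} is not available, so the cone-support step in the finite-time case must rely solely on the divergence $\la(t)(1-t) \to \infty$ and the exterior-energy monotonicity, without recourse to any such pointwise control.
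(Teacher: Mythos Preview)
Your overall strategy---exclude finite-time blow-up via cone support and kinetic-energy vanishing, and in both cases extract a static limit which must be trivial in $\HH_0$---matches the paper's.  But there is a genuine gap in the step where you ``pass to the limit in~\eqref{cp}'' to identify $\psi_\infty$ as harmonic.

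What you obtain from Lemma~\ref{t av decay} is a sequence $t_n\to1$ with $\int_0^{1-t_n}\dot\psi^2(t_n,r)\,r\,dr\to0$, so the strong $H\times L^2$ limit of the rescaled \emph{data} is $(\psi_\infty,0)$.  This says only that the limiting solution has zero velocity at one instant; it does not make $\psi_\infty$ time-independent.  To conclude that $\psi_\infty$ is harmonic you must show $\dot\psi_\infty\equiv0$ on a nondegenerate time interval, and for that you need \emph{time-averaged} vanishing of the kinetic energy on a rescaled unit interval, i.e.
\[
\la(t_n)\int_{t_n}^{t_n+1/\la(t_n)}\int_0^\infty\dot\psi^2(t,r)\,r\,dr\,dt\longrightarrow0.
\]
In the blow-up case the paper extracts this from Corollary~\ref{t dec extract} by choosing $\sigma=1/\la(t_n)$ (which requires the bound $\la(t_n)(1-t_n)\ge C_0$; your stronger claim $\la(t)(1-t)\to\infty$ is neither proved by the lifespan argument you sketch nor needed).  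In the global case the paper gets it from the virial identity of Lemma~\ref{vir} plus a pigeonhole construction of the sequence $\{t_n\}$.  Once the time-averaged vanishing is in hand, the paper feeds the strong convergence of data through the perturbation lemma (Lemma~\ref{pert}) to propagate it over $[0,T_0]$, and then reads off $\int_0^{T_0}\int\dot\psi_\infty^2=0$.  Your H\'elein-based route would also work, but it too needs space-time control of $\dot\psi_n$, not just control at $t=0$.

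Two smaller points.  First, once $\psi_\infty\equiv0$ the conclusion is immediate from energy conservation along the convergent subsequence: $\E(\vec\psi)=\lim_n\E(\vec\psi_n(0))=\E(0)=0$; pre-compactness does not by itself force $\|\vec\psi(t)\|_{H\times L^2}\to0$ along the full family.  Second, for Theorem~\ref{rigidity2} you do not actually need H\'elein: a time-independent finite-$H$-norm solution of~\eqref{cp} in $\HH_0$ satisfies the harmonic-map ODE with $\psi(0)=\psi(\infty)=0$, and your observation that $\|Q_\mu\|_H=\infty$ already rules out the only nontrivial candidates.  The paper reserves H\'elein for the full Theorem~\ref{rigidity}, where the limit need not lie in $\HH_0$.
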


We begin by recalling the following virial identity:
\begin{lem}\label{vir} Let $\chi_R(r) =\chi(r/R)\in C_0^{\infty}(\R)$ satisfy $\chi(r)= 1$ on $[-1, 1]$ with $\supp(\chi) \subset [-2, 2]$. Suppose that $\vec \psi$ is a solution to \eqref{cp} on some interval $I \ni 0$. Then, for all $T \in I$ we have 
\begin{align}\label{virial}
 \ang{\chi_R\dot \psi \, \vert \, r \psi_r}\Big{|}_0^T = -\int_0^T \int_0^{\infty} \dot{\psi}^2 \, r\, dr\, dt + \int_0^T O( \E_R^{\infty} (\vec\psi(t)))\, dt.
\end{align}
\end{lem}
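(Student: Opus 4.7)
The plan is to interpret the pairing as $\ang{\chi_R\dot\psi\mid r\psi_r}=\int_0^\infty \chi_R(r)\,\dot\psi(t,r)\,r\psi_r(t,r)\,r\,dr$ (the radial $L^2(\R^2)$ inner product), differentiate this quantity in $t$, and then use the equation \eqref{cp} together with integration by parts in $r$ to recover the right hand side. Specifically, I compute
\EQ{
\f{d}{dt}\ang{\chi_R\dot\psi\mid r\psi_r}=\int_0^\I \chi_R\,\psi_{tt}\,\psi_r\,r^2\,dr+\int_0^\I \chi_R\,\dot\psi\,\dot\psi_r\,r^2\,dr,
}
replace $\psi_{tt}=\psi_{rr}+\f1r\psi_r-\f{\sin(2\psi)}{2r^2}$ from \eqref{cp}, and treat the four resulting terms.

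The key step is the integration by parts. For the kinetic piece I use $\dot\psi\dot\psi_r=\tfrac12\p_r(\dot\psi^2)$ to get
\EQ{
\int_0^\I \chi_R\,\dot\psi\,\dot\psi_r\,r^2\,dr=-\int_0^\I \chi_R\,\dot\psi^2\,r\,dr-\tfrac12\int_0^\I \chi_R'\,\dot\psi^2\,r^2\,dr.
}
The $\psi_{rr}\psi_r$ contribution, similarly rewritten as $\tfrac12\p_r(\psi_r^2)$ and integrated by parts, cancels exactly against $\int \chi_R\,\psi_r^2\,r\,dr$ coming from the $\f1r\psi_r$ term, leaving only the boundary-type remainder $-\tfrac12\int \chi_R'\,\psi_r^2\,r^2\,dr$. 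Finally, since $\sin(2\psi)\psi_r=\p_r(\sin^2\psi)$, the nonlinear term becomes $+\tfrac12\int \chi_R'\,\sin^2\psi\,dr$ after one integration by parts. Assembling,
\EQ{
\f{d}{dt}\ang{\chi_R\dot\psi\mid r\psi_r}=-\int_0^\I \chi_R\,\dot\psi^2\,r\,dr-\tfrac12\int_0^\I \chi_R'\bigl(r^2\dot\psi^2+r^2\psi_r^2-\sin^2\psi\bigr)\,dr.
}

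The last step is to verify that the right hand side equals $-\int_0^\I \dot\psi^2\,r\,dr$ modulo an $O(\E_R^\I(\vec\psi))$ error. Writing $-\int \chi_R\dot\psi^2 r\,dr=-\int \dot\psi^2 r\,dr+\int(1-\chi_R)\dot\psi^2 r\,dr$ accounts for the first piece, since $1-\chi_R$ is supported in $r\ge R$. For the three terms carrying $\chi_R'$, I use that $\supp\chi_R'\subset[R,2R]$ with $|\chi_R'|\lec 1/R\lec 1/r$ on that annulus; this converts $|\chi_R'|\,r^2\,dr$ into a measure bounded by $r\,dr$ and $|\chi_R'|\,dr$ into a measure bounded by $r^{-1}\,dr=r^{-2}\,r\,dr$, so each of the three integrals is controlled by $\E_R^\I(\vec\psi(t))$. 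Integrating the resulting pointwise-in-$t$ identity from $0$ to $T$ yields \eqref{virial}. The only technicality is this bookkeeping step matching the weights of the cutoff-derivative terms to the three densities composing $\E_R^\I$; everything else is a routine computation using \eqref{cp}.
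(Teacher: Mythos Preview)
Your proof is correct and follows essentially the same approach as the paper's own proof: differentiate the pairing in $t$, substitute the equation \eqref{cp}, integrate by parts using $\psi_{rr}\psi_r=\tfrac12\p_r(\psi_r^2)$, $\dot\psi\dot\psi_r=\tfrac12\p_r(\dot\psi^2)$, and $\sin(2\psi)\psi_r=\p_r(\sin^2\psi)$, then bound the cutoff-derivative remainders using $|\chi_R'|\lesssim 1/R$ on $[R,2R]$. The computations and error estimates match the paper line by line.
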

 \begin{proof} Since $\vec\psi$ is a solution to \eqref{cp} we have 
 \begin{align*} 
 \frac{d}{dt} \ang{\chi_R\dot \psi \, \vert \, r \psi_r} &= \ang{\chi_R\ddot \psi \, \vert \, r \psi_r} + \ang{\chi_R\dot \psi \, \vert \, r \dot\psi_r} \\
 &= \ang{\chi_R(\psi_{rr} + \frac{1}{r} \psi_r - \frac{\sin(2\psi)}{2r^2})  \, \Big{|} \, r \psi_r} + \ang{\chi_R\dot \psi \, \vert \, r \dot\psi_r} \\
 &= - \int_0^{\infty} \dot\psi^2 r\, dr + \int_0^{\I}(1- \chi_R) \dot{\psi}^2 \, r\, dr\\
 &\quad -\frac{1}{2} \int_0^{\I} \left( \dot \psi^2 + \psi^2_r - \frac{\sin^2(\psi)}{r^2}\right) \chi_R^{\prime} \, r^2\, dr.
\end{align*}
 Observe that 
 \begin{align*} 
 \abs{\int_0^{\I}(1- \chi_R) \dot{\psi}^2 \, r\, dr} \lesssim \E_R^{\infty}(\vec \psi).
 \end{align*} 
 Finally, noting that $\chi_R^{\prime}(r) = \frac{1}{R} \chi^{\prime}(r/R)$, we obtain
 \begin{multline*}
 \abs{\int_0^{\I} \frac{1}{2} \left( \dot \psi^2 + \psi^2_r - \frac{\sin^2(\psi)}{r^2}\right) \chi_R^{\prime} \, r^2\, dr} \\\lesssim \int_R^{2R} \left( \dot \psi^2 + \psi^2_r +\frac{\sin^2(\psi)}{r^2}\right) \frac{r}{R} \chi'\left(\frac{r}{R}\right)r\,\,dr
 \lesssim \E_R^{\I}(\vec \psi).
 \end{multline*}
Hence we can conclude that 
\begin{align*}
\frac{d}{dt} \ang{\chi_R\dot \psi \, \vert \, r \psi_r} = - \int_0^{\infty} \dot{\psi}^2 \, r\, dr\,  + O( \E_R^{\infty} (\vec\psi(t))).
\end{align*}
An integration from $0$ to $T$ proves the lemma. 
\end{proof}

With the virial identity \eqref{virial}, we can begin the proof of Theorem~\ref{rigidity2}. This will be done in several steps and is inspired by the arguments in \cite[Proof of Theorem $2$]{DKM1}. To begin, we recall from \cite{KM08} that any wave map with a pre-compact trajectory in $H \times L^2$ as in \eqref{K2} that blows up in finite time is supported on the backwards light cone. 

\begin{lem}\cite[Lemma $4.7$ and Lemma $4.8$]{KM08}\label{comp supp} Let $\vec{\psi}(t) \in \HH_0$ be a solution to \eqref{cp} such that $I_{\max}(\vec \psi)$ is a finite interval. Without loss of generality we can assume $T_+(\vec \psi) =1$. Suppose there exists a continuous function $\la : I_{\max} \to (0, \infty)$ so that $ K$, as defined in \eqref{K2}, is pre-compact in $H \times L^2$. Then 
\begin{align}\label{la lower}
0< \frac{C_0(K)}{1-t} \le \la(t).
\end{align}
And, for every $t \in [0, 1)$ we have 
\begin{align}\label{supp} 
\supp (\vec \psi(t)) \in [0, 1-t).
\end{align}
\end{lem}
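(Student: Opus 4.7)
The plan is to adapt the Kenig-Merle strategy for critical elements from \cite{KM08} to the present equivariant setting, proving the two assertions in sequence. Since $K$ is pre-compact in $H \times L^2$, it is in particular uniformly bounded, and one obtains the uniform tightness statement that for every $\e > 0$ there exists $R(\e) > 0$ with $\E_{R(\e)/\la(t)}^{\infty}(\vec\psi(t)) < \e$ for all $t \in [0,1)$. Moreover, via Lemma~\ref{ckm lem2} and the $4d$ reformulation \eqref{4d}, the local Cauchy theory applies uniformly on bounded subsets of $H \times L^2$.

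For the lower bound \eqref{la lower}, I argue by contradiction. Assume there is a sequence $t_n \nearrow 1$ with $\la(t_n)(1-t_n) \to 0$, and form the rescaled wave maps
\begin{equation*}
\vec\psi_n(\tau, r) := \left(\psi\!\left(t_n + \tfrac{\tau}{\la(t_n)},\, \tfrac{r}{\la(t_n)}\right),\, \tfrac{1}{\la(t_n)}\dot\psi\!\left(t_n + \tfrac{\tau}{\la(t_n)},\, \tfrac{r}{\la(t_n)}\right)\right),
\end{equation*}
whose forward maximal lifespan equals $\la(t_n)(1-t_n)$. Pre-compactness of $K$ yields, up to a subsequence, $\vec\psi_n(0) \to \vec v_{\infty}$ in $H \times L^2$. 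The nonlinear evolution of $\vec v_{\infty}$ has some positive forward lifespan $\delta_{\infty} > 0$, and by continuous dependence for \eqref{4d} the lifespan of $\vec\psi_n$ is at least $\delta_{\infty}/2$ for all large $n$, contradicting $\la(t_n)(1-t_n) \to 0$.

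For the support property \eqref{supp}, fix $t_0 \in [0,1)$ and $r_0 > 1-t_0$, and set $\delta_0 := r_0 - (1-t_0) > 0$. The central ingredient is the standard exterior-energy inequality coming from finite speed of propagation, valid for equivariant wave maps because of the non-negativity of the energy density and of the null-flux term: for every $t_1 \in [t_0, 1)$,
\begin{equation*}
\E_{r_0}^{\infty}(\vec\psi(t_0)) \,\le\, \E_{r_0 - (t_1 - t_0)}^{\infty}(\vec\psi(t_1)) \,=\, \E_{(1-t_1)+\delta_0}^{\infty}(\vec\psi(t_1)) \,\le\, \E_{\delta_0}^{\infty}(\vec\psi(t_1)).
\end{equation*}
Combining this with the tightness bound and the just-proved lower bound \eqref{la lower} (so that $R(\e)/\la(t_1) \le R(\e)(1-t_1)/C_0 \to 0$ as $t_1 \to 1$), we find $\E_{\delta_0}^{\infty}(\vec\psi(t_1)) < \e$ for $t_1$ close to $1$, and hence $\E_{r_0}^{\infty}(\vec\psi(t_0)) < \e$. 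Sending $\e \to 0$ gives $\E_{r_0}^{\infty}(\vec\psi(t_0)) = 0$; since $\psi(t_0,\cdot) \to 0$ at infinity (as $\vec\psi(t_0) \in \HH_0$), this forces $\vec\psi(t_0, r) = 0$ for $r \ge r_0$. As $r_0 > 1-t_0$ was arbitrary, $\supp(\vec\psi(t_0)) \subset [0, 1-t_0)$. The main technical point is verifying the exterior-energy monotonicity with the correct sign in the equivariant setting, though this is classical; the rest follows the concentration compactness machinery developed in \cite{KM08}.
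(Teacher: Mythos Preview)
Your proposal is correct and follows precisely the standard Kenig--Merle argument from \cite[Lemmas 4.7 and 4.8]{KM08} that the paper cites in lieu of a proof: the lower bound on $\la(t)$ via contradiction with the local Cauchy theory applied to a convergent rescaled subsequence, and the support property via the exterior-energy monotonicity combined with uniform tightness of the pre-compact trajectory. There is no alternative argument to compare against, as the paper simply invokes the cited reference.
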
 

We can now begin the proof of Theorem~\ref{rigidity2}.
\begin{proof}[Proof of Theorem~\ref{rigidity2}]  
\hspace{1in}\begin{flushleft}\emph{Step $1$}: \hspace{.5in}\end{flushleft}
First we show that $I_{\max}(\psi)= \R$. Assume that $T_+(\vec \psi) < \infty$ and we proceed by contradiction. Without loss of generality, we may assume that $T_+(\vec \psi) =1$. By Lemma~\ref{comp supp}, we can deduce  that $0< \frac{C_0(K)}{1-t} \le \la(t)$ and $\supp(\vec \psi(t)) \in [0, 1-t)$. 
In addition, we know, by  Lemma~\ref{ext en decay}, (see \cite{St} or the argument in \cite[Lemma $2.2$]{STZ}), that self similar blow-up for $2d$ wave maps is ruled out; note that the argument in~\cite{STZ} applies  only to smooth wave maps. See Appendix~\ref{a:STZ} for how to extend this result to solutions in~$\HH_0$. This implies that there exists a sequence $\{\tau_n\} \subset (0,1)$ with  $\tau_n \to 1$ such that 
\begin{align} \label{taun}
\frac{1}{ \la(\tau_n)(1-\tau_n)} < 1 \quad \textrm{as} \quad  n \to \infty.
\end{align}

Hence, we can extract a further subsequence $\{t_n\}\to 1$  and apply Corollary~\ref{t dec extract} with $\sigma= \frac{1}{\la(t_n)}$ to obtain,  for every $n$, the bound
\begin{align}\label{1n}
\la(t_n) \int_{t_n}^{t_n + \frac{1}{\la(t_n)}}  \int_0^{\infty} \dot \psi^2(t, r) \, r\, dr\, dt \le \frac{1}{n}.
\end{align}
Note that above we have used the fact that $\textrm{supp} (\vec \psi(t)) \in [0, 1-t)$. Next, with $t_n$ as above, define a sequence in $\HH_0$ by setting 
\begin{align*} 
\vec \psi_n(0) = (\psi_n^0, \psi_n^1):=\left(\psi\left(t_n, \frac{r}{\la(t_n)}\right), \frac{1}{\la(t_n)} \dot\psi\left(t_n, \frac{r}{\la(t_n)}\right)\right).
\end{align*}
The nonlinear evolutions associated to our sequence 
\begin{align*}
\vec \psi_n(t):=\left(\psi\left(t_n+\frac{t}{\la(t_n)}, \frac{r}{\la(t_n)}\right), \frac{1}{\la(t_n)} \dot\psi\left(t_n+ \frac{t}{\la(t_n)}, \frac{r}{\la(t_n)}\right)\right)
\end{align*}
are then solutions to \eqref{cp} with $\E(\vec{\psi}_n) = \E(\vec{\psi})$. Observe  that 
\begin{align} \label{t lim1} 
\int_0^{1} \int_0^{\infty} \dot\psi_n^2(t, r) \, r\, dr\, dt \to 0 \quad \textrm{as} \quad n \to \infty.
\end{align}
Indeed, by \eqref{1n} we have that 
\begin{align*}
\int_0^{1} \int_0^{\infty} \dot\psi_n^2 \, r\, dr\, dt = \la(t_n) \int_{t_n}^{t_n + \frac{1}{\la(t_n)}}  \int_0^{\I} \dot{\psi}^2(t, r) \, r \, dr  \, dt \to 0 \quad \textrm{as} \quad n \to \I.
\end{align*}
We now proceed as follows. By the compactness of $K$ we can find $\vec \psi_{\infty}(0) = ( \psi_{\infty}^0, \psi_{\infty}^1) \in \HH_0$ and a subsequence of $\{\vec{\psi_n}(0)\}$ such that we have strong convergence
\begin{align}
\vec \psi_n(0) \to \vec \psi_{\infty}(0) \quad \textrm{as} \quad n \to \I
\end{align}
in $H \times L^2$. Note that this also implies strong convergence in the energy topology, i.e.,  $\vec{\psi}_n(0) \to \vec{\psi}_{\I}(0)$ in $\HH_0$. In particular, we have 
\begin{align}\label{1E=}
\E(\vec\psi_{\infty}(0)) = \E(\vec \psi_n(0)) = \E(\vec{\psi}).
\end{align}
Now,  let $\vec\psi_{\infty}(t)\in \HH_0$ denote the forward solution to \eqref{cp} with initial data $\vec\psi_{\infty}(0)$ on its maximal interval of existence $[0, T_+(\psi_{\infty}))$. Choose $T_0 \in(0, T_+(\psi_{\I}))$ with $T_0 \le 1$. 

Using Lemma~\ref{pert} for the equivalent $4$-dimensional wave equation \eqref{4d}, the strong convergence of $\vec{\psi}_n(0)$ to $\vec \psi_{\infty}(0)$ in $H \times L^2$  implies that for large $n$, 
the nonlinear evolutions $\vec \psi_n(t)$ and $\vec\psi_{\infty}(t)$ remain uniformly close in $H\times L^2$ for $t\in [0, T_0]$. Indeed, we have  
\begin{align}\label{pert concl1}
\sup_{t \in [0, T_0]}\|\vec \psi_n(t) - \vec \psi_{\I}(t) \|_{H\times L^2}  = o_n(1).
\end{align}
Hence, combining \eqref{t lim1} with \eqref{pert concl1} we have 
\begin{align*}
0 \gets \int_0^{1} \int_0^{\infty} \dot\psi_n^2(t, r) \, r\, dr\, dt  &\ge \int_0^{T_0} \int_0^{\infty} \dot\psi_n^2(t, r) \, r\, dr\, dt\\
& = \int_0^{T_0} \int_0^{\infty} \dot\psi_{\infty}^2(t, r) \, r\, dr\, dt + o_n(1).
\end{align*}
Therefore we have $\dot\psi_{\infty} \equiv 0$ on $[0, T_0]$. Since $\psi =0$ is the unique harmonic map in  $\HH_0$ we necessarily have that $\psi_{\infty} \equiv 0$. But, by \eqref{1E=} we then have $0=\E(\vec\psi_{\infty}) = \E(\vec \psi_n) = \E( \vec \psi)$. Hence $\vec \psi \equiv 0$, which contradicts our assumption that $\psi \neq 0$ blows up at time $t=1$.

\begin{flushleft}{\em Step $2$}: By Step $1$, we have reduced the proof of Theorem~\ref{rigidity2} to the case $I_{\max} =\R$, and hence $\la: \R \to [A_0, \infty)$. By time symmetry we can, without loss of generality, work with nonnegative times only and thus consider  $\la(t):[0, \infty) \to [A_0, \infty)$. \end{flushleft}

First note that since $ K$ is pre-compact in $H \times L^2$  and since $\la(t) \ge A_0$ we have that for all $\e >0$ there exists an $R= R(\e)$ such that for every $t\in [0, \infty)$
\begin{align}\label{tail2} 
\E_{R(\e)}^{\I}(\vec \psi(t)) < \e.
\end{align}
Also, observe that for all $T>0$ we have 
\begin{align} \label{lhs2}
\abs{ \ang{\chi_R\dot \psi \, \vert \, r \psi_r}\Big{|}_0^T }  \lesssim R\E(\vec{\psi}).
\end{align} 
Now, fix $\e>0$ and fix $R$ large enough so that $\sup_{t\ge0}\E_{R}^{\I}(\vec\psi) < \e$. Then, Lemma~\ref{vir} together with \eqref{lhs2} implies that for all $T \in [0, \infty)$ we have 
\begin{align*} 
\frac{1}{T}\int_0^T \int_0^{\infty} \dot{\psi}^2 \, r\, dr\, dt \lesssim \frac{R}{T}\E(\vec \psi) + \e.
\end{align*}
 This shows that 
 \begin{align} \label{cesaro2} 
 \frac{1}{T}\int_0^T \int_0^{\infty} \dot{\psi}^2 \, r\, dr\, dt  \to 0 \quad \textrm{as} \quad T \to \I.
 \end{align}
 Next, we claim that there exists a sequence $\{t_n\}$ with $t_n \to \I$ such that 
 \begin{align} \label{ces2} 
  \lim_{n\to \infty} \la(t_n) \int_{t_n}^{t_n + \frac{1}{\la(t_n)}} \left( \int_0^{\I} \dot{\psi}^2 \, r \, dr \right) \, dt=0.
  \end{align} 
To see this, we begin by defining a sequence $\tau_n$ as follows. Set 
\begin{align*} 
\tau_0 =0,  \quad \tau_{n+1}:= \tau_n + \frac{1}{\la(\tau_n)}= \sum_{k=0}^n \frac{1}{\la(\tau_k)}.
\end{align*}
First we establish that $\tau_n \to \infty$ as $n \to \infty$. If not, then up to a subsequence we would have $\tau_n \to \tau_{\infty} < \I$. This would imply that 
\begin{align*}
\tau_{\infty} = \sum_{k=0}^{\I} \frac{1}{\la(\tau_k)} < \infty
\end{align*}
which means that $\ds{\lim_{k \to \I} \frac{1}{\la(\tau_k)} =0}$. But this is impossible since $\la(\tau_k) \to \la(\tau_{\infty})< \infty$ by the continuity of $\la$. 

Now, suppose that \eqref{ces2} fails for all subsequences $\{t_n\} \subset \{\tau_n\}$. Then there exists $\e >0$ such that for all $k$,
\begin{align*} 
\int_{\tau_k}^{\tau_{k+1}} \left( \int_0^{\I} \dot{\psi}^2 \, r \, dr \right) \, dt \ge \e \frac{1}{\la(\tau_k)}.
\end{align*} 
Summing both sides above from $1$ to $n$ gives
\begin{align*} 
\int_0^{\tau_{n+1}} \left( \int_0^{\I} \dot{\psi}^2 \, r \, dr \right) \, dt \ge \e \sum_{k=1}^n  \frac{1}{\la(\tau_k)} = \e \tau_{n+1}
\end{align*}
which contradicts \eqref{cesaro2}. Hence there exists a sequence $\{t_n\}$ such that \eqref{ces2} holds. Moreover, since $\la(t) \ge A_0>0$ for all $t \ge 0$ we can extract a further subsequence, still denoted  by $\{t_n\}$, such that \eqref{ces2} holds and all the intervals $[t_n, t_n + \frac{1}{\la(t_n)}]$ are disjoint. 

Next, with $t_n$ as above, define a sequence in $\HH_0$ by setting 
\begin{align*} 
\vec \psi_n(0) = (\psi_n^0, \psi_n^1):=\left(\psi\left(t_n, \frac{r}{\la(t_n)}\right), \frac{1}{\la(t_n)} \dot\psi\left(t_n, \frac{r}{\la(t_n)}\right)\right).
\end{align*}
The nonlinear evolutions associated to our sequence 
\begin{align*}
\vec \psi_n(t):=\left(\psi\left(t_n+\frac{t}{\la(t_n)}, \frac{r}{\la(t_n)}\right), \frac{1}{\la(t_n)} \dot\psi\left(t_n+ \frac{t}{\la(t_n)}, \frac{r}{\la(t_n)}\right)\right)
\end{align*}
are then global solutions to \eqref{cp} with $\E(\vec{\psi}_n) = \E(\vec{\psi})$. Observe  that 
\begin{align} \label{t lim2} 
\int_0^{1} \int_0^{\infty} \dot\psi_n^2(t, r) \, r\, dr\, dt \to 0 \quad \textrm{as} \quad n \to \infty.
\end{align}
Indeed, by \eqref{ces2} we have that 
\begin{align*}
\int_0^{1} \int_0^{\infty} \dot\psi_n^2 \, r\, dr\, dt = \la(t_n) \int_{t_n}^{t_n + \frac{1}{\la(t_n)}} \left( \int_0^{\I} \dot{\psi}^2 \, r \, dr \right) \, dt \to 0 \quad \textrm{as} \quad n \to \I.
\end{align*}
We now proceed as follows. By the pre-compactness of $ K$ we can find $\vec \psi_{\infty}(0) = ( \psi_{\infty}^0, \psi_{\infty}^1) \in \HH_0$ and a subsequence of $\{\vec{\psi_n}(0)\}$ such that we have strong convergence
\begin{align}
\vec \psi_n(0) \to \vec \psi_{\infty}(0) \quad \textrm{as} \quad n \to \I
\end{align}
in $H \times L^2$. Note that this also implies strong convergence in the energy topology, i.e.,  $\vec{\psi}_n(0) \to \vec{\psi}_{\I}(0)$ in $\HH_0$. In particular, we have 
\begin{align}\label{E=}
\E(\vec\psi_{\infty}(0)) = \E(\vec \psi_n(0)) = \E(\vec{\psi}).
\end{align}
 
 Now,  let $\vec\psi_{\infty}(t)\in \HH_0$ denote the forward solution to \eqref{cp} with initial data $\vec\psi_{\infty}(0)$ on its maximal interval of existence $[0, T_+(\psi_{\infty}))$. Choose $T_0 \in(0, T_+(\psi_{\I}))$ with $T_0 \le 1$. 

Using Lemma~\ref{pert} for the $4$-dimensional wave equation \eqref{4d}, the strong convergence of $\vec{\psi}_n(0)$ to $\vec \psi_{\infty}(0)$ in $H \times L^2$  implies that for large $n$ 
the nonlinear evolutions $\vec \psi_n(t)$ and $\vec\psi_{\infty}(t)$ remain uniformly close in $H\times L^2$ in $t\in [0, T_0]$. Indeed, we have  
\begin{align}\label{pert concl}
\sup_{t \in [0, T_0]}\|\vec \psi_n(t) - \vec \psi_{\I}(t) \|_{H\times L^2}  = o_n(1).
\end{align}
Hence, combining \eqref{t lim2} with \eqref{pert concl} we have 
\begin{align*}
0 \gets \int_0^{1} \int_0^{\infty} \dot\psi_n^2(t, r) \, r\, dr\, dt  &\ge \int_0^{T_0} \int_0^{\infty} \dot\psi_n^2(t, r) \, r\, dr\, dt\\
& = \int_0^{T_0} \int_0^{\infty} \dot\psi_{\infty}^2(t, r) \, r\, dr\, dt + o_n(1).
\end{align*}
Therefore we have $\dot\psi_{\infty} \equiv 0$ on $[0, T_0]$. Since $\psi =0$ is the unique harmonic map in  $\HH_0$ we necessarily have that $\psi_{\infty} \equiv 0$. But, by \eqref{E=} we then have $0=\E(\psi_{\infty}, 0) = \E(\vec \psi_n) = \E( \vec \psi)$. Hence $\vec \psi \equiv 0$ as desired. 
\end{proof}

 We can now complete the proof of Theorem~\ref{main}. 
 \begin{proof}[Proof of Theorem~\ref{main}] Suppose that Theorem~\ref{main} fails. Then by Proposition~\ref{conc comp} there would exist a nonzero critical element $\vec \psi_*$ that satisfies the assumptions of Theorem~\ref{rigidity2}. But by Theorem~\ref{rigidity2}, $\vec\psi_*\equiv 0$, which is a contradiction. 
 \end{proof} 
 
 To conclude, we prove Theorem~\ref{rigidity}. 
 \begin{proof}[Proof of Theorem~\ref{rigidity}] \hspace{1in}\begin{flushleft}\emph{Step $1$}: First we show that $I_{\max}(\vec U)= \R$. We argue by contradiction. Assume that $T_+(\vec U) < \infty$. Without loss of generality, we may assume that $T_+(\vec U) =1$. \end{flushleft}
 Applying the exact same argument as in Step $1$ of the proof of Theorem~\ref{rigidity2} up to \eqref{t lim1} we can construct a sequence of solutions $\vec U_n(t)\in\dot{H}^1 \times L^2(\R^2; S^2)$ to \eqref{cp} such that 
\begin{align*} 
\vec U_n(0) = (U_n^0, U_n^1):=\left(U\left(t_n, \frac{r}{\la(t_n)}, \om\right), \frac{1}{\la(t_n)} \p_tU\left(t_n, \frac{r}{\la(t_n)}, \om\right)\right)
\end{align*}
with $\E(\vec U_n) = \E(\vec U)$ and 
\begin{align} \label{U t lim} 
\int_0^{1} \int_{\R^2} \p_t U_n^2(t) \, dx\, dt \to 0 \quad \textrm{as} \quad n \to \infty.
\end{align}
From this we obtain the following conclusions:
\begin{itemize} 
\item[(i)]Extracting a subsequence we have $U_n \rightharpoonup U_{\infty}$ weakly in $\dot{H}^1_{\textrm{loc}}([0,1]\times \R^2; S^2)$ and hence $\vec{U}_{\infty}(t)$ is a weak solution to \eqref{cp} on $[0, 1]$. 
\item[(ii)] By the pre-compactness of $\ti K$ we can, in fact, ensure that  $\vec U_n(0) \to \vec{U}_{\infty}(0)$ strongly in $\dot{H}^1\times L^2(\R^2; S^2)$. This implies that \begin{align}\label{EU}
\E(\vec{U}_{\infty}) = \E(\vec U_n) = \E(\vec U)
\end{align}
\item[(iii)] By \eqref{U t lim} we can deduce that $\dot{U}_{\infty} \equiv 0$ on $[0,1]$. 
\end{itemize} 

Putting this all together, we have a  time independent weak solution $\vec U_{\infty} \in \HH $ to \eqref{cp} for $t \in[0,1]$. By H\'{e}lein's Theorem \cite[Theorem $1$]{Hel} we know that  $U_{\infty}$ is, in fact, harmonic. Since $U=0$ and $U=(\pm Q, \om)$ are the unique harmonic maps up to scaling in  $\HH$ we necessarily have that either  $U_{\infty}=0$ or $\vec U_{\infty}(r, \om) =(Q(\ti \la \cdot), \om)$ for some $\ti \la>0$.  Hence, by \eqref{EU}, we can deduce that either $\E(\vec U) = 0$ or $\E(\vec U) = \E(Q, 0)$. The former case implies that $U \equiv 0$. If the latter case occurs, then $U(t)$ can either be an element of $\HH_0$,   $\HH_1$, or of $\HH_{-1}$ since all the higher topological classes, $\HH_n$ for $\abs{n}>1$, require more energy. If $U(t) \in \HH_0$  then it is global in time and scatters by Theorem~\ref{main}. If $U(t) \in\HH_1$ or $\HH_{-1}$ then we have $U(t, r, \om) = (\pm Q(\ti \la r),  \om)$ for some $\ti \la>0$ since $(Q, 0)$, respectively $(-Q, 0)$,  uniquely minimizes the the energy in $\HH_1$, respectively $\HH_{-1}$. In either case, this provides a contradiction to our assumption that $I_{\max} \neq \R$. 

  \begin{flushleft}\emph{Step $2$}: 
   \end{flushleft}
 
 Again we apply the exact same argument given in Step $2$ of the proof of Theorem~\ref{rigidity2} and we  construct a sequence of solutions $\vec U_n(t)\in\dot{H}^1 \times L^2(\R^2; S^2)$ to \eqref{cp} such that 
\begin{align*} 
\vec U_n(0) = (U_n^0, U_n^1):=\left(U\left(t_n, \frac{r}{\la(t_n)}, \om\right), \frac{1}{\la(t_n)} \p_tU\left(t_n, \frac{r}{\la(t_n)}, \om\right)\right)
\end{align*}
with $\E(\vec U_n) = \E(\vec U)$ and 
\begin{align} \label{U t lim2} 
\int_0^{1} \int_{\R^2} \p_t U_n^2(t) \, dx\, dt \to 0 \quad \textrm{as} \quad n \to \infty.
\end{align}
We thus obtain the following conclusions:
\begin{itemize} 
\item[(i)]Extracting a subsequence we have $U_n \rightharpoonup U_{\infty}$ weakly in $\dot{H}^1_{\textrm{loc}}([0,1]\times \R^2; S^2)$ and hence $\vec{U}_{\infty}(t)$ is a weak solution to \eqref{cp} on $[0, 1]$. 
\item[(ii)] By the pre-compactness of $\ti K$ we can extract a further subsequence with $\vec U_n(0) \to \vec{U}_{\infty}(0)$ strongly in $\dot{H}^1\times L^2(\R^2; S^2)$. This implies that \begin{align}\label{EU2}
\E(\vec{U}_{\infty}) = \E(\vec U_n) = \E(\vec U)
\end{align}
\item[(iii)] By \eqref{U t lim2} we can deduce that $\dot{U}_{\infty} \equiv 0$ on $[0,1]$. 
\end{itemize} 

Putting this all together, we have a  time independent weak solution $\vec U_{\infty} \in \HH $ to \eqref{cp} for $t \in[0,1]$. By H\'{e}lein's Theorem \cite[Theorem $1$]{Hel} we know that $U_{\infty}$ is, in fact, harmonic. Since $U=0$ and $U=( \pm Q, \om)$ are the unique harmonic maps up to scaling in  $\HH$ we necessarily have that either  $U_{\infty}=0$ or $\vec U_{\infty}(r, \om) =(\pm Q(\ti \la \cdot), \om)$ for some $\ti \la>0$.  Hence by \eqref{EU2} we can deduce that either $\E(\vec U) = 0$ or $\E(\vec U) = \E(Q, 0)$. The former case implies that $U \equiv 0$. Arguing as in the conclusion to Step $1$, the latter case implies that either $U(t) \in \HH_0$ or $U(t) \in \HH_{\pm 1}$. If $U(t) \in \HH_{\pm1}$, then $U(t, r, \om) = (\pm Q(\ti \la r),  \om)$ for some $\ti \la>0$. If $\vec U(t) \in \HH_0$ with $\E(\vec U) = \E(Q)$,  then  Theorem~\ref{main}  shows that $ \vec U(t)$ is global in time and scatters to $0$ as $t \to \infty$ in $\dot{H}^1 \times L^2(\R^2; S^2)$ in the sense that the energy of $\vec U(t)$ goes to $0$ as $t \to \infty$ on any fixed but compact set $V \subset \R^2$. Finally, we observe that the pre-compactness of $\ti K$ renders such a scattering result impossible. 

We thus conclude that either $U \equiv 0$ or $U(t, r, \om) = ( \pm Q(\ti \la r), \om)$ for some $\ti \la >0$ proving Theorem~\ref{rigidity}.
 \end{proof}


 \section{Universality of the blow-up profile for degree one wave maps with energy below $3 \E(Q)$}\label{sect bu}

In this section we prove Theorem~\ref{bu cont}. We start  by first  deducing the conclusions of   Theorem~\ref{bu cont} along a sequence of times. To be specific, we establish the following proposition: 

\begin{prop}\label{bu}  Let $\vec \psi(t) \in \HH_1$  be a solution to  \eqref{cp} blowing up at time $t=1$ with  $$\E(\vec \psi) = \E(Q) + \eta < 3\E(Q) $$ Then there exists a sequence of times $t_n \to 1$, a sequence of scales $\la_n = o(1-t_n)$, a map $\vec \fy=(\fy_0, \fy_1) \in \HH_0$, and a decomposition  
\begin{align}\label{dec}
(\psi(t_n),  \dot\psi(t_n)) =   ( \fy_0, \fy_1) + \left(Q\left(\frac{\cdot}{\la_n}\right), 0\right) + \vec\e(t_n)
\end{align}
such that $\vec \e(t_n) \in \HH_0$ and $\vec \e(t_n)\to 0$ in $H \times L^2$ as $n \to \infty$. 
\end{prop}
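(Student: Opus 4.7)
\medskip

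\noindent\textbf{Proof plan for Proposition~\ref{bu}.}
The plan is to identify the asymptotic profile $\vec\fy$ as the weak limit of the ``residual'' $\vec\psi(t_n) - (Q(\cdot/\la_n), 0)$ along a suitable subsequence, and then upgrade this to strong convergence via a Bahouri--G\'erard decomposition coupled with the $4d$ exterior energy estimate. First I would apply Theorem~\ref{str} to extract times $t_n \nearrow 1$ and scales $\la_n = o(1-t_n)$ such that the rescaled maps $\vec \psi_n(t, r) = (\psi(t_n + \la_n t, \la_n r), \la_n \dot\psi(t_n + \la_n t, \la_n r))$ converge in $H^1_{\mathrm{loc}}((-1,1) \times \R^2)$ to $(\pm Q(r/\la_0), 0)$; after absorbing $\la_0$ into $\la_n$ I normalize $\la_0 = 1$, and Corollary~\ref{pi seq lem} together with the bound $\E(\vec\psi) < 3\E(Q)$ rules out the minus sign. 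Setting $\vec b_n := (\psi(t_n) - Q(\cdot/\la_n), \dot\psi(t_n))$, the difference is a degree-zero map; since $\la_n \to 0$ while $Q(\cdot/\la_n)$ concentrates at that scale, a direct expansion of the nonlinear energy, using the trigonometric inequality \eqref{sin} and the convergence~\eqref{Hloc} of Corollary~\ref{pi seq lem}, yields $\E(\vec b_n) = \E(\vec\psi) - \E(Q) + o(1) = \eta + o(1) < 2\E(Q)$, so by Lemma~\ref{ckm lem2} the sequence $\vec b_n$ is uniformly bounded in $H \times L^2$.

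Next I would pass to a subsequence along which $\vec b_n \rightharpoonup \vec\fy = (\fy_0, \fy_1) \in \HH_0$ weakly in $H \times L^2$; this is the candidate radiation. Write $\vec c_n := \vec b_n - \vec\fy$, so $\vec c_n \rightharpoonup 0$ and $\limsup \E(\vec c_n) < 2\E(Q)$. The goal becomes to show $\vec c_n \to 0$ strongly. Apply the Bahouri--G\'erard decomposition (Corollary~\ref{bg wm}) to $\vec c_n$, producing linear profiles $\vec \fy_L^j$ with parameters $(t_n^j, \la_n^j)$ and errors $\vec \ga_n^k$. Because $\vec c_n \rightharpoonup 0$, no profile may be centered at the trivial parameters, so each parameter sequence escapes in the sense of \eqref{po}. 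By the Pythagorean identity of Lemma~\ref{en orth lem}, each nonlinear profile $V^j$ has energy below $2\E(Q)$, hence by Theorem~\ref{main} it is global in time and scatters; Proposition~\ref{nonlin profile} then gives a nonlinear profile decomposition for the wave-map evolution of $\vec b_n$ with errors vanishing in $L^3_t L^6_x \cap L^\infty_t(H \times L^2)$.

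To force all profiles to vanish I would test this decomposition against two distinguished regions. On the one hand, the rescaled map $\vec\psi_n$ converges strongly to $(Q,0)$ on every compact subset of $(-1,1) \times \R^2$ by Theorem~\ref{str} and Lemma~\ref{Hloc conv}, so the portion of $\vec c_n$ concentrated in the bubble region $r \lesssim \la_n R$ must vanish in the limit. On the other hand, for profiles living at scales comparable to or exceeding $1-t_n$, the exterior decay Lemma~\ref{ext en decay} and Lemma~\ref{t av decay} constrain the available free energy in the light-cone complement. The decisive step is to apply the $4d$ exterior energy estimate (Corollary~\ref{linear ext en}, equivalently Proposition~\ref{lin ext estimate}) to the linear free evolution of each remaining profile, concluding that any nontrivial profile would deposit a fixed quantum of energy into the exterior region $\{r \ge 1 - t_n\}$, contradicting the energy budget of $\vec\psi(t_n)$ already accounted for outside the blow-up core. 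Together these considerations force every profile, as well as the free error $\vec \ga_n^k$, to vanish in $H \times L^2$ as $n, k \to \infty$, and the Pythagorean energy identity then upgrades this to $\vec c_n \to 0$ strongly, producing the decomposition~\eqref{dec}.

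The \textbf{main obstacle} is the rigorous execution of the third paragraph: the wave-map flow of $\vec b_n$ is only indirectly tied to that of $\vec\psi$, since subtracting the stationary nonlinear bubble $Q(\cdot/\la_n)$ does not yield a genuine wave map, so the effective dynamics of $\vec b_n$ involves cross-interaction terms with the $Q$-bubble at the scale $\la_n \to 0$. Managing these cross terms across the three relevant spatial scales (the bubble scale $\la_n$, the radiation scale $O(1)$, and the light-cone scale $1-t_n$), and coupling them to the exterior energy bound of Corollary~\ref{linear ext en}, is the central technical hurdle. It is precisely at this step that the $4d$ nature of the linearized problem after the substitution $\psi = ru$ becomes essential: as emphasized in the introduction, the exterior energy estimate fails in dimensions $2, 6, 10, \dots$, which is why the method is confined to odd equivariance classes.
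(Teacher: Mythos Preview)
Your high-level architecture (extract the bubble via Struwe, form a degree-zero residual, run a Bahouri--G\'erard decomposition, and invoke the $4d$ exterior energy estimate) matches the paper's, but two of the load-bearing steps are misidentified, and as written the argument would not close.

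\medskip

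\textbf{Construction of $\vec\fy$.} You define $\vec\fy$ as the weak $H\times L^2$ limit of $\vec b_n$. The paper instead \emph{constructs} $\vec\fy$ as the time-$1$ trace of a genuine global degree-zero wave map $\vec\fy(t)$, obtained by chopping $\psi(\bar t_n,\cdot)$ near a point $r_n$ where $\psi(\bar t_n,r_n)\to\pi$, connecting linearly to $\pi$, and evolving forward (this uses Theorem~\ref{main} since the chopped data has energy $\le\eta<2\E(Q)$). The payoff is the identity $\vec\fy(t,r)+\pi=\vec\psi(t,r)$ for all $r>1-t$, so that $\vec a(t):=\vec\psi(t)-\vec\fy(t)$ has $(\partial_r a,\dot a)$ supported in $[0,1-t)$. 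This support property, and the fact that $\vec\fy(t)$ is itself a wave map which can appear as one of the nonlinear profiles, are used repeatedly in the sequel; a bare weak limit carries neither.

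\medskip

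\textbf{Mechanism for killing the profiles.} You attribute the vanishing of the profiles $\fy^j$ to the exterior energy estimate. In the paper the exterior energy estimate is \emph{not} what rules out the profiles. The argument is: from $\|\dot b_{n,1}\|_{L^2}\to 0$ one gets $t_n^j\equiv 0$ and $\dot V_L^j(0)\equiv 0$; then, using the support of $\vec a$ and the Shatah--Tahvildar-Zadeh time-averaged kinetic energy decay (Corollary~\ref{t dec extract}), one shows
\[
\frac{1}{\la_n^1}\int_0^{\la_n^1}\int_{\e\la_n^1+t}^\infty |\dot\fy^1_n(t,r)|^2\,r\,dr\,dt \to 0,
\]
after cutting off the data inside the bubble region, evolving the cut-off data as a genuine wave map, and comparing to $\vec\psi$ on the exterior via finite speed of propagation. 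This forces $\dot\fy^1(t,r)=0$ for $r\ge t$, $t\in[0,1]$, so $\fy^1$ is a finite-energy harmonic map in $\HH_0$, hence zero. The exterior energy estimate (Corollary~\ref{linear ext en}) enters only \emph{after} all profiles are gone: it gives a lower bound on $\|\vec b_n(t)\|_{H\times L^2(r\ge t)}$ in terms of $\|b_{n,0}\|_H$, which is then propagated backward in time to a fixed positive radius $r_0>0$, contradicting the fact that $\vec\psi$ cannot concentrate energy at a spacetime point away from the spatial origin. This is a different contradiction than ``energy budget in the exterior region $\{r\ge 1-t_n\}$.''

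\medskip

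The obstacle you flag---that the wave-map flow of $\vec b_n$ is not the flow of $\vec\psi$ minus the bubble---is exactly right, and the paper's resolution is the chopping-plus-finite-speed-of-propagation device described above, carried out twice (once to kill profiles, once to propagate the exterior-energy lower bound back to a fixed radius). Without that device and without the explicit wave-map construction of $\vec\fy(t)$, the nonlinear profile comparison you need has no anchor.
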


Most of  this section will be devoted to the proof of Proposition~\ref{bu}. We will proceed in several steps, the first being the extraction of the radiation term.

\subsection{Extraction of the radiation term}

In this subsection we construct what we will call the radiation term, $\vec \fy = (\fy_0, \fy_1)$, in the decomposition \eqref{dec}. 

\begin{lem}\label{psi fy} 
There exists $\fy \in \HH_0$ with $\E(\vec \fy) \le \eta< 2\E(Q)$ so that the following holds: Denote by $\vec \fy(t)$ the  wave map evolution of $\vec \fy$. Then $\vec \fy(t) \in \HH_0$ is global in time and scatters to zero as $t \to \pm \infty$ and we have
\begin{align}\label{same} 
\vec \fy(t, r) +\pi= \vec \psi(t, r) \quad \forall \, \, (t, r) \in \{ (t, r) \, \vert\, t \in[0, 1),  r\in (1-t, \infty)\}
\end{align}
\end{lem}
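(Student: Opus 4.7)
The proof is essentially bookkeeping of facts already established during the construction of $\vec\phi$. The plan has three steps.

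First, I would verify $\vec\fy \in \HH_0$ by examining boundary values. The sequence $\vec\phi_n$ was constructed to lie in $\HH_{1,1}$, and the limit object $\vec\phi$ was shown (via two applications of the finite speed of propagation, coupled with the monotonicity of the energy on light cones) to satisfy $\phi_0(0) = \phi_0(\infty) = \pi$ and $\E(\vec\phi) \le \eta$. Therefore $\fy_0 = \phi_0 - \pi$ satisfies $\fy_0(0) = \fy_0(\infty) = 0$, which places $\vec\fy$ in the degree-zero class $\HH_0$. The identity $\E(\vec\fy) = \E(\vec\phi)$ is immediate from \eqref{en psi} since shifting the scalar field by the constant $\pi$ leaves $\psi_r$, $\psi_t$, and $\sin^2(\psi)$ unchanged. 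Combined with the bound $\E(\vec\phi) \le \eta$ from \eqref{en phi}, this yields $\E(\vec\fy) \le \eta < 2\E(Q)$.

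Second, global existence and scattering of $\vec\fy(t)$ follow immediately from Theorem~\ref{main} once we know $\vec\fy \in \HH_0$ with energy strictly below $2\E(Q)$. To connect this evolution back to $\vec\phi(t)$, I would invoke the fact that \eqref{cp} is invariant under the shift $\psi \mapsto \psi + \pi$, since $\sin(2(\psi+\pi)) = \sin(2\psi)$. Hence if $\vec\fy(t)$ is the global wave map evolution of $\vec\fy$, then $\vec\fy(t) + (\pi, 0)$ is a solution of \eqref{cp} with initial data $\vec\phi$, and uniqueness in the energy space forces $\vec\phi(t) = \vec\fy(t) + (\pi, 0)$ for all $t \in \R$.

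Third, for the matching identity \eqref{same}, I would recall the consequence of finite speed of propagation already derived in the discussion above: for every $t_0 \in [0, 1)$ and every $r_0 > 1 - t_0$, one chooses $n$ large enough that $r_0 > 2(1 - \bar t_n) + 1 - t_0$ and deduces $\vec\phi(t_0, r_0) = \vec\phi_n(t_0 - \bar t_n, r_0) = \vec\psi(t_0, r_0)$. Combining with the identification $\vec\phi(t) = \vec\fy(t) + (\pi, 0)$ from the previous step yields \eqref{same}.

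The main ``obstacle'' here is really no longer an obstacle, since the hard work — constructing $\vec\phi$ consistently across scales and pinning down its boundary values at both $r = 0$ and $r = \infty$ via the two finite-speed arguments — has already been carried out. The present lemma only needs to repackage those facts through the trivial shift $\psi \mapsto \psi - \pi$ and an invocation of Theorem~\ref{main}.
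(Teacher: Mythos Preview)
Your proposal is correct and matches the paper's approach exactly: the paper itself presents this lemma with the preamble ``We gather the results established above in the following lemma,'' giving no separate proof, and your three steps simply make explicit the bookkeeping (boundary values under the shift $\psi\mapsto\psi-\pi$, energy invariance under that shift, Theorem~\ref{main}, and the finite-speed identity already derived for $\vec\phi$). The only minor imprecision is in your summary of how $\phi_0(0)=\pi$ was obtained --- it is simply \emph{defined} that way rather than deduced from finite speed of propagation --- but this does not affect the argument.
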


\begin{proof}
To begin, let $\bar t_n \to 1$ and $r_n \in (0,1-\bar t_n]$ be chosen as in Corollary \ref{pi seq lem}. We make the following definition: 
\begin{align}  \label{phi 0 n def}
&\phi_n^0(r) = \begin{cases} \pi - \frac{\pi-\psi(\bar t_n, r_n)}{r_n} r \quad \textrm{if} \quad 0\le r\le r_n\\ \psi(\bar t_n, r) \quad \textrm{if} \quad r_n\le r < \infty \end{cases} \\
&\phi_n^1(r) = \begin{cases}0 \quad \textrm{if} \quad 0\le r\le r_n\\ \dot{\psi}(\bar t_n, r) \quad \textrm{if} \quad r_n\le r < \infty \end{cases} 
\end{align}
We claim that $\vec \phi_n:=(\phi_n^0, \phi_n^1)$ forms a bounded sequence in the energy space $\HH$--in fact, the sequence is in $\HH_{1,1}$ which is defined in \eqref{Hnm}. To see this we start with the claim that 
\begin{align} \label{E phi rn to inf}
\E_{r_n}^{\infty}(\vec \phi_n) = \E_{r_n}^{\infty}(\vec \psi(\bar t_n)) \le \eta + o_n(1).
\end{align} 
Indeed, since $\psi(\bar t_n, r_n) \to \pi$ we have $G(\psi(\bar t_n, r_n)) \to 2= \frac{1}{2}\E(Q)$ as $n \to \infty$. Therefore, by \eqref{b R} have $$\E_0^{r_n}(\psi(\bar t_n), 0)  \ge 2G(\psi(\bar t_n, r_n)) \ge \E(Q)- o_n(1)$$ for large $n$ which  proves \eqref{E phi rn to inf} since $ \E_{r_n}^{\infty}(\vec \psi(\bar t_n)) =  \E_{0}^{\infty}(\vec \psi(\bar t_n))-  \E_{0}^{r_n}(\vec \psi(\bar t_n))$.

\begin{figure}
\begin{tikzpicture}[
	>=stealth',
	axis/.style={semithick},
	coord/.style={dashed, semithick},
	yscale = 1.2,
	xscale = 1.2]
	\newcommand{\xmin}{0};
	\newcommand{\xmax}{8};
	\newcommand{\ymin}{0};
	\newcommand{\ymax}{3.5};
	\newcommand{\xa}{3};
	\newcommand{\fsp}{0.2};
	\draw [semithick] (\xmin-\fsp,0) -- (\xmax,0);
	\draw [semithick,->] (0,\ymin-\fsp) -- (0,\ymax+\fsp);
	\draw (\xa,0.1) -- (\xa,-0.1) node [below] {$r_n$};
	\draw (\xmax,0.1) -- (\xmax,-0.1) node [below] {$r=\infty$};
	\draw [semithick, dashed] (\xmin,pi) node [left] {$\pi$} -- (\xmax,pi);
	\draw [thick, dashed] plot[domain={\xmin}:{\xa}] (\x,{atan(\x*\xmax/(\xmax-\x))/90*pi}); 
	\draw [thick] ((\xmin,pi) -- (\xa, {atan(\xa*\xmax/(\xmax-\xa))/90*pi});
	\draw [thick, /pgf/fpu,/pgf/fpu/output format=fixed] plot[domain={\xa}:{\xmax}, samples=400] (\x,{atan(\x*\xmax/(\xmax-\x))/90*pi + 1/100*sin(100*(\x-\xa)^2)*(\x-\xmax)^2*(\x-\xa)});
	\fill (\xmax,pi) circle (0.05);
\end{tikzpicture}
\caption{\label{fig:4} The solid line represents the graph of the function $\phi_n^0(\cdot)$ for fixed $n$, defined in \eqref{phi 0 n def}. The dotted line is the piece of the function $\psi(\bar t_n,\cdot)$ that is chopped at $r=r_n$ in order to linearly connect to $\pi$, which ensures that $\vec \phi_n \in \HH_{1,1}$.}
\end{figure}
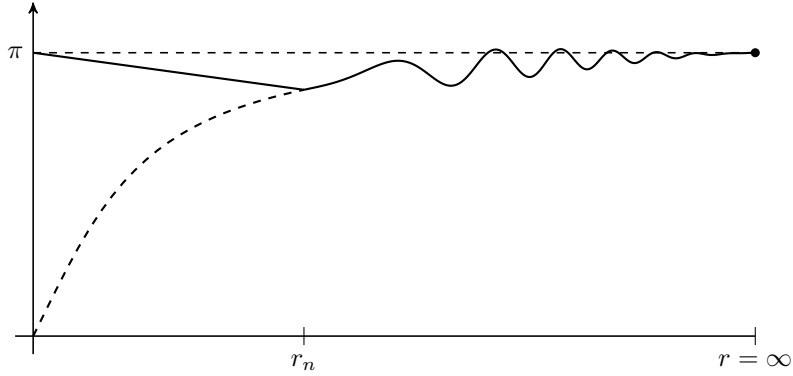

We can also directly compute $\E_{0}^{r_n} (\phi_n^0,0)$. Indeed,
\begin{align*} 
\E_{0}^{r_n} (\phi_n^0,0)&= \int_0^{r_n} \left( \frac{\pi-\psi(\bar t_n, r_n)}{r_n}\right)^2\, r\, dr + \int_0^{r_n} \frac{\sin^2\left( \frac{\pi-\psi(\bar t_n, r_n)}{r_n} r\right)}{r} \, dr\\ \\
& \le C \abs{\pi- \psi(\bar t_n, r_n)}^2  \to 0 \quad \textrm{as} \quad   n \to \infty.
\end{align*} 
Hence $\E( \vec \phi_n)\le \eta + o_n(1)$. This means that for large enough $n$ we have the uniform estimates $\E( \vec \phi_n) \le C < 2\E(Q)$. Therefore, by Theorem~\ref{main}, (which holds with exactly the same statement in $\HH_{1, 1}$ as in $\HH_0= \HH_{0, 0}$), we have that the wave map evolution $\vec \phi_n(t) \in \HH_{1,1}$ with initial data $\vec \phi_n$ is global in time and scatters  to $\pi$ as $t \to \pm \infty$. 
We define $\vec \phi =(\phi_0, \phi_1)\in \HH_{1, 1} $ by
\begin{align}
 \phi_0(r) := \begin{cases}  \pi \quad \textrm{if} \quad r=0\\ \phi_n(1-\bar t_n, r)  \quad \textrm{if} \quad r>2(1-\bar t_n)\end{cases}\\
 \phi_1(r):=\begin{cases}  0 \quad \textrm{if} \quad r=0\\ \dot{\phi}_n(1-\bar t_n, r)  \quad \textrm{if} \quad r>2(1-\bar t_n)\end{cases}
\end{align}
We need to check first that $ \vec \phi$ is well-defined. First recall  that by definition 
\begin{align*}
\vec \phi_n(r)= \vec \psi(\bar t_n, r) \quad \forall r \ge 1-\bar t_n
\end{align*}
since $r_n \le 1-\bar t_n$. Using the finite speed of propagation of the wave map flow, see e.g., \cite{SS},  we can then deduce that for all $t \in [0, 1)$ we have
\begin{align*}
\vec \phi_n(t-\bar t_n, r) = \vec\psi(t, r) \quad \forall\, r\ge 1-\bar t_n + \abs{t-\bar t_n}
\end{align*}
Now let $m >n$ and thus $\bar t_m>\bar t_n$. The above implies that 
\begin{align*}
\vec \phi_n(\bar t_m-\bar t_n, r) = \vec\psi(\bar t_m, r) = \vec \phi_m(r)\quad \forall\, r\ge 1-\bar t_n + \abs{\bar t_m-\bar t_n}
\end{align*}
Therefore, using the finite speed of propagation again we can conclude that 
\begin{align*}
\vec \phi_n(1-\bar t_n, r) = \vec \phi_m(1-\bar t_m, r) \quad \forall\, r >2(1-\bar t_n)
\end{align*}
proving that $\vec \phi$ is well-defined. Next we claim that 
\begin{align}\label{en phi}
\E( \vec \phi) \le \eta
\end{align}
Indeed, observe that by monotonicity of the energy on light cones, see e.g. \cite{SS},  we have 
\begin{align*}
\E_{2(1-\bar t_n)}^{\infty}(\vec \phi)= \E_{2(1-\bar t_n)}^{\infty}(\vec \phi_n(1-\bar t_n)) \le \E_{1-\bar t_n}^{\infty}(\vec \phi_n(0))\le \E(\vec \phi_n(0)) \le \eta + o_n(1)
\end{align*}
and then \eqref{en phi} follows by taking $n \to \infty$ above. Now, let $\vec \phi(t) \in \HH_{1,1}$ denote the wave map evolution of $\vec \phi$. Since $ \vec \phi \in \HH_{1,1}$ and $\E(\vec \phi) \le \eta <2\E(Q)$ we can deduce by Theorem \ref{main} that $\vec \phi(t)$ is global in time and scatters  as $t \to \pm \infty$.  Our final observation regarding $\phi(t)$ is that for all $t \in[0, 1)$ we have 
\begin{align*}
\vec \phi(t, r) = \vec \psi(t, r) \quad \forall\, r>1-t
\end{align*}
This follows immediately from the definition of $\vec \phi$ and the finite speed of propagation. To be specific, fix $t_0 \in [0, 1)$ and $r_0>1-t$. Since $\bar t_n \to 1$ we can choose $n$ large enough so that $r_0>2(1-\bar t_n)+1-t_0$. Then observe that by finite speed of propagation and the fact that $\vec \phi(r)= \vec \phi_n(1-\bar t_n, r)$ for all $r >2(1-\bar t_n)$ we have
\begin{align*}
\vec \phi(t_0, r) = \phi_n(t_0-\bar t_n, r) = \vec \psi(t_0, r) \quad \forall\, r>r_0>2(1-\bar t_n)+1-t_0
\end{align*}
and in particular for $r=r_0$. 

Finally, we define our radiation term $\vec{\fy}=(\fy_0, \fy_1) \in \HH_0$ by setting 
\begin{align} \label{fy def}
&\fy_0(r) := \phi_0 -\pi\\ \label{fy def1}
&\fy_1(r):= \phi_1.
\end{align}
We denote by $\vec \fy(t) \in \HH_0$ the global wave map evolution of $\vec \fy$. 
 \end{proof}


Now define 
\begin{align} \label{a def}
\vec a(t, r) := \vec\psi(t, r)- \vec \fy(t, r).
\end{align} 
We use Lemma~\ref{psi fy} to show that $\vec a(t)$ has the following properties:
\begin{lem}\label{a lem} Let $\vec a(t)$ be defined as in \eqref{a def}. Then $a(t) \in \HH_1$ for all $t \in [0, 1)$ and 
\begin{align} \label{supp a}
\supp( a_r(t), \dot{a}(t))  \in [0, 1-t).
\end{align}
Moreover we have 
\begin{align} \label{a en}
\lim_{t \to 1} \E(\vec a(t)) = \E( \vec \psi) - \E(\vec \fy).
\end{align}
\end{lem}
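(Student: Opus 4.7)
The plan is to exploit the identity $\vec{\fy}(t,r)+\pi = \vec{\psi}(t,r)$ for $r>1-t$ from Lemma~\ref{psi fy} and then reduce everything to an integral on the shrinking interval $[0,1-t]$, where the radiation $\vec{\fy}(t)$ becomes negligible.

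First I would establish \eqref{supp a} and $\vec a(t) \in \HH_1$ together. For $r > 1-t$, the identity $\vec\fy(t,r) + \pi = \vec\psi(t,r)$ gives $a(t,r) = \pi$, so $a_r(t,r) = 0$, $\dot a(t,r) = 0$, and $\sin^2(a(t,r)) = \sin^2(\pi) = 0$. Combined with $a(t,0) = \psi(t,0) - \fy(t,0) = 0 - 0 = 0$ and $a(t,\infty) = \pi$, this shows $a(t) \in \HH_{1}$ once finite energy is verified; the latter follows from $|a_r| \le |\psi_r| + |\fy_r|$, the analogous pointwise bound for $\dot a$, and the elementary inequality $\sin^2(\psi - \fy) \le 2\sin^2\psi + 2\sin^2\fy$, giving $\E(\vec a(t)) \le C(\E(\vec\psi) + \E(\vec\fy))$.

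Next, for \eqref{a en}, the support property immediately collapses $\E(\vec a(t)) = \E_0^{1-t}(\vec a(t))$. Moreover, since $\psi$ and $\fy$ differ by $\pi$ outside the cone, $\E_{1-t}^\infty(\vec\psi(t)) = \E_{1-t}^\infty(\vec\fy(t))$, and by conservation of energy for both flows,
\EQ{
\E(\vec\psi) - \E(\vec\fy) = \E_0^{1-t}(\vec\psi(t)) - \E_0^{1-t}(\vec\fy(t)).
}
Therefore it suffices to prove that the difference
\EQ{
D(t) := \E_0^{1-t}(\vec a(t)) - \E_0^{1-t}(\vec\psi(t)) + \E_0^{1-t}(\vec\fy(t))
}
tends to $0$ as $t\to 1$. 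Expanding the squares, the kinetic and gradient parts of $D(t)$ reduce to two cross terms of the shape $2\int_0^{1-t}\dot\fy(\dot\fy-\dot\psi)\,r\,dr$ and $2\int_0^{1-t}\fy_r(\fy_r-\psi_r)\,r\,dr$; by Cauchy--Schwarz these are bounded by $C\sqrt{\E_0^{1-t}(\vec\fy(t))}\sqrt{\E(\vec a(t))}$, and we already showed $\E(\vec a(t))$ is uniformly bounded. For the potential part I would use the identity
\EQ{
\sin^2(\psi-\fy) - \sin^2\psi + \sin^2\fy = -2\sin\fy\,\cos\psi\,\sin(\psi-\fy),
}
which upon Cauchy--Schwarz yields a bound by $2\sqrt{\E_0^{1-t}(\vec\fy(t))}\sqrt{\E(\vec a(t))}$ as well.

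The remaining task is then to show $\E_0^{1-t}(\vec\fy(t)) \to 0$ as $t\to 1$. This is the one place where I would use that $\vec\fy\in C^0(\R;\HH_0)$ from Lemma~\ref{psi fy}: by continuity, $\vec\fy(t)\to\vec\fy(1)$ in the energy space, so
\EQ{
\E_0^{1-t}(\vec\fy(t)) \le \E_0^{1-t}(\vec\fy(1)) + \|\vec\fy(t)-\vec\fy(1)\|_{\HH}^{2} + \text{c.t.},
}
and both terms vanish as $t\to 1$, the first by absolute continuity of the integral. The mildly delicate point I expect to give the most bookkeeping is verifying the trigonometric identity and then controlling the potential cross-term uniformly in $t$; everything else is routine Cauchy--Schwarz manipulation tied to the finite speed of propagation already encoded in \eqref{same}.
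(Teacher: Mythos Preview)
Your proof is correct and follows essentially the same route as the paper: collapse everything onto the cone $[0,1-t]$ using the identity from Lemma~\ref{psi fy}, expand the energy, and bound all cross terms by quantities controlled by $\|\vec\fy(t)\|_{H\times L^2(r\le 1-t)}$, which vanishes as $t\to 1$. The only cosmetic differences are that the paper bounds the potential cross term via the cruder inequality $|\sin^2(x+y)-\sin^2 x|\le 2|\sin x||y|+2|y|^2$ rather than your exact identity, and it obtains the vanishing of $\E_0^{1-t}(\vec\fy(t))$ from uniform smallness of $\sup_{t\in[0,1]}\|\vec\fy(t)\|_{H\times L^2(r\le\delta)}$ (compactness of the trajectory on $[0,1]$) rather than continuity at the single time $t=1$; both arguments are equivalent here.
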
 
\begin{proof} 
First observe that \eqref{supp a} follows immediately from \eqref{same}.  Next we prove \eqref{a en}. First observe since $\vec \fy(t)\in \HH_0$ is a global wave map with $\E(\vec \fy) < 2\E(Q)$ we have 
\begin{align*} 
\sup_{t\in[0, 1]} \| \vec \fy(t) \|_{H \times L^2( r \le \de)} \to 0 \quad \textrm{as} \quad \de \to 0,
\end{align*}
which implies in particular that 
\begin{align}\label{fyreg}
 \| \vec \fy(t) \|_{H \times L^2( r \le 1-t)} \to 0
\end{align}
 as $ t \to 1$. Next we see that 
\begin{align*} 
\E(\vec a (t)) &= \int_0^{1-t} \left(\abs{ \psi_t(t) - \fy_t(t)}^2 + \abs{ \psi_r(t) - \fy_r(t)}^2 + \frac{\sin^2( \psi(t)- \fy(t))}{r^2}\right) \, r\, dr\\
&= \E_0^{1-t}( \vec \psi(t) + \int_0^{1-t} \left(-2 \psi_t(t) \fy(t) -2\psi_r(t) \fy_r(t)\right) \, r\, dr \\
&\, + \int_0^{1-t} \left( \fy_t^2(t) + \fy_r^2(t) \right) \, r\, dr + \int_0^{1-t} \frac{\sin^2( \psi(t)- \fy(t))- \sin^2 (\psi(t))}{r} \, dr\\
& = \E_0^{1-t}(\vec\psi(t))  + C\E(\vec \psi) \|\vec \fy(t) \|_{H \times L^2( r\le1-t)}  +C \|\vec \fy(t)\|_{H \times L^2( r\le1-t)}^2\\
&= \E_0^{1-t}(\vec\psi(t)) + o(1) \quad \textrm{as}\quad t \to 1,
\end{align*}
where on the last line two lines we used \eqref{fyreg} and the fact that 
\begin{align} \label{sin 3}
\abs{\sin^2(x+y) - \sin^2(x) } \le 2 \abs{\sin(x)} \abs{y} +  2\abs{y}^2.
\end{align}
Finally, by Lemma~\ref{psi fy} we observe that for all $t \in [0, 1)$ we have
\begin{align*} 
\E_{1-t}^{\infty}( \vec \psi(t)) = \E_{1-t}^{\infty}( \vec \fy(t)).
\end{align*} 
Hence, 
\begin{align*} 
\E(\vec a(t)) = \E(\vec \psi(t)) - \E_{1-t}^{\infty}( \vec \fy(t)) + o(1) \quad \textrm{as}\quad t \to 1,
\end{align*}
which completes the proof.
\end{proof}
 
 \subsection{Extraction of the blow-up profile} 
 Next, we use Struwe's result, Theorem~\ref{str}, to extract a sequence of  properly rescaled  harmonic maps.  At this point we note that we can, after a suitable rescaling and time translation, assume, without loss of generality, that the scale $\la_0$ in Theorem~\ref{str} satisfies $\la_0=1$. We prove the following result:

\begin{prop} \label{a prop}
Let $\vec a(t) \in \HH_1$ be defined as in \eqref{a def}. There exists a sequence $\al_n$ with $\al_n \to \infty$,  a sequence of times $\tau_n \to 1$ and a sequence of scales $\la_n = o(1-\tau_n)$ and $\al_n \la_n <1-\tau_n$ such that 
\begin{itemize} 
\item[($a$)] As $n \to \infty$ we have 
\begin{align} \label{a dot}
\int_0^{\infty} \dot{a}^2(\tau_n, r) \, r\, dr \le \frac{1}{n}.
\end{align}
\item[($b$)] As $n \to \infty$  we have
\begin{align} \label{a-Q}
\int_0^{\al_n \la_n} \left(\abs{a_r(\tau_n, r) - \frac{Q_r(r/ \la_n)}{\la_n}}^2 + \frac{\abs{a(\tau_n, r)- Q(r/ \la_n)}^2}{r^2}\right) \, r\, dr \le \frac{1}{n}.
\end{align}
\item[($c$)]As $n \to \infty$ we also have 
\begin{align} \label{b 2eq}
\E(\vec a(\tau_n) - (Q(\cdot/\la_n), 0)) \le \eta + o_n(1),
\end{align}
which implies that for large enough $n$ we have $$\E(\vec a(\tau_n) - (Q(\cdot/\la_n), 0)) \le C <2\E(Q).$$
\end{itemize}
\end{prop}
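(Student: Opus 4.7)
The plan is to combine Struwe's blow-up analysis (Theorem~\ref{str}) with the local convergence strengthened in Lemma~\ref{Hloc conv} and Corollary~\ref{pi seq lem}, then transfer the resulting statements from $\vec\psi$ to $\vec a = \vec\psi - \vec\fy$ using that the radiation $\vec\fy$ is negligible at the concentration scale. Applying Theorem~\ref{str} to $\vec\psi$ and invoking Corollary~\ref{pi seq lem} (here we crucially use $\E(\vec\psi) < 3\E(Q)$ to fix the sign of the bubble and set $\la_0=1$), one obtains sequences $s_k \to 1$ and $\mu_k = o(1-s_k)$ such that $\psi_k(t,r):=\psi(s_k+\mu_k t,\mu_k r)$ converges to $Q(r)$ in $L^2_t((-1,1); H_{\text{loc}})$, while \eqref{sig with la} provides the time-averaged kinetic decay $\mu_k^{-1}\int_{s_k}^{s_k+\mu_k}\int_0^{1-t}\dot\psi^2 r\,dr\,dt \to 0$. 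Since $\vec\fy(t) \in \HH_0$ is a global scattering solution, the map $s \mapsto \vec\fy(s)$ is continuous into the energy space, so $\|\vec\fy(s,\cdot)\|_{H\times L^2([0,r_0])}\to 0$ uniformly in $s$ on compact time intervals as $r_0 \to 0$. Consequently the rescaled radiation $\fy(s_k+\mu_k t, \mu_k\cdot)$ vanishes in $H_{\text{loc}}$ on $(-1,1)\times[0,A]$ for any fixed $A$, so the rescaled $a_k(t,r) := a(s_k+\mu_k t, \mu_k r)$ inherits the same local convergence $a_k \to Q$.

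Next, given the prescribed $\al_n \to \infty$, I would choose $k(n)$ large enough (which is possible because $\mu_k = o(1-s_k)$ and the $L^2_tH_{\text{loc}}$ convergence holds on any fixed compact set) to secure simultaneously
$$\al_n \mu_{k(n)} < \tfrac{1}{2}(1 - s_{k(n)}), \qquad \int_0^1 \Big(\int_0^{\al_n}\dot a_{k(n)}^2\, r\,dr + \|a_{k(n)}(t)-Q\|^2_{H([0,\al_n])}\Big)\,dt < \tfrac{1}{n^2}.$$
A Chebyshev/pigeonhole argument then supplies $t_n^*\in[0,1]$ at which the inner integrand is at most $1/n$. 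Setting $\la_n := \mu_{k(n)}$ and $\tau_n := s_{k(n)}+\mu_{k(n)}\,t_n^*$, the scaling identity $a(\tau_n, \la_n s) = a_{k(n)}(t_n^*, s)$ converts this pointwise-in-time bound into the conclusions (a) and (b), while $\la_n = o(1-\tau_n)$ and $\al_n\la_n < 1-\tau_n$ follow directly from the first inequality and $\mu_k = o(1-s_k)$.

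For part (c), I would split $\E(\vec a(\tau_n) - (Q(\cdot/\la_n), 0))$ into interior $[0,\al_n\la_n]$ and exterior $[\al_n\la_n,\infty)$ contributions. The interior piece is $o_n(1)$ via (a), (b), and the pointwise bound $|\sin(a - Q_{\la_n})| \le |a - Q_{\la_n}|$; moreover, applying Cauchy-Schwarz to the cross terms in $\sin^2(a) - \sin^2(Q_{\la_n}) = (\sin a - \sin Q_{\la_n})(\sin a + \sin Q_{\la_n})$ gives $\E_0^{\al_n\la_n}(\vec a(\tau_n)) = \E(Q) + o_n(1)$. On the exterior, since $\pi - Q(s) = O(1/s)$ and $Q'(s) = O(1/s^2)$, one has $\E_{\al_n}^\infty(Q) = o_n(1)$; using the identity $\sin(a-Q_{\la_n})=\sin(a)\cos(Q_{\la_n})-\cos(a)\sin(Q_{\la_n})$ with $\cos(Q_{\la_n}) \to -1$ on this region, plus Cauchy-Schwarz for all cross terms, yields $\E_{\al_n\la_n}^\infty(\vec a(\tau_n) - (Q_{\la_n},0)) = \E_{\al_n\la_n}^\infty(\vec a(\tau_n)) + o_n(1)$. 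Combining with Lemma~\ref{a lem}, which gives $\lim_{t\to 1}\E(\vec a(t)) = \E(\vec\psi)-\E(\vec\fy) \le \E(Q) + \eta$, the exterior bound becomes $\le \eta - \E(\vec\fy) + o_n(1) \le \eta + o_n(1)$, completing (c). The main obstacle I anticipate is precisely the careful bookkeeping in part (c): at radii $r \sim \al_n\la_n$ both $a(\tau_n)$ and $Q_{\la_n}$ can carry nontrivial energy, so the nonlinear $\sin^2$ term cannot be absorbed by a naive triangle inequality, and one must exploit the decay $\E_{\al_n}^\infty(Q) \to 0$ via Cauchy-Schwarz to close the estimate.
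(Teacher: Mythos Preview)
Your proposal is correct and follows essentially the same route as the paper: Struwe's theorem plus Lemma~\ref{Hloc conv}/Corollary~\ref{pi seq lem} for the time-averaged local convergence, transfer from $\vec\psi$ to $\vec a$ via the smallness of $\vec\fy$ inside the cone, a diagonal argument to accommodate $\al_n\to\infty$, and a pigeonhole extraction of good times $\tau_n$. Your treatment of (c) via a two-region split and Cauchy--Schwarz against the tail $\E_{\al_n}^\infty(Q)=o_n(1)$ is equivalent to the paper's three-region split (the paper additionally isolates $[1-\tau_n,\infty)$ using $\vec a\equiv(\pi,0)$ there, which slightly streamlines the potential-term bookkeeping but is not essential).

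One small correction: in your displayed inequality you restrict the kinetic integral to $\int_0^{\al_n}\dot a_{k(n)}^2\,r\,dr$, which after unscaling only controls $\int_0^{\al_n\la_n}\dot a^2(\tau_n,r)\,r\,dr$; since $\al_n\la_n<1-\tau_n$ this does not cover the full support of $\dot a(\tau_n)$ and hence does not yield (a) as stated. The fix is immediate from what you already wrote: \eqref{sig with la} together with $\sup_{t\ge s_k}\|\dot\fy(t)\|_{L^2(r\le 1-t)}\to 0$ gives $\mu_k^{-1}\int_{s_k}^{s_k+\mu_k}\int_0^{\infty}\dot a^2\,r\,dr\,dt\to 0$, so you should take the kinetic part of your averaged quantity over $r\in[0,\infty)$ rather than $[0,\al_n]$ before pigeonholing.
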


\begin{proof} 
We begin by establishing \eqref{a dot} and \eqref{a-Q}. The basis for the argument is Theorem~\ref{str}.  Indeed, by  Theorem~\ref{str} and Corollary~\ref{pi seq lem} there exists a sequence of times $t_n \to 0$ and a sequence of scales $\la_n = o(1-t_n)$ such that for any $B \ge 0$ we have 
\begin{align*} 
&\frac{1}{\la_n} \int_{t_n}^{t_n + \la_n} \int_0^{1-t}  \dot \psi^2(t,  r)  \, r \, dr \, dt \to 0\\
&\frac{1}{\la_n} \int_{t_n}^{t_n + \la_n} \int_0^{B \la_n}\left( \abs{ \psi_r(t, r) - \frac{ Q_r(r/ \la_n)}{\la_n}}^2 + \frac{\abs{\psi(t, r)- Q(r/ \la_n)}^2}{r^2}\right) \, r \, dr \, dt \to 0
\end{align*}
as $n \to \infty$. Next observe that since $\vec \fy(t) \in \HH_0$ is a global wave map with $\E(\vec \fy)<2 \E(Q)$, we can use the monotonicity of the energy on light cones to deduce that 
\begin{align} \label{fy reg3}
\sup_{ t_n\le t \le 1}\E_0^{1-t}(\vec \fy(t)) \to 0 \quad \textrm{as} \quad n \to \infty.
\end{align} 
The above then implies that
\begin{align} \label{fy reg2}
\sup_{ t_n \le t \le 1} \|\vec \fy(t)\|_{H\times L^2(r \le 1-t)} \to 0 \quad \textrm{as} \quad n \to \infty.
\end{align} 
By \eqref{a def}, Lemma~\ref{a lem} we then have 
\begin{align*}
\frac{1}{\la_n} \int_{t_n}^{t_n + \la_n} \int_0^{\infty}  \dot a^2(t,  r)  \, r \, dr\, dt  &=  \frac{1}{\la_n} \int_{t_n}^{t_n + \la_n}\int_0^{1-t}  \abs{\dot \psi(t,  r)- \dot{\fy}(t, r)}^2  \, r \, dr\, dt\\
 &\lesssim \frac{1}{\la_n} \int_{t_n}^{t_n + \la_n}\int_0^{1-t}  \dot \psi^2(t,  r)  \, r \, dr\, dt \\
 &\quad+\frac{1}{\la_n} \int_{t_n}^{t_n + \la_n} \int_0^{1-t}  \dot \fy^2(t,  r)  \, r \, dr\, dt \to 0.
 \end{align*}
 Using \eqref{fy reg2} it is also immediate that 
 \begin{align*} 
 \frac{1}{\la_n} \int_{t_n}^{t_n + \la_n} \int_0^{B \la_n}\left( \abs{ a_r(t, r) - \frac{ Q_r(r/ \la_n)}{\la_n}}^2 + \frac{\abs{a(t, r)- Q(r/ \la_n)}^2}{r^2}\right) \, r \, dr \, dt \to 0.
\end{align*}
Now, define
\begin{multline*}
s(B,n):= \frac{1}{\la_n} \int_{t_n}^{t_n + \la_n} \int_0^{\infty}  \dot a^2(t,  r)  \, r \, dr\, dt  \\+ \frac{1}{\la_n} \int_{t_n}^{t_n + \la_n} \int_0^{B \la_n}\left( \abs{ a_r(t, r) - \frac{ Q_r(r/ \la_n)}{\la_n}}^2 + \frac{\abs{a(t, r)- Q(r/ \la_n)}^2}{r^2}\right) \, r \, dr \, dt.
\end{multline*}
We know that for all $B \ge 0$ we have $s(B, n) \to 0$ as $n \to \infty$. Let  $\al_n \to \infty$.  Then there exists a subsequence  $\sigma(n)$ such  that $s( \al_n , \sigma(n)) \to 0$ as $n \to \infty$ with $\al_n \la_{\sigma(n)}<1-t_{\sigma(n)}$. To see this let $N(B, \de)$ be defined so that  for $n \ge N(B, \de)$ we have $s(B, n) \le \de$ and then set  $ \sigma(n):=N(\al_n, 1/n)$. Note that we necessarily have $\al_n\la_{\sigma(n)}<1-t_{\sigma(n)}$. Then  we can extract $\tau_{\sigma(n)} \in [t_{\sigma(n)}, t_{\sigma(n)} + \la_{\sigma(n)}]$ so that after relabeling we have 
\begin{multline*} 
 \int_0^{\infty}  \dot a^2(\tau_n,  r)  \, r \, dr  \\+  \int_0^{\al_n \la_n}\left( \abs{ a_r(\tau_n, r) - \frac{ Q_r(r/ \la_n)}{\la_n}}^2 + \frac{\abs{a(\tau_n, r)- Q(r/ \la_n)}^2}{r^2}\right) \, r \, dr  \le \frac{1}{n}
 \end{multline*}
 for every $n$ which proves \eqref{a dot} and \eqref{a-Q}. 
 
Lastly, we establish \eqref{b 2eq}. To see this, let $\tau_n$ and $\la_n$ be as in  \eqref{a dot} and \eqref{a-Q}. Observe that 
\begin{align*} 
\E( \vec a(\tau_n) -(Q(\cdot/ \la_n), 0) &= \E_0^{\al_n \la_n}( \vec a(\tau_n) -(Q(\cdot/ \la_n), 0)) \\
&+ \E_{\al_n \la_n}^{1-\tau_n}( \vec a(\tau_n) -(Q(\cdot/ \la_n), 0) )\\
&+ \E_{1-\tau_n}^{\infty}( \vec a(\tau_n) -(Q(\cdot/ \la_n), 0) ).
\end{align*}
 First, observe that  \eqref{a dot} and \eqref{a-Q} directly imply that 
 \begin{align} \label{e a-q}
  \E_0^{\al_n \la_n}( \vec a(\tau_n) -(Q(\cdot/ \la_n), 0)) = o_n(1)
  \end{align} 
  as $n \to \infty$. Next we observe that 
  \begin{align} \label{Q tail}
  \E_{\al_n \la_n}^{\infty}(Q(\cdot/ \la_n)) = \E_{\al_n}^{\infty}(Q) = o_n(1).
  \end{align}
 Using \eqref{Q tail} and the fact that $\vec a(\tau_n, r) = (\pi, 0)$ for every $r \in[1-\tau_n, \infty)$, we have that 
  \begin{align*} 
  \E_{1-\tau_n}^{\infty}( \vec a(\tau_n) -(Q(\cdot/ \la_n), 0) ) &= \E_{1-\tau_n}^{\infty}(( \pi, 0)- (Q(\cdot/ \la_n), 0) )\\
  & \le \E_{\al_n \la_n}^{\infty}( Q(\cdot/ \la_n) ) = o_n(1).
  \end{align*} 
  Hence it suffices to show that 
  \begin{align} \label{a nlan}
   \E_{\al_n \la_n}^{1-\tau_n}( \vec a(\tau_n) -(Q(\cdot/ \la_n), 0) ) \le \eta + o_n(1).
   \end{align}
   Applying \eqref{Q tail} again we see that the above reduces to showing that  
   \begin{align*} 
    \E_{\al_n \la_n}^{1-\tau_n}( \vec a(\tau_n)) \le \eta + o_n(1).
   \end{align*}
Now combine the following two facts. One the one hand, for large $n$, \eqref{a en} implies that 
   \begin{align*} 
   \E(\vec a(\tau_n)) \le \E(\vec \psi)  + o_n(1).
   \end{align*} 
   On the other hand,  \eqref{a dot} and \eqref{a-Q} give that $\E_0^{\al_n \la_n}(\vec a(\tau_n)) = \E(Q) - o_n(1)$. Putting this all together we obtain \eqref{a nlan}. 
      \end{proof}

In the next section we will also need the following consequence of Proposition~\ref{a prop}. 

\begin{lem}\label{tech lem} Let $\al_n, \la_n, $ and $ \tau_n$ be defined as in Proposition~\ref{a prop}. Let $\be_n \to \infty$ be any other sequence such that $\be_n \le c_0\al_n$ for all $n$, for some $c_0 <1$. Then for every $0<c_1<C_2$ such that $C_2 c_0 \le 1$ there exists $\ti \be_n$ with $c_1 \be_n \le \ti \be_n\le C_2 \be_n$ such that 
\begin{align}
&\psi(\tau_n, \ti \be_n \la_n) \to \pi \quad \textrm{as} \quad n \to \infty \label{psi be to pi}
\end{align}
\end{lem}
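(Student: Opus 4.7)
The plan is to exploit the decomposition $\psi(\tau_n, r) = a(\tau_n, r) + \fy(\tau_n, r)$ from \eqref{a def}: if we can locate $\ti\be_n \in [c_1\be_n, C_2\be_n]$ at which both $a(\tau_n, \ti\be_n\la_n) - Q(\ti\be_n) \to 0$ and $\fy(\tau_n, \ti\be_n\la_n) \to 0$, then since $\ti\be_n \to \infty$ forces $Q(\ti\be_n) \to \pi$, we will immediately conclude $\psi(\tau_n, \ti\be_n\la_n) \to \pi$. Both convergences are plausible in an averaged (integral) sense from what Proposition~\ref{a prop}(b) and \eqref{fy reg2} already provide, and the freedom to choose $\ti\be_n$ inside an interval of scales is precisely what will let us upgrade these averaged statements to a pointwise one at a single well-chosen radius.

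The first concrete step will be to check that $I_n := [c_1\be_n\la_n,\, C_2\be_n\la_n]$ lies inside both $[0,\al_n\la_n]$ (so that \eqref{a-Q} applies) and $[0, 1-\tau_n)$ (so that the $\fy$-energy decay applies). The first inclusion is the content of the hypothesis $C_2 c_0 \le 1$ combined with $\be_n \le c_0\al_n$, and the second follows from the extra bound $\al_n\la_n < 1-\tau_n$ built into Proposition~\ref{a prop}. Together with \eqref{fy reg2}, this gives
\[
\int_{I_n} \frac{|a(\tau_n, r) - Q(r/\la_n)|^2 + \fy(\tau_n, r)^2}{r}\, dr \;=\; o_n(1).
\]

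The second (and essentially only) step is a one-line mean-value argument on $I_n$: since the integral above is $o_n(1)$, there exists $r_n \in I_n$ at which the integrand lies below its average, i.e.
\[
\frac{|a(\tau_n, r_n) - Q(r_n/\la_n)|^2 + \fy(\tau_n, r_n)^2}{r_n} \;\le\; \frac{o_n(1)}{|I_n|}.
\]
Because $r_n/|I_n| \le C_2/(C_2-c_1)$ is bounded uniformly in $n$, this yields $|a(\tau_n, r_n) - Q(r_n/\la_n)| + |\fy(\tau_n, r_n)| \to 0$. Setting $\ti\be_n := r_n/\la_n \in [c_1\be_n, C_2\be_n]$, the decomposition $\psi = a + \fy$ together with $Q(\ti\be_n) \to \pi$ will yield \eqref{psi be to pi}.

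The only mildly subtle point I anticipate is exactly this conversion of $H$-norm smallness into a pointwise value: generically one cannot control an $\dot H^1$-type function at a prescribed radius, but the lemma is formulated to give us an entire interval of admissible scales, and the logarithmic structure of the $H$-norm (i.e.\ the $\sin^2(\psi)/r^2$ piece integrating into $dr/r$) is exactly what makes the averaging argument scale-invariant. Beyond this, the proof reduces to bookkeeping with the constants $c_0, c_1, C_2$ and to invoking the estimates already packaged in Proposition~\ref{a prop} and \eqref{fy reg2}.
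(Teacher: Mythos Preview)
Your proposal is correct and takes essentially the same approach as the paper: both arguments rest on the single estimate
\[
\int_{c_1\be_n}^{C_2\be_n} \frac{|\psi(\tau_n,\la_n r)-Q(r)|^2}{r}\,dr \longrightarrow 0,
\]
which the paper packages as \eqref{psi-Q in H} (combining \eqref{a-Q} with \eqref{fyreg}) and which you obtain in the equivalent form via the splitting $\psi-Q=(a-Q)+\fy$. The only cosmetic difference is that the paper runs a contradiction (if $\psi(\tau_n,\la_n r)$ stayed $\de_0$-far from $\pi$ on the whole interval while $Q(r)$ is $\de_0/2$-close to $\pi$ there, the integral above would be bounded below), whereas you extract $\ti\be_n$ directly by a mean-value argument on $I_n$; these are two phrasings of the same pigeonhole, and your bookkeeping with $r_n/|I_n|\le C_2/(C_2-c_1)$ is exactly what makes the direct version work.
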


\begin{proof}We first observe that we can combine \eqref{a-Q} and \eqref{fyreg} to conclude that 
\begin{align}\label{psi-Q in H}
\|\vec \psi(\tau_n)- (Q(\cdot/ \la_n), 0)\|_{H \times L^2(r \le \al_n \la_n)} \to 0
\end{align}
as $n \to \infty$. Now,  suppose \eqref{psi be to pi} fails. Then there exists $\de_0>0$, $\be_n \to  \infty$ with $\be_n\le c_0 \al_n$, and $c_1<C_2$, and a subsequence so that 
\begin{align*}
 \forall n \quad \psi( \tau_n,  \la_n r) \not\in [\pi- \de_0, \pi+ \de_0] \quad \forall r\in[c_1  \be_n, C_2 \be_n]
\end{align*} 
Now, since $\be_n \to \infty$ we can choose $n $ large enough so that 
\begin{align*}
Q(r) \in [\pi- \de_0/2, \pi) \quad \forall r \in[c_1  \be_n, C_2 \be_n]
\end{align*}
Putting this together we have that  
\begin{align*}
\int_{c_1\be_n}^{C_2 \be_n} \frac{\abs{\psi(\tau_n, \la_nr)- Q(r)}^2}{r} \, dr \ge \left( \frac{C_2-c_1}{2c_1}\right)^2 \de_0^2
\end{align*}
But this directly contradicts \eqref{psi-Q in H} since $C_2 \be_n \le \al_n$ for every $n$.  
\end{proof}

\subsection{Compactness of the error}

For the remainder of this section, $\al_n, \tau_n$ and $\la_n$ will all be defined by Proposition~\ref{a prop}. Next, we define $\vec b_n \in \HH_0$ as follows:
\begin{align} \label{b def}
&b_{n, 0}(r) = a(\tau_n, r) - Q(r/\la_n)\\
&b_{n, 1}(r)=  \dot{a}(\tau_n, r)= o_n(1) \quad \textrm{in} \quad L^2 \label{b def 1}.
\end{align}

Our goal in this section is to complete the proof of Proposition~\ref{bu} by showing that $\vec b_n \to 0$ in the energy space. Indeed we prove the following result: 
\begin{prop} \label{b comp}
Define $\vec b_n= (b_{n, 0}, b_{n, 1})$ as in \eqref{b def}, \eqref{b def 1}. Then 
\begin{align} \label{b to 0}
\|\vec b_n\|_{H \times L^2} \to 0
\end{align} 
as $n \to \infty$. 
\end{prop}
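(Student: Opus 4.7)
The plan is to argue by contradiction: assume that, along a subsequence, $\|\vec b_n\|_{H\times L^2} \ge \mu_0 > 0$. Since $b_{n,1}\to 0$ in $L^2$ by Proposition~\ref{a prop}(a), the obstruction must come from $b_{n,0}$ in $H$. By Proposition~\ref{a prop}(c), $\E(\vec b_n) \le \eta + o_n(1) < 2\E(Q)$ for large $n$, so Lemma~\ref{ckm lem2} provides a uniform $H\times L^2$ bound on the sequence; moreover, Proposition~\ref{a prop}(b) localizes the defect to the annulus $\alpha_n\la_n \le r \le 1-\tau_n$, since $\vec b_n$ is supported in $[0, 1-\tau_n)$ by the support property in Lemma~\ref{a lem}.

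Next I would apply the Bahouri-G\'erard decomposition (Corollary~\ref{bg wm}) to $\vec b_n$, producing linear profiles $\vec\fy^j_L$ with parameters $(\mu_n^j, s_n^j)$ and a dispersively small remainder $\vec\gamma_n^k$. By the Pythagorean expansion (Lemma~\ref{en orth lem}), each nonlinear profile $\fy^j$ has energy strictly below $2\E(Q)$, so by Theorem~\ref{main} every $\fy^j$ exists globally in $\HH_0$ and scatters. I would then consider the nonlinear evolution starting at $\tau_n$, written as
\[
\vec\psi(\tau_n + t) \;=\; \vec\fy(\tau_n + t) \;+\; \bigl(Q(\,\cdot/\la_n),\,0\bigr) \;+\; \vec b_n(t),
\]
treating the static bubble $Q(\cdot/\la_n)$ as a known background. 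Using the nonlinear profile decomposition (Proposition~\ref{nonlin profile}) for $\vec b_n$ together with the perturbation lemma (Lemma~\ref{pert}), I would build an approximate solution for $\vec\psi$ whose Strichartz norm is controlled on a time interval reaching past $t = 1$, with all cross-terms between the bubble, the radiation $\vec\fy$, and the profiles $\fy^j$ rendered negligible by the pseudo-orthogonality \eqref{po}, via a case analysis of the type carried out in the proof of Lemma~\ref{en orth lem}. This extension of $\vec\psi$ past $t=1$ with bounded Strichartz norm contradicts the assumed blow-up at $t=1$, forcing $\mu_0 = 0$.

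The hard part will be twofold. First, the static bubble $(Q(\cdot/\la_n), 0)$ has energy $\E(Q)$ and lies in $\HH_1$ rather than $\HH_0$, so it cannot be absorbed into a standard small-energy perturbation scheme; it must be handled as a modulated background whose nonlinear interaction with the small-energy profiles is isolated by the scale separation $\la_n \ll 1-\tau_n$ coming from Proposition~\ref{a prop}. Second, the Bahouri-G\'erard remainder $\vec\gamma_n^k$ is only guaranteed to be small in Strichartz norm, whereas closing the contradiction requires $H \times L^2$ smallness on the blow-up cone $\{r \le 1-\tau_n\}$; this will be supplied by combining the compact support of $\vec b_n$ in $[0, 1-\tau_n)$ with the four-dimensional exterior energy estimate of Proposition~\ref{lin ext estimate} and its equivariant translation Corollary~\ref{linear ext en}, which in this favorable even dimension upgrades the dispersive smallness of the free evolution of $\vec\gamma_n^k$ to energy smallness on the cone, thereby allowing the perturbation argument to be closed in the energy topology.
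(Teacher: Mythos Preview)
Your contradiction mechanism is the central gap. You propose to show that, if $\vec b_n$ carried nontrivial profiles, an approximate solution built from $\vec\fy$, the bubble $(Q(\cdot/\la_n),0)$, and the nonlinear profiles of $\vec b_n$ would extend $\vec\psi$ past $t=1$ with finite Strichartz norm, contradicting blow-up. But $\vec\psi$ blows up at $t=1$ regardless of whether $\vec b_n$ vanishes: even with $\vec b_n\equiv 0$ the bubble $Q(\cdot/\la_n)$ concentrates since $\la_n\to 0$, and $Q$ itself, being stationary and lying in $\HH_1$, has infinite $L^3_tL^6_x$ norm. No perturbation argument around this background can produce a finite Strichartz bound up to $t=1$; your scheme would prove too much. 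The ``hard part'' you flag about handling the $\HH_1$ bubble via Lemma~\ref{pert} is therefore not merely hard but genuinely obstructed. You also have the direction of the exterior energy estimate reversed: Proposition~\ref{lin ext estimate} and Corollary~\ref{linear ext en} are \emph{lower} bounds showing that energy persists outside cones; they do not upgrade Strichartz smallness of $\vec\ga_n^k$ to energy smallness.

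The paper's argument is structured quite differently and in two stages. First (Proposition~\ref{no prof}), one shows the Bahouri--G\'erard decomposition of $\vec b_n$ contains \emph{no} nonzero profiles. This step does not use a Strichartz continuation argument; it exploits the time-averaged kinetic-energy decay of Corollary~\ref{t dec extract} (valid for \emph{any} $\sigma\in(0,1-t_n)$, hence at the scale $\la_n^j$ of each putative profile) together with pseudo-orthogonality to force $\dot\fy^j(t,r)=0$ on $\{r\ge t,\ t\in[0,1]\}$, whence $\fy^j$ is harmonic in $\HH_0$ and thus zero. Only then does the exterior estimate enter: since now $\vec b_n(t)=\vec b_{n,L}(t)+o_{H\times L^2}(1)$ with $b_{n,1}=o_{L^2}(1)$, Corollary~\ref{linear ext en} (which applies only to data of the form $(f,0)$) gives the lower bound $\|\vec b_n(t)\|_{H\times L^2(r\ge|t|)}\gtrsim\de_0$ for all $t$ (Corollary~\ref{ext en est}). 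The contradiction comes from propagating \emph{backwards} rather than forwards: after rescaling to the cone and using Corollary~\ref{Cote} to control the evolution near the bubble, this persistent exterior energy forces $\vec\psi$ to concentrate a fixed amount of $H\times L^2$ norm near a spacetime point $(1-r_0,r_0)$ with $r_0>0$, which is impossible for a smooth equivariant wave map away from the spatial origin.
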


To begin, we observe that by Proposition~\ref{a prop} we have $$\E(\vec b_n) \le C <2\E(Q)$$ for $n$ large enough.  Denote by $\vec b_n(t) \in \HH_0$ the wave map evolution with data $\vec b_n \in \HH_0$. Since $\E(\vec b_n)\le C <2\E(Q)$ for large $n$,  we know from Theorem~\ref{main} that $\vec b_n(t) \in \HH_0$ is global  and scatters to zero as $t \to \pm \infty$.

The proof of Proposition~\ref{b comp}  proceeds in several steps. We give a brief outline of the approach below to give the reader a general sense of the strategy.
\begin{itemize}
\item 
The first step in the proof of Proposition~\ref{b comp} is to show that the sequence $\vec b_n$ does not contain any nonzero profiles. The proof of this step is reminiscent of an argument given in \cite[Section $5$]{DKM1} and in particular  \cite[Proposition $5.1$]{DKM1}. Here the situation has been simplified as we have already extracted the large profile $Q(\cdot/ \la_n)$ by means of Struwe's theorem. This result is achieved in Proposition~\ref{no prof} below. 
\item Next,  we proceed by contradiction. If we assume that the conclusion of Proposition~\ref{b comp} fails, then up to extracting a subsequence we have $$\| \vec b_n\|_{H \times L^2} \ge \de_0.$$ As we have shown in the first step that $\vec b_n$ has no nonzero profiles, the nonlinear wave map evolution $\vec b_n(t)$ is well approximated by the corresponding linear flow with the same initial data, which we denote by $\vec b_{n, L}(t)$.  Using the exterior linear estimates, Corollary~\ref{linear ext en}, we can then deduce that the wave map flow $\vec b_n(t)$ maintains a fixed amount of energy exterior to the light cone, viz.~\eqref{b ext}.

\item The final step is to show that~\eqref{b ext} forces the original wave map, $\vec \psi(t)$ to concentrate energy on the boundary of the cone {\em before} the blow-up time $t=1$.  This is of course impossible, both by equivariance -- the only concentration point can be $r=0$ -- and by our assumptions that the map blows up at time $t=1$. The proof is a delicate argument that  is based on showing that  the backwards evolutions of $\vec \psi(\tau_n)$ and $\vec b_n$ remain close on an exterior region. 
 Several technical difficulties arise from the fact that a Bahouri-Gerard type nonlinear profile decomposition cannot be directly applied, since $\vec \psi(\tau_n) \in \HH_1$, and not in $\HH_0$. The pieces of the decomposition must then be evolved separately, in two different steps,  and we rely on finite speed of propagation, as well as on Corollary~\ref{Cote} to show that $\vec \psi(\tau_n +t)$ and $\vec b_n(t)$ remain close throughout this process near the boundary of the cone. In particular, the nonzero amount of energy that $\vec b_n(t)$ maintains on the exterior cone forces $\vec \psi(\tau_n +t)$ to concentrate energy there as well, which will give us a  contradiction. 
 \end{itemize} 

 We now begin with the proof. 

\begin{prop}\label{no prof}
Let $\vec b_n \in \HH_0$ and the corresponding global wave map $\vec b_n(t)\in \HH_0$  be defined as above. Then there exists a decomposition 
\begin{align} 
\vec b_n(t, r) = \vec b_{n, L}(t, r) + \vec \theta_n(t, r) 
\end{align}
where $\vec b_{n, L}(t, r)$ satisfies the linear wave equation 
\begin{align} \label{linear 2d wave}
\p_{tt}b_{n, L} - \p_{rr}b_{n, L} -\frac{1}{r}\p_rb_{n, L} + \frac{1}{r^2}b_{n, L}=0
\end{align}
with initial data $\vec b_{n, L}(0, r)= (b_{n, 0}, 0)$. Moreover, $b_{n, L}$ and $\vec \theta_n$ satisfy
\begin{align}
&\left\|\frac{1}{r} b_{n, L}\right\|_{L^3_t(\R; L^6_x(\R^4))} \longrightarrow 0\label{bL to 0}\\
&\|\vec \theta_n\|_{L^{\infty}_t(\R; H\times L^2)} + \left\| \frac{1}{r} \theta_n\right\|_{L^3_t(\R ; L^6_x(\R^4))} \longrightarrow 0 \label{theta to 0}
\end{align}
as $n \to \infty$. 
\end{prop}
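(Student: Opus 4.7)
The plan is to apply the Bahouri--G\'erard profile decomposition to $\vec b_n$ and prove every linear profile must be trivial; together with Proposition~\ref{nonlin profile} and the nonlinear perturbation lemma (Lemma~\ref{pert}), this yields the stated decomposition with $\vec\theta_n := \vec b_n - \vec b_{n,L}$. Working in the $4d$ formulation via $u_n = b_{n,0}/r$, Proposition~\ref{a prop}(c) and Lemma~\ref{ckm lem2} give that $(b_{n,0}/r, b_{n,1}/r)$ is uniformly bounded in $\dot{H}^1\times L^2(\R^4)$, so Corollary~\ref{bg wm} produces linear profiles $\{\vec\fy_L^j\}$, parameters $\{(t_n^j,\la_n^j)\}$, and errors with vanishing free Strichartz norms as $k\to\infty$. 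The energy Pythagorean expansion (Lemma~\ref{en orth lem}) combined with $\E(\vec b_n) \leq \eta + o_n(1) < 2\E(Q)$ places every nonlinear profile $V^j$ below the $2\E(Q)$ threshold; hence Theorem~\ref{main} gives global existence and scattering for each $V^j$, and Proposition~\ref{nonlin profile} produces $\vec b_n(t) = \sum_j \vec V_n^j(t) + \vec w_{n,L}^k(t) + \vec z_n^k(t)$ with $\vec z_n^k$ small as $k\to\infty$.

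The decisive step is to force every linear profile $\vec\fy_L^j \equiv 0$. From Proposition~\ref{a prop}(a), $\|b_{n,1}\|_{L^2} \to 0$, and the velocity Pythagorean expansion gives $\|\dot\fy_L^j(-t_n^j/\la_n^j)\|_{L^2} \to 0$ for every $j$. If $-t_n^j/\la_n^j \to \pm\infty$, then the equipartition of energy for free radial waves in $\R^{1+4}$ (an asymptotic consequence of Huygens' principle viewed through the $d=4$ radial reduction) implies $\|\dot V_L^j(t)\|_{L^2}^2 \to \tfrac{1}{2} \E(\vec V_L^j)$ as $|t| \to \infty$, whence $\vec V_L^j \equiv 0$. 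For profiles with $-t_n^j/\la_n^j \to \tau_\infty^j \in \R$, translating in time reduces to $t_n^j \equiv 0$ and $\dot\fy_L^j(0) = 0$; these profiles are free waves with zero initial velocity and constitute the main obstacle.

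To eliminate such profiles I would combine two ingredients. First, the exterior-energy Pythagorean decomposition of Corollary~\ref{c16} with $r_n = 1-\tau_n$: since $\vec a(\tau_n) \equiv (\pi,0)$ for $r \geq 1-\tau_n$ by Lemma~\ref{a lem} and $\|\pi - Q(\cdot/\la_n)\|_{H(r \geq 1-\tau_n)} \to 0$ because $\la_n = o(1-\tau_n)$, the left-hand side vanishes, forcing $\|\vec\fy_L^j(0)\|_{H\times L^2(r \geq (1-\tau_n)/\la_n^j)} \to 0$ for each $j$; this alone kills profiles with $\la_n^j \gg 1-\tau_n$. Second, Lemma~\ref{Hloc conv} applied to $\vec\psi$ at scale $\la_n$, combined with the scattering of $\vec\fy$ (which gives $\vec\fy(\tau_n, \la_n\cdot) \to 0$ in $H_{\text{loc}}$ since $\la_n \to 0$), yields $b_{n,0}(\la_n\cdot) \to 0$ in $H_{\text{loc}}$, which by pseudo-orthogonality excludes profiles at scales $\la_n^j \lesssim \la_n$. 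The remaining intermediate regime $\la_n \ll \la_n^j \lesssim 1-\tau_n$ is the principal obstacle: here I expect the argument to combine the exterior energy estimate of Corollary~\ref{linear ext en} (which guarantees that the free evolution of $(\fy_L^j(0), 0)$ radiates at least $\beta_0\|\fy_L^j(0)\|_H$ into every exterior region $r \geq |t|$) with the shrinking support of $\vec b_n$ and a time-shifted application of Corollary~\ref{c16} to derive the required incompatibility. Once every linear profile vanishes, the nonlinear decomposition collapses to $\vec b_n(t) = \vec w_{n,L}^k(t) + \vec z_n^k(t)$; using $\|b_{n,1}\|_{L^2}\to 0$ to replace the $(b_{n,0},b_{n,1})$-free evolution by the $(b_{n,0},0)$-free evolution up to $o_n(1)$ in $L^\infty_t(H\times L^2)$, a standard diagonal argument in $n$ and $k$ produces \eqref{bL to 0} and \eqref{theta to 0} with $\vec\theta_n := \vec b_n - \vec b_{n,L}$.
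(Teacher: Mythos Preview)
Your setup is correct and matches the paper: the profile decomposition, the reduction to $t_n^j\equiv 0$ with $\dot\fy_L^j(0)=0$ via equipartition, the nonlinear profile decomposition through Proposition~\ref{nonlin profile}, and the scale restriction $\la_n\ll\la_n^j\lesssim 1-\tau_n$ via Proposition~\ref{c16}. The gap is precisely where you flag it---the intermediate regime---and the tools you propose there cannot close it.

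The exterior energy estimate of Corollary~\ref{linear ext en} together with spatial localization of $\vec b_n(0)$ carries no contradiction: a profile living at a scale $\la_n^j$ with $\al_n\la_n\ll\la_n^j\ll 1-\tau_n$ is fully compatible with $\E_0^{\al_n\la_n}(\vec b_n)\to 0$ and $\E_{1-\tau_n}^\infty(\vec b_n)\to 0$, and the exterior energy inequality only says such a profile radiates into $\{r\ge|t|\}$, which is not obstructed by any property of $\vec b_n(0)$ alone. A ``time-shifted'' application of Proposition~\ref{c16} does not help either, since that proposition concerns the initial-time Pythagorean splitting and says nothing about evolved data. In fact, in the paper Corollary~\ref{linear ext en} enters only \emph{after} Proposition~\ref{no prof} is established (to prove Corollary~\ref{ext en est}); it is not an input to Proposition~\ref{no prof}.

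The missing idea is dynamical: one must exploit the time-averaged vanishing of kinetic energy of the original wave map, namely Corollary~\ref{t dec extract}, at window size $\sigma\simeq\la_n^1$. The paper builds an auxiliary degree-zero wave map $\vec{\tilde\psi}_n$ by excising the bubble region $r\lesssim\tilde\la_n$ from $\vec\psi(\tau_n,\cdot)$; by finite speed of propagation $\vec{\tilde\psi}_n(t,r)+(\pi,0)=\vec\psi(\tau_n+t,r)$ for $r\ge\e\la_n^1+|t|$, and $\vec{\tilde\psi}_n$ has a nonlinear profile decomposition whose profiles are exactly the $\fy^j$ with $j\in\J_{\mathrm{ext}}$. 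Feeding Corollary~\ref{t dec extract} into this yields
\[
\frac{1}{\la_n^1}\int_0^{\la_n^1}\int_{\e\la_n^1+t}^\infty\Bigl|\dot\fy_n^1(t,r)+\dot\ga_{n,L}^k(t,r)\Bigr|^2\,r\,dr\,dt\;=\;o_n^k,
\]
and after separating $\dot\ga_{n,L}^k$ by weak convergence one obtains $\dot\fy^1(t,r)=0$ for $r\ge t$, $t\in[0,1]$. Thus $\fy^1$ is a finite-energy equivariant harmonic map on $\R^2\setminus\{0\}$, which by Sacks--Uhlenbeck extends across the origin and, being in $\HH_0$, must vanish. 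This rigidity step---forcing the profile to be harmonic via vanishing time-averaged kinetic energy and then invoking the classification of harmonic maps---is the mechanism you are missing; no combination of Proposition~\ref{c16} and Corollary~\ref{linear ext en} can substitute for it.
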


Before beginning the proof of Proposition~\ref{no prof} we deduce the following  corollary which will be an essential ingredient in the proof of Proposition~\ref{b comp}.

\begin{cor} \label{ext en est}
Let $\vec b_n(t)$ be defined as in Proposition~\ref{no prof}. Suppose that there exists a constant $\de_0$ and a subsequence in $n$ so that $\|b_{n, 0}\|_H \ge \de_0$.  Then there exists $\al_0 >0$ such that for all $t>0$ and all $n$ large enough we have
\begin{align} \label{b ext}
\|\vec b_n(t) \|_{H \times L^2(r \ge t)} \ge \al_0 \de_0
\end{align}
\end{cor}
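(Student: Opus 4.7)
The plan is to combine the linear exterior energy estimate of Corollary~\ref{linear ext en} with the nonlinear decomposition provided by Proposition~\ref{no prof}. Note that Proposition~\ref{no prof} decomposes the nonlinear evolution $\vec b_n(t)$ as $\vec b_{n,L}(t) + \vec \theta_n(t)$, where $\vec b_{n,L}(t) = W(t)(b_{n,0},0)$ is the linear evolution of the initial position alone, with zero initial velocity. This is exactly the form required to apply Corollary~\ref{linear ext en}.

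First, I would apply Corollary~\ref{linear ext en} directly to $\vec b_{n,L}(t)$. Since the initial velocity is zero, we obtain for all $t \ge 0$
\[
\|\vec b_{n,L}(t)\|_{H \times L^2(r \ge t)} \ge \be_0 \|b_{n,0}\|_H \ge \be_0 \de_0
\]
for all $n$ in the subsequence along which $\|b_{n,0}\|_H \ge \de_0$ holds.

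Next, I would apply the reverse triangle inequality for the norm on the exterior region $\{r \ge t\}$, namely
\[
\|\vec b_n(t)\|_{H \times L^2(r \ge t)} \ge \|\vec b_{n,L}(t)\|_{H \times L^2(r \ge t)} - \|\vec \theta_n(t)\|_{H \times L^2(r \ge t)} \ge \be_0 \de_0 - \|\vec \theta_n\|_{L^\infty_t(\R;H \times L^2)}.
\]
By \eqref{theta to 0}, the last term tends to zero as $n \to \infty$, so for all $n$ sufficiently large it is bounded by $\tfrac{\be_0}{2}\de_0$. This gives
\[
\|\vec b_n(t)\|_{H \times L^2(r \ge t)} \ge \tfrac{\be_0}{2} \de_0
\]
uniformly in $t \ge 0$, which is the claimed estimate with $\al_0 = \be_0/2$.

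There is no substantial obstacle here: all the work has already been done in Proposition~\ref{no prof}, which establishes that only the free linear part of $\vec b_n$ carries the nontrivial $H$-mass while the nonlinear correction $\vec\theta_n$ is negligible in the energy norm. The only point meriting attention is that $\vec b_{n,L}$ is defined with zero initial velocity (the $b_{n,1}$ data from \eqref{b def 1} is absorbed into $\vec\theta_n$ since $\|b_{n,1}\|_{L^2} = o_n(1)$), which is precisely the hypothesis needed to apply \eqref{2d lin ext est}. If the profile decomposition had instead placed $b_{n,1}$ inside the linear part, one would have to worry about cancellation between the two data components, which in even space dimensions can cause the linear exterior energy bound to fail (as discussed in the introduction and in \cite{CKS}); but this issue does not arise here.
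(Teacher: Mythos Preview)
Your proof is correct and follows essentially the same approach as the paper: apply Corollary~\ref{linear ext en} to the linear part $\vec b_{n,L}(t)=W(t)(b_{n,0},0)$, then use the reverse triangle inequality together with $\|\vec\theta_n\|_{L^\infty_t(H\times L^2)}\to 0$ from Proposition~\ref{no prof} to absorb the error. Your additional remark about why the zero initial velocity in $\vec b_{n,L}$ is essential for invoking \eqref{2d lin ext est} is a helpful observation that the paper leaves implicit.
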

\begin{proof} First note that since $\vec b_{n, L} $ satisfies the  linear wave equation \eqref{linear 2d wave} with initial data $\vec b_{n, L}(0) =(b_{n, 0}, 0)$ we know by Corollary~\ref{linear ext en} that there exists a constant $\be_0 >0$ so that for each $t\ge 0$ we have
\begin{align*} 
\|\vec b_{n, L}(t)\|_{H \times L^2(r \ge t)} \ge \be_0\|b_{n, 0}\|_H
\end{align*}
On the other hand, by Proposition~\ref{no prof} we know that 
\begin{align*} 
\|\vec b_{n}(t) - \vec b_{n, L}(t)\|_{H \times L^2(r\ge t)} \le \|\vec \theta_n(t)\|_{H \times L^2} = o_n(1)
\end{align*}
Putting these two facts together gives 
\begin{align*} 
\|\vec b_n(t) \|_{H \times L^2(r \ge t)} &\ge \|b_{n, L}(t)\|_{H \times L^2(r \ge t)}  -o_n(1)\\
&\ge  \be_0\|b_{n, 0}\|_H -o_n(1)
\end{align*}
This yields \eqref{b ext} by passing to a suitable subsequence and taking $n$ large enough.\end{proof} 

To prove Proposition~\ref{no prof} we will first pass to the standard $4d$ representation in order to perform a profile decomposition on the sequence $\vec b_n$. Up to extracting a subsequence, $\vec b_n \in \HH_0$ forms a uniformly bounded sequence with $\E(\vec b_n) \le C <2\E(Q)$. By Lemma~\ref{ckm lem2} and the right-most equality in \eqref{2-4}, the sequence $\vec u_n =(u_{n, 0}, u_{n,1})$ defined by 
\begin{align}
&u_{n, 0}(r) = \frac{b_{n, 0}(r)}{r}\\
&u_{n, 1}(r) = \frac{b_{n, 1}(r)}{r} = o_n(1) \quad \textrm{in} \quad L^2(\R^4)\label{L2}
\end{align} 
is uniformly bounded in $\dot{H}^1 \times L^2 (\R^4)$. By Theorem~\ref{BaG} we can perform the following profile decomposition on the sequence $\vec u_n$: 
\begin{align}
&u_{n, 0}(r) = \sum_{j \le k}\frac{1}{\la_n^j} V^j_L\left(\frac{-t_n^j}{\la_n^j}, \frac{r}{\la_n^j}\right) + w_{n, 0}^k(0, r) \label{bg0}\\
&u_{n, 1}(r) = \sum_{j \le k}\frac{1}{(\la_n^j)^2} \dot V_L^j\left(\frac{-t_n^j}{\la_n^j}, \frac{r}{\la_n^j}\right) + w_{n, 1}^k(0, r)\label{bg1}
\end{align}
where each $\vec V^j_L$ is a free radial wave in $4d$ and where we have for $j \neq k$:
\begin{align} \label{ort}
\frac{\la_n^j}{\la_n^k} + \frac{\la_n^k}{\la_n^j} + \frac{\abs{t_n^j -t_n^k}}{\la_n^k} +  \frac{\abs{t_n^j -t_n^k}}{\la_n^j} \to \infty \quad \textrm{as} \quad n \to \infty
\end{align} 
Moreover, if we denote by $\vec w_{n, L}^k(t)$ the free evolution of $\vec w_n^k$ we have for $j\le k$ that 
\begin{align} 
&\left(\la_n^j w_{n,L}^k(\la_n^jt_n^j, \la_n^j \cdot), (\la_n^j)^2 \dot w_{n,L}^k(\la_n^j t_n^j, \la_n^j \cdot)\right) \rightharpoonup 0 \in \dot{H}^1 \times L^2 \quad \textrm{as} \quad n \to \infty \label{weak}\\
&\limsup_{n \to \infty} \|w_{n,L}^k\|_{L^3_tL^6_x} \to 0 \quad\textrm{as} \quad k \to \infty\label{stric}
\end{align}
Finally,   
\begin{align} \label{pyt}
\|\vec u_n \|_{\dot{H}^1 \times L^2}^2 = \sum_{j \le k}\left\| \vec V_L^j\left(\frac{-t_n^j}{\la_n^j}\right)\right\|_{\dot{H}^1 \times L^2}^2 + \| \vec w_{n}^k(0)\|_{\dot{H}^1 \times L^2}^2 + o_n(1)
\end{align} 
It  is  also convenient to rephrase the above profile decomposition in the $2d$ formulation. We have 
\begin{align}
&b_{n, 0}(r)= \sum_{j \le k}\fy^j_L\left(  \frac{ -t_n^j}{ \la_n^j} , \frac{r}{\la_n^j}\right) + \ga_{n, 0}^k( r)\\
&b_{n, 1}(r) = \sum_{j \le k} \frac{1}{\la_n^j}   \dot \fy^j_L\left(  \frac{ -t_n^j}{ \la_n^j} , \frac{r}{\la_n^j}\right) + \ga_{n, 1}^k( r),
\end{align}
where 
\begin{align*} 
&\fy^j_L\left( \frac{-t_n^j}{\la_n^j}, \frac{r}{\la_n^j}\right):=\frac{r}{\la_n^j} V_L^j\left( \frac{-t_n^j}{\la_n^j}, \frac{r}{\la_n^j}\right)    \\
&\gamma_{n}^k(r):=rw_{n, 0}^k( r) .
\end{align*}
and similarly for the time derivatives. 

We  make the following crucial observation about the scales $\la_n^j$. By Proposition~\ref{a prop} we have as $n \to \infty$ that
\begin{align} \label{aln b}
\E_0^{\al_n\la_n}(b_{n, 0}, 0) \to 0,\\
\E_{1-\tau_n}^{\infty}(b_{n, 0}, 0) \to 0\label{1-tn b}.
\end{align}
Note that we also have that if $\be_n \to \infty$ is any other sequence with $\be_n \le \al_n$ then  
\begin{align} \label{ben b}
\E_0^{\be_n\la_n}(b_{n, 0}, 0) \to 0.
\end{align}
We can combine ~\eqref{aln b} and \eqref{1-tn b} with  Proposition~\ref{c16} to conclude that for each scale $\la_n^j$ corresponding to a nonzero profile $\fy^j$ we have 
\begin{align} \label{diff scales}
\la_n \ll \la_n^j \le 1-\tau_n
\end{align} 
at least for $n$ large. In particular,  
\begin{align}\label{la to 0}
\la_n^j \to 0 \quad \textrm{as} \quad n \to \infty \quad \textrm{for every} \, j.
\end{align}

The proof of Proposition~\ref{no prof} will consist of a sequence of steps designed to show that each of the profiles $ \vec V^j_L$ (or equivalently the $\vec \phi^j_L$)  must be identically zero. 

Our first goal is to show that all of the time sequences $\{t_{n, j}\}$ can be taken to be $\equiv 0$ and that then the initial velocities of the profiles vanish, i.e., $\dot V_L^j(0, r) \equiv 0$ for each $j$. This is an easy consequence of the  following lemma:
\begin{lem} \label{l:tnj} 
In the decomposition~\eqref{bg0},~\eqref{bg1} we must have 
\EQ{
 \limsup_{n \to \infty} \abs{ \frac{t_{n}^j}{\la_n^j}}  < \infty \quad \forall \, j  \in \N.
 }
 
\end{lem}

\begin{cor}  \label{c:vt0} 
 In the decomposition \eqref{bg0}, \eqref{bg1} we can assume, without loss of generality, that $t_n^j =0$ for every $n$ and for every $j$. And, in addition we then have 
\begin{align*} 
\dot V_L^j(0, r) \equiv 0 \quad \textrm{for every} \quad  j.
\end{align*} 
\end{cor}
\begin{proof}[Proof of Corollary~\ref{c:vt0}]
Since all of the sequences $t_n^j/ \la_n^j$ are bounded, we can assume (by translating the profiles) that $t_{n}^j  \equiv 0$ for all $j$ and for all $n$. In the case when $t_n^j = 0$ for all $j$, it is easy to see that, besides~\eqref{pyt} the following Pythagorean expansion also
holds
\EQ{
o_n(1)  = \|u_{n, 1}\|_{L^2}^2 = \sum_{j \le k}\left\| \dot V_L^j(0)\right\|_{L^2}^2 + \|w_{n, 1}^k(0)\|_{L^2}^2 + o_n(1)    \label{pyt2}, 
}
from which it is immediate that 
$V_1^j:= \dot V^j_L(0) =0$ for every $j$. 
\end{proof} 

We now move to the proof of  Lemma~\ref{l:tnj}. We follow closely the argument in~\cite{DKMe}, however since there are a few technical differences, we reproduce the proof here. 

Note that one way of viewing Corollary~\ref{c:vt0} is that, under the hypothesis,  one has ability to pass from~\eqref{pyt} to~\eqref{pyt2}. For a profile decomposition of a general sequence $(v_{n, 0}, v_{n, 1})$ in $\dot H^1 \times L^2(\R^4)$ with $\|v_{n, 1}\|_{L^2}   = o_n(1)$ this is not possible due to the following example: Let $ \vec V_L(t)$ be any nonzero free wave and let $s_n \to \infty$ be any sequence of times. Let $v_{n, 0}:= 2 V_L(s_n)$ and $v_{n, 1} = 0$. Then 
\EQ{
v_{n, 0} = V_L^1( - s_n^1) + V_L^2( -s_{n}^2),  \quad 
 v_{n, 1} = 0
}
where 
\begin{align*}
 V_L^1(t) := V_L(t), \, \, s_n^1 := -s_n,  \quad V_L^2(t) := V_L(-t), \, \, s_n^2 :=  s_n
\end{align*}
is a profile decomposition which does not satisfy 
\begin{align*}
 0 = \| u_{n, 1}\|_{L^2}^2 \neq   \|  \dot V_L^1(-s_n^1) \|_{L^2}^2  + \|  \dot V_L^2(-s_n^2) \|_{L^2}^2  + o_n(1).
 \end{align*}

With this example in mind, the first step towards proving Lemma~\ref{l:tnj} is to show that such time-symmetric profiles are the only type that can arise with diverging parameters $t_{n}^j/ \la_{n}^j  \to \pm \infty$, for a sequence $(v_{n, 0}, v_{n, 1})$ in $\dot H^1 \times L^2(\R^4)$ with $\|v_{n, 1}\|_{L^2}   = o_n(1)$. 

We begin by  establishing the following claim. Denote by $ \vec S(t)$ the free wave propagator in $\R^{1+4}$, i.e., for data $(f, g)$ we set 
\begin{align*}
&S(t) (f, g) =  \cos (t \sqrt{ - \De}) f + \frac{ \sin( t\sqrt{- \De})}{  \sqrt{ -\De}} g,  \\
& \vec S(t)(f, g) :=  ( S(t)(f, g) ,\,   \p_t S(t)(f, g)).
\end{align*} 
\begin{claim}\cite[Claim 2]{DKMe} \label{c:2} 
 Let $\{f_{n}, g_{n}\}$ be a bounded sequence of radial functions in $\dot{H}^1 \times L^2 (\R^4)$ and let $A_n>0$ be any sequence so that 
 \EQ{
 \| g_n \|_{L^2( r \ge A_n)} \to 0 \mas  n \to \infty.
 }
 Let $t_n$ be a time sequence so that $ \abs{t_n}/ A_n  \to \infty $ as $n \to \infty$. If 
\begin{align*}
  \vec S(-t_n)( f_n, g_n) \rightharpoonup  (V_0, V_1) \in \dot{H}^1 \times L^2,
\end{align*}
then, 
\begin{align*}
  \vec S(t_n)( f_n, g_n) \rightharpoonup  (V_0,  - V_1) \in \dot{H}^1 \times L^2.
\end{align*} 
\end{claim} 
 \begin{proof}
 The proof follows closely the argument given in~\cite{DKMe}, but here we crucially use~\cite[Theorem 4]{CKS} in place of~\cite[Lemma 4.1]{DKM1}.  Denote by $\ang{\cdot, \cdot}_{\dot H^1 \times L^2}$ the inner product in $\dot {H}^1 \times L^2$. Given any radial $(h_0, h_1) \in C^{\infty}_0 \times C^\infty_0 (\R^4)$ we have 
 \begin{multline*}
  \ang{  \vec S( - t_n)(f_n, g_n), \, ( h_0, h_1)}_{\dot H^1 \times L^2}  =   \ang{  (f_n, g_n), \, \vec S( t_n)( h_0, h_1)}_{\dot H^1 \times L^2} \\
   = \ang{  (f_n, - g_n), \, \vec S( t_n)( h_0, h_1)}_{\dot H^1 \times L^2} + o_n(1) \mas n \to \infty.
 \end{multline*} 
We note that the last inequality above is due our assumptions on $g_n$. Indeed, by~\cite[Theorem 4]{CKS} (which says roughly that radial free waves radiate most of their energy near the light cone)  and since $\abs{t_n}/ A_n \to  \infty$, we have  
\begin{align*}
 \ang{  (0, g_n), \, \vec S( t_n)( h_0, h_1)}_{\dot H^1 \times L^2}  =  o_n(1) \mas n \to \infty.
\end{align*}  
Using the fact that for any data $(f, g)$ we have $$ \vec S(- t) (f, -g)  = ( S(t)(f, g),  -  \p_t S(t)(f, g))$$ we obtain
\begin{multline*}
\ang{  \vec S( - t_n)(f_n, g_n), \, ( h_0, h_1)}_{\dot H^1 \times L^2}  = \ang{  \vec S( - t_n)(f_n, - g_n), \, ( h_0, h_1)}_{\dot H^1 \times L^2}  + o_n(1) \\
 =  \ang{  \vec S(  t_n)(f_n, g_n), \, ( h_0,  - h_1)}_{\dot H^1 \times L^2} + o_n(1)  \mas n \to \infty,
\end{multline*}
which completes the proof. 
 \end{proof}
 
 \begin{claim}\label{c:3} 
 Let $(v_{n, 0}, v_{n, 1})$ be a bounded sequence of radial functions in $\dot H^1 \times L^2 (\R^4)$ such that 
 \EQ{ \label{vnto0} 
  \| v_{n, 1} \|_{L^2} \to 0 \mas n \to \infty.
 }
 Then, after passing to a subsequence, there exists a profile decomposition with free waves $V_{L}^j$ and parameters $\{t_n^j, \la_n^j\}$ so that for any fixed $j \in \N$ we have either 
 \EQ{
 t_{n}^j =  0 , \quad \forall n  \mand \dot V_L^j(0) = 0,
 }
 or 
 \EQ{ \label{tnjI} 
  \frac{t_n^j}{\la_n^j} \to  \pm \infty \mas n \to \infty
  }
  and 
 there exists $k \neq j$ so that 
 \EQ{ \label{kth prof} 
 V_{L}^k(t)  = V_L^j(-t) \mand  \, \, \forall n \, \,  t_n^j = - t_{n}^k, \quad  \la_{n}^k = \la_n^j.
 }
 \end{claim} 
 \begin{proof}
 Fix and $j \in \N$. Recall from~\cite{BG} that the profile $ \vec V_L^j$ with parameters $\{t_n^j, \la_n^j\}$ is defined by the weak limit 
 \EQ{ \label{profile}
 \vec S(t_n^j/ \la_n^j)(  \la_n^j v_{n, 0}( \la_n^j \cdot), (\la_n^j)^2 v_{n, 1}( \la_n^j \cdot)) \rightharpoonup  \vec V_L^j(0) \in \dot H^1 \times L^2.
 }
Now, we can assume without loss of generality that either $t_n^j  = 0$ for all $n$ or that~\eqref{tnjI}  holds. If $t_{n, j} =  0$ then~\eqref{vnto0} and~\eqref{profile} show that 
 $\p_t V_L (0) = 0$. In the latter case, we can use Claim~\ref{c:2} to extract the weak limit  
 \EQ{
  \vec S(- t_n^j/ \la_n^j)(  \la_n^j v_{n, 0}( \la_n^j \cdot), (\la_n^j)^2 v_{n, 1}( \la_n^j \cdot)) \rightharpoonup  ( V_L^j(0), - \p_t V_L^j(0)) \in \dot H^1 \times L^2.
  }
  This gives us the existence of the $k$th profile $V^k_L$  precisely as in~\eqref{kth prof}.  \end{proof} 
 We can now prove Lemma~\ref{l:tnj}. 
\begin{proof}[Proof of Lemma~\ref{l:tnj}] We argue by contradiction. Passing to the $2d$ formulation,  assume that there exists a $j_0 \ge1$ so that $\fy^j_L \neq 0$ and $- t_n^{j_0}/ \la_{n}^{ j_0} \to + \infty$. By Claim~\ref{c:3} and after reordering the profiles we can assume that 
\begin{align*}
 \fy^{j_0+1}_L(t)  =  \fy_L^{j_0}(-t) \mand  t_{n}^{j_0+1} = - t_n^{j_0}, \, \, \, \la_n^{j_0+1} = \la_n^{j_0}.
\end{align*} 
Recall that in Proposition~\ref{a prop} the time sequence $\tau_n$ was chosen so that for every $n$ we have 
\begin{align*}
 \int_0^\I  \dot a^2( \tau_n, r) \, r \, dr \le \frac{1}{n}.
\end{align*}   
Our first observation is that there is considerable flexibility in the choice of $\tau_n$ in Proposition~\ref{a prop}. In fact, we claim that there exists a number $\tau_0 \in (0, 1]$ so that 
\EQ{ \label{tau0}
 \int_0^\I  \dot a^2( \tau_n + \la^{j_0}_n \tau_0, r) \, r \, dr  \to 0 \mas n \to \infty.
 }
To prove~\eqref{tau0}, we first show that there exists a sequence $\e_n  \to 0$ so that 
\EQ{
 \frac{1}{\la_n^{j_0}} \int_{\tau_n}^{\tau_n + \la^{j_0}_n} \int_0^\infty \dot a^2(t, r) \, r \, dr \, dt  = \e_n.
 }
 Recalling that $\vec a(t)  = \vec \psi(t) - \vec \fy(t)$ and using the global regularity of $\fy$ we see that it suffices to show that 
 \EQ{
  \frac{1}{\la_n^{j_0}} \int_{\tau_n}^{\tau_n + \la^{j_0}_n} \int_0^{1-t} \dot  \psi^2(t, r) \, r \, dr \, dt  =  o_n(1) \mas n \to \infty.
 }
 Note from the proof of Proposition~\ref{a prop} that $ \tau_n \in [t_n, t_n + \la_n]$, where $t_n$ is as in Corollary~\ref{t dec extract}. We also have $ \tau_n + \la_n^{j_0} < 1$. From this we infer that 
\begin{align*}
 \tau_n + \la_n^{j_0} \le t_n + \min \{ 1- t_n, \la_n^{j_0} + \la_n\}.
 \end{align*} 
 Setting $\sigma_n = \min \{ 1-t_n, \la^{j_0}_n + \la_n\}$ we see that 
 \begin{align*}
   \frac{1}{\la_n^{j_0}} \int_{\tau_n}^{\tau_n + \la^{j_0}_n} \int_0^{1-t} \dot  \psi^2(t, r) \, r \, dr \, dt & \le \frac{1}{\la_n^{j_0}} \int_{t_n}^{t_n + \sigma_n} \int_0^{1-t} \dot  \psi^2(t, r) \, r \, dr \, dt \\
   & \lesssim \frac{1}{\sigma_n} \int_{t_n}^{t_n + \sigma_n} \int_0^{1-t} \dot  \psi^2(t, r) \, r \, dr \, dt = o_n(1)  ,
   \end{align*}
   where we have used Corollary~\ref{t dec extract} and~\eqref{diff scales} in the last line. 
   
   Next, let 
\begin{align*}
E_n:=   \Big\{  \tau \in [0, 1]   \,\,   | \, \,  \int_0^\I  \dot a^2( \tau_n + \la^{j_0}_n \tau, r) \, r \, dr   \ge  \e^{\frac{1}{4}}_n \Big \}.
\end{align*} 
 We have 
 \begin{align*}
  \e_n   &= \frac{1}{\la_n^{j_0}} \int_{\tau_n}^{\tau_n + \la^{j_0}_n} \int_0^\infty \dot a^2(t, r) \, r \, dr \, dt  
    =  \int_0^1  \int_0^\infty  \dot a^2( \tau_n + \la_n^{j_0} t, r) \, r \, dr \, dt . \\
  & \ge \abs{E_n} \e^{\frac{1}{2}}_n
  \end{align*}
  This implies that $ \abs{E_n} \to 0 $ as $n \to \infty$. Passing to a subsequence, we can assume that $| E_n| \le 2^{-n-2}$ so that 
  \EQ{
   \abs{ \bigcup_{n \ge 0} E_n }  \le \frac{1}{2} .
   }
   It follows that $50\%$ of all  $ \tau_0 \in (0, 1]$ satisfy~\eqref{tau0}. Choosing any such $\tau_0$ proves~\eqref{tau0}. 
   
   Now, recall the from the definition of $\vec b_n$ we have 
 \begin{align} \label{prof for psi1} 
\vec  \psi(\tau_n)  = Q( \cdot/ \la_n) + \vec \fy( \tau_n) + \sum_{j \le k}\fy^j_{L, n}(  0 ) +  \vec \ga_{n}^k
 \end{align}
 where we write $\vec \fy_{n, L}$ for the modulated linear profiles, i.e., 
 \begin{align*}
  \vec \fy_{L, n}^j(t, r) =  \left(  \fy^j_L \left( \frac{t - t_n^j}{ \la_n^j},  \frac{ r}{ \la_n^j} \right), \, \frac{1}{\la_n^j} \dot \fy_L^j \left( \frac{t - t_n^j}{ \la_n^j},  \frac{ r}{ \la_n^j} \right) \right)
  \end{align*}
    Using~\eqref{aln b},~\eqref{diff scales} and  \cite[Appendix $B$]{DKM1}, choose a sequence $\ba \la_n \to 0$ such that 
\begin{align*}
&\ba \la_n \ll \al_n \la_n, \quad 
\la_n  \ll  \ba\la_n \ll \la_n^{j_0}\\
&\ba \la_n \ll \la_n^j \, \, \textrm{or} \,\, \la_n^j \ll \ba\la_n \quad \forall j>1.
\end{align*}
Set
$
\ba \be_n= \frac{\ba \la_n}{\la_n} \to \infty
$
and we note that $\ba\be_n  \ll \al_n$ and $\ba\la_n = \ba \be_n  \la_n$. Therefore, up to replacing $\ba \be_n$ by a sequence $\ti \be_n \simeq  \ba \be_n$  and $\ba \la_n$ by $\ba{\ba \la}_n:= \ti \be_n \la_n$, we have by Lemma~\ref{tech lem} and a slight abuse of notation that
\begin{align}\label{psi ti la to pi1}
\psi(\tau_n, \ba \la_n) \to \pi \quad \textrm{as} \quad n \to \infty.
\end{align}
We define the set 
\begin{align*} 
\J_{\textrm{ext}}^1:= \{ j \ge 1 \, \, \vert \, \,  \ba\la_n \ll \la_n^{j}\}.
\end{align*}
Note that by construction $j_0 \in \J_{\textrm{ext}}^1$. 

Next, with $\ba \la_n$ as above we define $ (\ba f_{n, 0}, \ba f_{n, 1}) $ as follows: 
\begin{align*} 
& \ba f_{n, 0}( r) := \begin{cases}  \pi - \frac{\pi- \psi(\tau_n,  \ba \la_n)}{ \ba{\la}_n} r  \quad\textrm{if} \quad 0 \le r \le \ba\la_n\\ \psi(\tau_n, r)  \quad\textrm{if} \quad  \ba\la_n \le r\end{cases}\\
& \ba f_{n, 1}(r) :=  \dot{\psi}(\tau_n, r)
\end{align*}
Then $(\ba f_{n, 0}, \ba f_{n, 1}) \in \HH_{1, 1}$. Now let $\chi \in C^{\infty}_0$ be defined so that $\chi(r) \equiv 1$ for all $r \in [2, \infty)$ and $\textrm{supp}( \chi )\subset [1, \infty)$. We define $\vec{\ba{\psi}}_n = ( \ba{\psi}_{n, 0}, \ba{\psi}_{n, 1}) \in \HH_0$ as follows:
\begin{align*}
&\ba{\psi}_{n, 0} := \chi(2r/ \ti \la_n)( \ba f_{n, 0}( r) - \pi)\\
& \ba{\psi}_{n, 1} : =  \chi(2r/ \ti \la_n)  \ba f_{n, 1}(r)
\end{align*}
By construction for  $n $ large enough we have $\E(\vec{\ti{\psi}}_n) \le C < 2 \E(Q)$ (for a proof of this fact we refer the reader to the proof of Lemma~\ref{2 lim} for a similar arguement which applies verbatim here). 
   It follows from Theorem~\ref{main} that for each $n$, the wave map evolution $\vec{\ba{\psi}}_n(t) \in \HH_{0 }$ of the data $\vec{\ba{\psi}}_n$ is global in time and scatters to zero as $t \to \pm \infty$.  And by the finite speed of propagation, it is immediate that 
for all $t$ such that $0 \le \tau_n+t <1$ we have
\begin{align}\label{outside ela11}
\vec{\ba{\psi}}_n(t, r) + (\pi, 0)= \vec \psi(\tau_n+t, r) \quad \forall r \ge  \ba \la_n + \abs{t}. 
\end{align}
We also define 
\begin{align*}
\vec{\ba{ \ga}}_{n, L}^k(0, r):= \chi(2r/ \ti \la_n) \vec \ga_{n, L}^k(0, r)
\end{align*}
Now observe that we can combine \eqref{prof for psi1} and Proposition~\ref{c16} to  obtain the following decomposition: 
\begin{align}\label{prof for ti psi1}
\vec{\ba{\psi}}_{n}(r) =  \vec \fy(\tau_n, r) +  \sum_{j \in \J_{\textrm{ext}}^1\, , \, j \le k}  \vec \fy^j_{L, n}(0) + \vec{\ba{\ga}}_{n, L}^k(0, r) + o_n(1)
\end{align}
where the $o_n(1)$ above is in the sense of $H\times L^2$. Using Proposition~\ref{nonlin profile},  Lemma~\ref{pert}, and Lemma~\ref{en orth lem} we can find a corresponding nonlinear profile decomposition 
\EQ{ \label{npbap}
 \vec{\ba \psi}_n(t, r)  =  \vec \fy(\tau_n + t, r)  + \sum_{j \in \J_{\textrm{ext}}^1\, , \, j \le k}  \vec \fy^j_{n}(t, r) + \vec{\ba{\ga}}_{n, L}^k(0, r) +  \vec{ \ba \theta}^k_n(t, r)
}
where 
\begin{align*}
  \lim_{k \to \infty} \limsup_{n \to \infty}   \left\| \vec{\ba{\theta}}^k_n \right\|_{ L^{\infty}(H \times L^2)}  = 0
\end{align*}
For the precise details on how to deduce~\eqref{npbap} we again refer to the proof of Lemma~\ref{2 lim}. 

Next, we evaluate~\eqref{npbap} at the time $t =   \la_{n}^{j_0} \tau_0$ note that one can  extract a~\emph{linear} profile decomposition $( \vec{\ba{V}}_{L}^j, \ba t_n^j, \ba \la_n^j)$ from the sequence $\vec{\ba{\psi}}(  \la_n^{j_0} \tau_0)$ where the parameters are given by  
\EQ{
  \ba t_n^j  = t_n^j  - \la_n^{j_0} \tau_0, \quad \ba \la^j_n = \la_n^j
  }
  Note that the profiles corresponding to the indices $j_0$ and $j_0+1$ are precisely $ \ba \fy^{j_0}_L(t) = \fy^{j_0}_L(t)$ and $ \ba \fy^{j_0+1}_L(t) = \fy^{j_0+1}_L(t) = \fy^{j_0}_L(-t)$. In addition to this we note that by~\eqref{outside ela11} 
  \begin{align*}
  \vec{\ba{\psi}}_n( \la^{j_0}_n \tau_0, r) + (\pi, 0)= \vec \psi(\tau_n+ \la^{j_0}_n \tau_0, r) \quad \forall r \ge  \ba \la_n +  \la^{j_0}_n \tau_0.  
\end{align*}
Next we apply  Claim~\ref{c:2} with $A_n =  \ba \la_n/  \la_{n}^{j_0} + \tau_0$ and $t_n = t_n^{j_0}/ \la_{n}^{j_0}$ and 
\begin{align*}
(f_n, g_n) =  (\ba \psi( \la_n^{j_0} \tau_0, \la_n \cdot), \frac{1}{\la_n^{j_0}}  \p_t \ba \psi( \la^{j_0}_n \tau_0,  \la^{j_0}_n)). 
\end{align*}
By our choice of $\ba \la_n$ we see that $ \abs{t_n}/ A_n \to \infty$ and hence 
\begin{align*}
 \textrm{weak}-\lim_{n \to \infty} \vec S(t_n^{j_0}/ \la_{n}^{j_0})(f_n, g_n)   &=  \textrm{weak}-\lim_{n \to \infty}  \vec S( \tau_0) \vec S( \ba t_n^{j_0}/  \ba \la_{n}^{j_0})(f_n, g_n)  \\ &= ( \fy_L^{j_0}( \tau_0),  \p_t \fy_L^{j_0}( \tau_0))  
\end{align*}
as well as 
\begin{multline*}
 \textrm{weak}-\lim_{n \to \infty} \vec S(- t_n^{j_0}/ \la_{n}^{j_0})(f_n, g_n)   =  \textrm{weak}-\lim_{n \to \infty}  \vec S( \tau_0) \vec S( \ba t_n^{j_0+1}/  \ba \la_{n}^{j_0+1})(f_n, g_n)  \\ 
 =  ( \ba \fy^{j_0+1}_L( \tau_0),   \p_t  \ba \fy^{j_0+1}_L( \tau_0)) = ( \fy_L^{j_0}(- \tau_0),  -\p_t \fy_L^{j_0}( -\tau_0))  
\end{multline*}
But the above implies that 
\begin{align*}
\fy^{j_0}_L(t) = \fy^{j_0}_L( t+ 2 \tau_0). 
\end{align*}
Since $\vec \fy^j_L$ is a solution to the linear wave equation the above implies that $\fy^{j_0}_L$ can only be identically $0$, which contradicts the assumption that $\fy_L^{j_0}$ is nonzero.
\end{proof} 

Now, using Corollary~\ref{c:vt0} we can rewrite our profile decomposition 
in the $2d$ formulation as follows. 
\begin{align}\label{2d bg1}
&b_{n, 0}(r)= \sum_{j \le k}\fy^j\left(0, \frac{r}{\la_n^j}\right) + \ga_{n}^k( r)\\
&b_{n, 1}(r) = o_n(1) \quad \textrm{in} \quad L^2\label{2d bg2},
\end{align}
where 
\begin{align*} 
&\fy^j\left(0, \frac{r}{\la_n^j}\right):=\frac{r}{\la_n^j} V_L^j\left(0, \frac{r}{\la_n^j}\right)    \\
&\gamma_{n}^k(r):=rw_{n, 0}^k( r) .
\end{align*}
Note that in addition to the Pythagorean expansions given in \eqref{pyt} we also have the following almost-orthogonal decomposition of the nonlinear energy given by Lemma~\ref{en orth lem}:
\begin{align} \label{nonlin en}
\E(\vec b_n) = \sum_{j\le k} \E(\fy^j(0), 0) + \E(\ga_n^k, 0) + o_n(1).
\end{align}
Note that $\fy^j, \ga_n^k \in \HH_0$ for every $j$, for every $n$, and for every $k$.  Using the fact that $\E(\vec b_n) \le C< 2\E(Q)$, \eqref{nonlin en} and Theorem~\ref{main} imply that,  for every $j$, the nonlinear wave map evolution of the data $(\fy^j(0, r/ \la_n^j), 0)$  given by
\begin{align} 
\vec \fy_n^j(t, r) = \left (\fy^j\left(\frac{t}{\la_n^j}, \frac{r}{\la_n^j}\right)\, , \, \frac{1}{\la_n^j} \dot{\fy}^j\left(\frac{t}{\la_n^j}, \frac{r}{\la_n^j}\right)\right)
\end{align}
is global in time and scatters as $t\to \pm \infty$. Moreover we have the following nonlinear profile decomposition given by  Proposition~\ref{nonlin profile}:
\begin{align} \label{nonlin prof}
\vec b_n(t, r)  = \sum_{j \le k} \vec \fy_n^j(t, r) + \vec \ga_{n, L}^k(t, r) + \vec \theta_n^k(t, r)
\end{align} 
where $\vec b_n(t, r)$ are the global wave map evolutions of the data $\vec b_n$ and  $\vec \ga_{n,  L}^k(t, r)$ is the linear evolution of $(\ga_n^k, 0)$.  Finally, by  \eqref{nonlin error}, we have
\begin{align} 
& \limsup_{n\to \infty}\left(\|\vec \theta_n^k\|_{L^{\infty}_t(H\times L^2)} + \left\| \frac{1}{r} \theta_n^k\right\|_{L^3_t(\R ; L^6_x(\R^4))}\right) \to 0 \quad \textrm{as} \quad k \to \infty\label{theta 0}.
\end{align}
Now, recall that our goal is to prove that $\fy^j=0$ for every $j$. 
 Let $k_0$ be the index corresponding to the first nonzero profile $\fy^{k_0}$. Without loss of generality, we can assume that $k_0=1$. Using \eqref{aln b},  \eqref{diff scales} and  \cite[Appendix $B$]{DKM1}, we can find a sequence $\ti \la_n \to 0$ such that 
\begin{align*}
&\ti \la_n \ll \al_n \la_n\\
&\la_n  \ll  \ti \la_n \ll \la_n^1\\
&\ti \la_n \ll \la_n^j \, \, \textrm{or} \,\, \la_n^j \ll \ti \la_n \quad \forall j>1.
\end{align*}
Now define
\begin{align*}
\be_n= \frac{\ti \la_n}{\la_n} \to \infty
\end{align*}
and we note that $\be_n  \ll \al_n$ and $\ti\la_n = \be_n  \la_n$. Therefore, up to replacing $\be_n$ by a sequence $\ti \be_n \simeq \be_n$  and $\ti \la_n$ by $\ti{\ti \la}_n:= \ti \be_n \la_n$, we have by Lemma~\ref{tech lem} and a slight abuse of notation that
\begin{align}\label{psi ti la to pi}
\psi(\tau_n, \ti \la_n) \to \pi \quad \textrm{as} \quad n \to \infty.
\end{align}
We define the set 
\begin{align*} 
\J_{\textrm{ext}}:= \{ j \ge 1 \, \, \vert \, \,  \ti\la_n \ll \la_n^j\}.
\end{align*}
Note that by construction $1 \in \J_{\textrm{ext}}$. The next step consists of establishing the following claim: 
\begin{lem} \label{2 lim} Let $\fy^1$, $\la_n^1$ be defined as above. Then for all $\e>0$ we have 
\begin{align}\label{p1}
&\frac{1}{\la_n^1} \int_0^{\la_n^1} \int_{ \e \la_n^1 + t}^{\infty}  \abs{  \sum_{ j \in \J_{\textrm{ext}} \, , j \le k }  \dot\fy_n^j(t, r) + \dot \ga_{n, L}^k(t, r)}^2 \, r\, dr\, dt= o^k_n 
\end{align}
where $\ds{\lim_{k\to \infty} \limsup_{n \to \infty} o^k_n =0}$. 
Also, for all $j>1$ and for all $\e>0$ we have 
\begin{align} \label{p2}
\frac{1}{\la_n^1} \int_0^{\la_n^1} \int_{ \e \la_n^1 + t}^{\infty} ( \dot \fy_n^j)^2(t, r) \, r\, dr dt \to 0 \quad \textrm{as} \quad n \to \infty.
\end{align}
\end{lem}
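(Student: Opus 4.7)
My plan is to prove the two parts in turn, using a change of variables for (p2) and the nonlinear profile decomposition for (p1). The central analytic tool in both cases is the monotonicity of the wave-map energy on the exterior of a forward light cone (a consequence of the positivity of the energy density, see~\cite{SS}).

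\emph{Proof of (p2).} Setting $(s,\rho)=(t/\la_n^j,r/\la_n^j)$ reduces the estimate to
\[
\frac{1}{L_n}\int_0^{L_n}\!\!\int_{\e L_n+s}^{\infty}(\dot\fy^j(s,\rho))^2\,\rho\,d\rho\,ds,\qquad L_n:=\la_n^1/\la_n^j.
\]
Since $t_n^j=0$ for every $j$ and the profiles are pseudo-orthogonal in scale, we must have either $L_n\to\infty$ or $L_n\to 0$. If $L_n\to\infty$, the monotonicity of the energy of $\fy^j$ on exterior forward cones gives
\[
\int_{\e L_n+s}^\infty (\dot\fy^j(s))^2\rho\,d\rho \le E_{\e L_n}^\infty(\vec\fy^j(0))\xrightarrow{L_n\to\infty} 0
\]
uniformly in $s\ge 0$, which suffices. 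If instead $L_n\to 0$, I use that $\dot\fy^j(0)\equiv 0$ (which follows from $\dot V_L^j(0)\equiv 0$ and $t_n^j=0$) together with continuity of $s\mapsto\dot\fy^j(s)$ in $L^2$ to get $\sup_{s\in[0,L_n]}\|\dot\fy^j(s)\|_{L^2}\to 0$; bounding the $\rho$-integral pointwise by $\|\dot\fy^j(s)\|_{L^2}^2$ then gives the claim.

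\emph{Proof of (p1).} Using the nonlinear profile decomposition \eqref{nonlin prof}, I rewrite
\[
\sum_{j\in\J_{\textrm{ext}},\,j\le k}\dot\fy_n^j + \dot\ga_{n,L}^k \;=\; \dot b_n - \sum_{j\notin\J_{\textrm{ext}},\,j\le k}\dot\fy_n^j - \dot\theta_n^k,
\]
and bound the three terms on the right in the exterior norm $\|\cdot\|^2:=\la_n^{-1}\int_0^{\la_n^1}\!\int_{\e\la_n^1+t}^\infty(\cdot)^2\,r\,dr\,dt$, then combine via $|a+b+c|^2\le 3(a^2+b^2+c^2)$. For the first term, the exterior-cone monotonicity applied to the wave map $\vec b_n$ gives the pointwise-in-$t$ bound $\int_{\e\la_n^1+t}^\infty\dot b_n^2\,r\,dr\le E_{\e\la_n^1}^\infty(\vec b_n(0))$, and I must show this right-hand side is $o_n(1)$ (see the obstacle below). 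For the inner profiles ($\la_n^j\le\ti\la_n$), the change of variables used in (p2) now yields $L_n\ge\la_n^1/\ti\la_n\to\infty$, so Case~A of (p2) applies and produces $o_n(1)$ for each fixed $j\le k$. For the third term, \eqref{theta 0} gives $\|\vec\theta_n^k\|_{L^\infty_t(H\times L^2)}=o^k_n$, which is more than enough.

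\emph{Main obstacle.} The delicate step is verifying $E_{\e\la_n^1}^\infty(\vec b_n(0))=o_n(1)$. The kinetic piece is immediate: $\|b_{n,1}\|_{L^2}^2=\|\dot a(\tau_n)\|_{L^2}^2\to 0$ by \eqref{a dot}. For the potential piece, the $Q(\cdot/\la_n)$ contribution is $O((\la_n/\la_n^1)^2)=o_n(1)$ from the explicit form of the harmonic map together with $\la_n\ll\la_n^1$. For the contribution of $a(\tau_n)$ on $r\ge\e\la_n^1$, I exploit that $a(\tau_n)=\psi(\tau_n)-\fy(\tau_n)$ on this region (Lemma~\ref{psi fy} and \eqref{supp a}), combine \eqref{fy reg3} to show the radiation is negligible on this small-$r$ region, and use Lemma~\ref{tech lem} together with the identity $\E(\vec a(\tau_n))=\E(\vec\psi)-\E(\vec\fy)+o_n(1)$ from Lemma~\ref{a lem} and the fact that $\E_0^{\al_n\la_n}(\vec a(\tau_n))\to\E(Q)$ from \eqref{a dot}--\eqref{a-Q} to pin down the energy of $a(\tau_n)$ on the annulus $[\e\la_n^1,1-\tau_n]$ and show it vanishes.
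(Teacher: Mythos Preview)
Your argument for \eqref{p2} is correct and essentially identical to the paper's.

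Your argument for \eqref{p1}, however, has a genuine gap at the step you yourself flag as the ``main obstacle'': the claim $E_{\e\la_n^1}^{\infty}(\vec b_n(0))=o_n(1)$ is \emph{false} under the standing contradiction hypothesis $\fy^1\neq 0$. Indeed, by the localized Pythagorean expansion (Proposition~\ref{c16}),
\[
\|\vec b_n(0)\|_{H\times L^2(r\ge \e\la_n^1)}^2 \;\ge\; \|\fy^1(0,\cdot)\|_{H(r\ge \e)}^2 + o_n(1),
\]
and the right-hand side is a fixed positive number as $n\to\infty$. Equivalently, the energy of $\vec b_n(0)$ on $[\e\la_n^1,1-\tau_n]$ is (up to $o_n(1)$) the energy of $\vec a(\tau_n)$ on that annulus, which by Lemma~\ref{a lem} and \eqref{a dot}--\eqref{a-Q} is $\ge \eta-\E(\vec\fy)-o_n(1)$; at this stage of the argument we only know $\E(\vec\fy)\le\eta$, not equality. (That equality is deduced \emph{after} Proposition~\ref{b comp}, so invoking it here is circular.) Thus exterior-cone monotonicity applied to $\vec b_n$ cannot possibly yield \eqref{p1}.

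The paper avoids this trap by never trying to control $\dot b_n$ on the exterior cone. Instead it constructs an auxiliary degree-zero wave map $\vec{\ti\psi}_n$ by chopping $\psi(\tau_n,\cdot)-\pi$ at the intermediate scale $\ti\la_n$ (with $\la_n\ll\ti\la_n\ll\la_n^1$). This $\vec{\ti\psi}_n$ has a profile decomposition consisting \emph{only} of the $\J_{\textrm{ext}}$ profiles plus a truncated $\ga$, and by finite speed of propagation it agrees with $\psi(\tau_n+t,\cdot)-\pi$ on $r\ge\e\la_n^1+t$. One then bounds the left-hand side of \eqref{p1} by $\la_n^{-1}\int_{\tau_n}^{\tau_n+\la_n^1}\int_0^{1-t}\dot\psi^2\,r\,dr\,dt$ plus negligible terms, and this time average is $o_n(1)$ by Corollary~\ref{t dec extract} (the averaged decay \eqref{sig}, with $\sigma\simeq\la_n^1$). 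The essential input you are missing is this time-averaged control of $\dot\psi$ inside the light cone, which has no analogue for $\dot b_n$.
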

Note that \eqref{p1} and \eqref{p2} together directly imply the following result: 
\begin{cor}\label{p3}Let $\fy^1$ be as in Lemma~\ref{2 lim}. Then for all $\e>0$ we have 
\begin{align} 
\frac{1}{\la_n^1} \int_0^{\la_n^1} \int_{\e \la_n^1 + t}^{\infty}  \abs{   \dot\fy_n^1(t, r) + \dot \ga_{n, L}^k(t, r)}^2 \, r\, dr\, dt =o_n^k
\end{align}
where  $\ds{\lim_{k\to \infty} \limsup_{n \to \infty} o^k_n =0}$. 
\end{cor}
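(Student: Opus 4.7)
The plan is to derive Corollary~\ref{p3} as an immediate algebraic consequence of the two estimates \eqref{p1} and \eqref{p2} provided by Lemma~\ref{2 lim}, using the fact that $1 \in \J_{\textrm{ext}}$. Throughout, let me write $\|\cdot\|_{L^2(\Om_n)}$ for the seminorm
\[
\|F\|_{L^2(\Om_n)}^2 := \frac{1}{\la_n^1} \int_0^{\la_n^1} \int_{\e \la_n^1 + t}^{\infty} |F(t,r)|^2 \, r \, dr \, dt,
\]
so that \eqref{p1} and \eqref{p2} read, respectively,
\[
\Big\|\sum_{j \in \J_{\textrm{ext}},\, j \le k} \dot\fy_n^j + \dot\ga_{n,L}^k\Big\|_{L^2(\Om_n)}^2 = o_n^k, \qquad \big\|\dot\fy_n^j\big\|_{L^2(\Om_n)}^2 = o_n(1) \text{ for each fixed } j > 1.
\]

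Since $1 \in \J_{\textrm{ext}}$ by construction of $\ti\la_n$, I would split
\[
\dot\fy_n^1 + \dot\ga_{n,L}^k \;=\; \Big(\sum_{j \in \J_{\textrm{ext}},\, j \le k} \dot\fy_n^j + \dot\ga_{n,L}^k\Big) \;-\; \sum_{\substack{j \in \J_{\textrm{ext}} \\ 2 \le j \le k}} \dot\fy_n^j,
\]
and then apply the triangle inequality in $L^2(\Om_n)$ followed by $(a+b)^2 \le 2a^2 + 2b^2$ (or, equivalently, Cauchy--Schwarz on the finite sum of $k$ terms) to obtain
\[
\big\|\dot\fy_n^1 + \dot\ga_{n,L}^k\big\|_{L^2(\Om_n)}^2 \;\le\; 2\Big\|\sum_{j \in \J_{\textrm{ext}},\, j \le k} \dot\fy_n^j + \dot\ga_{n,L}^k\Big\|_{L^2(\Om_n)}^2 \;+\; 2k\sum_{\substack{j \in \J_{\textrm{ext}} \\ 2 \le j \le k}} \big\|\dot\fy_n^j\big\|_{L^2(\Om_n)}^2.
\]

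The first term on the right is $o_n^k$ directly from \eqref{p1}. For the second term, $k$ is fixed, so it is a finite sum of at most $k-1$ quantities each of which tends to $0$ as $n \to \infty$ by \eqref{p2}. Hence for each fixed $k$ this second term is $o_n(1)$. Combining, the total bound is of the form $o_n^k + o_n(1)$, and since
\[
\lim_{k \to \infty} \limsup_{n \to \infty} \big(o_n^k + o_n(1)\big) = \lim_{k \to \infty} \limsup_{n \to \infty} o_n^k = 0,
\]
the quantity in Corollary~\ref{p3} is itself of the form $o_n^k$ in the sense stated. There is no serious obstacle here; the only point requiring care is noting that $1 \in \J_{\textrm{ext}}$ (so that the profile $\dot\fy_n^1$ actually appears in the sum in \eqref{p1}) and that the residual finite sum is handled term-by-term, with the prefactor $2k$ harmless because $k$ is fixed before $n \to \infty$.
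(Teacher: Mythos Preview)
Your proposal is correct and is exactly the direct argument the paper has in mind: the paper states that \eqref{p1} and \eqref{p2} ``together directly imply'' Corollary~\ref{p3} without writing out the details, and your triangle-inequality computation, using $1\in\J_{\textrm{ext}}$ and the fact that \eqref{p2} handles each of the finitely many remaining profiles for fixed $k$, is precisely how one fills this in.
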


\begin{proof}[Proof of Lemma~\ref{2 lim}]
We begin by proving \eqref{p1}. First recall that by the definition of $\vec b_n$ we have the following decomposition 
\begin{align}\label{prof for psi}
\vec \psi(\tau_n, r) = (Q(r/ \la_n), 0) +  \vec \fy(\tau_n, r) +  \sum_{j \le k} (\fy^j(0, r/ \la_n^j), 0) + \vec \ga_{n, L}^k(0, r)
\end{align}
Next, with $\ti \la_n$ as above we define $\vec f_n= (f_{n, 0},f_{n, 1}) $ as follows: 
\begin{align*} 
& f_{n, 0}( r) := \begin{cases}  \pi - \frac{\pi- \psi(\tau_n,  \ti \la_n)}{ \ti{\la}_n} r  \quad\textrm{if} \quad 0 \le r \le \ti\la_n\\ \psi(\tau_n, r)  \quad\textrm{if} \quad  \ti\la_n \le r\end{cases}\\
& f_{n, 1}(r) :=  \dot{\psi}(\tau_n, r)
\end{align*}
Then $\vec f_n \in \HH_{1, 1}$. Now let $\chi \in C^{\infty}_0$ be defined so that $\chi(r) \equiv 1$ for all $r \in [2, \infty)$ and $\textrm{supp}( \chi )\subset [1, \infty)$. We define $\vec{\ti{\psi}}_n = ( \ti{\psi}_{n, 0}, \ti{\psi}_{n, 1}) \in \HH_0$ as follows:
\begin{align*}
&\ti{\psi}_{n, 0} := \chi(2r/ \ti \la_n)( f_{n, 0}( r) - \pi)\\
& \ti{\psi}_{n, 1} : =  \chi(2r/ \ti \la_n) f_{n, 1}(r)
\end{align*}
We claim that for $n $ large enough we have $\E(\vec{\ti{\psi}}_n) \le C < 2 \E(Q)$. To see this, observe that 
\begin{align}\label{e ti psi dec}
\E( \vec{\ti{\psi}}_n) = \E_{\ti\la_n/2}^{\ti \la_n}( \vec{\ti{\psi}}_n) + \E_{\ti \la_n}^{\infty}( \vec \psi(\tau_n)).
\end{align}
Using \eqref{psi ti la to pi} and \eqref{b R},  we note that we have $\E_{0}^{\ti \la_n}( \vec \psi(\tau_n)) \ge \E(Q)- o_n(1)$ which in turn implies that  
\begin{align*}
  \E_{\ti \la_n}^{\infty}( \vec \psi(\tau_n)) \le \eta + o_n(1). 
  \end{align*}
  We can again use the fact that $\psi(\ta_n, \ti \la_n) \to \pi$ and \eqref{aln b} to deduce that $ \E_{\ti\la_n/2}^{\ti \la_n}( \vec{\ti{\psi}}_n)= o_n(1)$. Putting these facts into \eqref{e ti psi dec} we obtain the claim since, by assumption, $\eta<2\E(Q)$.  
  
  Now, since  $\vec{\ti{\psi}}_n \in \HH_0$ satisfies $\E(\vec{\ti{\psi}}_n) \le C < 2 \E(Q)$, Theorem~\ref{main} implies that for each $n$, the wave map evolution $\vec{\ti{\psi}}_n(t) \in \HH_{0 }$ of the data $\vec{\ti{\psi}}_n$ is global in time and scatters to zero as $t \to \pm \infty$.  And by the finite speed of propagation, it is immediate that 
for all $t$ such that $0 \le \tau_n+t <1$ we have
\begin{align}\label{outside ela1}
\vec{\ti{\psi}}_n(t, r) + (\pi, 0)= \vec \psi(\tau_n+t, r) \quad \forall r \ge \e \la_n^1 + \abs{t}
\end{align}
as long as  $n $ is large enough to ensure that $\ti \la_n \le \e \la_n^1$. We also define 
\begin{align*}
\vec{\ti{ \ga}}_{n, L}^k(0, r):= \chi(2r/ \ti \la_n) \vec \ga_{n, L}^k(0, r)
\end{align*}
Now observe that we can combine \eqref{prof for psi} and Proposition~\ref{c16} to to obtain the following decomposition: 
\begin{align}\label{prof for ti psi}
\vec{\ti{\psi}}_{n}(r) =  \vec \fy(\tau_n, r) +  \sum_{j \in \J_{\textrm{ext}}\, , \, j \le k} (\fy^j(0, r/ \la_n^j), 0) + \vec{\ti{\ga}}_{n, L}^k(0, r) + o_n(1)
\end{align}
where the $o_n(1)$ above is in the sense of $H\times L^2$. By Lemma~\ref{local scat lem} we have that 
\begin{align*}
\limsup_{n \to \infty}\left\| \frac{1}{r} \ti{\ga}_{n, L}^k\right\|_{L^3_t L^6_x(  \R^{1+4})}  \to 0 \quad \textrm{as} \quad k  \to \infty
\end{align*}
since if the above did not hold we could find subsequences $n_{\ell}$ and $k_{\ell}$ such that for all $\ell$ we have
\begin{align*}
\left\| \frac{1}{r} \ti{\ga}_{n_{\ell}, L}^{k_{\ell}}\right\|_{L^3_t L^6_x(  \R^{1+4})}  \ge \e  \quad \textrm{and} \quad \lim_{\ell \to\infty}\left\| \frac{1}{r} \ga_{n_{\ell}, L}^{k_{\ell}}\right\|_{L^3_t L^6_x(  \R^{1+4})}  =0
\end{align*}
which would directly contradict Lemma~\ref{local scat lem}. Hence, if we ignore the $o_n(1)$ term,  the right-hand side of \eqref{prof for ti psi} is a profile decomposition in the sense of Corollary~\ref{bg wm}. Therefore, by Proposition~\ref{nonlin profile}, and Lemma~\ref{pert},  we can find $\vec{\ti{\theta}}_n^k(t, r)$ with 
\begin{align*}
\lim_{k \to \infty}\limsup_{n \to \infty}\left\|\vec{\ti{\theta}}_n^k(t, r)\right\|_{L^{\infty}_t (H \times L^2)} =0
\end{align*}
such that the following nonlinear profile decomposition holds: 
\begin{align}\label{nonlin prof ti psi}
\vec{\ti{\psi}}_{n}(t, r) =  \vec \fy(\tau_n+t, r) +  \sum_{j \in \J_{\textrm{ext}}\, , \, j \le k} \vec \fy_n^j(t, r)+ \vec{\ti{\ga}}_{n, L}^k(t, r) + \vec{\ti{\theta}}_n^k(t, r)
\end{align}
To be precise, \eqref{nonlin prof ti psi} is proved as follows: Define 
\begin{align}\label{prof for breve psi}
\vec{\breve{\psi}}_{n}(r) =  \vec \fy(\tau_n, r) +  \sum_{j \in \J_{\textrm{ext}}\, , \, j \le k} (\fy^j(0, r/ \la_n^j), 0) + \vec{\ti{\ga}}_{n, L}^k(0, r) 
\end{align}
As mentioned above, this is a profile decomposition in the sense of Corollary~\ref{bg wm} and $\E(\vec{\breve{\psi}}_{n}) < C \le 2\E(Q)$. By Proposition~\ref{nonlin profile} we then have the following nonlinear profile decomposition for the wave maps evolutions $\vec{\breve{\psi}}_{n}(t, \cdot) \in \HH_0$: 
\begin{align*}
&\vec{\breve{\psi}}_{n}(t, r)= \vec \fy(\tau_n+t, r) +  \sum_{j \in \J_{\textrm{ext}}\, , \, j \le k} \vec \fy_n^j(t, r)+ \vec{\ti{\ga}}_{n, L}^k(t, r) + \vec{\breve{\theta}}_n^k(t, r)\\
&\lim_{k \to \infty}\limsup_{n \to \infty}\left\|\vec{\breve{\theta}}_n^k(t, r)\right\|_{L^{\infty}_t (H \times L^2)} =0
\end{align*}
Now, by our perturbation theory, i.e., Lemma~\ref{pert}, we can deduce~\eqref{nonlin prof ti psi} since $\|\vec{\breve{\psi}}_{n}(0)- \vec{\ti{\psi}}_{n}(0)\|_{H \times L^2} = o_n(1)$. 

Next, we combine \eqref{nonlin prof ti psi} with \eqref{outside ela1} to conclude that
\begin{align*}
\vec{\psi}(\tau_n+t, r) -(\pi, 0) - \vec \fy(\tau_n+t, r) = \sum_{j \in \J_{\textrm{ext}}, \,  j \le k} \vec \fy_n^j(t, r)+ \vec{\ga}_{n, L}^k(t, r) + \vec{\ti{\theta}}_n^k(t, r)
\end{align*}
for all $t +\tau_n<1 $ and $r \ge \e \la_n^1+t$ for $n$ large enough so that $\ti \la_n \le \e \la_n^1$. Using the above we can finally conclude that
\begin{multline}
\frac{1}{\la_n^1} \int_0^{\la_n^1} \int_{ \e \la_n^1 + t}^{\infty}  \abs{  \sum_{ j \in \J_{\textrm{ext}} \, , j \le k }  \dot\fy_n^j(t, r) + \dot \ga_{n, L}^k(t, r)}^2 \, r\, dr\, dt\\
\le \frac{1}{\la_n^1} \int_0^{\la_n^1} \int_{ \e \la_n^1 + t}^{\infty}  \dot{a}^2(\tau_n+t, r)\, r\, dr\, dt + o_n^k\\
\le  \frac{1}{\la_n^1} \int_0^{\la_n^1} \int_0^{\infty}  \dot{a}^2(\tau_n+t, r)\, r\, dr\, dt + o_n^k \\
=  \frac{1}{\la_n^1} \int_{\tau_n}^{\tau_n+\la_n^1} \int_0^{\infty}  \dot{a}^2(t, r)\, r\, dr\, dt + o_n^k\\
 \le \frac{1}{\la_n^1} \int_{\tau_n}^{\tau_n+\la_n^1} \int_0^{1-t}  \dot{\psi}^2(t, r)\, r\, dr\, dt +  \sup_{t \ge \tau_n} \E_0^{1-t}(\vec \fy(t)) + o_n^k=o_n^k.
\end{multline}
To justify the last line above we need to show that 
\begin{align*}
 \frac{1}{\la_n^1} \int_{\tau_n}^{\tau_n+\la_n^1} \int_0^{1-t}  \dot{\psi}^2(t, r)\, r\, dr\, dt =o_n(1)
 \end{align*}
 On the one hand,  by our construction in the proof of Proposition~\ref{a prop} we have $\tau_n \in [t_n, t_n + \la_n]$ where $t_n$ is as in  Corollary~\ref{t dec extract} and Theorem~\ref{str}. On the other hand, note that $\tau_n + \la_n^1 <1$. Putting these facts together we infer that 
 \begin{align*} 
 \tau_n + \la_n^1  \le t_n + \min\{1-t_n,  \la_n^1 + \la_n\}
 \end{align*}
 Therefore, if we define $\sigma:= \min\{1-t_n,  \la_n^1 + \la_n\}$ we have
 \begin{align*}
 \frac{1}{\la_n^1} \int_{\tau_n}^{\tau_n+\la_n^1} \int_0^{1-t}  \dot{\psi}^2(t, r)\, r\, dr\, dt & \le \frac{1}{\la_n^1} \int_{t_n}^{t_n+ \sigma} \int_0^{1-t}  \dot{\psi}^2(t, r)\, r\, dr\, dt \\
 &\lesssim \frac{1}{\sigma} \int_{t_n}^{t_n+ \sigma} \int_0^{1-t}  \dot{\psi}^2(t, r)\, r\, dr\, dt  
 = o_n(1)
 \end{align*}
 where the last line above follows from Corollary~\ref{t dec extract}. Note that we have used  the fact that  $\la_n \ll \la_n^1$ in the second inequality above.
This proves~\eqref{p1}.

Next we prove \eqref{p2}. Recall that for $j \neq 1$ we have either $\mu_n^j:=\frac{\la_n^1}{\la_n^j} \to 0$ or $\mu_n^j \to \infty$. Suppose the former occurs. Then 
\begin{align*} 
 \frac{1}{\la_n^1} \int_0^{\la_n^1} \int_{ 0}^{\infty} ( \dot \fy_n^j)^2(t, r) \, r\, dr dt&= \frac{1}{\la_n^1} \int_0^{\la_n^1} \int_{ 0}^{\infty} \frac{1}{(\la_n^j)^2}( \dot \fy^j)^2\left(\frac{t}{\la_n^j},\frac{r}{\la_n^j}\right) \, r\, dr dt\\
 &= \frac{1}{\la_n^1} \int_0^{\la_n^1} \int_{ 0}^{\infty}( \dot \fy^j)^2\left(\frac{t}{\la_n^j},r \right) \, r\, dr dt\\
 &=\frac{1}{\mu_n^j}\int_0^{\mu_n^1} \int_{ 0}^{\infty}( \dot \fy^j)^2\left(t,r \right) \, r\, dr dt\\
 &\longrightarrow  \int_{ 0}^{\infty}( \dot \fy^j)^2\left(0,r \right) \, r\, dr dt =0
 \end{align*}
 Now suppose that $\mu_n^j \to \I$.  Then, changing variables  as above, we have  
 \begin{align} \label{p2int}
  \frac{1}{\la_n^1} \int_0^{\la_n^1} \int_{\e \la_n^1 + t}^{\infty} ( \dot \fy_n^j)^2(t, r) \, r\, dr dt&=\frac{1}{\mu_n^j}\int_0^{\mu_n^1} \int_{\e \mu_n^j + t}^{\infty}( \dot \fy^j)^2\left(t,r \right) \, r\, dr dt
  \end{align}
Now note that by monotonicity of the energy on exterior cones we have that  for all $\de>0$ there exists $M>0$ such that for all $t \in [0, \infty)$ we have 
 \begin{align*} 
 \int_{M+t}^{\infty} ( \dot \fy^j)^2\left(t,r \right) \, r\, dr  \le \de
 \end{align*} 
 This implies that the right-hand side of \eqref{p2int} tends to $0$ as $n \to \infty$. 
 \end{proof}
We can now conclude the proof Proposition~\ref{no prof}. 
\begin{proof}[Proof of Proposition~\ref{no prof}]
We first show that all of the profiles $\fy^j$ in the decomposition \eqref{2d bg1} must be identically $0$. We argue by contradiction. As above we assume that $\fy^1 \neq 0$. By Corollary~\ref{p3} we know that for all $\e>0$ we have 
\begin{align*} 
\frac{1}{\la_n^1} \int_0^{\la_n^1} \int_{\e \la_n^1 + t}^{\infty}  \abs{   \dot\fy_n^1(t, r) + \dot \ga_{n, L}^k(t, r)}^2 \, r\, dr\, dt =o_n^k
\end{align*}
as $n \to \infty$ for any $k>1$. Changing variables this implies that 
\begin{align}\label{sum lim}
 \int_0^{1} \int_{\e + t}^{\infty}  \abs{   \dot\fy^1(t, r) +\la_n^1 \dot \ga_{n, L}^k(\la_n^1 t, \la_n^1r)}^2 \, r\, dr\, dt =o^k_n
\end{align}
Now consider the mapping $H \times L^2 \to \R$ defined by 
\begin{align*}
(f_0, f_1) \mapsto \int_0^1 \int_{\e +t}  \dot\fy^1(t, r) \dot{f}(t, r) \, r \, dr \, dt
\end{align*} 
where $\vec f(t, r)$ is the solution to the linear wave equation 
\begin{align*} 
f_{tt} -f_{rr}-\frac{1}{r} f_{r} + \frac{1}{r^2}f =0
\end{align*}
with initial data $(f_0, f_1)$. This is a continuous linear functional on $H \times L^2$. Now, by \eqref{weak} we have 
\begin{align*} 
\left(\gamma_{n, L}^k( \la_n^1\cdot), \la_n^1 \dot \ga_{n,L}^k( \la_n^1 \cdot)\right) \rightharpoonup 0 \, \,\textrm{in}\, \, H \times L^2 \quad \textrm{as} \quad n \to \infty 
\end{align*}
Hence, for all $\e>0$ we have
\begin{align*} 
\lim_{n\to \infty}\int_0^{1} \int_{\e + t}^{\infty}     \dot\fy^1(t, r) \la_n^1 \dot \ga_{n, L}^k(\la_n^1 t, \la_n^1r) \, r\, dr\, dt = 0
\end{align*}
Combining the above line with \eqref{sum lim} we conclude that for all $\e>0$ we have 
\begin{align*}
 \int_0^{1} \int_{\e + t}^{\infty}  \abs{   \dot\fy^1(t, r)}^2  \, r\, dr\, dt =0
\end{align*}
Letting $\e$ tend to $0$ we obtain
\begin{align*}
 \int_0^{1} \int_{ t}^{\infty}  \abs{   \dot\fy^1(t, r)}^2  \, r\, dr\, dt =0
\end{align*}
Therefore $\dot \fy^1(t, r) = 0$ if $r \ge t$ and $t \in [0, 1]$. Let $\Om$ denote the region in $[0,1] \times\R^2$ exterior to the light cone 
\begin{align*} 
\Om = \{ (t, x) \in[0,1] \times \R^2 \, \vert \, \abs{x} \ge t\}
\end{align*}
If we let $U^1(t, x) = ( \fy^1(t, r), \om)$ denote the full equivariant wave map (here $x=(r, \om)$ in polar coordinates on $\R^2$) then we have $(t, x) \in \Om \Rightarrow  U^1(t, x) = U_0^1(x)$. Hence $U_0^1(x)$ is a finite energy equivariant harmonic map on $\R^2-\{0\}$. By Sacks-Uhlenbeck  \cite{SU} we can extend $U_0^1$ to a smooth equivariant harmonic map from $\R^2 \to S^2$. But since $\fy^1 \in \HH_0$, $U_0^1$ must  be identically equal to $0$, since $0$ is the unique harmonic map in the topological class $\HH_0$. But this contradicts the fact that we assumed $\fy^1 \neq 0$. 

To complete the proof of Proposition~\ref{no prof} we note that we have now concluded that all the profiles in the decomposition \eqref{2d bg1} must be identically zero. Hence, we have $\ga_n^k(r) = b_{n, 0}(r)$,  $\vec \ga_{n, L}^k=:b_{n, L}$, and $\vec \theta_n^k= \vec \theta_n$ and we can rewrite \eqref{nonlin prof} as follows:
\begin{align} 
\vec b_n(t, r) =  \vec b_{n, L}(t, r) + \vec \theta_n(t, r)
\end{align} 
 Finally,  \eqref{bL to 0} and \eqref{theta to 0} are satisfied because of \eqref{stric} and \eqref{theta 0}. 
\end{proof}

We can now prove Proposition~\ref{b comp}.

\begin{proof}[Proof of Proposition~\ref{b comp}]
 Assume that Proposition~\ref{b comp} fails. Then up to extracting a subsequence, we can find $\de_0>0$ so that 
 \begin{align}\label{contradict}
 \|b_{n, 0}\|_H \ge \de_0
 \end{align}
for every $n$.

We will show that this implies a concentration of energy at some point $r_0 >0$ and  time $t = 1-r_0 <1$, which is a contradiction with our assumed blow-up time as well as with the fact that equivariance prevents concentration away from  $r=0$.

The key idea is to use that the wave map evolution $\vec b_n(t)$ actually maintains a fixed amount of  energy outside the light cone, 
as shown in Corollary~\ref{ext en est}. We then prove that this forces $\vec \psi$ to concentrate energy on the boundary of the cone.  For this, we proceed in two steps, both requiring evolving a nonlinear profile decomposition backwards in time. First, we show that  the evolutions of $\vec b_n$ and $\vec \psi(\tau_n)$ remain close on an exterior region during a time-scale on which we can control the rescaled harmonic map -- in the sense of 
Corollary~\ref{Cote} and Proposition \ref{nonlin profile}.

At this point, we focus the analysis outside the light cone: we need to evolve the decomposition past the time-scale on which we can control the harmonic map, but fortunately this large profile does not contribute in this exterior region. In fact, we evolve  the profile decomposition with the harmonic map removed for \emph{all time} exterior to the cone, and infer that some energy remains outside the light cone -- in fact it concentrates on the boundary. 
As $\vec b_n(t)$ actually maintains a fixed amount of  energy outside the light cone, this means that $\vec \psi(\tau_n+t)$ concentrates energy there as well, and gives us a contradiction.

We now begin with the details of the proof. It is convenient to carry out the argument in rescaled coordinates.
Set 
 \begin{align*} 
 \mu_n:= \frac{\la_n}{1-\tau_n}.
 \end{align*}
 Since $\la_n =o(1-\tau_n)$ we have $\mu_n \to 0$ as $n \to \infty$. Now define the rescaled wave maps
\begin{align*}
&g_n(t, r):= \psi(\tau_n + (1-\tau_n)t, (1-\tau_n)r) \\
&h_n(t, r):= \fy(\tau_n + (1-\tau_n)t, (1-\tau_n)r).
\end{align*}
 Then $\vec g_n(t) \in \HH_1$ is a wave map defined on the interval $[-\frac{\tau_n}{1-\tau_n}, 1)$, and $ \vec h_n(t) \in \HH_0$ is global in time and scatters to $0$. We then have 
 \begin{align*}
 a(\tau_n+(1-\tau_n) t, (1-\tau_n)r) =g_n(t, r)-h_n(t, r).
 \end{align*}
 Similarly, define 
 \begin{align*} 
& \ti b_{n, 0}( r):= b_{n, 0}((1-\tau_n)r)\\
& \ti b_{n , 1}(r):=(1-\tau_n)b_{n, 1}((1-\tau_n)r)
 \end{align*}
 and the corresponding rescaled wave map evolutions 
 \begin{align*}
 &\ti b_n(t, r):= b_n((1-\tau_n)t, (1-\tau_n)r)\\
 &\p_t \ti{b}_n(t, r) := (1-\tau_n) \dot{b}_n((1-\tau_n) t, (1-\tau_n) r).
 \end{align*}
Observe that we have the decomposition
\begin{align}\label{ghqb}
&g_n(0, r)= h_n(0, r) + Q\left(\frac{r}{\mu_n}\right) +\ti b_{n, 0}(r)\\
&\dot{g}_n(0, r)= \dot{h}_n(0, r) + \ti b_{n,1}(r).
\end{align}
Note that by \eqref{supp a} we have $\ti b_{n, 0} =  \pi - Q(\cdot/ \mu_n)$ on $[1, \infty)$ and hence
\begin{align}\label{ti b ext}
\|\ti b_{n, 0}\|_{H(r\ge 1)} \to 0
\end{align}
as $n \to \infty$.

Now, observe that the regularity properties of $\fy(t)$ imply that  
\begin{align}\label{h reg}
\lim_{\rho \to 0}\sup_{n} \|\vec h_n(0)\|_{H \times L^2(r \le \rho/(1-\tau_n))} =0
\end{align}
Hence, for fixed large $K$, (to be chosen precisely later),  we can find $r_0>0$ so that 
\begin{align}\label{h small}
 \sup_n  \|\vec h_n(0)\|_{H \times L^2(r \le \frac{3r_0}{(1-\tau_n)})}  \le \frac{\de_0}{K},
 \end{align}
 where $\de_0$ is as in \eqref{contradict}. Now, recall that  $\al_n \to \infty$ has been fixed. 
 Using Lemma~\ref{tech lem} we can choose $\ga_n \to \infty$ with $$\ga_n\ll  \al_n$$ such  that 
 \begin{align*}
 g_n(0, \ga_n \mu_n) \to \pi \quad \textrm{as} \quad n \to \infty
\end{align*}
Now define $\de_n \to 0$ by 
\begin{align*}
\abs{g_n(0, \ga_n \mu_n) -\pi } =: \de_n \to 0
\end{align*}
Finally we choose $\be_n \to \infty$ so that 
\begin{align}
& \be_n \le\min\{\sqrt{\ga_n}, \de_n^{-1/2}, \sqrt{n}\} \notag\\
& g_n(0, \be_n \mu_n/2) \to \pi \quad \textrm{as} \quad n \to \infty \label{gn bemu}
 \end{align}
 
  We make the following claims:
\begin{itemize}
\item[($i$)] As $n \to \infty$ we have 
\begin{align}\label{g-Q}
\|\vec g_n(-\be_n \mu_n/2) - (Q(\cdot/ \mu_n), 0)\|_{H \times L^2( r\le \be_n \mu_n)} \to 0
\end{align}
\item[($ii$)] For each $n$, on the interval $r \in [\be_n \mu_n, \infty)$ we have 
\begin{align}\label{ghbt}
&\vec g_n\left( -\frac{\be_n \mu_n}{2}, r\right) - (\pi, 0) = \vec h_n\left( -\frac{\be_n \mu_n}{2}, r\right) +  \vec{\ti{b}}_n\left( -\frac{\be_n \mu_n}{2}, r\right) \\ \notag
&\quad + \vec{\breve{\theta}}_n\left( -\frac{\be_n \mu_n}{2}, r\right),\\
&\|\vec{\breve{\theta}}_n\|_{L^{\infty}_t( H \times L^2)} \to 0 \notag
\end{align}

\end{itemize}

We first prove \eqref{g-Q}. Note that by Proposition~\ref{a prop} we have 
 \begin{align}\label{b in AB} 
 \|(\ti b_{n,0}, \ti b_{n, 1})\|_{ H \times L^2(r \le \al_n \mu_n)} \le \frac{1}{n} \to 0.
 \end{align}
 Using \eqref{h reg} together with  $\al_n \la_n \le 1- \tau_n \to 0$  as well as \eqref{b in AB} and  the decomposition \eqref{ghqb}  we can then deduce that 
 \begin{align*}
 \|\vec g_n(0)- (Q(\cdot/ \mu_n), 0) \|_{ H \times L^2(r \le \ga_n \mu_n)}  \le \frac{2}{n} \to 0.
 \end{align*}
  Unscale the above by setting $\ti g_n(t, r) =g_n(\mu_n t, \mu_nr)$ and observe that, 
 \begin{align*}
  \|( \ti{g}_n(0), \p_t \ti g_n(0))- (Q(\cdot), 0) \|_{ H \times L^2(r \le \ga_n)} \le \frac{2}{n} \to 0.
  \end{align*}
  Now using Corollary~\ref{Cote} and the finite speed of propagation we claim that we have
  \begin{align} \label{ti g-Q at be}
\|( \ti{g}_n(-\be_n /2), \p_t \ti g_n(-\be_n /2))- (Q(\cdot), 0) \|_{ H \times L^2(r \le \be_n)} =o_n(1) .
\end{align}
 To see this, we need to show that Corollary~\ref{Cote} applies. Indeed define 
 \begin{align*}
 &\hat{g}_{n,0}(r) := \begin{cases}   \pi \quad \textrm{if} \quad r \ge 2\ga_n  \\ \pi+ \frac{\pi - \ti{g}_n(0, \ga_n)}{\ga_n}(r- 2\ga_n) \quad \textrm{if} \quad \ga_n \le r \le 2\ga_n \\ \ti g_n(0, r) \quad \textrm{if} \quad r \le \ga_n
 \end{cases}\\
 &\hat{g}_{n, 1}(r) = \begin{cases} \p_t \ti g_n(0, r)\quad  \textrm{if} \quad r \le \ga_n \\ 0 \quad \textrm{if} \quad r \ge \ga_n\end{cases}
 \end{align*}
 Then, by construction we have $\vec{\hat{g}}_n \in \HH_1$, and since  
 \begin{align*}
 \| \vec{\hat{g}}_n-(\pi, 0)\|_{H \times L^2( \ga_n \le r \le 2\ga_n )} \le C \de_n
 \end{align*}
 we then can conclude that 
 \begin{align*} 
 \|\vec{\hat{g}}_n - (Q, 0)\|_{H \times L^2} &\le  \|\vec{\hat{g}}_n - (Q, 0)\|_{H \times L^2(r \le \ga_n)}+  \|\vec{\hat{g}}_n - ( \pi, 0)\|_{H \times L^2(\ga_n \le r \le 2\ga_n)} \\
 &\quad+  \|(\pi, 0) - (Q, 0)\|_{H \times L^2(r \ge \ga_n)}\\
 & \le C\left( \frac{1}{n} + \de_n + \ga_n^{-1}\right)
 \end{align*}
 Now, given our choice of $\be_n$,  \eqref{ti g-Q at be} follows from  Corollary~\ref{Cote} and the finite speed of propagation. 
 Rescaling  \eqref{ti g-Q at be}  we have 
 \begin{align*}
\|( {g}_n(-\be_n\mu_n/2), \p_t g_n(-\be_n\mu_n/2))- (Q(\cdot/ \mu_n), 0) \|_{ H \times L^2(r \le \be_n\mu_n)} \to 0.
\end{align*}
This proves \eqref{g-Q}. Also note that by monotonicity of the energy on interior cones and the comparability of the energy and the $H \times L^2$  in $\HH_0$ for small energies, we see that \eqref{b in AB} implies that 
\begin{align}\label{ti b small before be}
\|(\ti b_{n}(-\be_n \mu_n/2),  \p_t\ti b_{n}(-\be_n\mu_n/2))\|_{ H \times L^2(r \le \be_n \mu_n)} \to 0
 \end{align}

Next we prove \eqref{ghbt}. First we define
\begin{align*}
&\hat g_{n, 0}(r)=  \begin{cases} \pi - \frac{\pi- g_n(0, \mu_n \be_n/2)}{\frac{1}{2}\mu_n \be_n} r\quad \textrm{if} \quad r \le  \be_n\mu_n/2\\ g_n(0, r) \quad \textrm{if} \quad  r \ge  \be_n\mu_n/2  \end{cases}\\
& \hat g_{n, 1}(r)= \dot{g}_n(0, r)
\end{align*}
Then, let $\chi \in C^{\infty}([0, \infty))$ be defined so that $\chi(r) \equiv 1$ on the interval $[2, \infty)$ and $\textrm{supp} \chi \subset [1, \infty)$. Define 
\begin{align*}
&\vec{\breve{g}}_n(r) := \chi(4 r/ \be_n \mu_n)( \vec{\hat{g}}_n(r)- (\pi, 0))\\
& \vec{\breve{b}}_n(r):= \chi(4 r/ \be_n \mu_n) \vec{\ti{b}}_n(r)
\end{align*}
 and observe that we have the following decomposition 
 \begin{align*}
 \vec{\breve{g}}_n(r)  = \vec h_n(0, r) +  \vec{\breve{b}}_n(r) + o_n(1).
 \end{align*}
 where the $o_n(1)$ is in the sense of $ H \times L^2$ -- here we also have used that $\be_n \la_n \to 0$ together with \eqref{h reg}. Moreover,  the right-hand side above, without the $o_n(1)$ term, is a profile decomposition in the sense of Corollary~\ref{bg wm} because  of Proposition~\ref{no prof} and Lemma~\ref{local scat lem}. We can then consider the nonlinear profiles. Note that by construction we have $\vec{\breve{g}}_n \in \HH_0$ and as usual, we can use \eqref{gn bemu} to show that $\E(\vec{\breve{g}}_n) \le C < 2\E(Q)$ for large $n$. The corresponding wave map evolution $\vec{\breve{g}}_n(t) \in \HH_0$ is thus global in time and scatters as $t \to \pm \infty$ by Theorem~\ref{main}. We also need to check that $\E(\vec{\breve{b}}_n) \le C < 2 \E(Q)$. Note that by construction and the definition of $\ti b_n$, we have
 \begin{align*}
\E( \vec{\breve{b}}_n) &\le \E(\vec{\ti{b}}_n) + O\left( \int_{0}^{\infty} \frac{4r^2}{\be_{n, 0}^2 \mu_n^2} (\chi^{\prime})^2( 4r/ \be_n \mu_n) \frac{b_n^2((1-\tau_n)r)}{r} \, dr \right)\\
&\quad  + \int_{\be_n \mu_n/2}^{\be_n \mu_n} \frac{\sin^2( \chi(4r/ \be_n \mu_n) b_{n, 0}((1-\tau_n)r))}{r} \, dr \\
&\le  \E(\vec{\ti{b}}_n) + O\left( \int_{\be_n \la_n/2}^{\be_n \la_n} \frac{b_{n, 0}^2(r)}{r} \, dr \right)\\
&=\E(\vec{\ti{b}}_n) + o_n(1) \le C < 2\E(Q),
\end{align*}
 where the last line follows from Proposition~\ref{a prop} and the definition of $b_{n, 0}$, since $\be_n \ll \al_n$. 
 
 Arguing as in the proof of \eqref{nonlin prof ti psi}, we can use Proposition~\ref{no prof},  Proposition~\ref{nonlin profile} and Lemma~\ref{pert} to obtain
 the following nonlinear profile decomposition  \begin{align*}
& \vec{\breve{g}}_n(t, r)  = \vec h_n(t, r) +  \vec{\breve{b}}_n(t, r) + \vec{\breve{\theta}}_n(t, r)\\
& \|\vec{\breve{\theta}}_n\|_{L^{\infty}_t(H \times L^2)} \to 0
 \end{align*}
 Finally observe that by construction and the finite speed of propagation we have 
 \begin{align*}
& \vec{\breve{g}}_n(t, r)  = \vec g_n(t, r) - \pi\\
 &  \vec{\breve{b}}_n(t, r)  =\vec{\ti{b}}_n (t, r)
 \end{align*}
 for all  $t \in [-\tau_n/ (1-\tau_n), 1)$ and $ r \in [\be_n \mu_n/2 + \abs{t}, \infty)$.  Therefore, in particular we have 
 \begin{align*}
 & \vec{g}_n( - \be_n \mu_n/2, r) -(\pi, 0) = \vec h_n(- \be_n \mu_n/2, r) +  \vec{\ti{b}}_n(- \be_n \mu_n/2, r) + \vec{\breve{\theta}}_n( \be_n \mu_n/2, r)
 \end{align*}
 for all $r \in [\be_n \mu_n, \infty)$ which proves \eqref{ghbt}. 
 

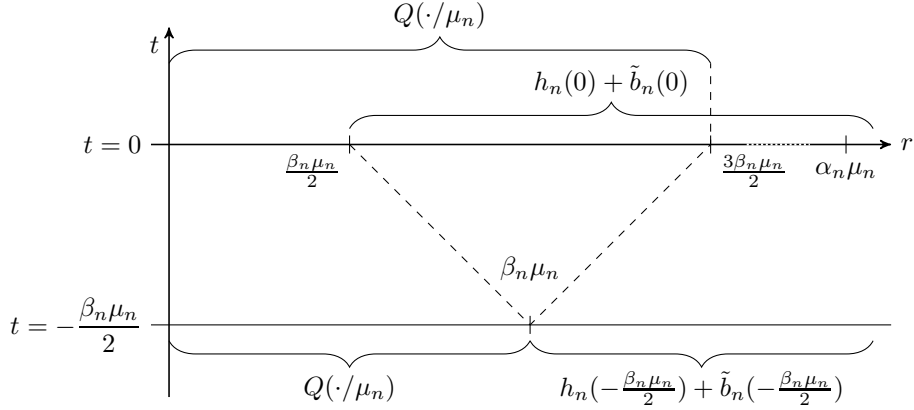
\begin{figure}
\begin{tikzpicture}[
	>=stealth',
	axis/.style={semithick, ->},
	coord/.style={dashed, semithick},
	yscale = 1.2,
	xscale = 1.2]
	\newcommand{\xmin}{0};
	\newcommand{\xmax}{8};
	\newcommand{\ymin}{-2.8};
	\newcommand{\xa}{1}
	\newcommand{\ya}{2};
	\newcommand{\yb}{7.5};
	\newcommand{\xb}{\xa+\yb-\ya}
	\newcommand{\ymax}{0.9};
	\newcommand{\delay}{1.5};
	\newcommand{\fsp}{0.2};
	\draw [axis] (\xmin-\fsp,0) node [left] {$t=0$} -- (\xmax,0) node [right] {$r$};
	\draw [axis] (0,\ymin) -- (0,\ymax+2*\fsp) node [below left] {$t$};
	\draw (-\fsp, -2) node [left] {$\ds t= - \frac{\beta_n \mu_n}{2}$} -- (\xmax,-2);
	\draw (\ya,-0.1) -- (\ya,0.1);
	\draw (\ya,0) node [below left] {$\frac{\beta_n \mu_n}{2}$};
	\draw (3*\ya,-0.1) -- (3*\ya,0.1);
	\draw (3*\ya,0) node [below right] {$\frac{3 \beta_n \mu_n}{2}$};
	\draw [thick, densely dotted, white] (3*\ya+2*\fsp,0) -- (\yb - 2*\fsp,0);
	\draw (\yb,0.1) -- (\yb,-0.1) node [below] {$\alpha_n \mu_n$};
	\draw (2*\ya, -\ya-0.1) -- (2*\ya, -\ya+0.1) node [yshift=18] {$\beta_n \mu_n$};
	\draw [dashed] (\ya,0) -- (2*\ya, -\ya) -- (3*\ya,0) --(3*\ya,\ymax);
	\draw [decorate,decoration={brace,amplitude=10}] (0, \ymax) -- (3*\ya,\ymax)  node [midway,yshift=18] {$Q(\cdot/\mu_n)$}; 	
	\draw [decorate,decoration={brace, amplitude=10, raise=6}] (\ya,0) -- (\xmax-\fsp,0)  node [midway,yshift=24] {$ h_n(0) + \ti{b}_n(0)$};
	\draw [decorate,decoration={brace,mirror,amplitude=10, raise=6}] (0, -\ya) -- (2*\ya,-\ya )  node [midway,yshift=-24] {$Q(\cdot/\mu_n)$}; 
	\draw [decorate,decoration={brace,mirror,amplitude=10, raise=6}] (2*\ya,-\ya ) -- ( (\xmax-\fsp, -\ya)  node [midway,yshift=-24] {$ h_n(- \frac{\beta_n \mu_n}{2}) + \ti{b}_n( - \frac{\beta_n \mu_n}{2})$}; 	
\end{tikzpicture}
\caption{\label{fig:3} A schematic depiction of the evolution of the decomposition \eqref{ghqb} from time $t=0$ up to $t = - \frac{\beta_n \mu_n}{2}$. At time $t = - \frac{\beta_n \mu_n}{2}$ the decomposition \eqref{g at be n mu n} holds.}
\end{figure}


We can combine \eqref{g-Q},  \eqref{ghbt}, \eqref{ti b small before be},  and \eqref{h reg} together with the monotonicity of the energy on interior cones to obtain the decomposition 
\begin{align} \label{g at be n mu n}
\vec{g}_n( - \be_n \mu_n/2, r) &=  (Q(r/ \mu_n), 0)+ \vec h_n(- \be_n \mu_n/2, r) \\ \notag
& \quad +\vec{\ti{b}}_n(- \be_n \mu_n/2, r) + \vec{\ti{\theta}}_n( r)\\
&\| \vec{\ti{\theta}}_n\|_{H \times L^2} \to 0 
 \end{align}

Now define 
\begin{align*}
s_n:=- \frac{r_0}{1-\tau_n}.
\end{align*}
The next step is to prove the following decomposition at time $s_n$: 
\begin{align}\label{ghbz}
&\vec g(s_n, r)-( \pi, 0) = \vec h_n(s_n, r) + \vec{\ti{b}}_n(s_n, r) + \vec \zeta_n( r) \quad  \forall r \in [\abs{s_n}, \infty)\\
&  \|\vec \zeta_n\|_{H \times L^2}  \to 0\label{z to 0}
\end{align}

We proceed as in the proof of \eqref{ghbt}. By \eqref{g-Q} we can argue as in the proof of Lemma~\ref{tech lem} and find $\rho_n \to \infty$ with $\rho_n\ll \be_n$ so that 
\begin{align}\label{gn rho}
 g_n( - \be_n \mu_n/2,   \, \rho_n\mu_n ) \to \pi \quad \textrm{as} \quad n  \to \infty
 \end{align}
Define 
\begin{align*}
&\hat f_{n, 0}(r)=  \begin{cases} \pi - \frac{\pi- g_n(- \be_n \mu_n/2,  \,  \rho_n \mu_n)}{ \rho_n \mu_n} r\quad \textrm{if} \quad r \le   \rho_n\mu_n\\ g_n(- \be_n \mu_n/2, r) \quad \textrm{if} \quad  r \ge  \rho_n \mu_n  \end{cases}\\
& \hat f_{n, 1}(r)= \dot{g}_n(- \be_n\mu_n/2, r)
\end{align*}
Let  $\chi \in C^{\infty}$ be as above and set 
\begin{align*}
&\vec{f}_n(r) := \chi(2r/ \rho_n \mu_n)( \vec{\hat{f}}_n(r)- (\pi, 0))\\
& \vec{\hat{b}}_n(r):= \chi(2 r/ \rho_n \mu_n) \vec{\ti{b}}_n(-\be_n \mu_n/2, r)
\end{align*}
 Observe that we have the following decomposition: 
  \begin{align*}
 \vec{f}_n(r)  = \vec h_n(- \be_n \mu_n/2, r) +  \vec{\hat{b}}_n(r) + o_n(1).
 \end{align*}
 where the $o_n(1)$ above is in the sense of $H \times L^2$. Moreover, the  right-hand side above, without the $o_n(1)$ term, is a profile decomposition in the sense of Corollary~\ref{bg wm} because  of Proposition~\ref{no prof} and Lemma~\ref{local scat lem}. We can then consider the nonlinear profiles. Note that by construction we have $\vec{f}_n \in \HH_0$ and, as usual, we can use  \eqref{gn rho} to  show that $\E(\vec{f}_n) \le C < 2\E(Q)$ for large $n$. The corresponding wave map evolution $\vec{f}_n(t) \in \HH_0$ is thus global in time and scatters as $t \to \pm \infty$ by Theorem~\ref{main}. 
 
  As in the proof of  \eqref{ghbt} it is also easy to show that $\E(\vec{\hat{b}}_n) \le C<2\E(Q)$ where here we use \eqref{ti b small before be} instead of Proposition~\ref{a prop}. 
 
\begin{figure}
\begin{tikzpicture}[
	>=stealth',
	axis/.style={semithick, ->},
	coord/.style={dashed, semithick},
	yscale = 1.6,
	xscale = 1.6]
	\newcommand{\xmin}{-0};
	\newcommand{\xmax}{6};
	\newcommand{\ymin}{-3.8};
	\newcommand{\xa}{0.5}
	\newcommand{\ya}{1.5};
	\newcommand{\yb}{3};
	\newcommand{\xb}{\xa+\yb-\ya}
	\newcommand{\ymax}{0.2};
	\newcommand{\delay}{1.5};
	\newcommand{\fsp}{0.2};
	\draw [axis] (\xmin-\fsp,0) node [left] {$t=0$} -- (\xmax,0) node [right] {$r$};
	\draw [axis] (0,\ymin) -- (0,\ymax+\fsp) node [below right] {$t$};
	\draw [->] (0,0) -- (3.8,-3.8); 
	\draw (-\fsp, -\ya) node [left] {$\ds t=  - \frac{\beta_n \mu_n}{2}$} -- (\xmax,-\ya);
	\draw (\xa,-\ya+0.1) node [above] {$\rho_n \mu_n$} -- (\xa,-\ya-0.1);
	\draw (\ya,-\ya+0.1) -- (\ya,-\ya-0.1);
	\draw (\ya,-\ya)  node [above right] {$\frac{\beta_n \mu_n}{2}$};
	\draw [dashed] (\xa,-\ya) -- (\xb,-\yb);
	\draw (-\fsp, -\yb) node [left] {$\ds t= s_n$} -- (\xmax,-\yb);
	\draw (\xb,-\yb+0.1) -- (\xb,-\yb-0.1) node [below] {$\rho_n \mu_n + |\nu_n|$};
	\draw (\yb,-\yb+0.1) -- (\yb,-\yb-0.1);
	\draw (\yb,- \yb) node [above right] {$|s_n|$};
	\draw [decorate,decoration={brace,amplitude=10, raise=18}] (\xa,-\ya) -- (\xmax,-\ya) node [midway,yshift=36] {$\vec h_n( - \frac{\beta_n \mu_n}{2}) + \vec{\ti{b}}_n(-\frac{\beta_n \mu_n}{2})$}; 
	\draw  [decorate,decoration={brace, amplitude=10, raise=16}] (\xb,-\yb) -- (\xmax,-\yb) node [midway,yshift=36] {$\vec h_n( s_n) + \vec{\ti{b}}_n(s_n)$};
	\draw  [<->, xshift=-3] (0,-\ya) -- (0,-\yb) node [midway,left] {$\nu_n$};
\end{tikzpicture}
\caption{\label{fig:2} A schematic depiction of the evolution of the decomposition \eqref{g at be n mu n} up to time $s_n$. On the interval $[|s_n|, +\infty)$, the decomposition \eqref{ghbz} holds.}
\end{figure}
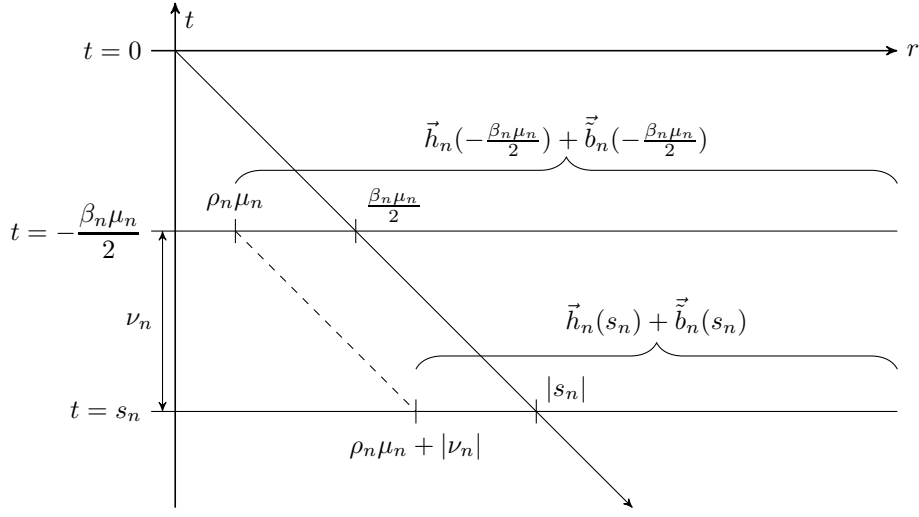
 


 Arguing as in the proof of \eqref{nonlin prof ti psi} we can use Proposition~\ref{nonlin profile} and Lemma~\ref{pert} to obtain
 the following nonlinear profile decomposition 
 \begin{align*}
& \vec{f}_n(t, r)  = \vec h_n( - \be_n \mu_n/2 +t, r) +  \vec{\hat{b}}_n(t, r) + \vec{\ti{\zeta}}_n(t, r)\\
& \|\vec{\ti{\zeta}}_n\|_{L^{\infty}_t(H \times L^2)} \to 0
 \end{align*}
In particular, for $$\nu_n:= s_n +\be_n \mu_n/2$$ we have 
 \begin{align*}
& \vec{f}_n(\nu_n, r)  = \vec h_n( s_n, r) +  \vec{\hat{b}}_n(\nu_n, r) + \vec{\ti{\zeta}}_n(\nu_n, r).
 \end{align*}
By the finite speed of propagation we have that
\begin{align*}
&\vec{f}_n(\nu_n, r) = \vec g_n(s_n, r) \\
& \vec{\hat{b}}_n(\nu_n, r) = \vec{\ti{b}}_n(s_n, r)
\end{align*}
as long as $r \ge \rho_n \mu_n + \abs{\nu_n}$. Using the fact that $\rho_n \ll \be_n$ we have that $\abs{s_n} \ge \rho_n \mu_n + \abs{\nu_n}$ and hence, 
 \begin{align*}
& \vec{g}_n(s_n, r)-(\pi, 0)  = \vec h_n( s_n, r) +  \vec{\ti{b}}_n(s_n, r) + \vec{\ti{\zeta}}_n(\nu_n, r) \quad \forall r \in [\abs{s_n}, \infty).
 \end{align*}
 Setting $\vec{\zeta}_n:= \vec{\ti{\zeta}}_n(\nu_n)$ we obtain \eqref{ghbz} and \eqref{z to 0}. Now, combine \eqref{z to 0},  \eqref{h small}, and the monotonicity of the energy on light cones for the evolution of $\vec h_n$, we obtain:
 \begin{align}\label{at sn}
 \|\vec g_n(s_n)-(\pi, 0)- \vec{\ti{b}}_n(s_n) \|_{ H \times L^2(\abs{s_n} \le r \le  2\abs{s_n})} \le  \frac{C\de_0}{K}
 \end{align}
 for $n$ large enough.
 By Corollary~\ref{ext en est} and \eqref{contradict}, there exists $\be_0 >0$ so that for all $t \in \R$ we have 
 \begin{align*}
  \|\vec{\ti{b}}_n(t)\|_{H \times L^2( r \ge \abs{t})} \ge  \be_0 \de_0
  \end{align*}
  By \eqref{ti b ext} and the monotonicity of the energy on cones we have $$\|\vec{\ti{b}}_n(t)\|_{H \times L^2(r \ge\abs{ t}+1)} \to 0$$ as $n \to \infty$. Therefore we have 
 \begin{align*}
  \|\vec{\ti{b}}_n(t)\|_{H \times L^2( \abs{t} \le r \le 1+\abs{t})} \ge  \frac{\be_0 \de_0}{2}
  \end{align*}
  for $n$ large enough and for all $t \in \R$. Hence setting $t= s_n$ we see that the above and \eqref{at sn} imply in particular that
  \begin{align*}
  \|\vec g_n(s_n) - (\pi, 0)\|_{H \times L^2( \abs{s_n} \le r \le 1+\abs{s_n})} \ge  \frac{\be_0 \de_0}{4} >0
  \end{align*}
  for $n, K$ large enough. Un-scaling this we obtain 
   \begin{align*}
  \|\vec \psi(\tau_n- r_0) - (\pi, 0)\|_{H \times L^2( r_0 \le r \le r_0+(1-\tau_n))} \ge  \frac{\be_0 \de_0}{4} >0.
  \end{align*}  
  However this contradicts the fact the $\psi(t, r)$ cannot concentrate any energy at the point $(1-r_0, r_0) \in [0, 1) \times [0, \infty)$ with $r_0>0$. This concludes the proof  of Propostion~\ref{b comp} and hence of Proposition~\ref{bu} as well. 
\end{proof}

 We can now finish the proof of Theorem~\ref{bu cont}. 
 \begin{proof}[Proof of Theorem \ref{bu cont}]
 Let $\vec a(t)$ be defined as in \eqref{a def}.  Recall that by Lemma~\ref{a lem} we have 
 \begin{align}\label{Epsi-Efy}
 \lim_{t \to 1} \E (\vec a(t)) = \E (\vec \psi)- \E(\vec \fy)
 \end{align}
 Over the course of the proof of Proposition~\ref{bu} we have found a sequence of times $\tau_n \to 1$ so that 
 \begin{align*}
 \E(\vec a(\tau_n)) \to \E(Q)
 \end{align*}
 as $n \to \infty$. Since $\E(\vec \psi) = \E(Q) + \eta$ this implies that $\E(\vec \fy) = \eta$ since the right hand side of \eqref{Epsi-Efy} is independent of $t$. This then implies that 
 \begin{align*}
 \lim_{t \to 1} \E (\vec a(t)) =\E(Q)
 \end{align*}
We now use the variational characterization of $Q$ to show that in fact $\|\dot a(t)\|_{L^2} \to 0$ as $t \to 1$. To see this observe that since $a(t) \in \HH_1$ we can deduce by \eqref{var char} that 
 \begin{align*}
  \E(Q) \leftarrow \E(a(t), \dot{a}(t)) \ge  \int_0^{\infty} \dot{a}^2(t, r) \, r\, dr + \E(Q)
 \end{align*}
 Next observe that the decomposition in Lemma~\ref{decomposition} provides us with a function $\la: (0, \infty) \to (0, \infty)$ such that 
 \begin{align*}
 \|a(t,  \cdot)- Q(\cdot/ \la(t))\|_H \le \de(\E( a(t), 0) - \E(Q)) \to 0
 \end{align*}
 This also implies that 
 \begin{align}\label{a-Q en to 0}
 \E(\vec a(t) - (Q(\cdot/ \la(t)), 0)) \to 0
 \end{align}
 as  $t \to 1$. Since $t \mapsto a(t)$ is continuous in $H$ for $t \in[0, 1)$ it follows from Lemma~\ref{decomposition} that $\la(t)$ is continuous on $[0, 1)$.  Therefore we have established that 
 \begin{align*}
 \vec \psi(t) - \vec \fy(t) - (Q(\cdot/ \la(t)), 0) \to 0 \quad \textrm{in} \quad H \times L^2 \quad \textrm{as} \quad t \to 1
 \end{align*}
 It remains to show that $\la(t) = o(1-t)$. This follows immediately from the support properties of $\nabla_{t, r} a$  and from \eqref{a-Q en to 0}. To see this observe that $a(t, r)-Q(r/ \la(t)) = \pi-Q(r/ \la(t))$ on $[1-t, \infty)$. Thus, 
 \begin{align*}
\E_{\frac{1-t}{\la(t)}}^{\infty}(Q) = \E_{1-t}^{\infty}( \pi- Q(\cdot/ \la(t))) \le  \E(\vec a(t) - (Q(\cdot/ \la(t)), 0)) \to 0.
\end{align*}
But this then implies that $\frac{1-t}{\la(t)} \to \infty$ as $t \to 1$. This completes the proof. 
  \end{proof}

\appendix

\section{Higher Equivariance classes and more general targets}

\subsection{$1$-equivariant wave maps to more general targets}
Theorem~\ref{main}, Theorem~\ref{rigidity}, and Theorem~\ref{bu cont}  can be extended to a larger class of equations, namely equivariant wave maps to general, rotationally symmetric compact targets. To be specific, each of these theorems holds in the case that the target manifold $M$ is a surface of revolution with the metric given in polar coordinates, $(\rho, \om) \in [0, \infty) \times S^1$, by $ds^2 = d\rho^2 + g^2(\rho)d\om^2$ where $g: \R \to \R$ is a smooth, odd, function with $g(0)=0$, $ g'(0)= 1 $. In addition, in order to ensure the existence of stationary solutions to the corresponding equivariant wave map equation we need to require that there exists $C>0$ such such that $g(C)=0$ and we let $C^*$ be minimal with this property. We also assume that $g'(C^*)=-1$ and that $g$ is periodic with period $2C^*$.  In this case, the nonlinear wave equation of interest is given by 
\begin{align} \label{cp gen1} 
&\psi_{tt} - \psi_{rr} - \frac{1}{r} \psi_r + \frac{f(\psi)}{r^2} = 0\\
&(\psi, \psi_t)\vert_{t=0} = (\psi_0, \psi_1) \notag
\end{align} 
where $f(\psi) = g(\psi)g'(\psi)$. The conserved energy for this problem is given by 
\begin{align*}
\E(\vec \psi(t)) = \int_0^{\infty} \left( \psi_t^2 + \psi_r^2 + \frac{g^2(\psi)}{r^2} \right)\, r \, dr = \textrm{const}. 
\end{align*}

To see how this extension works, we note that the small data well-posedness theory for  \eqref{cp gen1} is given in \cite[Theorem $2$]{CKM}. One then needs replacements for the estimates involving the $\sin$ function in the proof of the orthogonality of the nonlinear
energy, the proof of the nonlinear perturbation theory, and later in estimates involving the energy of $\vec a(t)$, namely \eqref{sin},  \eqref{sin 2}, and \eqref{sin 3}. But, the same type of estimates for $g$ are easily established using the assumptions we have made on $g$ and its derivatives and simple calculus.

For more details regarding more general metrics we refer the reader to \cite{CKM}. Note that since we do not rely on \cite[Lemma $7$]{CKM} we are able to eliminate their condition~ \cite[(A$3)$]{CKM}. 

\subsection{Higher equivariance classes and the $4d$-equivariant Yang-Mills system} 
We can also consider higher equivariance classes,  $\ell >1$. Restricting our attention again to the case $g(\rho)= \sin( \rho)$, the Cauchy problem for $\ell$ equivariant wave maps reduces to 
\begin{align} \label{cp ell} 
&\psi_{tt} - \psi_{rr} - \frac{1}{r} \psi_r  + \ell^2 \frac{\sin(2\psi)}{2r^2} = 0\\
&(\psi, \psi_t)\vert_{t=0} = (\psi_0, \psi_1) \notag
\end{align} 
For $\ell$-equivariant wave maps of topological degree zero we can, as in the $1$-equivariant case, consider the reduction $\psi =: r^{\ell} u$ and we obtain the following Cauchy problem for $u$: 
\begin{align}\label{semi lin ell}
u_{tt} - u_{rr} - \frac{2 \ell + 1}{r} u_r = u^{1+ 2/ \ell} Z(r^{\ell}u)
\end{align}
with $$\ds{Z(\rho):=\frac{\ell^2}{2} \frac{\sin(2 \rho)- 2\rho}{\rho^{1 + 2/ \ell}}}$$ a bounded function. In \cite[Theorem $2$]{CKM} a suitable  local well-posedness/small data theory for such a nonlinearity is addressed when $\ell =2$ and thus Theorem~\ref{main} follows from the same arguments in this paper. For $\ell>2$, one would need to develop a suitable well-posedness theory for \eqref{semi lin ell}. This presents some difficulties due the fractional power, $1+2/ \ell$, in the nonlinearity. 

One can also consider the $4d$ equivariant Yang-Mills system: 
\begin{align*}
&F_{\al \be} = \p_{\al} A_{\be} - \p_{\be} A_{\al} +[A_{\al}, A_{\be}]\\
&\p_{\be} F^{\al \be} + [A_{\be}, F^{\al \be}]= 0, \quad \al, \be = 0, \dots, 3
\end{align*}
for the connection form $A_{\al}$ and the curvature $F_{\al \be}$. After, making the equivariant ansatz: 
\begin{align*}
A_{\al}^{ij}= (\de^{i}_{\al}x^j -\de^j_{\al} x^i)\frac{1-\psi(t, r)}{r^2}
\end{align*} 
one obtains the following equation for $\psi$: 
\begin{align*}
\psi_{tt} - \psi_{rr}- \frac{1}{r} \psi_r - \frac{ 2\psi(1-\psi^2)}{r^2} = 0
\end{align*}
which can be written in the form 
\begin{align} \label{cp ym} 
&\psi_{tt} - \psi_{rr} - \frac{1}{r} \psi_r  + \ell^2 \frac{f(\psi)}{r^2} = 0\\
&(\psi, \psi_t)\vert_{t=0} = (\psi_0, \psi_1) \notag
\end{align} 
for $f(\rho)= g(\rho) g^{\prime}(\rho)$ and $g(\rho) = 1/2(1- \rho^2)$ and $\ell =2$. This equation is of the same form as \eqref{cp ell} with $\ell =2$ and a more general metric $g$. The local well-posedness/small data scattering theory for \eqref{cp ym} is addressed in \cite[Theorem $2$]{CKM}. The proof and conclusions of Theorem $1.1$ thus hold for solutions of this equation with suitable modifications as in the case of $1$-equivariant wave maps to more general targets addressed above. 

As we mentioned in the introduction, modulo a suitable local well-posedness/small data theory,  one should be able to apply our methods to prove the analog of Theorem~\ref{bu cont} for the odd higher equivariance classes, $\ell= 3, 5, 7, \dots, $. The reason is  that if $\ell$ is odd, the linearized version of equation \eqref{cp ell} is a $2 \ell +2$ dimensional free radial wave equation with $2 \ell +2 = 0 \mod 4$ for $\ell$ odd, and in these dimensions Proposition~\ref{lin ext estimate} holds, see \cite[Corollary~$5$]{CKS}. 

However, as demonstrated in \cite{CKS}, Proposition~\ref{lin ext estimate} {\em fails} for $\ell= 2, 4, 6, \dots$, since  $2 \ell+2= 2 \mod 4$ for $\ell$ even. Therefore it is impossible to prove Corollary~\ref{ext en est} in these cases and our contradiction argument for the compactness of the error term $\vec b_n$ does not go through. So our method is not suited to prove the complete conclusions of Theorem~\ref{bu cont} for either the even equivariance classes or the $4d$ Yang-Mills system, which  corresponds roughly to the case $\ell=2$. However, the rest of the argument preceding the proof of Proposition~\ref{bu} should go through and in particular one should be able to deduce Proposition~\ref{no prof}. This would allow one to conclude that the error terms $\vec b_n$ contain  no profiles  and converge to zero in a Strichartz norm adapted to the nonlinearity in \eqref{cp ell}. This is a slightly weaker result than showing that the $\vec b_n$'s vanish in the energy space, but on its own, it is already  quite strong.

\section{Shatah and Tahvildar-Zadeh's theorem in $\HH$:  \\ an appendix by Jacek Jendrej}  \label{a:STZ}


In this section we prove that a self-similar blow-up rate is impossible for wave maps $\vec \psi(t) \in \HH$, i.e., we prove Lemma~\ref{ext en decay} for maps in the energy class. This section was not included in the published version of this paper. This gap was pointed out to the authors by Jacek Jendrej, who also suggested the following proof. We also note that a direct analog of the results proved in this section were established by Jia and the second author in~\cite[Lemma 2.1]{JK-AJM} via an alternative argument. 

We restate the goals of this section. 

\begin{prop} \label{p:stzH} 
Let~$\vec \psi(t) \in \HH$ be a solution to~\eqref{cp} on the time interval $[0, T_+( \vec \psi))$ with $T_+ < \infty$. Let $\la \in (0, 1)$. Then, 
\EQ{
 \lim_{t \to T_+} \int_{ \la( T_+ - t)}^{T_+ - t} \Big(\psi_t^2(t) + \psi_r^2(t) + \frac{\sin^2 \psi(t)}{r^2}  \Big) \, r \, \ud r = 0 
}
\end{prop} 

We will also prove the following corollary. 
\begin{cor} \label{c:stzdt} 
Let~$\vec \psi(t) \in \HH$ be a solution to~\eqref{cp} on the time interval $[0, T_+( \vec \psi))$ with $T_+< \infty$. Then 
\EQ{
\lim_{t \to T_+} \frac{1}{T_+-t}\int_t^{T_+} \int_0^{T_+-t} \psi_t^2(t', r) \, r \, \ud r \, \ud t' = 0 
}
\end{cor} 


The proof below will use an approximation argument. However, it is not enough to simply combine the scheme from~\cite[Proof of Lemma 2.2]{STZ1} with an approximating sequence $\vec \psi_n(t)$ of smooth wave maps,  since several of the crucial estimates from~\cite{STZ1} would not be uniform in $n$. We thus give a different argument below. We note that we do make use of some key ingredients from the argument in~\cite{STZ1},   such as~\eqref{eq:pApB}.

Let $\vec \psi_n(0)$ be a sequence of smooth functions  in $\HH$ with 
\EQ{
 \vec \psi_n(0) \to \vec \psi(0)
}
strongly in $\HH$. Let $\vec \psi_n(t)$ denote the unique solution to~\eqref{cp} with data $\vec \psi_n(0)$. Note that 
\EQ{
\liminf_{n \to \infty} T_+( \vec \psi_n) \ge T_+( \vec \psi)
}
and thus (passing to a subsequence) we may assume the above holds element-wise and below we will write $T_+ = T_+( \vec \psi)$. By the local Cauchy theory for~\eqref{cp} (i.e., continuous dependence) we see that the proof of Proposition~\ref{p:stzH} reduces to proving the following lemma. 
\begin{lem}  \label{l:undec}
Fix $\la >0$ and let $\vec\psi_n$ be as above. Then for any $\epsilon >0$ there exists a time $\tau = \tau(\epsilon)< T_+( \vec \psi)$ such that for all $t \in [\tau, T_+)$ we have 
\EQ{
\liminf_{n \to \infty}  \int_{\la ( T_+ - t)}^{T_+ - t}  \Big( (\p_t \psi_n(t))^2 + (\p_r \psi_n(t))^2 + \frac{\sin^2 \psi_n(t)}{r^2}  \Big) \, r \, \ud r  \le \epsilon
}
where we emphasize that $\tau$ is independent of $n$. 
\end{lem}

To prove Lemma~\ref{l:undec} we introduce some notation from~\cite{STZ}. Denote 
\EQ{
e_n(t, r) &:= \frac{1}{2} \Big(\p_t (\psi_n(t))^2 + (\p_r \psi_n(t))^2 + \frac{\sin^2 \psi_n(t)}{r^2}\Big) \\
m_n(t, r) &:= \p_t \psi_n(t) \p_r\psi_n \\ 
\A_n^2(t, r) &:= r (e_n(t, r) + m_n(t, r)) , \quad \B_n^2(t, r) := r ( e_n(t, r) - m_n(t, r)) 
} 
We also denote by $e, m, \A^2, \B^2$ the corresponding quantities for $\vec \psi(t)$. 

We begin with the following key lemma. 
\begin{lem}[Uniformity of the flux decay]\label{l:unflux} For any $\epsilon>0$ there exists $\tau = \tau(\epsilon) < T_+$ such that if $ \tau \le t_1 \le t_2 < T_+$, then, 
\EQ{
\limsup_{n \to \infty} \int_{t_1}^{t_2} \B_n^2(t, T_+ - t) \, \ud t \le \epsilon. 
}
\end{lem} 
\begin{proof}[Proof of Lemma~\ref{l:unflux}]
From the energy-flux identity we know that for each $n$, and $t_1 \le  t_2 < T_+$, 
\EQ{
\int_0^{T_+- t_1} e_n(t_1, r) \, r \, \ud r \ge  \int_{0}^{T_+- t_2} e_n(t_2, r) \,r \, \ud r
}
Taking the limit as $n \to \infty$ we obtain, 
\EQ{
\int_0^{T_+- t_1} e(t_1, r) \, r \, \ud r \ge  \int_{0}^{T_+- t_2} e(t_2, r) \,r \, \ud r
}
In other words, the function, 
\EQ{
t \mapsto \int_{0}^{T_+ - t} e(t, r) \, r \, \ud r
}
is decreasing, and thus it has a limit, 
\EQ{
\E_0 :=  \lim_{t \to T_+} \int_0^{T_+ - t} e(t, r) \, r \, \ud r
}
Now, fix $\epsilon >0$ and let $\tau = \tau(\epsilon)$ be such that 
\EQ{
 \int_0^{T_+ - \tau(\epsilon)} e( \tau(\epsilon), r) \,r  \, \ud r  \le \E_0 + \frac{1}{3}\eps
}
Next, consider any $\tau(\eps) \le t_1 \le t_2 < T_+$.  For large enough $n$ we can invoke the local Cauchy theory to deduce that 
\EQ{
\int_0^{T_+ - t_1} e_n(t_1, r)  \, r \, \ud r \le \E_0 + \frac{2}{3} \eps \\
\int_0^{T_+ - t_2} e_n(t_2, r)  \, r \, \ud r \ge \E_0 - \frac{1}{3} \eps 
}
By the energy-flux identity and the above we conclude that 
\EQ{
\int_{t_1}^{t_2} \B_n^2(t, T_+ - t) \, \ud t  = \int_0^{T_+ - t_1} e_n(t_1, r)  \, r \, \ud r - \int_0^{T_+ - t_2} e_n(t_2, r)  \, r \, \ud r   \le \eps 
}
as desired. 
\end{proof} 

Before stating the next lemma, we first introduce null coordinates, 
\EQ{
 \eta := (T_+ - t )- r , \quad \xi := (T_+ - t) + r
}
A computation in~\cite[p. 954]{STZ1} shows that 
\EQ{ \label{eq:pApB}
 \abs{ \p_\xi \A_n} \lesssim \frac{1}{r} \B_n \mand  \abs{ \p_\eta \B_n } \lesssim \frac{1}{r} \A_n
}
where the implicit constants above are universal (in particular independent of $n$). 

Define 
\EQ{
 \ti \la := \frac{1 - \la}{1+ \la} < 1
} 

\begin{lem}\label{l:hsmall} 
For any $\eps>0$ there exists $\xi_1>0$ such that if $\xi  \le \xi_1$ then, 
\EQ{
\limsup_{n \to \infty} \int_0^{\ti \la \xi} \A_n^2( \eta', \xi) \, \ud \eta' \le \eps. 
}
\end{lem}

Next, let $0 < \xi_0 \ll 1$ to be determined below and consider $\xi$ with $ 0 < \xi  \le  \xi_0 \ll 1$.  For all $\xi'$ with $ \xi  \le  \xi' \le \xi_0$ we define
\EQ{
h_n( \xi', \xi) := \int_0^{ \ti \la \xi} \A_n^2( \eta', \xi') \, \ud \eta' 
}
With this notation, Lemma~\ref{l:hsmall} is the statement that $\limsup_{n \to \infty} h_n( \xi, \xi) \to 0$ as $\xi \to 0$. 
The first  ingredient in the proof of Lemma~\ref{l:hsmall} is the following claim.  
\begin{claim} \label{c:hnxi0} 
For any fixed $0<\xi_0 \ll 1$, we have 
\EQ{ \label{hnxi0} 
\limsup_{n \to \infty} h_n( \xi_0, \xi)   \to 0 \mas  \xi \to 0
}
\end{claim} 
\begin{proof}[Proof of Claim~\ref{hnxi0}]
The proof is similar in spirit to the proof of Lemma~\ref{l:unflux}. Let $\xi_0$ be fixed and consider any $0< \xi < \xi_0$.  Consider in $(1+1)$-dimensions, the triangle with vertices $A = (0, \xi_0)$, $B_\xi= (\ti \la \xi, \xi_0)$, and  $C_\xi= (0,  \xi_0 + \ti \la \xi)$.  Note that while boundary of this triangle does not correspond to a truncated light cone in the original $(t, x)$ variables, it does correspond to a surface made up of constant $t$-slices and null hypersurfaces. 
Indeed, for each $\xi < \xi_0$ the line segments connecting $B_\xi$ to $C_\xi$ correspond to constant $t$-slices in the original coordinates. The line segment joining $A$ to $C_\xi$ is a piece of the backwards light cone emanating from $(T_+, 0)$, i.e, it is a constant-$\eta= 0$ slice. And the line segment joining $A$ to $B_\xi$ is also manifestly null, i.e, it is a constant-$\xi = \xi_0$ slice. It follows from finite speed of propagation that for any $\xi_1< \xi_2 < \xi_0$ we have 
\EQ{
\int_{[B_{\xi_1}, C_{\xi_1}]} e_n(t, r) \,r \, \ud r \le \int_{[B_{\xi_2}, C_{\xi_2}] }e_n(t, r) \,r \, \ud r 
}
where $[B_\xi, C_{\xi}]$ denotes the line segment connecting $B_\xi$ to $C_\xi$. Passing to the limit as $n \to \infty$ yields, 
\EQ{
\int_{[B_{\xi_1}, C_{\xi_1}]} e(t, r) \,r \, \ud r \le \int_{[B_{\xi_2}, C_{\xi_2}] }e(t, r) \,r \, \ud r 
}
Hence the energy over these slices is decreasing as $\xi \to 0$ and thus there exists a limit, 
\EQ{
\E_{\xi_0} := \lim_{\xi \to 0} \int_{[B_{\xi}, C_{\xi}]} e(t, r) \,r \, \ud r.
}
In fact, $\E_{\xi_0} = 0$ for any $\xi_0>0$ since any positive limit would correspond to a blow-up via concentration of energy before the time $T_+$. Thus for any fixed $\eps>0$ we can find  $\xi_1= \xi_1(\eps)$ small enough so that for any $\xi \le \xi_1$ we have 
\EQ{
 \int_{[B_{\xi}, C_{\xi}]} e(t, r) \,r \, \ud r \le   \frac{1}{2} \eps
}
Thus for large enough $n$ we can ensure that 
\EQ{
 \int_{[B_{\xi}, C_{\xi}]} e_n(t, r) \,r \, \ud r \le  \eps \label{eq:eneps1} 
}
for all $\xi \le \xi_1$. Finally, the positive fluxes along the segments joining $C_\xi$ to $A$ and joining $B_\xi$ to $A$ are given by 
\EQ{
\int_{\xi_0}^{\xi_0 + \ti \la \xi} \B_n^2( 0, \xi') \, \ud \xi', \quad  h_n( \xi_0, \xi) = \int_{0}^{\ti \la \xi} \A^2_n( \eta', \xi_0) \, \ud \eta' 
}
Since the latter flux above is precisely $h_n( \xi_0, \xi)$ we have by~\eqref{eq:eneps1} that 
\EQ{
h_n( \xi_0, \xi) \le \eps
}
for all $n$ large enough, as desired. 
\end{proof} 

Proving Lemma~\ref{l:hsmall} requires showing that $\limsup_n h_n( \xi, \xi)$ is small. So we try to propagate the smallness given by Claim~\ref{c:hnxi0} from $\xi_0$ all the way down to $\xi$ using~\eqref{eq:pApB} and an argument based on Grownwall's inequality. 

\begin{claim} \label{c:hn'} 
For any $\eps>0$ there exists $\xi_0, \xi_1$, with  $0< \xi_1 < \xi_0 \ll 1$ such that if  $\xi  \le \xi_1$, then for all $0 < \xi \le \xi' \le \xi_0$ we have 
\EQ{
h_n( \xi', \xi) \le  \eps + C  \eta \int_{\xi'}^{\xi_0} \frac{h_n( \xi'', \xi)}{( \xi'')^2} \, \ud \xi''
}
for all $n$ large enough. Above $C>0$ is a universal constant (which may depend on $\la$) and from now on, given $\xi$ we set 
\EQ{
\eta:= \ti \la \xi
}
\end{claim} 
\begin{proof}[Proof of Claim~\ref{c:hn'}]
Given $\xi_0$, which will be fixed below, find $\xi_1$ with $\xi_1 < \xi_0$ small enough so that by Claim~\ref{c:hnxi0} we have 
\EQ{
h_n( \xi_0, \xi) \le \eps
}
for all $\xi \le \xi_0$. Now fix any such $\xi \le \xi_1$ and let $\eta := \ti \la \xi$. 
By~\eqref{eq:pApB}, for $\xi \le \xi'  \le \xi_0$ we have 
\EQ{
\abs{ h_n'( \xi', \xi)}  \lesssim  \int_0^\eta  \frac{1}{\xi'}  \A_n( \eta', \xi') \B_n(\eta', \xi')\,  \ud \eta'
}
where $h_n'$ means the derivative in the first slot, i.e., in $\xi'$. We have used above that in the region $r \ge \la (T_+ - t)$ we have $\xi' \lesssim r$, with a constant depending on $\la$. Integrating back to $\xi_0$ yields, 
\EQ{ \label{eq:hnxi'xi0} 
h_n( \xi', \xi) &\le h_n(\xi_0, \xi)  + C \int_{\xi'}^{\xi_0}\int_0^\eta  \frac{1}{\xi''}  \A_n( \eta', \xi'') \B_n(\eta', \xi'') \, \ud \eta' \, \ud \xi'' \\
& = h_n(\xi_0, \xi)  + C \int_0^\eta  \int_{\xi'}^{\xi_0}\frac{1}{\xi''}  \A_n( \eta', \xi'') \B_n(\eta', \xi'')\, \ud \xi''  \, \ud \eta' 
}
We pause for a moment and consider for a given $\xi'$ the quantity 
\EQ{
g_n(\eta', \xi', \xi_0):= \int_{\xi'}^{\xi_0} \B_n^2(\eta', \xi'')\, \ud \xi'' 
}
Again using~\eqref{eq:pApB}, and writing $g_n(\eta') = g_n( \eta', \xi', \xi_0)$ for simplicity, we deduce that 
\EQ{
\abs{ g_n'( \eta')} &\lesssim \int_{\xi'}^{\xi_0} \frac{1}{\xi''} \A_n(\eta', \xi'') \B_n(\eta', \xi'')\, \ud \xi''  \\
& \le \left(\int_{\xi'}^{\xi_0} \Big(\frac{1}{\xi''}\Big)^2 \A_n^2(\eta', \xi'')\, \ud \xi''\right)^{\frac{1}{2}} \sqrt{ g_n(\eta')} 
}
It follows that for $\eta' \le \eta = \ti \la \xi$ we have 
\EQ{
\sqrt{ g_n(\eta')} &\le \sqrt{ g_n(0)} + C \int_0^{\eta'} \left(\int_{\xi'}^{\xi_0} \Big(\frac{1}{\xi''}\Big)^2 \A_n^2(\eta'', \xi'')\, \ud \xi''\right)^{\frac{1}{2}}  \ud \eta'' \\
& \le \sqrt{ g_n(0)} + C \int_0^{\eta} \left(\int_{\xi'}^{\xi_0} \Big(\frac{1}{\xi''}\Big)^2 \A_n^2(\eta'', \xi'')\, \ud \xi''\right)^{\frac{1}{2}}  \ud \eta'' \\
&  \le \sqrt{ g_n(0)} + C \sqrt{\eta} \left( \int_{\xi'}^{\xi_0}\int_0^{\eta} \Big(\frac{1}{\xi''}\Big)^2 \A_n^2(\eta'', \xi'')\, \ud \eta''  \ud \xi''\right)^{\frac{1}{2}}  \\
& \le \sqrt{ g_n(0)} + C \sqrt{\eta} \left( \int_{\xi'}^{\xi_0} \Big(\frac{1}{\xi''}\Big)^2 h_n( \xi'', \xi)  \ud \xi''\right)^{\frac{1}{2}}
}
Now, applying Cauchy-Schwarz to the right-hand-side of~\eqref{eq:hnxi'xi0} and applying the above we obtain, 
\begin{multline} 
h_n( \xi', \xi) \le h_n(\xi_0, \xi)  \\
  + C \int_0^\eta \left( \int_{\xi'}^{\xi_0}   \frac{\A_n^2(\eta', \xi'') }{(\xi'')^2}\ud \xi'' \right)^{\frac{1}{2}} \Bigg(  \sqrt{ g_n(0)} + C \sqrt{\eta} \left( \int_{\xi'}^{\xi_0}  \frac{h_n( \xi'', \xi)}{(\xi'')^2}  \ud \xi''\right)^{\frac{1}{2}} \Bigg) \ud \eta' 
\end{multline} 
which after another application of Cauchy-Schwarz becomes 
\EQ{
&h_n(\xi', \xi) \le h_n( \xi_0, \xi)   \\
&\quad + C\sqrt{\eta}  \left( \int_{\xi'}^{\xi_0} \frac{h_n(\xi'', \xi)}{(\xi'')^2}  \, \ud \xi'' \right)^{\frac{1}{2}} \Bigg(  \sqrt{ g_n(0)} + C \sqrt{\eta} \left( \int_{\xi'}^{\xi_0}  \frac{h_n( \xi'', \xi)}{(\xi'')^2}  \ud \xi''\right)^{\frac{1}{2}} \Bigg)
}
where we note above that $C$ is independent of $n$ and depends only on $\la$. Now, note that $g_n( 0) = g_n(0, \xi', \xi_0)$ is part of the flux controlled by Lemma~\ref{l:unflux} and hence we can choose (and fix) $\xi_0$ small enough so that for all $\xi'>0$ we have 
\EQ{
\limsup_{n \to \infty} g_n(0) \le \frac{1}{2} \eps 
}
Now that we have fixed $\xi_0$ we can now find $\xi_1>0$ small enough so that for any $\xi \le \xi_1$ we have by Claim~\ref{c:hnxi0} 
\EQ{
\limsup_{n \to \infty} h_n(\xi_0, \xi) \le \frac{1}{2} \eps 
}
Inserting these estimates above completes the proof of the claim. 
\end{proof} 

To conclude we show how Lemma~\ref{l:hsmall} follows from Claim~\ref{c:hn'} together with Grownwall's inequality, which we recall below. 
\begin{lem}[Gronwall's inequality] 
Suppose $f, g$ are continuous non-negative functions on the interval $[x_0, x]$ and for all $x' \in [x_0, x]$ we have 
\EQ{
f(x')  \le \eps + \int_{x_0}^{x'} g(x'') f(x'') \, \ud x'' 
}
Then, 
\EQ{
f(x) \le \eps + \eps \int_{x_0}^x g(x') \exp \Big( \int_{x'}^x g( x'') \, \ud x'' \Big) \, \ud x' 
}
\end{lem} 
\begin{proof}[Proof of Lemma~\ref{l:hsmall}]
By Claim~\ref{c:hn'} and Gronwall with $f(\xi') = h_n(\xi', \xi)$ and $g(\xi') = C \eta (\xi')^{-2}$, recalling that $\eta = \ti \la \xi$  we have 
\EQ{
h_n( \xi, \xi) &\le \eps + \eps \int_{\xi}^{\xi_0} \frac{C \eta}{( \xi')^{2}} \exp \Big( \int_{\xi}^{\xi'}  \frac{C \eta}{( \xi'')^{2}}  \, \ud \xi'' \Big) \ud \xi'  \\
& \lesssim_\la  \eps 
} 
for all $n$ large enough. This completes the proof. 
\end{proof} 

We can now sketch the proofs of Proposition~\ref{p:stzH} and Corollary~\ref{c:stzdt}. 
\begin{proof}[Sketch of the proof of Proposition~\ref{p:stzH}]
The proposition is a consequence of the standard energy-flux identify over the triangle (in $(\eta, \xi)$ coordinates) with vertices, $A = (\xi, 0)$, $B = ( \xi, \ti \la \xi)$, $C = ( \xi + \ti \la \xi, 0)$. Note that the line segment connecting $B$ to $C$ is precisely the constant $t$ slice between $r = \la( T_+-t)$ and $r = T_+ - t$. The flux energy computation yields 
\EQ{
\int_{\la( T_+- t)}^{T_+ - t} e_n(t, r) \, r \, \ud r  \le h_n(\xi, \xi) + g_n(0, \xi, \xi+ \ti \la \xi).
}
We now conclude that the right-hand-side above can be made small uniformly in $n$ using Lemma~\ref{l:unflux} and Lemma~\ref{l:hsmall}. This proves Lemma~\ref{l:undec} and thus also Proposition~\ref{p:stzH}. 
\end{proof} 
\begin{proof}[Sketch of the proof of Corollary~\ref{c:stzdt}]
This follows by invoking the corresponding proof in Shatah, Tahvildar-Zadeh, i.e, ~\cite[Proof of Corollary 2.3]{STZ1} for the approximating sequence $\vec \psi_n(t)$ together with Proposition~\ref{p:stzH} and the uniform decay of the flux from Lemma~\ref{l:unflux}. 
\end{proof}

 \bigskip

\centerline{\scshape Rapha\"el C\^{o}te }
\medskip
{\footnotesize
\begin{center}
CNRS and \'{E}cole Polytechnique \\
Centre de Math\'ematiques Laurent Schwartz UMR 7640 \\
Route de Palaiseau, 91128 Palaiseau cedex, France \\
\email{cote@math.polytechnique.fr}
\end{center}
} 

\medskip

\centerline{\scshape Carlos Kenig, Andrew Lawrie, Wilhelm Schlag}
\medskip
{\footnotesize
 \centerline{Department of Mathematics, The University of Chicago}
\centerline{5734 South University Avenue, Chicago, IL 60615, U.S.A.}
\centerline{\email{cek@math.uchicago.edu, alawrie@math.uchicago.edu, schlag@math.uchicago.edu}}
} 

 \end{document}